\documentclass[10pt,reqno]{amsart}
\usepackage[margin=1in]{geometry}

\usepackage{amsmath,amssymb,amsthm,bbm,bm,caption,comment,dsfont,enumerate,graphics,graphicx,mathrsfs,mathtools,multicol,pgfplots,subcaption,tikz,xcolor} 
\usepackage[nosort]{cite}
\usepackage[bookmarks=true,unicode,hidelinks]{hyperref} 

\pgfplotsset{compat=1.18}

\usetikzlibrary{3d,decorations,decorations.pathmorphing,decorations.pathreplacing,decorations.markings,patterns,shapes}

\numberwithin{equation}{section}

\newtheorem{thm}{Theorem}[section]
\newtheorem{lm}[thm]{Lemma}
\newtheorem{defn}[thm]{Definition}
\newtheorem{prop}[thm]{Proposition}

\newtheorem{cor}[thm]{Corollary}

\newtheorem{conj}[thm]{Conjecture}

\newcommand{\bbC}{\mathbb{C}}
\newcommand{\bbE}{\mathbb{E}}

\newcommand{\bbR}{\mathbb{R}}
\newcommand{\bbZ}{\mathbb{Z}}
\newcommand{\bfr}{\mathbf{r}}
\newcommand{\cE}{\mathcal{E}}
\newcommand{\cG}{\mathcal{G}}
\newcommand{\cH}{\mathcal{H}}
\newcommand{\cL}{\mathcal{L}}
\newcommand{\confH}{\mathbb{H}}
\newcommand{\cR}{\mathcal{R}}

\newcommand{\dom}{\Omega}
\newcommand{\HPT}{\mathsf{H}} 
\newcommand{\intZ}{\mathbb{Z}}
\newcommand{\ntau}{{\tau^*}}
\newcommand{\rch}{\mathrm{ch}}
\newcommand{\pch}{\mathrm{pch}}
\newcommand{\mdpch}{\mathrm{pch}^{\mathrm{mod}}}
\newcommand{\ra}{\mathrm{a}}

\newcommand{\rd}{\mathrm{d}}
\newcommand{\re}{e}
\newcommand{\ri}{\mathrm{i}}
\newcommand{\rI}{\mathrm{I}}
\newcommand{\rL}{\mathrm{L}}
\newcommand{\rp}{\mathrm{p}}

\newcommand{\rR}{\mathrm{R}}

\newcommand{\rz}{\mathrm{z}}
\newcommand{\rC}{\mathrm{C}}
\newcommand{\rD}{\mathrm{D}}
\newcommand{\rG}{\mathrm{G}}
\newcommand{\rK}{\mathrm{K}}

\newcommand{\bz}{\mathbf{z}}
\newcommand{\h}{\mathfrak{h}}
\newcommand{\B}{\mathbf{B}}

\newcommand{\bfone}{\mathbf{1}}
\newcommand{\prob}{\mathbb{P}}
\renewcommand{\Re}{\mathrm{Re}}

\newcommand{\pchlim}{\mathcal{X}} 
\newcommand{\pchlimline}{\mathcal{X}^0} 

\newcommand{\uc}{\mathrm{UC}}
\newcommand{\btau}{\mathcal{T}}
\newcommand{\bntau}{\mathcal{T}^*}
\newcommand{\TT}{\tau}
\newcommand{\XX}{\alpha}
\newcommand{\HH}{\beta}
\newcommand{\polylog}{\mathrm{Li}}
\newcommand{\I}{\mathrm{I}}
\newcommand{\rE}{\mathrm{E}}
\newcommand{\xib}{\bm{\xi}}
\newcommand{\etab}{\bm{\eta}}
\newcommand{\hftn}{\varphi} 
\newcommand{\sfx}{\mathsf{x}}
\newcommand{\sfy}{\mathsf{y}}
\newcommand{\sfz}{\mathsf{z}}
\newcommand{\sfu}{\mathsf{u}}
\newcommand{\sfS}{\mathsf{S}}
\newcommand{\sfT}{\mathsf{T}}

\newcommand{\Ke}{K^{\mathrm{en}}}
\newcommand{\limKe}{\mathrm{K}^{\mathrm{en}}}

\newcommand{\Cau}{\mathrm{ChDt}} 
\newcommand{\Sp}{\mathscr{S}}
\newcommand{\sx}{\mathsf{x}}
\newcommand{\sxx}{\mathsf{X}}

\newcommand{\bsigma}{\bm{\sigma}}
\newcommand{\sfG}{\mathsf{g}}

\DeclareMathOperator{\Pconf}{Pconf_{N,L}}

\newcommand{\mcG}{\mathfrak{G}}

\newcommand{\rhosr}{\rho_{L,0}}

\newcommand{\mpch}{\pch}

\DeclareMathOperator{\Fscaled}{F}
\DeclareMathOperator{\Pconfno}{Pconf}

\newcommand{\ncE}{\cE^{\mathrm{norm}}}
\newcommand{\npch}{\pch^{\mathrm{norm}}}

\newcommand{\bmz}{\bm{z}}
\newcommand{\Mx}{M}

\newcommand{\sfTp}{\sfT^+}

\title{Periodic KPZ fixed point with general initial conditions}

\author{Jinho Baik}
\address{Department of Mathematics, University of Michigan, Ann Arbor, MI 48109, USA}
\email{baik@umich.edu}

\author{Yuchen Liao}
\address{School of Mathematical Sciences, University of Science and Technology of China, Hefei, Anhui 230026, P.R. China}
\email{ycliao@ustc.edu.cn}

\author{Zhipeng Liu}
\address{Department of Mathematics, University of Kansas, Lawrence, KS 66045, USA}
\email{zhipengliumath@gmail.com}

\thanks{The work of J.B. was supported in part by NSF grant DMS-2246790. The work of Z.L. was supported in part by NSF grant DMS-2246683. The authors would like to thank Jhih-Huang Li, Konstantin Matetski, Leonid Petrov, Daniel Remenik, and Tejaswi Tripathi for helpful discussions and communications.}

\date{\today}
\subjclass[2020]{60K35, 82C22}

\begin{document}

\begin{abstract}
We consider the relaxation-time-scale limit of the periodic totally asymmetric simple exclusion process (PTASEP) with general initial conditions. For every sequence of initial conditions approximating a periodic upper semicontinuous function, we compute the limiting space-time multipoint distributions of the rescaled particle locations and height functions. The resulting finite-dimensional distributions are explicit and form a consistent family, thereby defining a spatially periodic space-time random field. We call this field the periodic KPZ fixed point with the corresponding initial condition. This extends earlier results for PTASEP with special initial conditions and defines the periodic analogue of the KPZ fixed point on the line.

The main technical novelty is a pair of new probabilistic representations for the energy function and the characteristic function, the two functions through which the initial condition enters the finite-time PTASEP multipoint distribution formula. Both representations are expressed in terms of a geometric random walk and two stopping times, namely the first hitting time of the initial profile and the first such hitting time at or after one full period, with the latter capturing the periodic geometry.
\end{abstract}

\maketitle

\tableofcontents

\section{Introduction and main results}

The Kardar--Parisi--Zhang (KPZ) universality class is a broad class of models in mathematical physics, including interacting particle systems, random interface growth, directed random polymers, and stochastic partial differential equations; see, e.g.,~\cite{Kardar-Parisi-Zhang86, Baik-Deift-Johansson99, Johansson00, Amir-Corwin-Quastel11, Seppalainen12, Corwin-OConnel-Seppalainen-Zygouras14} and the references therein. 
It is widely believed, and partly proved, that the height functions of these models share the characteristic $1:2:3$ scaling, namely, at large time $t$, height fluctuations are of order $t^{1/3}$ and spatial correlations are of order $t^{2/3}$. 
Under this scaling, the height functions are expected to converge to a universal space-time random field, the KPZ fixed point. 
The KPZ fixed point was constructed in~\cite{Matetski-Quastel-Remenik21}. 
See also~\cite{Matetski-Remenik23a, Matetski-Remenik23, BLSZ23} for modifications and generalizations. 
Convergence to the KPZ fixed point has been established for several models~\cite{Matetski-Quastel-Remenik21, Nica-Quastel-Remenik20, Virag20, Matetski-Quastel-Remenik25, Wu23, Aggarwal-Corwin-Hegde24, Dauvergne-Zhang24}.

This paper concerns KPZ models on a ring, or equivalently spatially periodic models.
Let $L$ be the period and $t$ the time, and consider the regime in which both $L$ and $t$ tend to infinity.
Since spatial correlations in KPZ models scale as $t^{2/3}$, the transition occurs when $t$ is of order $L^{3/2}$.
When $t\ll L^{3/2}$, the effect of periodicity is negligible, and the system behaves asymptotically as its infinite-line counterpart.
When $t\gg L^{3/2}$, spatial correlations become trivial, and the height function evolves asymptotically as Brownian motion.
The scale $t\sim L^{3/2}$ is called the \emph{relaxation time scale}.
At this scale, the height function is expected to converge to a nontrivial crossover field, which we call the \emph{periodic KPZ fixed point}.
This field is conjectured to interpolate, in suitable limiting regimes, between the KPZ fixed point and one-dimensional Brownian motion.

Periodic KPZ models have been studied in several physics papers~\cite{Gwa-Spohn92, Derrida-Lebowitz98, Priezzhev2003, Golinelli-Mallick04, Golinelli-Mallick05, Brankov-Papoyan-Poghosyan-Priezzhev06}, in particular by Prolhac~\cite{Prolhac16, Prolhac20} on the relaxation time scale. 
Rigorous results have been obtained in~\cite{Baik-Liu16, Baik-Liu18, Liu16, Baik-Liu19, Liu-Saenz-Wang20, Baik-Liu21, Liao22, Dunlap-Gu-Komorowski23, Li-Saenz25, Baik-Liu2024, Corwin-Gu-Sorensen26}. 
In particular, \cite{Baik-Liu19, Baik-Liu21} computed the multipoint limiting distributions of the periodic totally asymmetric simple exclusion process (PTASEP) on the relaxation time scale for several special initial conditions, thereby defining the periodic KPZ fixed point in those cases.\footnote{The multipoint limits for the periodic narrow-wedge initial condition were computed in~\cite{Baik-Liu19}, and their consistency was verified in~\cite[Appendix]{Baik-Liu2024}. The same argument applies to the other special initial conditions considered in~\cite{Baik-Liu21}.}

In this paper, we extend~\cite{Baik-Liu19, Baik-Liu21} and~\cite[Appendix]{Baik-Liu2024} to general initial conditions. 
We compute the limiting multipoint distributions of the PTASEP for every initial condition approximating a periodic upper semicontinuous function and prove that these finite-dimensional distributions are consistent. 
Together, these results define, for each periodic upper semicontinuous initial condition, a periodic KPZ fixed point.

\subsection{Relaxation-time-scale limit of PTASEP}\label{sec:result}

The \emph{periodic totally asymmetric simple exclusion process} (PTASEP) can be described either by particle positions or by its height-function representation. 
We state the result using both descriptions.

Let $L>N$ be positive integers, where $L$ is the period and $N$ is the number of particles per period. 
A PTASEP configuration is an element of
\begin{equation}\label{eq:def_conf}
	\Pconf := \{ Y=(y_i)_{i\in \bbZ} : y_i\in \bbZ,  y_{i+N}= y_i-L \text{ for all } i\in\bbZ,\ 
	y_N<\cdots<y_1 \}. 
\end{equation} 
The particle locations at time $t$ are denoted by $(\sx_i(t))_{i\in\bbZ}$, with labels chosen so that
\begin{equation*}
    \sx_{i+1}(t)<\sx_i(t),\qquad i\in\bbZ .
\end{equation*} 
The particles evolve according to the usual TASEP dynamics, subject to the periodicity constraint. The periodicity forces 
\begin{equation*}
    	\sx_{i+kN}(t) = \sx_i(t)-kL 
\end{equation*}
for all $i, k \in \bbZ$ and $t\ge 0$. 

\begin{defn}
Let $L>N$ be positive integers, and suppose $Y=(y_i)_{i\in\bbZ}\in\Pconf$. 
We denote by $\mathrm{PTASEP}_Y(L,N)$ the PTASEP of period $L$ with $N$ particles per period and initial condition $\sx_i(0)=y_i$ for all $i\in\bbZ$. 
\end{defn}

We assume that the initial condition approximates a periodic upper semicontinuous function. 

\begin{defn}
Define
\begin{equation*}
    \uc_1
    :=
    \left\{
    \h:\bbR\to\bbR\cup\{-\infty\} :
    \text{$\h$ is upper semicontinuous},\
    \h\not\equiv-\infty,\
    \h(\XX+1)=\h(\XX) \text{ for all }\XX\in\bbR
    \right\}.
\end{equation*}
We equip $\uc_1$ with the topology of local Hausdorff convergence of hypographs. 
The effective domain of $\h\in\uc_1$ is $\{\XX\in\bbR:\h(\XX)>-\infty\}$. 
\end{defn}

See~\cite[Section~3.1]{Matetski-Quastel-Remenik21} for the precise definition of this topology. 
In this topology, $\h_L\to\h$ in $\uc_1$ if and only if, for every $\XX\in\bbR$,
\begin{equation} \label{eq:UC_criterion}
\limsup_{L\to\infty}\h_L(\XX_L)\le\h(\XX)
\quad\text{for every }\XX_L\to\XX,
\qquad
\liminf_{L\to\infty}\h_L(\XX_L)\ge\h(\XX)
\quad\text{for some }\XX_L\to\XX .
\end{equation}

Our first main theorem establishes convergence of the multipoint distributions of the PTASEP for initial conditions approximating an arbitrary function in $\uc_1$. 
The period $L$, the number $N$ of particles per period, and the time $t$ all tend to infinity. The density $N/L$ remains bounded away from $0$ and $1$, and $t$ is of order $L^{3/2}$, the relaxation time scale.

\begin{thm}[Particle position version]
\label{thm:main}
Fix $0<\rho_-<\rho_+<1$. 
Let $N_L$ be a sequence of integers such that
\begin{equation*}
    \rho_L:=\frac{N_L}{L}\in[\rho_-,\rho_+]
\end{equation*} 
for all sufficiently large $L$. 
Let $\h\in\uc_1$. 
For each $L$, let $(\sx^{(L)}_k(t))_{k\in\bbZ}$ be distributed as $\mathrm{PTASEP}_{Y_L}(L,N_L)$, where the initial conditions 
$Y_L=(y_i^{(L)})_{i\in\bbZ}\in\Pconfno_{N_L,L}$ satisfy
\begin{equation}
\label{eq:initial_scaling}
    \h_L\to\h \quad \text{in } \uc_1,
    \qquad
    \h_L(\XX):=
    \frac{-y^{(L)}_{-\XX N_L+1}-1+\XX L}
    {\sqrt{\frac{1-\rho_L}{\rho_L}}\,L^{1/2}},
\end{equation}
as $L\to \infty$. Here $y_a^{(L)}$ for non-integer $a$ is defined by linear interpolation between 
$y_{\lfloor a\rfloor}^{(L)}$ and $y_{\lfloor a\rfloor+1}^{(L)}$. 
Fix $m\ge1$, points $(\XX_1,\TT_1),\ldots,(\XX_m,\TT_m)\in\bbR\times\bbR_{+}$\footnote{Here, $\bbR_+=(0,\infty)$.}, and 
$\bm{\HH}=(\HH_1,\ldots,\HH_m)\in\bbR^m$. 
Set
\begin{equation}
\label{eq:scaling_1}
    t_i:=\TT_i\frac{L^{3/2}}{\sqrt{\rho_L(1-\rho_L)}},
    \qquad
    k_i:=\bigl\lfloor \rho_L^2 t_i-\XX_i N_L \bigr\rfloor,
    \qquad 1\le i\le m.
\end{equation}
Then
\begin{equation}
\label{eq:lim_bbF_particle}
    \lim_{L\to\infty}
    \prob\left(
    \bigcap_{i=1}^m
    \left\{
    \frac{-\sx^{(L)}_{k_i}(t_i)+\XX_i L+(1-2\rho_L)t_i}
    {\sqrt{\frac{1-\rho_L}{\rho_L}}\,L^{1/2}}
    \le \HH_i
    \right\}
    \right)
    =
    \mathbb{F}_\h^{(m)}
    \bigl(\bm{\HH};(\XX_i,\TT_i)_{i=1}^m\bigr),
\end{equation}
where $\mathbb{F}_\h^{(m)}$ is defined in Definition~\ref{def:F}.
\end{thm}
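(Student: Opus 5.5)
The plan is to start from the exact finite-time formula for the multipoint distribution of $\mathrm{PTASEP}_Y(L,N)$ with general initial condition $Y$, in the form used in~\cite{Baik-Liu19, Baik-Liu21}. There the joint probability is a multiple contour integral over $z_1,\dots,z_m$ on small circles around the origin. The integrand is a product of a Fredholm determinant built from the roots of the Bethe equations $q^N(q+1)^{L-N}=z^L$ and of prefactors. The initial condition $Y$ enters only through two functions: the energy function $\cE_Y(z_1)$ and the characteristic function $\rch_Y(v,u)$. The latter couples Bethe roots at the first level $z_1$. Everything else in the formula, namely the kernel entries at levels $2,\dots,m$ and the $z$-dependent prefactors, does not depend on $Y$. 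Its asymptotics under the scaling~\eqref{eq:scaling_1} are already known from~\cite{Baik-Liu19}: Bethe roots near the critical point $q_c=-\rho_L$, rescaled by $L^{1/2}$, converge to the roots of $e^{-\xi^2/2}=z$, and the finite-$L$ kernel converges to the limiting kernel. So I would reuse those estimates and concentrate on the two $Y$-dependent functions.

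For these, I would use the probabilistic representations announced in the abstract. Write $\cE_Y$ and $\rch_Y$ as expectations over a geometric random walk $S$ started suitably. Let $\sfT$ be the first time $S$ hits the profile $(y_i)$, and $\sfTp$ the first hitting time at or after one full period. Then $\rch_Y$ becomes an expectation of $\bfone$ and exponential weights evaluated at $(\sfT,S_\sfT)$ and $(\sfTp,S_{\sfTp})$, and $\cE_Y$ similarly, with the Bethe variables entering as exponential tilts. Under the $L^{1/2}$ spatial / $N$ index scaling, the tilted geometric walk converges to Brownian motion with drift. The rescaled profile $\h_L$ converges in $\uc_1$ to $\h$. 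The hitting times of an epigraph-type boundary are then continuous functionals at a.e. path, exactly as in the MQR hit-time argument for TASEP~\cite{Matetski-Quastel-Remenik21}. This gives pointwise convergence of $\cE_{Y_L}$ and $\rch_{Y_L}$ to limits $\ncE_\h$, $\pchlim_\h$ expressed through Brownian hitting times of $\h$ and of $\h$ after one period. These are the quantities entering Definition~\ref{def:F}.

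The key steps are as follows.
\begin{enumerate}[(i)]
\item Derive the random walk representations at finite $L$. This is algebraic: sum the determinantal or Cauchy-type expressions by a path decomposition at the first hitting time, then again after one period, using $y_{i+N}=y_i-L$.
\item Prove pointwise convergence of the representations using the criterion~\eqref{eq:UC_criterion}. The $\limsup$ part gives that the limit walk cannot hit before the limit hitting time, and the $\liminf$ part gives that it does hit; the regularity of Brownian motion handles the boundary cases.
\item Prove uniform bounds on $\cE_{Y_L}$ and $\rch_{Y_L}(v,u)$ for Bethe roots $v,u$ in the relevant regions, including roots far from $q_c$, which are not in the $L^{1/2}$ window. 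The aim is bounds of the form $e^{c|\xi|^{3}}$-type growth dominated by the Gaussian decay of the kernel, together with uniformity in $z_1$ on the contour.
\item Combine with the known kernel asymptotics via dominated convergence in the Fredholm expansion and in the $z$-integrals.
\end{enumerate}

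The main obstacle I expect is step (iii). For general $\h$ the walk may hit late or only after a full period, and $\h$ may be $-\infty$ on large sets. Exponential tilts then produce factors that are not obviously bounded. I would first try to bound the tilted expectations using the optional stopping theorem with the exponential martingale of $S$. I would also use the period-shift $\sfTp\le\sfT+$ (one period) to control the second stopping time, and compare with the flat initial condition, where explicit formulas exist~\cite{Baik-Liu21}. Finally, the height-function version follows from the particle version by the standard duality $\{\sx_k(t)\ge a\}=\{\text{height at }a\ \text{large}\}$, and consistency of $\mathbb{F}^{(m)}_\h$ is inherited from the finite-$L$ probabilities once convergence holds for all $m$.
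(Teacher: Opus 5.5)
Your architecture matches the paper's: the finite-time formula of \cite{Baik-Liu21}, random-walk representations of the two $Y$-dependent functions through $\tau$ and $\tau^*$, and the hitting-time invariance principle of \cite{Matetski-Quastel-Remenik21}. The plan breaks down at the energy function, however.

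You treat $\cE_Y(z_1)$ as another expectation of exponential weights at the hitting times. In fact it is a ratio of $N\times N$ determinants with $N\to\infty$, and its limit is a genuine Fredholm determinant $\det(\rI+\limKe_{\h}(\rz))_{L^2(\bbR)}$, already equal to $e^{2B(\rz)}$ for the periodic narrow wedge. A path decomposition of determinant entries will not produce that. The missing steps are these:
\begin{enumerate}[(1)]
\item Normalize to $\ncE_Y(z)=\det(M_Y)$.
\item Split each entry by residues into an upper-triangular part (a contour around $0$) plus a part coming from the left Bethe roots.
\item Invert the triangular part, converting the $N\times N$ determinant into $\det(I+K)_{\ell^2(\bbZ_{\le0})}$ (Proposition~\ref{prop:orthogonalization}).
\end{enumerate}
That conversion requires a function $H(v,u)$ satisfying the reproducing identities \eqref{eq:Hdefiningreal}. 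It is the line characteristic function $\rch_Y$ (Proposition~\ref{prop:charlineareqs}) that supplies $H$. The result is $\ncE_Y(z)=\det(I-\Ke_{Y,\rz})$ with $\Ke_{Y,\rz}=-\rz\mathbf{1}_{\le0}\Sigma(I+S_\rz)T_Y\mathbf{1}_{\le0}$ and $T_Y(x,y)=\prob_{G_0=x}(G_N=-L+y-1,\ \tau<N)$. This is valid only for $y_1<0$, which is why the paper first normalizes to $y_1^{(L)}=-1$ and later removes this by relabeling and shift invariance of $\mathbb{F}_{\h}^{(m)}$.

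Convergence is then a Fredholm-determinant statement. You need pointwise convergence of the rescaled $T_{Y_L}$, with hits near time $1$ treated separately. You also need bounds $|\widehat{\Ke}_{L,\rz}(\sfx,\sfy)|\le Ce^{-c(\sfx+\sfy)}$ on $(0,\infty)^2$, obtained by comparing $\tau$ with the hitting time of a flat level $M\ge\sup\h$ plus a Doob--Chernoff estimate, so that Hadamard's inequality dominates the Fredholm series. Even for $\rch_Y-\rch_Y^*$, the workable derivation is by uniqueness rather than direct summation. Cramer's rule characterizes $v\mapsto\npch_Y(v,u;z)$ on $\cR_z$ as the unique solution of an $N\times N$ linear system. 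A binomial martingale with optional stopping then shows the walk functional solves that system, with the period-$k\ge1$ contributions of $\tau$ and $\tau^*$ cancelling.

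Step (iii) also targets the wrong balance. On the root sets $e^{-\zeta^2/2}=\rz$ the $\zeta^2$-factors are bounded, so there is no Gaussian decay to use. The only decay at level one is $e^{-c|\zeta|^3}$ from $\mathrm{f}_1$, which uses $\TT_1>0$, so $e^{c|\xi|^3}$ growth of the characteristic function would not be dominated. What is needed is the bound of Lemma~\ref{prop:uniform_bound_char}:
\[
L^{-1/2}|\mpch_{Y_L}(v_L,u_L)|\le Ce^{cM(|\eta_L|+|\xi_L|)}\bigl(1/\Re(-\xi_L)+e^{c|\xi_L|^2}\bigr).
\]
The quadratic growth here comes from the Gaussian tail of $\prob(\tau<N)$ in the starting point. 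It must be paired with a strengthened prelimit bound $|f_1(u_L)/Q_{1,L}|\le Ce^{-c|\xi_L|^3}$, valid for all Bethe roots including those at distance of order one from $-\rho_L$ (Lemma~\ref{lm:main_decay}). \cite{Baik-Liu19} only gives a uniform bound on $f_1$, so this estimate is not already available.

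Finally, the contour formula and these bounds cover only $\TT_1\le\cdots\le\TT_m$ with $\HH_i<\HH_{i+1}$ at equal times. The remaining cases of Definition~\ref{def:F} require showing that the one-point limit is a continuous distribution function, with the tails handled by monotone comparison with flat and periodic narrow-wedge data. Only then can the extension argument of \cite{Baik-Liu2024} be run, and your plan does not address this.
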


\medskip

This limit theorem can also be stated in terms of the associated height function. 
Define for $L>N$ 
\begin{equation} \label{eq:confHdef}
    \confH(N,L):= \{H:\bbZ\to\bbZ : |H(x+1)-H(x)|=1
    \text{ and } H(x+L)-H(x)=L-2N \text{ for all }x\in\bbZ \}.
\end{equation}
The \emph{periodic corner growth model} of type $(N,L)$ is the Markov process $\HPT(\cdot,t)$ on $\confH(N,L)$ defined as follows. 
To each site $\ell\in\{1,\ldots,L\}$ we attach an independent rate-one Poisson clock. 
When the clock at $\ell$ rings, if $\ell$ is a local minimum of the current height function, then the heights at all sites in $\ell+L\bbZ$ are raised by $2$; otherwise, nothing happens. 
We extend $H\in\confH(N,L)$ to a function on $\bbR$ by linear interpolation.

For TASEP on the line, \cite{Matetski-Quastel-Remenik21} showed that convergence of the rescaled particle-location profile in the upper semicontinuous topology is equivalent to the convergence of the corresponding rescaled height function. 
Since this equivalence is local, the same argument applies in the periodic setting and yields the following consequence of Theorem~\ref{thm:main}. See Section~\ref{sec:particletoheight} for the proof.

\begin{thm}[Height-function version] 
\label{thm:mainheight}
Fix $0<\rho_-<\rho_+<1$. 
Let $N_L$ be a sequence of integers such that 
$\rho_L:=N_L/L \in[\rho_-,\rho_+]$ 
for all sufficiently large $L$.  Let $\h\in\uc_1$. 
For each $L$, let $\HPT_{L}$ be the periodic corner growth model of type $(N_L,L)$ with initial profile $\HPT_{L}(\cdot,0)\in \confH(N_L,L)$.  Assume that
\begin{equation}
\label{eq:initial_height_convergence}
    \frac{\HPT_{L}(\XX L,0)-(L-2N_L)\XX}
    {-2\sqrt{\rho_L(1-\rho_L)}\,L^{1/2}}
    \longrightarrow \h(\XX)
    \qquad \text{in $\uc_1$}
\end{equation}
as $L\to\infty$. Fix $m\ge1$, points $(\XX_1,\TT_1),\ldots,(\XX_m,\TT_m)\in\bbR\times\bbR_{+}$, and 
$\bm{\HH}=(\HH_1,\ldots,\HH_m)\in\bbR^m$. 
Set
\begin{equation*}
    t_i:=\frac{\TT_i L^{3/2}}{\sqrt{\rho_L(1-\rho_L)}},
    \qquad x_i:=\XX_i L+(1-2\rho_L)t_i,
    \qquad 1\le i\le m.
\end{equation*}
Then
\begin{equation}  \label{eq:lim_bbF_height}
    \lim_{L\to\infty} \prob\left( \bigcap_{i=1}^m \left\{
    \frac{ \HPT_{L}(x_i,t_i) -(1-2\rho_L)x_i -2\rho_L(1-\rho_L)t_i}{-2\sqrt{\rho_L(1-\rho_L)}\,L^{1/2}}
    \le \HH_i \right\} \right)
    = \mathbb{F}_\h^{(m)} \bigl(\bm{\HH};(\XX_i,\TT_i)_{i=1}^m\bigr).
\end{equation}
\end{thm}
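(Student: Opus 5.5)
We will deduce Theorem~\ref{thm:mainheight} from Theorem~\ref{thm:main} through the standard dictionary between particle positions and the corner growth height function, adapted to the periodic setting.

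\textbf{Step 1: the dictionary.} Given $\HPT_L(\cdot,0)\in\confH(N_L,L)$, we place a particle at each down step of $H$. Concretely, site $x$ is occupied if $H(x)-H(x-1)=-1$, after fixing one convention for the half-integer shift. We label the particles right to left so that $\sx_{i+N}=\sx_i-L$; the identity $H(x+L)-H(x)=L-2N$ says exactly that there are $N$ particles per period. With this convention, a local minimum of $H$ at $\ell$ is a hole immediately to the right of a particle, so raising that minimum by $2$ is the same as the particle jumping right. The two Markov processes therefore coincide pathwise. We fix the labeling offset by a reference point. For instance, require that
\begin{equation*}
H(x,t)=2\bigl(\#\{\text{particles jumped across }x\text{ by time }t\}\bigr)+H(x,0),
\end{equation*}
and that particle $1$ at time $0$ is the first particle at or to the left of $0$. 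The general relation is then
\begin{equation*}
\HPT(x,t)\ge h \iff \sx_{k(x,h)}(t)\ge x \quad\text{(up to }\pm1\text{)},\qquad k(x,h)=\tfrac{x-h}{2}+c,
\end{equation*}
where $c$ is a constant determined by the reference labeling. Using this, we rewrite the event in \eqref{eq:lim_bbF_height} at each $(x_i,t_i)$ as an event of the form $\{\sx_{k_i'}(t_i)\ge x_i'\}$. Here the label $k_i'$ is chosen so that the rescaled particle event matches $\{\cdots\le\HH_i\}$ in \eqref{eq:lim_bbF_particle}, up to $o(L^{1/2})$ shifts in position and $o(L)$ shifts in label.

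\textbf{Step 2: check the scalings match.} At leading order, the label $k_i'$ must equal $\rho_L^2t_i-\XX_iN_L+O(L^{1/2})$. The $O(L^{1/2})$ part encodes $\HH_i$. We use the monotonicity of $k\mapsto\sx_k(t)$ to absorb the discrepancy between $k_i'$ and the $k_i$ of \eqref{eq:scaling_1} into a shift of $\HH$. Since $\sx_k-\sx_{k+1}\approx 1/\rho_L$ on average, a label shift of size $aL^{1/2}$ corresponds to a position shift $a L^{1/2}/\rho_L$. This is a Taylor-type linearization, and the exact constants $\sqrt{(1-\rho)/\rho}$ versus $2\sqrt{\rho(1-\rho)}$ should match once $\HH$ is rescaled correctly. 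To make it rigorous, we sandwich each event between two particle events with $\HH_i\pm\epsilon$. We then apply Theorem~\ref{thm:main}, send $\epsilon\to0$, and use continuity of $\mathbb{F}^{(m)}_\h$ in $\bm\HH$, which follows from its definition and the consistency property.

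\textbf{Step 3: initial data.} We must show that \eqref{eq:initial_height_convergence} implies \eqref{eq:initial_scaling} for the induced $Y_L$. This is the main obstacle, because convergence in $\uc_1$ is not pointwise and the change of variables from height to label inverts a monotone map. We follow \cite{Matetski-Quastel-Remenik21}. The map $H(\cdot,0)\mapsto(y_i)$ is essentially a generalized inverse, namely $y_k=\max\{x: H(x)\le \cdots -2k\}$. Its linearization converts hypograph convergence of the height profile into hypograph convergence of $\h_L$, with $\XX$ fixed at leading order. Since $\h$ is only upper semicontinuous and may equal $-\infty$, we verify the two conditions in \eqref{eq:UC_criterion} directly, using the $1$-Lipschitz property of $H$ and the fact that $y$ is strictly decreasing with gaps at least $1$. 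The argument is local in $\XX$, and both $\h_L$ and $\h$ are $1$-periodic, so the line argument transfers to the periodic setting verbatim. With this in hand, Theorem~\ref{thm:main} applies and gives \eqref{eq:lim_bbF_height}.
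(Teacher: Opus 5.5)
Steps 1 and 3 are fine in outline. Step 3 is essentially the paper's Lemma~\ref{prop:particle_to_height}, which handles the generalized inverse via the reparametrization $\mathsf{z}_L(\alpha)=(y_{-\alpha N_L+1}+1)/L$. One small correction in Step 1: with particles at down steps labelled right to left, the $h$-dependence of your label has the wrong sign. The correct dictionary is $H(\ell)\ge h\iff \mathsf{x}_{(h-\ell)/2}\ge \ell$ (with $H(0)=0$ and $y_1\le -1<y_0$).

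\textbf{The gap is in Step 2.} After translation, the event at $(x_i,t_i)$ becomes $\{\mathsf{x}_{K_i}(t_i)\ge \lfloor x_i\rfloor\}$ with
\[
K_i=\rho_L^2t_i-\alpha_iN_L-\sqrt{\rho_L(1-\rho_L)}\,L^{1/2}\beta_i+O(1).
\]
So $K_i$ differs from the label $k_i$ of Theorem~\ref{thm:main} by order $L^{1/2}$.

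You propose to absorb this discrepancy into $\beta_i\pm\epsilon$ using monotonicity of $k\mapsto\mathsf{x}_k$ and the average gap $1/\rho_L$. No such sandwich holds deterministically. The only deterministic input is that gaps are at least $1$, which gives $|\mathsf{x}_{K_i}-\mathsf{x}_{k_i}|\ge|k_i-K_i|$. This is off by the factor $\rho_L$ from what you need, and there is no deterministic upper bound at all.

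A probabilistic sandwich would need a mesoscopic local-density estimate. You would have to show that at time $t_i\sim L^{3/2}$, the $O(L^{1/2})$ consecutive gaps between labels $K_i$ and $k_i$ sum to $(k_i-K_i)/\rho_L+o(L^{1/2})$ with high probability. Theorem~\ref{thm:main} at fixed parameters cannot give this, since it only resolves label separations of order $L$. Comparing with the labels at $\alpha_i\pm\delta$ by monotonicity loses $\delta L\gg L^{1/2}$. Your proposal does not prove this estimate.

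\textbf{The missing idea.} No random-gap argument is needed. In Theorem~\ref{thm:main}, changing $\alpha$ shifts the label and the position centering together, deterministically and exactly in the ratio $1/\rho_L$. Set $\alpha_{i,L}:=\alpha_i+\rho_{L,0}L^{-1/2}\beta_i$ with $\rho_{L,0}=\sqrt{(1-\rho_L)/\rho_L}$. Then
\[
\rho_L^2t_i-\alpha_{i,L}N_L=K_i+O(1),\qquad \alpha_{i,L}L+(1-2\rho_L)t_i-\beta_i\rho_{L,0}L^{1/2}=x_i .
\]
Hence the height event is exactly the particle event of Theorem~\ref{thm:main} at the moving point $(\alpha_{i,L},\tau_i)$ with the same $\beta_i$. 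The only errors are $O(L^{-1})$ in $\alpha$ and $O(L^{-1/2})$ in $\beta$ from integer rounding, and the paper handles these by a deterministic sandwich.

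This is the paper's argument. It requires two ingredients your proposal does not invoke:
\begin{enumerate}
\item local uniformity of the convergence in Theorem~\ref{thm:main} with respect to $(\alpha_i,\tau_i,\beta_i)$, so that $L$-dependent parameters are allowed;
\item joint continuity of $\mathbb{F}_\h^{(m)}$ in all parameters, Theorem~\ref{thm:main2}(e), not merely continuity in $\bm{\beta}$.
\end{enumerate}
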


The conclusions of Theorems~\ref{thm:main} and~\ref{thm:mainheight} were previously established for several special initial conditions. The periodic step initial condition $y_{i+kN_L}^{(L)}=-i-kL$ for $1\le i\le N_L$ and $k\in\bbZ$, which approximates the periodic narrow-wedge profile $\h_{\mathrm{pnw}}(\XX):=-\infty\mathbf{1}_{\XX\notin\bbZ}$, was addressed in~\cite{Baik-Liu19}. The flat initial condition $y_k^{(L)}=-\lfloor kL/N_L\rfloor$ for $k\in\bbZ$, which approximates $\h_{\mathrm{flat}}\equiv0$, was treated in~\cite{Baik-Liu21}; the periodic step-flat initial condition was considered there as well.

These earlier results, as well as the present work, are based on the finite-time PTASEP multipoint distribution formulas obtained in~\cite{Baik-Liu19,Baik-Liu21}. For the special initial conditions above, these formulas simplify sufficiently for direct asymptotic analysis. The paper~\cite{Baik-Liu21} also considered general initial conditions and proved convergence under certain technical assumptions, but these assumptions are restrictive and difficult to verify beyond the special cases.

In this paper, we prove Theorems~\ref{thm:main} and~\ref{thm:mainheight} for every $\h\in\uc_1$. The main new ingredient is a pair of probabilistic representations for the two initial-condition-dependent quantities in the finite-time formula. These representations are expressed in terms of geometric-random-walk hitting times and are suitable for asymptotic analysis. The limiting distribution functions $\mathbb{F}_\h^{(m)}$ are given explicitly by contour integrals involving Fredholm determinants and Brownian hitting-time terms. In Section~\ref{sec:examples}, we verify that our formulas recover the results of~\cite{Baik-Liu19,Baik-Liu21} for the periodic narrow-wedge and flat initial conditions.

\subsection{Periodic KPZ fixed point}\label{sec:PKPZ}

Theorems~\ref{thm:main} and~\ref{thm:mainheight} identify, for each $m\ge1$ and $\h\in\uc_1$, a function $\mathbb{F}_\h^{(m)}$ as the limiting multipoint distribution of the rescaled PTASEP height function. 
The next theorem records several properties of these functions.

\begin{thm} \label{thm:main2}
Let $\h\in\uc_1$, and for each $m\ge1$ let $\mathbb{F}_\h^{(m)}$ be the function from Definition~\ref{def:F}. 
Then the family $\{\mathbb{F}_\h^{(m)}\}_{m\ge1}$ satisfies the following properties.
\begin{enumerate}[(a)]
\item (Periodicity) 
$\mathbb{F}_\h^{(m)}\bigl(\bm{\HH};(\XX_i,\TT_i)_{i=1}^m\bigr)$ is unchanged if any one of the variables $\XX_k$ is replaced by $\XX_k+1$.

\item (Joint CDF) 
If $(\XX_1,\TT_1),\ldots,(\XX_m,\TT_m)$ are distinct, then 
$\mathbb{F}_\h^{(m)}\bigl(\bm{\HH};(\XX_i,\TT_i)_{i=1}^m\bigr)$ is a continuous joint distribution function in $\bm{\HH}\in\bbR^m$.

\item (Marginal compatibility) 
For $m\ge2$ and $1\le k\le m$,
\begin{equation*} 
	\lim_{\HH_k\to\infty}
	\mathbb{F}_\h^{(m)}\bigl(\bm{\HH};(\XX_i,\TT_i)_{i=1}^m\bigr)
	=
	\mathbb{F}_\h^{(m-1)}
	\bigl((\HH_i)_{i\ne k};\,(\XX_i,\TT_i)_{i\ne k}\bigr).
\end{equation*}   

\item (Symmetry) 
For every permutation $\sigma$ of $\{1,\ldots,m\}$,
\begin{equation*} 
	\mathbb{F}_\h^{(m)}
	\bigl((\HH_{\sigma(i)})_{i=1}^m;\,(\XX_{\sigma(i)},\TT_{\sigma(i)})_{i=1}^m\bigr)
	=
	\mathbb{F}_\h^{(m)}
	\bigl(\bm{\HH};(\XX_i,\TT_i)_{i=1}^m\bigr).
\end{equation*} 

\item (Joint continuity) 
$\mathbb{F}_\h^{(m)}$ is jointly continuous in 
$\bigl((\XX_i,\TT_i,\HH_i)\bigr)_{i=1}^m\in(\bbR\times\bbR_{+}\times\bbR)^m$.

\item (Initial condition) 
As $\TT_1,\ldots,\TT_m\to0$, 
\begin{equation}\label{eq:limit_tau0}
	\mathbb{F}_\h^{(m)}
	\bigl(\bm{\HH};(\XX_i,\TT_i)_{i=1}^m\bigr)
	\longrightarrow
    \begin{dcases}
        0, & \text{ if } \HH_i < \h(\XX_i) \text { for some }i, \\
        1, & \text{ if } \HH_i> \h(\XX_i) \text{ for all }i. 
    \end{dcases}
\end{equation}
\item (Shift invariance) 
If $\h'(\XX)=\h(\XX-a)+b$ for some $a,b\in\bbR$, then
\begin{equation}\label{eq:translation_invariance}
	\mathbb{F}_{\h'}^{(m)}
	\bigl(\bm{\HH}+b(1,\ldots,1);\,(\XX_i+a,\TT_i)_{i=1}^m\bigr)
	=
	\mathbb{F}_\h^{(m)}
	\bigl(\bm{\HH};(\XX_i,\TT_i)_{i=1}^m\bigr).
\end{equation}

\item (Monotonicity) 
If $\h,\h'\in\uc_1$ satisfy $\h(\XX)\le \h'(\XX)$ for all $\XX\in\bbR$, then
\begin{equation*}
	\mathbb{F}_{\h'}^{(m)}
	\bigl(\bm{\HH};(\XX_i,\TT_i)_{i=1}^m\bigr)
	\le
	\mathbb{F}_\h^{(m)}
	\bigl(\bm{\HH};(\XX_i,\TT_i)_{i=1}^m\bigr).
\end{equation*}
\end{enumerate}                
\end{thm}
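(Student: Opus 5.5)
The plan is to get as many properties as possible from the PTASEP limit (Theorems~\ref{thm:main} and~\ref{thm:mainheight}) and to use the explicit formula of Definition~\ref{def:F} only where a limit argument cannot decide the question.

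\textbf{Periodicity, symmetry, and the distribution-function structure.}
Property~(a) follows from the PTASEP side. Replacing $\XX_k$ by $\XX_k+1$ changes $k_i$ by $-N_L$, and $\sx_{k-N}=\sx_k+L$ while the centering term changes by $L$. So the left side of \eqref{eq:lim_bbF_particle} is unchanged.

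Property~(d) is immediate, because the event is an intersection and its probability does not depend on the order.

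For~(b) and~(c), every finite-$L$ left side is a joint distribution function in $\bm\HH$ and satisfies marginal compatibility. Pointwise limits keep monotonicity in each $\HH_i$ and the rectangle inequalities. The nontrivial parts are:
\begin{itemize}
\item normalization, namely the limits as $\HH_i\to\pm\infty$;
\item continuity in $\bm\HH$;
\item exchanging $\lim_{\HH_k\to\infty}$ with $\lim_{L\to\infty}$.
\end{itemize}
I would handle all three with a uniform-in-$L$ tightness estimate for a single point. The estimate is that $\prob(|\text{rescaled height at }(\XX,\TT)|>M)\to0$ as $M\to\infty$, uniformly in $L$. I would obtain it by sandwiching. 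The upper bound comes from monotone coupling with an initial condition that dominates $\h$ in $\uc_1$; the constant $\sup\h<\infty$ works, since $\h$ is periodic and usc. The lower bound comes from coupling with a narrow wedge at a point of the effective domain, and using the known one-point laws for the flat and narrow-wedge cases~\cite{Baik-Liu19,Baik-Liu21}.

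Continuity in $\bm\HH$ would come from the explicit formula: the contour-integral and Fredholm-determinant expression is analytic or continuous in the $\HH_i$. This also gives~(e), joint continuity in $(\XX_i,\TT_i,\HH_i)$. For that I would show that the integrands depend continuously on the parameters and admit dominating bounds that are uniform on compact sets with $\TT_i$ bounded away from $0$. The Brownian hitting-time terms in the formula are continuous in $\XX$ and $\TT$, since hitting-time laws of Brownian motion against a usc barrier vary continuously for $\TT>0$.

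\textbf{Shift invariance and monotonicity.}
Property~(g), shift invariance, follows by approximation. Shifting $Y_L$ by integer amounts, namely relabeling particles and translating space, realizes $a$ and $b$ up to $o(1)$ errors. The limit's joint continuity from~(e) absorbs those errors.

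Property~(h), monotonicity, follows from the basic coupling of TASEP. If $\h\le\h'$, pick approximations with $Y'_L$ dominating $Y_L$ pointwise, for instance $\h'_L=\max(\h_L,\h''_L)$ constructed at the particle level. The attractiveness of TASEP then preserves the ordering of particle positions for all time, which gives the inequality of probabilities. Passing to the limit via Theorem~\ref{thm:main} yields~(h).

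\textbf{Initial condition.}
Property~(f) is the hardest step, and I expect it to be the main obstacle, because it requires small-$\TT$ control uniformly over general usc $\h$. My first attempt would avoid the formula and use monotonicity~(h) together with the variational intuition.

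For the value $1$: suppose $\HH_i>\h(\XX_i)$. By upper semicontinuity, $\h\le \h^{\epsilon}$, where $\h^\epsilon$ equals $\HH_i-\delta$ near $\XX_i$ and equals $\sup\h$ elsewhere. I would then show that the periodic KPZ fixed point started from such a "locally low, globally bounded" profile stays near $\HH_i-\delta$ at $\XX_i$ as $\TT\to0$. This holds because the influence of far points is penalized by the parabolic cost $(\XX-y)^2/\TT$. I would prove it at the PTASEP level with the line KPZ fixed point comparison: for $\TT\ll1$, the process on a window of size $o(1)$ is governed by the line KPZ fixed point, whose small-time behavior is known~\cite{Matetski-Quastel-Remenik21}.

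For the value $0$: suppose $\HH_i<\h(\XX_i)$. I would bound below by a narrow wedge at a nearby point $\XX_L\to\XX_i$ with height close to $\h(\XX_i)$. Its one-point law at small $\TT$ concentrates, using the explicit periodic narrow-wedge formula of~\cite{Baik-Liu19} or the KPZ-fixed-point comparison.

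The delicate point is uniformity of the PTASEP-to-line comparison at small $\TT$. If it fails, I would instead analyze the formula directly as $\TT\to0$: show that the Fredholm determinant part tends to $1$, and that the Brownian hitting-time terms reduce to the indicator that a Brownian path of vanishing variance stays below $\h$ at $\XX_i$.
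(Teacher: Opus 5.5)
Your arguments for (a), (d), (h) and the flat/narrow-wedge sandwich behind (b)--(c) match the paper's (cf.\ Lemma~\ref{lem:consistency}). For (g) you take a different route: PTASEP relabeling, where the paper argues on the formula (Lemma~\ref{lm:change_start}, Corollary~\ref{lm:translation_rC}). The paper needs the formula version because that identity is an \emph{input} to Theorem~\ref{thm:main} for non-normalized data (Section~\ref{sec:proof_general_h}). Your route is fine once Theorem~\ref{thm:main} is available, with one correction. Rounding $aN_L$ to an integer moves the particle index by $O(1)$, which requires local uniformity of the convergence in the parameters (Corollary~\ref{cor:thm13geny1}), not just continuity of the limit.

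The first genuine gap is in (e). The dominating bounds are \emph{not} uniform on compact sets with $\TT_i$ bounded away from $0$. After time-ordering, suppose $\Delta\TT:=\TT_\ell-\TT_{\ell-1}\downarrow0$ while $\Delta\HH:=\HH_\ell-\HH_{\ell-1}<0$. Then on the roots, up to bounded factors, $|\mathrm f_\ell(\zeta)|\approx e^{-c\Delta\TT|\zeta|^3+c'|\Delta\HH||\zeta|}$, whose supremum grows like $e^{C|\Delta\HH|^{3/2}\Delta\TT^{-1/2}}$. Moreover, across $\TT_\ell=\TT_{\ell-1}$, Definition~\ref{def:F} switches which ordering of the points is fed into \eqref{eq:multi_time1}. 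So dominated convergence gives continuity only inside regime~(i), not across equal-time hyperplanes. The same problem affects your claim that continuity in $\bm{\HH}$ comes from the formula at $\HH_\ell=\HH_{\ell-1}$, $\TT_\ell=\TT_{\ell-1}$. There your prelimit framework repairs it via the bound $|\mathbb F^{(m)}_\h(\bm{\HH};\cdot)-\mathbb F^{(m)}_\h(\bm{\HH}';\cdot)|\le\sum_i|\mathbb F^{(1)}_\h(\HH_i;(\XX_i,\TT_i))-\mathbb F^{(1)}_\h(\HH_i';(\XX_i,\TT_i))|$. No such bound exists for changes in $(\XX_i,\TT_i)$. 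The paper covers this with the extension argument of \cite[Proposition~A.4]{Baik-Liu2024}, which uses only joint continuity of the one-point function. Also, $\pchlim_\h$ and $\limKe_\h(\rz)$ do not depend on $(\XX,\TT,\HH)$, so continuity of Brownian hitting laws in $(\XX,\TT)$ plays no role.

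The second and more serious gap is the value-$1$ half of (f). Your dominating profile is the paper's $\hat\h$, and your value-$0$ half is the paper's argument: compare with $\h(\XX_i)+\h_{\mathrm{pnw}}(\cdot-\XX_i)$, where the needed small-$\TT$ input is \cite[Theorem~1.6]{Baik-Liu-Silva22}. But the claim that ``on a window of size $o(1)$ the process is governed by the line KPZ fixed point'' has no mechanism behind it. The time is of order $\TT L^{3/2}\gg L$, so finite propagation speed localizes nothing. Localization must come from KPZ transversal fluctuations, quantitatively and uniformly in $L$. The plateau at height $\sup\h$ also lies at distance $\ge\delta$, outside any small window. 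It must be shown irrelevant through the parabolic cost $\delta^2/\TT$, which needs tail estimates rather than small-time continuity of the line fixed point.

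The paper does this at the prelimit, in these steps:
\begin{enumerate}[(1)]
\item It uses the corner-growth/LPP correspondence with a periodized exponential environment coupled to the line environment.
\item It bounds $h_L^{\mathrm{per}}\ge\min\{h_L,h_L^{\mathrm{ltail}},h_L^{\mathrm{rtail}}\}$.
\item For the center, the transversal-fluctuation estimate \cite[Proposition~3.12]{Schmid-Sly26} keeps the endpoint geodesics in one fundamental strip except with probability $e^{-c\TT^{-2/3}}$. This reduces the center to flat line-to-point LPP, whose limit $1-F_{\mathrm{GOE}}((2\TT)^{-1/3}\HH)$ tends to $0$.
\item For the tails, it uses a geodesic-crossing inequality together with the periodic flat small-$\TT$ limit \cite[Theorem~8.2]{Baik-Liu-Silva22} and the moderate-deviation bounds \cite[Corollary~3.14]{Schmid-Sly26}.
\end{enumerate}
Your fallback of sending $\TT\to0$ in the formula does not work as described. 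The hitting-time ingredients do not depend on $\TT$, so there is no ``Brownian path of vanishing variance''. More basically, the convergence of the series for $\rD_\h$ relies on the cubic decay of $\mathrm f_1$, which comes from $\TT_1>0$, to beat the possible Gaussian growth of $\pchlim_\h$ in Lemma~\ref{prop:plim_bound}. So there is no termwise limit to take.
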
 

Properties (a)--(d) imply that $\{\mathbb{F}_\h^{(m)}\}_{m\ge1}$ is a consistent spatially periodic family of finite-dimensional distributions. 
By the Kolmogorov extension theorem, there exists a random field on $(\bbR/\bbZ)\times\bbR_{+}$ with these finite-dimensional distributions. 
We lift it periodically to $\bbR\times\bbR_{+}$ and call the resulting field the \emph{periodic KPZ fixed point with initial condition $\h$}. 
We denote it by
\begin{equation*}
	\cH^{\mathrm{PKPZ}}(\,\cdot\,,\,\cdot\,;\h)
	:=
	\bigl\{\cH^{\mathrm{PKPZ}}(\XX,\TT;\h):
	(\XX,\TT)\in\bbR\times\bbR_{+}\bigr\}.
\end{equation*}
The remaining properties (e)--(h) of Theorem~\ref{thm:main2} translate directly into the corresponding properties of $\cH^{\mathrm{PKPZ}}$. 

\medskip

The periodic KPZ fixed point is conjectured to interpolate between the KPZ fixed point on the line and Brownian motion. 
In our coordinates, the limits $\TT\to0$ and $\TT\to\infty$ correspond respectively to the sub-relaxation regime $t\ll L^{3/2}$ and the super-relaxation regime $t\gg L^{3/2}$. 
In these regimes, the periodic KPZ fixed point is expected to converge to the KPZ fixed point and to one-dimensional Brownian motion, respectively.
It is convenient to express these limits as $\rp\to\infty$ and $\rp\to0$ for a $\rp$-periodic version of the field. 
For $\rp>0$, define
\begin{equation*}
    \uc_\rp:=\{\h:\bbR\to\bbR\cup\{-\infty\} :
    \text{$\h$ is upper semicontinuous},\ 
    \h\not\equiv-\infty,\ 
    \h(\XX+\rp)=\h(\XX) \text{ for all }\XX\in\bbR \}.
\end{equation*}
For $\h\in\uc_\rp$, define the \emph{$\rp$-periodic KPZ fixed point} by 
\begin{equation} \label{eq:scaling_invariance}
     	\cH^{\mathrm{PKPZ}}_\rp(\XX,\TT;\h)
     	:=
     	\rp^{1/2}\,
     	\cH^{\mathrm{PKPZ}}
     	\bigl(\rp^{-1}\XX,\rp^{-3/2}\TT;\h^\star\bigr),
\end{equation}
where $\h^\star(\XX):=\rp^{-1/2}\h(\rp\XX)\in\uc_1$. 
Based on formal sub-relaxation and super-relaxation asymptotics, we make the following conjecture.

\begin{conj}\label{conj:largesmallPKPZ}
\begin{enumerate}[(a)]
\item (Large-period limit) 
Let $\h$ be an initial condition for the KPZ fixed point on the line, bounded above by a linear function. 
For $\rp>0$, define its $\rp$-periodic cutoff by
\begin{equation} \label{eq:def_hcutoff}
	\h^{\mathrm{cutoff}}_\rp(\XX)
	:=
	\h\bigl(\XX-\rp\lfloor \XX/\rp+1/2\rfloor\bigr),
\end{equation}
with boundary values understood in the upper-semicontinuous sense. 
Then, in the sense of finite-dimensional distributions,\footnote{The factor of $2$ in $2\alpha$ reflects our choice of spatial units, consistent with~\cite{Baik-Liu18,Baik-Liu19,Baik-Liu21}.}
\begin{equation}\label{eq:conjecture}
     \cH^{\mathrm{PKPZ}}_\rp(2\XX,\TT;\h^{\mathrm{cutoff}}_\rp)
     \xrightarrow[\rp\to\infty]{d}
     \cH^{\mathrm{KPZ}}(\XX,\TT;\h).
\end{equation}

\item (Small-period limit) 
For every family $\{\h_\rp\}_{\rp>0}$ with $\h_\rp\in\uc_\rp$ satisfying
\begin{equation*}
	\lim_{\rp\to0} \, 
	\rp^{1/4} \sup_{\XX:\,\h_\rp(\XX)\neq-\infty} \h_\rp(\XX)
	=0,
\end{equation*}
we have, in the sense of finite-dimensional distributions,
\begin{equation} \label{eq:conjecture_small_period} 
	\frac{\sqrt{2}\,\rp^{1/4}}{\pi^{1/4}}
	\left(
	    \cH^{\mathrm{PKPZ}}_\rp(\XX,\TT;\h_\rp)
	    +\rp^{-1}\TT
	\right)
	\xrightarrow[\rp\to0]{d}
	\B(\TT),
\end{equation}
where $\B$ is a standard Brownian motion.
\end{enumerate}
\end{conj}

For the periodic narrow wedge and flat initial conditions, the one-point large-period limit at $\XX=0$ and the one-point small-period limit were proved in~\cite{Baik-Liu-Silva22}.
The small-period result extends to general initial conditions $\h_\rp$.
The proof of the following proposition is given in Section~\ref{sec:proofofsmalltconj}.

\begin{prop}\label{prop:onepoint_b}
Conjecture~\ref{conj:largesmallPKPZ}\textnormal{(b)} holds for one-point distributions.
\end{prop}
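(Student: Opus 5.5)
The approach is a reduction to the one-point distribution $\mathbb{F}_\h^{(1)}(\HH;(\XX,\TT))$ followed by a large-time asymptotic analysis of its explicit formula. By the scaling relation \eqref{eq:scaling_invariance}, the one-point law of $\cH^{\mathrm{PKPZ}}_\rp(\XX,\TT;\h_\rp)$ is that of $\rp^{1/2}\cH^{\mathrm{PKPZ}}(\rp^{-1}\XX,\rp^{-3/2}\TT;\h^\star_\rp)$, where $\h^\star_\rp(\XX)=\rp^{-1/2}\h_\rp(\rp\XX)\in\uc_1$. Set $\ntau=\rp^{-3/2}\TT\to\infty$. The hypothesis then reads
\[
\sup\h^\star_\rp=\rp^{-1/2}\sup\h_\rp=o(\rp^{-3/4})=o\bigl((\ntau)^{1/2}\bigr).
\]
Substituting these relations, the claim \eqref{eq:conjecture_small_period} becomes the following statement. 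Uniformly over initial conditions $g\in\uc_1$ with $\sup g=o(\tau^{1/2})$, we have
\[
\frac{\sqrt2}{\pi^{1/4}}\,\tau^{-1/2}\bigl(\cH^{\mathrm{PKPZ}}(\XX',\tau;g)+\tau\bigr)\Rightarrow N(0,1)
\]
as $\tau\to\infty$. The prefactor matches, since $\rp^{1/4}\rp^{1/2}=\rp^{3/4}=\TT^{1/2}\tau^{-1/2}$ and $\rp^{-1}\TT=\rp^{1/2}\tau$; the factor $\TT^{1/2}$ is what turns the Gaussian limit into the marginal $\B(\TT)$.

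To prove the reduced statement, I would first sandwich the initial condition using monotonicity, Theorem~\ref{thm:main2}(h). Let $M=\sup g$. Then $g\le M$, and $g\ge g_{\mathrm{pnw}}$, a periodic narrow wedge placed at a point $\XX_0$ where $g(\XX_0)$ is close to $M$. There is a caveat here. We only know $\max g=M$ is attained (upper semicontinuity plus periodicity), and it is attained at some $\XX_0$. So $g\ge \h_{\mathrm{pnw}}(\cdot-\XX_0)+M$ exactly. Combined with shift invariance (g), this gives
\[
\mathbb{F}^{(1)}_{\mathrm{flat}}(\HH-M)\le \mathbb{F}^{(1)}_g(\HH)\le \mathbb{F}^{(1)}_{\mathrm{pnw}}(\HH-M;\XX'-\XX_0).
\]
Since $M=o(\tau^{1/2})$, the shift by $M$ is negligible on the scale $\tau^{1/2}$. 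It therefore suffices to prove the Gaussian limit, with the same centering $-\tau$ and variance $\frac{\sqrt{\pi}}{2}\tau$, for the flat initial condition and for the narrow wedge. The narrow-wedge bound must hold uniformly in the spatial argument, which follows from periodicity (a) and joint continuity (e) on the compact circle.

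These two special cases are exactly the one-point small-period results of~\cite{Baik-Liu-Silva22}. Our formulas reduce to those of~\cite{Baik-Liu19,Baik-Liu21} by Section~\ref{sec:examples}, so I would invoke them directly, after checking that our normalization agrees. The narrow-wedge result of~\cite{Baik-Liu-Silva22} is stated at $\XX=0$. For general spatial location, the large-$\tau$ analysis of the narrow-wedge one-point formula has a Fredholm determinant that tends to $1$, and the $\XX$-dependence enters only through factors that are exponentially close to constant. I would verify this uniformity from the formula.

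The main obstacle is confirming that the Gaussian limits of the flat and narrow-wedge cases share the same centering $-\tau$, not just up to $O(1)$, and the same variance constant $\sqrt{\pi}/2$. Otherwise the sandwich gives no limit. If the narrow-wedge centering differed by a lower-order amount, say $O(\log\tau)$, this would still be harmless, since such a shift is $o(\tau^{1/2})$. I would check this against the leading terms of the explicit formulas.
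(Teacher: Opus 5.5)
Your argument is essentially the paper's: you sandwich the initial condition between a constant and a periodic narrow wedge placed at its maximizer, using monotonicity and shift invariance, and then invoke the flat and narrow-wedge small-period limits of~\cite{Baik-Liu-Silva22} (Theorems~8.1 and~1.5). Those two limits already share the centering $\rp^{-1}\TT$ and the limit $\B(\TT)$, so your ``main obstacle'' is settled by citation.

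One correction concerns uniformity of the narrow-wedge limit in the spatial argument. It does not follow from periodicity and joint continuity at fixed time, since these give no equicontinuity in $\XX$ as $\tau\to\infty$. It has to come from the spatial uniformity of the asymptotic analysis in~\cite{Baik-Liu-Silva22}, which is what the paper asserts and what your fallback of verifying it from the formula amounts to.
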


The KPZ fixed point on the line admits a variational representation~\cite{Nica-Quastel-Remenik20}
in terms of the directed landscape~\cite{Dauvergne-Ortmann-Virag22}. It is therefore natural to
ask whether there exists a periodic directed landscape $\mathcal{L}^{\mathrm{per}}$ that gives an
analogous representation of the periodic KPZ fixed point:
\begin{equation}\label{eq:varipdls}
	\cH^{\mathrm{PKPZ}}(\XX,\TT;\h)
	\stackrel{d}{=}
	\max_{\XX'\in\bbR}
	\left\{ \h(\XX') + \mathcal{L}^{\mathrm{per}}(\XX',0;\XX,\TT) \right\}.
\end{equation}
After the present work was completed, \cite{Aggarwal-Corwin-Schmid26} constructed the periodic directed landscape by gluing copies of the full-space directed landscape, defined the periodic KPZ fixed point through the variational formula~\eqref{eq:varipdls}, and proved that it is equal in distribution to our periodic KPZ fixed point for any given initial condition. 
The variational representation provides a coupling of the periodic KPZ fixed point across multiple initial data.
The same paper also proved the large-period
limit~\eqref{eq:conjecture} of Conjecture~\ref{conj:largesmallPKPZ}\textnormal{(a)}.

\subsection{Proof strategy and organization of the paper}

Our starting point is the multipoint distribution formula for the PTASEP derived in~\cite{Baik-Liu21}.
The initial condition $Y$ enters this formula through the \emph{PTASEP energy function} and the \emph{modified PTASEP characteristic function}, which are defined in Definition~\ref{def:characteristic}.
The main novelty of this paper is a probabilistic representation of each function in terms of a geometric random walk and two stopping times $\tau$ and $\tau^*$.
The stopping time $\tau$ is the first hitting time of the initial profile, while $\tau^*$ is the first hitting time at or after one full period.
These representations are given in Theorems~\ref{thm:characteristic} and~\ref{thm:energy}.
On the relaxation time scale, the geometric random walks converge to Brownian motions, and the stopping times converge to their Brownian analogues.

The representation of the modified PTASEP characteristic function adapts ideas from~\cite{Matetski-Quastel-Remenik21,Liao-Liu25} for the KPZ fixed point on the line, with the new stopping time $\tau^*$ encoding the periodic geometry. 
The PTASEP energy function, by contrast, has no direct analogue on the line. 
We first express it as a Fredholm determinant, and then represent its kernel using transition probabilities of the geometric random walk, with one term constrained by the stopping time $\tau$. 
This representation is the main technical step of the proof.

The analysis developed in this paper can be adapted, with only minor modifications, to some other periodic KPZ models with a determinantal structure, including discrete-time TASEP, Brownian last-passage percolation, and the polynuclear growth model. In each case, the characteristic and energy functions admit analogous probabilistic representations involving model-specific random walks. These models will be studied in a separate paper.

\medskip

The paper is organized as follows. Section~\ref{sec:PKPZ_formula} defines the limiting functions $\mathbb{F}_\h^{(m)}$, and Section~\ref{sec:propertymulti} establishes their well-definedness and several structural properties. Section~\ref{sec:PTASEP_formula} reviews the PTASEP multipoint distribution formula from~\cite{Baik-Liu21}. Section~\ref{sec:proof_PTASEP}, the main technical part of the paper, derives probabilistic representations of the PTASEP energy function and the modified PTASEP characteristic function. 

The asymptotic analysis is carried out in Sections~\ref{sec:asymptotics} and~\ref{sec:asymptotics_general}. Section~\ref{sec:asymptotics} treats generic time-ordered parameters under the additional assumption that $y_1^{(L)}=-1$ for all $L$, and Section~\ref{sec:asymptotics_general} removes this assumption using shift invariance and continuity of the one-point distribution. This completes the proof of Theorem~\ref{thm:main}. Section~\ref{sec:particletoheight} then derives Theorem~\ref{thm:mainheight} from Theorem~\ref{thm:main}.

The remaining properties of Theorem~\ref{thm:main2} are proved in Section~\ref{sec:proof_thm12}. Section~\ref{sec:proofofsmalltconj} proves Proposition~\ref{prop:onepoint_b}, the one-point small-period limit. Section~\ref{sec:examples} evaluates the hitting-time formulas for the periodic narrow-wedge and flat initial conditions and verifies agreement with~\cite{Baik-Liu19,Baik-Liu21}. Finally, Section~\ref{sec:additional} collects additional formulas and structural properties of the quantities appearing in the limiting multipoint distributions defined in Section~\ref{sec:PKPZ_formula}.


\section{Multipoint distribution formulas for the periodic KPZ fixed point}
\label{sec:PKPZ_formula}

In this section, we define the limiting functions  appearing in the main theorems. 
In Section~\ref{sec:efdpcf}, for each periodic upper semicontinuous function $\h$, we introduce two Brownian hitting times and define the periodic characteristic function $\pchlim_\h$ and the operator $\limKe_{\h}(\rz)$; these are the two initial-condition-dependent ingredients. 
In Section~\ref{sec:def_rCh}, we use them to define two auxiliary functions $\rC_\h(\bz)$ and $\rD_\h(\bz)$. 
In Section~\ref{sec:limitdistr}, we define $\mathbb{F}_\h^{(m)}$ as a contour integral of $\rC_\h(\bz)\rD_\h(\bz)$. 
The well-definedness and structural properties of these quantities are established in Section~\ref{sec:propertymulti}. 
Some additional formulas and properties are collected in Section~\ref{sec:additional}.

\subsection{Brownian hitting times} 
\label{sec:efdpcf}

\begin{defn} 
\label{def:hittingtimes}
Let $\B$ be a Brownian motion with diffusion coefficient $1$.\footnote{Throughout this paper, all Brownian motions have diffusion coefficient $1$. This differs from~\cite{Matetski-Quastel-Remenik21,Liao-Liu25}, where the diffusion coefficient is $2$. The discrepancy comes from different choices of spatial units in the periodic and infinite-line settings, following the convention of~\cite{Baik-Liu19,Baik-Liu21}; see also the footnote to \eqref{eq:conjecture}.}
For every upper semicontinuous function $\h:\bbR_{\le0}\to\bbR\cup\{-\infty\}$, define
\begin{equation*}
	\btau:=\inf\{\sfx\ge0:\B(\sfx)\le \h(-\sfx)\},
	\qquad
	\bntau:=\inf\{\sfx\ge1:\B(\sfx)\le \h(-\sfx)\}.
\end{equation*}
\end{defn}

These are the hitting times of the hypograph of the function $\sfx\mapsto\h(-\sfx)$. 
The negative argument reflects the right-to-left convention for TASEP particle labels. 
By a space-reversal property, the quantities below can also be written in terms of the hitting times using $\h(\sfx)$ instead; see Section~\ref{sec:reversal}.

In Definition~\ref{def:hittingtimes}, the dependence on $\h$ is suppressed from the notation; it will always be clear from context.

\begin{defn}\label{def:pch_lim} 
For $\h\in\uc_1$, $\arg(\eta)\in(-\frac{\pi}{4},\frac{\pi}{4})$, and $\Re(\xi)<0$, define the \emph{periodic characteristic function} associated with $\h$ by
\begin{equation}\label{eq:pch_lim}
	\pchlim_\h(\eta,\xi)
	:=
	\int_\bbR \rd s\, e^{-s\xi}
	\bbE_{\B(0)=s}\left[
	    e^{\eta\B(\btau)-\frac12\eta^2\btau}\mathbf{1}_{\btau<1}
	    -
	    e^{\eta\B(\bntau)-\frac12\eta^2\bntau}
	    \mathbf{1}_{\bntau<\infty}\mathbf{1}_{\btau<1}
	\right].
\end{equation}
\end{defn}

Lemma~\ref{prop:plim_bound} shows that $\pchlim_\h$ is well-defined, and Section~\ref{sec:examples} evaluates it explicitly for the periodic narrow-wedge and flat initial conditions. For nonperiodic $\h$, an analogous function was introduced in~\cite[Definition~2.2]{Liao-Liu25} and used to describe the KPZ fixed point on the line. The new feature in the periodic setting is the stopping time $\bntau$, which records the first hit at or after one full period.

The second initial-condition-dependent quantity uses the normalized subspace
\begin{equation*}
	\uc_1^0:=\{\h\in\uc_1:\h(0)=0\}.
\end{equation*}

\begin{defn} \label{def:Koperdef}
For $\h\in\uc_1^0$ and $|\rz|<1$, define the operator 
$\limKe_{\h}(\rz):L^2(\bbR)\to L^2(\bbR)$ by\footnote{Here and below, $\mathbf{1}_{>\beta}$ denotes the multiplication operator by the indicator function $\mathbf{1}_{x>\beta}$.}
\begin{equation}\label{eq:limkernel}
        \limKe_{\h}(\rz)
        :=
        \rz\,\mathbf{1}_{>0}(\rI-\rz\mathsf{G})^{-1}\sfT_{\h}\mathbf{1}_{>0},
\end{equation}
where $\mathsf{G}$ and $\sfT_{\h}$ have kernels
\begin{equation}\label{eq:limkernel2TT}
        \mathsf{G}(\sfx,\sfy)
        =
        \frac{1}{\sqrt{2\pi}}e^{-\frac{(\sfx-\sfy)^2}{2}},
        \qquad 
        \sfT_{\h}(\sfx,\sfy)
        :=
        \frac{
        \prob_{\B(0)=\sfx}\bigl(\B(1)\in\rd\sfy,\ \btau<1\bigr)
        }{\rd\sfy},
\end{equation}
for $\sfx,\sfy\in\bbR$.
\end{defn}

Proposition~\ref{result:KasQ} shows that $\limKe_\h(\rz)$ is trace class and gives a factorization of its Fredholm determinant relative to the periodic narrow-wedge case. The resolvent of the heat-kernel operator $\mathsf G$ admits the expansion
\begin{equation}\label{eq:limkernel2SS}
	(\rI-\rz\mathsf G)^{-1}
	=
	\rI+\sfS_\rz,
	\qquad
	\sfS_\rz(\sfx,\sfy)
	:=
	\sum_{k\geq1}
	\frac{\rz^k}{\sqrt{2\pi k}}
	e^{-\frac{(\sfx-\sfy)^2}{2k}}.
\end{equation}
See Subsection~\ref{sec:STalt} for alternative expressions for $\sfS_\rz$ and $\sfT_\h$.

\subsection{The functions $\rC_\h(\bz)$ and $\rD_\h(\bz)$} 
\label{sec:def_rCh}

Let $\polylog_s(\rz)$ denote the polylogarithm of order $s$. For $|\rz|<1$, 
\begin{equation*}
	\polylog_s(\rz)
	:=
	\sum_{k=1}^\infty \frac{\rz^k}{k^s}, 
\end{equation*}
and set 
\begin{equation}\label{eq:Afunction}
	A_1(\rz):=-\frac{1}{\sqrt{2\pi}}\polylog_{3/2}(\rz),
	\qquad
	A_2(\rz):=-\frac{1}{\sqrt{2\pi}}\polylog_{5/2}(\rz).
\end{equation}
For $0<|\rz|,|\rz'|<1$, define
\begin{equation}\label{eq:def_Bz}
	B(\rz,\rz')
	:=
	\frac{1}{4\pi}
	\sum_{k,k'\ge1}
	\frac{\rz^k(\rz')^{k'}}{(k+k')\sqrt{kk'}}, 
	\qquad 
	B(\rz):=B(\rz,\rz).
\end{equation}

\begin{defn} \label{def:Cfunction2}
Let $m\ge1$, $\HH_1,\ldots,\HH_m\in\bbR$, and 
$\TT_1,\ldots,\TT_m\in\bbR_{+}$. 
For $\bz=(\rz_1,\ldots,\rz_m)$ with $0<|\rz_i|<1$ and 
$\rz_i\ne\rz_{i+1}$ for $1\le i\le m-1$, set $\rz_{m+1}:=0$ and define
\begin{equation}\label{eq:Climnoh}
	\rC(\bz)
	:=
	\left[\prod_{\ell=1}^{m} \frac{\rz_\ell}{\rz_\ell-\rz_{\ell+1}}\right]
	\left[
	\prod_{\ell=1}^{m}
	\frac{
	e^{\HH_\ell A_1(\rz_\ell)+\TT_\ell A_2(\rz_\ell)}
	}{
	e^{\HH_\ell A_1(\rz_{\ell+1})+\TT_\ell A_2(\rz_{\ell+1})}
	}
	\right]
	\left[
	\prod_{\ell=2}^m e^{2B(\rz_\ell)-2B(\rz_\ell,\rz_{\ell-1})}
	\right].
\end{equation}
For $\h\in\uc_1$, choose a point $\ra$ in the effective domain of $\h$, set
\begin{equation*}
    \h_\ra(\sfx):=\h(\sfx+\ra)-\h(\ra),
\end{equation*}
and define
\begin{equation}\label{eq:Clim}
	\rC_\h(\bz):=\rC(\bz)\,\rE_\h(\rz_1),
	\qquad
	\rE_\h(\rz):=
	e^{-\h(\ra)A_1(\rz)}
	\det(\I+\limKe_{\h_\ra}(\rz))_{L^2(\bbR)}.
\end{equation}
\end{defn}

We call $\rE_\h(\rz)$ the \emph{energy function}. Corollary~\ref{lm:translation_rC} in Subsection~\ref{sec:FdetlimKtr} shows that it is independent of the choice of $\ra$. In Section~\ref{sec:examples}, we evaluate the energy function explicitly for the periodic narrow-wedge and flat initial conditions.

We now define $\rD_\h(\bz)$ by its Fredholm series. The corresponding determinant representation is given in Subsection~\ref{sec:Fredholm_D}.
For $|\rz|<1$, define\footnote{This function was denoted by $\mathsf{h}$ in~\cite{Baik-Liu18,Baik-Liu19,Baik-Liu21}. We use $\hftn$ to avoid confusion with the initial condition $\h$.}
\begin{equation}\label{eq:def_h_R}
	\hftn(\zeta,\rz)
	:=
    \begin{cases}
	 -\frac{1}{\sqrt{2\pi}}
	 \int_{-\infty}^{\zeta}
	 \polylog_{1/2}\bigl(\rz e^{(\zeta^2-y^2)/2}\bigr)\,\rd y,
     & \Re(\zeta)<0,\\
	 -\frac{1}{\sqrt{2\pi}}
	 \int_{-\infty}^{-\zeta}
	 \polylog_{1/2}\bigl(\rz e^{(\zeta^2-y^2)/2}\bigr)\,\rd y,
     & \Re(\zeta)>0,
    \end{cases}
\end{equation}
where the integration contour lies in the half-plane $\Re(y)<0$. 
Note that  $\hftn(\zeta,\rz)=O(\zeta^{-1})$ as $|\zeta|\to\infty$ away from the imaginary axis.
For $\rz_1,\ldots,\rz_m$ with $|\rz_i|<1$, define
\begin{equation*}
    \mathrm{g}_\ell(\zeta)
    :=
    e^{2\hftn(\zeta,\rz_\ell)
    -\hftn(\zeta,\rz_{\ell-1})
    -\hftn(\zeta,\rz_{\ell+1})},
    \qquad 1\le \ell\le m,
\end{equation*}
with the convention $\hftn(\zeta,\rz_0)=\hftn(\zeta,\rz_{m+1})=0$.

Let $\HH_1,\ldots,\HH_m,\XX_1,\ldots,\XX_m\in\bbR$ and 
$\TT_1,\ldots,\TT_m\in\bbR_{+}$. 
Set $\TT_0=\XX_0=\HH_0:=0$, and define
\begin{equation}\label{eq:f_lim}
	\mathrm{f}_\ell(\zeta)
	:=
	\begin{cases}
	e^{-\frac13(\TT_\ell-\TT_{\ell-1})\zeta^3
	+\frac12(\XX_\ell-\XX_{\ell-1})\zeta^2
	+(\HH_\ell-\HH_{\ell-1})\zeta},
	& \Re(\zeta)<0,\\[3pt]
	e^{+\frac13(\TT_\ell-\TT_{\ell-1})\zeta^3
	-\frac12(\XX_\ell-\XX_{\ell-1})\zeta^2
	-(\HH_\ell-\HH_{\ell-1})\zeta},
	& \Re(\zeta)>0,
	\end{cases}
\end{equation}
for $1\le \ell\le m$.

For $W=(w_1,\ldots,w_n)\in\bbC^n$ and 
$W'=(w'_1,\ldots,w'_m)\in\bbC^m$, write
\begin{equation*}
	W\sqcup W'
	:=
	(w_1,\ldots,w_n,w'_1,\ldots,w'_m)\in\bbC^{n+m}.
\end{equation*}
When $n=m$, define the Cauchy determinant
\begin{equation} \label{eq:Cauchydt}
	\Cau(W;W')
	:=
	\det\left[\frac{1}{w_i-w'_j}\right]_{1\le i,j\le n}
	=
	(-1)^{\frac{n(n-1)}{2}}
	\frac{ \prod_{1\le i<j\le n}(w_j-w_i)(w'_j-w'_i) }{ \prod_{1\le i,j\le n}(w_i-w'_j) }.
\end{equation}

For $0<|\rz|<1$, define the discrete sets
\begin{equation}
	\rL_\rz
	:=
	\{\zeta\in\bbC:e^{-\zeta^2/2}=\rz,\ \Re(\zeta)<0\},
	\qquad  
	\rR_\rz
	:=
	\{\zeta\in\bbC:e^{-\zeta^2/2}=\rz,\ \Re(\zeta)>0\}.
\end{equation}
See Figure~\ref{fig:root_limit}.

\begin{figure}
\centering
\begin{tikzpicture}[x=0.33cm, y=0.33cm,
	lroot/.style={circle, fill=black, inner sep=0pt, minimum size=3pt},
	rroot/.style={rectangle, fill=black, inner sep=0pt, minimum size=2.66pt}]

\draw[gray!60, thin] (-6.4,0) -- (6.4,0);
\draw[gray!60, thin] (0,-6.4) -- (0,6.4);
\foreach \t in {-6,-4,-2,2,4,6}{
	\draw[gray!60, thin] (\t,-0.12) -- (\t,0.12) node[below=2pt, black!65, font=\tiny] {$\t$};
	\draw[gray!60, thin] (-0.12,\t) -- (0.12,\t) node[left=2pt, black!65, font=\tiny] {$\t$};
}

\begin{scope}
	\clip (-6,-6) rectangle (6,6);
	\draw[thin] plot[domain=-6.2:6.2, samples=121, variable=\t]
		({sqrt(\t*\t + 1.832581)}, {\t});
	\draw[thin] plot[domain=-6.2:6.2, samples=121, variable=\t]
		({-sqrt(\t*\t + 1.832581)}, {\t});
\end{scope}

\foreach \x/\y in {%
	1.5191/-0.6893, 2.4964/2.0974, 2.8816/-2.5438, 3.5316/3.2618,
	3.8158/-3.5676, 4.3292/4.1121, 4.5644/-4.3591, 5.0019/4.8153,
	5.2069/-5.0279, 5.5945/5.4283, 5.7786/-5.6178}{
	\node[rroot] at (\x,\y) {};
	\node[lroot] at (-\x,-\y) {};
}

\node at (-4.5,2.2) {$\rL_\rz$};
\node at (4.5,2.2) {$\rR_\rz$};

\end{tikzpicture}
\caption{The roots of $e^{-\zeta^2/2}=\rz$ for $\rz=0.4e^{\ri\pi/3}$. The roots in $\rL_\rz$ are marked by circles and those in $\rR_\rz$ by squares. The line is the level curve $|e^{-\zeta^2/2}|=|\rz|$.}
\label{fig:root_limit}
\end{figure}

\begin{defn}[Series formula for $\rD_{\h}(\bz)$]
\label{def:D_series}
Let $m\ge1$, $\HH_1,\ldots,\HH_m,\XX_1,\ldots,\XX_m\in\bbR$, and 
$\TT_1,\ldots,\TT_m\in\bbR_{+}$.  
Let $\bz=(\rz_1,\ldots,\rz_m)$ with $0<|\rz_i|<1$. 
For $\mathbf{n}=(n_1,\ldots,n_m)\in\bbZ_{\ge0}^m$, set
\begin{equation*}
	\xib=(\xib^{(1)},\ldots,\xib^{(m)}), \qquad \etab=(\etab^{(1)},\ldots,\etab^{(m)}),
\end{equation*} 
where
$\xib^{(\ell)}=(\xi_1^{(\ell)},\ldots,\xi_{n_\ell}^{(\ell)})$, $\etab^{(\ell)}=(\eta_1^{(\ell)},\ldots,\eta_{n_\ell}^{(\ell)})$ for $1\le \ell\le m$. 
Define\footnote{We use the convention $\xib^{(m+1)}=\etab^{(m+1)}=\varnothing$.} 
\begin{equation}\label{eq:Dn_limnoh}
	\begin{aligned}
	\mathrm{D}^{(\mathbf{n})}(\bz;\xib,\etab)
	&=
	\left[
	\prod_{\ell=1}^{m-1}
	\left(1-\frac{\rz_{\ell+1}}{\rz_\ell}\right)^{n_\ell}
	\left(1-\frac{\rz_\ell}{\rz_{\ell+1}}\right)^{n_{\ell+1}}
	\right] 
	\left[
	\prod_{\ell=1}^m\prod_{i=1}^{n_\ell}
	\frac{
	\mathrm{f}_\ell(\xi_i^{(\ell)})
	\mathrm{f}_\ell(\eta_i^{(\ell)})
	\mathrm{g}_\ell(\xi_i^{(\ell)})
	\mathrm{g}_\ell(\eta_i^{(\ell)})
	}{
	\xi_i^{(\ell)}\eta_i^{(\ell)}
	}
	\right]  \\
	&\quad \times
	\prod_{i=1}^{n_1}
	e^{-\hftn(\xi_i^{(1)},\rz_1)-\hftn(\eta_i^{(1)},\rz_1)}
	\left[
	\prod_{\ell=1}^{m}
	\Cau\bigl(\xib^{(\ell)}\sqcup\etab^{(\ell+1)};
	\etab^{(\ell)}\sqcup\xib^{(\ell+1)}\bigr)
	\right].
    \end{aligned}
\end{equation}
For $\h\in\uc_1$, define\footnote{Throughout the paper, empty products and $0\times0$ determinants are understood to be $1$.}
\begin{equation}\label{eq:Dn_lim}
	\mathrm{D}^{(\mathbf{n})}_{\h}(\bz)
	:=
	\sum_{\substack{ \xib^{(\ell)}\in(\rL_{\rz_\ell})^{n_\ell},\ \etab^{(\ell)}\in(\rR_{\rz_\ell})^{n_\ell}\\ 1\le \ell\le m}}
	\det \left[ \pchlim_{\h}(\eta_i^{(1)},\xi_j^{(1)}) \right]_{i,j=1}^{n_1}
	\mathrm{D}^{(\mathbf{n})}(\bz;\xib,\etab),
\end{equation}
and
\begin{equation}\label{eq:series}
    \rD_{\h}(\bz)
    :=
    \sum_{\mathbf{n}\in\bbZ_{\ge0}^m}
    \frac{(-1)^{n_1+\cdots+n_m}}{(n_1!\cdots n_m!)^2}
    \mathrm{D}^{(\mathbf{n})}_{\h}(\bz).
\end{equation}
\end{defn}

\subsection{Definition of the multipoint distributions}
\label{sec:limitdistr}

We now define the multipoint distributions of the periodic KPZ fixed point. 
For $0<\TT_1\le\cdots\le\TT_m$, set
\begin{equation*}
    \dom^m(\TT_1,\ldots,\TT_m)
    :=
    \{\bm{\HH}\in\bbR^m:\HH_i\le \HH_{i+1}
    \text{ whenever } \TT_i=\TT_{i+1}\}
\end{equation*}
and
\begin{equation*}
    \dom_+^m(\TT_1,\ldots,\TT_m)
    :=
    \{\bm{\HH}\in\bbR^m:\HH_i< \HH_{i+1}
    \text{ whenever } \TT_i=\TT_{i+1}\}.
\end{equation*}

\begin{defn}
\label{def:F}
Let $\h\in\uc_1$ and $m\ge1$, and let $\rC_\h(\bz)$ and $\rD_\h(\bz)$ be the functions from Definitions~\ref{def:Cfunction2} and~\ref{def:D_series}. 
We define 
$\mathbb{F}_\h^{(m)}\bigl(\bm{\HH};(\XX_i,\TT_i)_{i=1}^m\bigr)$ as follows.
\begin{enumerate}[(i)]
\item If $0<\TT_1\le\cdots\le\TT_m$ and 
$\bm{\HH}\in\dom_+^m(\TT_1,\ldots,\TT_m)$, set
\begin{equation}\label{eq:multi_time1}
	\mathbb{F}_\h^{(m)}
	\bigl(\bm{\HH};(\XX_i,\TT_i)_{i=1}^m\bigr)
	:=
	\oint \frac{\rd\rz_1}{2\pi\ri\,\rz_1}\cdots
	\oint \frac{\rd\rz_m}{2\pi\ri\,\rz_m}
	\,\rC_\h(\bz)\rD_\h(\bz),
\end{equation}  
where $\bz=(\rz_1,\ldots,\rz_m)$ and the contours are nested circles satisfying
$0<|\rz_m|<\cdots<|\rz_1|<1$. 

\item If $0<\TT_1\le\cdots\le\TT_m$ and 
$\bm{\HH}\in\dom^m(\TT_1,\ldots,\TT_m)\setminus
\dom_+^m(\TT_1,\ldots,\TT_m)$, set
\begin{equation}\label{eq:multi_time2}
	\mathbb{F}_\h^{(m)}
	\bigl(\bm{\HH};(\XX_i,\TT_i)_{i=1}^m\bigr)
	:=
	\lim_{\substack{\bm{\HH}'\to\bm{\HH}\\
	\bm{\HH}'\in\dom_+^m(\TT_1,\ldots,\TT_m)}}
	\mathbb{F}_\h^{(m)}
	\bigl(\bm{\HH}';(\XX_i,\TT_i)_{i=1}^m\bigr).
\end{equation}  

\item For general $(\XX_1,\TT_1),\ldots,(\XX_m,\TT_m)\in\bbR\times\bbR_{+}$ and 
$\bm{\HH}\in\bbR^m$, choose a permutation $\sigma$ of $\{1,\ldots,m\}$ such that
\begin{equation*}
    \TT_{\sigma(1)}\le\cdots\le\TT_{\sigma(m)}
    \quad\text{and}\quad
    (\HH_{\sigma(i)})_{i=1}^m
    \in\dom^m(\TT_{\sigma(1)},\ldots,\TT_{\sigma(m)}).
\end{equation*}
Set
\begin{equation} \label{eq:multi_time3}
	\mathbb{F}_\h^{(m)}
	\bigl(\bm{\HH};(\XX_i,\TT_i)_{i=1}^m\bigr)
	:=
	\mathbb{F}_\h^{(m)}
	\bigl((\HH_{\sigma(i)})_{i=1}^m;\,
	(\XX_{\sigma(i)},\TT_{\sigma(i)})_{i=1}^m\bigr).
\end{equation}
\end{enumerate}     
\end{defn}

The limit in \eqref{eq:multi_time2} exists. 
An analogous boundary limit was considered for the periodic narrow wedge initial condition in~\cite[Lemma~A.1]{Baik-Liu2024}. The proof does not depend on the initial condition and applies here as well. 
The same argument also shows that the definition in \eqref{eq:multi_time3} is independent of the choice of the permutation $\sigma$.
Several  properties of $\mathbb{F}_\h^{(m)}$ are stated in Theorem~\ref{thm:main2} and proved in Sections~\ref{sec:propertymulti}, \ref{sec:consistency}, and~\ref{sec:proof_thm12}.

\section{Analytic properties of the limiting formulas}
\label{sec:propertymulti}

We establish several structural properties of the limiting quantities entering $\mathbb{F}_\h^{(m)}$: the trace-class property and determinant factorization of $\limKe_\h(\rz)$, independence of the normalization point in the energy function, bounds ensuring that $\rD_\h(\bz)$ is well defined, and shift invariance.

\subsection{Fredholm determinant of $\limKe_\h(\rz)$}
\label{sec:FdetlimKtr}

We record the trace-class property of $\limKe_\h(\rz)$ and a useful factorization of its Fredholm determinant.
In Proposition~\ref{prop:pnw22}, we show that 
$\det(\I+\limKe_{\h_{\mathrm{pnw}}}(\rz))_{L^2(\bbR)}=e^{2B(\rz)}$, where $B(\rz)$ is defined in \eqref{eq:def_Bz}.

\begin{prop}\label{result:KasQ}
Let $\h\in\uc_1^0$. Let $\mathsf G_+$ and $\sfTp_\h$ be the operators on $L^2(\bbR_+)$ with kernels
\begin{equation*}
	\mathsf G_+(\sfx,\sfy):=\mathsf G(\sfx,\sfy),
	\qquad
	\sfTp_\h(\sfx,\sfy):=\sfT_\h(\sfx,\sfy),
	\qquad \sfx,\sfy>0,
\end{equation*}
and let $\I_+$ be the identity operator on $L^2(\bbR_+)$. Then the following hold.
\begin{enumerate}[(a)]
\item The operator $\sfTp_\h$ is trace class on $L^2(\bbR_+)$.

\item For $|\rz|<1$, the operator $\limKe_\h(\rz)$ is trace class on $L^2(\bbR)$.

\item For $|\rz|<1$, the following factorization holds:
\begin{equation}\label{eq:KasQ_factorization}
	\det(\I+\limKe_\h(\rz))_{L^2(\bbR)}
	=
	\det(\I+\limKe_{\h_{\mathrm{pnw}}}(\rz))_{L^2(\bbR)}
	\det\left(\I_++\rz(\I_+-\rz\mathsf G_+)^{-1}\sfTp_\h\right)_{L^2(\bbR_+)},
\end{equation}
where $\h_{\mathrm{pnw}}(\alpha)=-\infty\mathbf{1}_{\alpha\notin\bbZ}$ denotes the periodic narrow-wedge initial condition.
\end{enumerate}
\end{prop}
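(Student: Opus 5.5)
The plan is to compare $\sfT_\h$ with the narrow-wedge kernel through a single decomposition. For $\h\in\uc_1^0$ we have $\h(0)=0$, hence $\h(-1)=0$ by periodicity. Consider a path started at $\sfx>0$ and ending at $\sfy>0$. The event $\{\btau<1\}$ splits into two cases. Either the path hits the hypograph in $[0,1)$, or, for the narrow wedge $\h_{\mathrm{pnw}}$, the only admissible hits are at integer times. For $\h_{\mathrm{pnw}}$ and starting point $\sfx>0$ we get $\btau\ge 1$, with a hit at time $0$ only if $\sfx\le0$. Therefore $\mathbf 1_{>0}\sfT_{\h_{\mathrm{pnw}}}\mathbf 1_{>0}=0$ on the positive half-line, and for $\sfx\le 0$ we get $\sfT_{\h_{\mathrm{pnw}}}(\sfx,\sfy)=\mathsf G(\sfx,\sfy)$.

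The first step is to write $\sfT_\h=\mathbf 1_{\le0}\mathsf G+\mathbf 1_{>0}\sfT_\h$ on all of $\bbR$. If $\sfx\le0$, then $\B(0)=\sfx\le0=\h(0)$ and so $\btau=0$. We then insert this into \eqref{eq:limkernel}, which gives
\[
\limKe_\h(\rz)=\rz\mathbf 1_{>0}(\rI-\rz\mathsf G)^{-1}\mathbf 1_{\le0}\mathsf G\mathbf 1_{>0}+\rz\mathbf 1_{>0}(\rI-\rz\mathsf G)^{-1}\mathbf 1_{>0}\sfT_\h\mathbf 1_{>0}.
\]
The first term is exactly $\limKe_{\h_{\mathrm{pnw}}}(\rz)$. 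Both operators act on $L^2(\bbR_+)$ after compression, so the determinant is that of $\I_++P+Q\sfTp_\h$ with $P=\limKe_{\h_{\mathrm{pnw}}}$ and $Q=\rz[\mathbf 1_{>0}(\rI-\rz\mathsf G)^{-1}\mathbf 1_{>0}]$. To factor $\det(\I_++P+Q\sfTp_\h)=\det(\I_++P)\det(\I_++(\I_++P)^{-1}Q\sfTp_\h)$, I would prove the identity $(\I_++P)^{-1}Q=\rz(\I_+-\rz\mathsf G_+)^{-1}$. This is a Schur-complement/block-inverse identity. Write $\rI-\rz\mathsf G$ in block form with respect to $L^2(\bbR_{\le0})\oplus L^2(\bbR_+)$. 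The $(+,+)$ block of its inverse is $(\I_+-\rz\mathsf G_+-\rz^2\mathsf G_{+-}(\I_--\rz\mathsf G_-)^{-1}\mathsf G_{-+})^{-1}$. Expressing $\I_++P$ via $\mathbf 1_{>0}(\rI-\rz\mathsf G)^{-1}\mathbf 1_{\le0}$ (which equals $[(\rI-\rz\mathsf G)^{-1}]_{+-}$) shows that $\I_++P=[(\rI-\rz\mathsf G)^{-1}]_{++}(\I_+-\rz\mathsf G_+)$. Indeed, $[(\rI-\rz\mathsf G)^{-1}]_{+-}\rz\mathsf G_{-+}+[(\rI-\rz\mathsf G)^{-1}]_{++}(\I_+-\rz\mathsf G_+)=\I_+$, and $P$ is precisely $[(\rI-\rz\mathsf G)^{-1}]_{+-}\rz\mathsf G_{-+}$. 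The identity $(\I_++P)^{-1}Q=\rz(\I_+-\rz\mathsf G_+)^{-1}$ then follows immediately, since $Q=\rz[(\rI-\rz\mathsf G)^{-1}]_{++}$. Invertibility of $\I_+-\rz\mathsf G_+$ for $|\rz|<1$ holds because $\|\mathsf G\|\le1$ as a convolution with a probability density.

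For (a), I would show that $\sfTp_\h$ is trace class by writing it as a product of two Hilbert–Schmidt operators using the Markov property at time $1/2$. The event $\{\btau<1\}$ splits into $\{\btau<1/2\}$ and $\{1/2\le\btau<1\}$. The first piece is $\sfT^{(1/2)}_\h\mathsf G_{1/2}$, where $\mathsf G_{1/2}$ is the time-$1/2$ heat kernel and $\sfT^{(1/2)}_\h$ is the corresponding half-time hitting kernel. The second piece is $\mathsf G^{\text{kill}}_{1/2}\,\tilde\sfT_\h$, with a killed kernel followed by a hitting kernel over the second half-interval. Each factor must be Hilbert–Schmidt on $\bbR_+$ when sandwiched by $\mathbf 1_{>0}$, and this is the main obstacle. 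The heat kernel is not Hilbert–Schmidt on a half-line, so decay must come from the hitting constraint. Hitting requires the path to descend from $\sfx>0$ to below $\h$. Although $\h$ may be $-\infty$ on most of the interval, it equals $0$ at time $0$ and time $1$, and its supremum $M$ is finite by upper semicontinuity and periodicity. So the paths must reach level $\le M$, which gives a Gaussian factor $e^{-c(\sfx-M)_+^2}$. In the second factor the same reasoning applies backwards in time from $\sfy$. I would insert weights $e^{\pm\epsilon\sfx}$ between the factors to make both factors Hilbert–Schmidt. Part (b) follows because $\limKe_{\h_{\mathrm{pnw}}}$ is trace class by the same argument (it is the case where the hit happens at time $0$), and $(\rI-\rz\mathsf G)^{-1}$ is bounded, so $Q\sfTp_\h$ is trace class. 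Part (c) is then the algebra above, justified by the multiplicativity of Fredholm determinants for trace-class perturbations.
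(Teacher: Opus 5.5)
Your proposal is correct and is essentially the paper's proof: your factorization $\I_++P+Q\sfTp_\h=(\I_++P)\bigl(\I_++\rz(\I_+-\rz\mathsf G_+)^{-1}\sfTp_\h\bigr)$, with $\I_++P=[(\rI-\rz\mathsf G)^{-1}]_{++}(\I_+-\rz\mathsf G_+)$, is exactly the paper's $AB$ decomposition, and the paper proves part (a) by the same Markov splitting at time $1/2$ with exponential weights in the intermediate variable. Two small corrections. First, your displayed block identity has a sign slip: it should read $[(\rI-\rz\mathsf G)^{-1}]_{++}(\I_+-\rz\mathsf G_+)-[(\rI-\rz\mathsf G)^{-1}]_{+-}\rz\mathsf G_{-+}=\I_+$, and it is this version that gives your stated conclusion. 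Second, for the trace-class property of $P$ the paper just notes that $\mathbf 1_{>0}(\rI-\rz\mathsf G)^{-1}\mathbf 1_{\le0}=\mathbf 1_{>0}\sfS_\rz\mathbf 1_{\le0}$ and $\mathbf 1_{\le0}\mathsf G\mathbf 1_{>0}$ are Hilbert--Schmidt, which is cleaner than appealing to ``the same argument'' as in (a); that route also works, but needs a one-sided weight such as $e^{\sfu}$ because the starting point now ranges over $\bbR_{\le0}$.
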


\begin{proof}
(a) Set
\begin{equation*}
	\sfG_t(\sfx,\sfy):=\frac{1}{\sqrt{2\pi t}}e^{-(\sfx-\sfy)^2/(2t)}.
\end{equation*}
Define
\begin{equation*}
	a(\sfx,\sfy):=\frac{\prob_{\B(0)=\sfx}\bigl(\B(1/2)\in\rd \sfy,\ \btau\le1/2\bigr)}{\rd \sfy},
	\qquad
	q(\sfx,\sfy):=\frac{\prob_{\B(0)=\sfx}\bigl(\B(1/2)\in\rd \sfy,\ \btau>1/2\bigr)}{\rd \sfy}.
\end{equation*}
For a Brownian motion started from $x$ at time $1/2$, set
\begin{equation*}
	\sigma:=\inf\{t\in[1/2,1]:\B(t)\le\h(-t)\},
	\qquad
	r(\sfx,\sfy):=\frac{\prob_{\B(1/2)=\sfx}\bigl(\B(1)\in\rd \sfy,\ \sigma<1\bigr)}{\rd \sfy},
\end{equation*}
with $\inf\varnothing=\infty$. The Markov property at time $1/2$ gives
\begin{equation}\label{eq:G_minus_Q_midpoint}
	\sfTp_\h(\sfx,\sfy)
	=
	\int_\bbR a(\sfx,\sfu)\sfG_{1/2}(\sfu,\sfy)\,\rd \sfu
	+
	\int_\bbR q(\sfx,\sfu)r(\sfu,\sfy)\,\rd \sfu.
\end{equation}
Thus
\begin{equation*}
	\sfTp_\h=A_1B_1+A_2B_2,
\end{equation*}
where $A_1,A_2:L^2(\bbR)\to L^2(\bbR_+)$ and $B_1,B_2:L^2(\bbR_+)\to L^2(\bbR)$ have kernels
\begin{equation*}
	A_1(\sfx,\sfu):=a(\sfx,\sfu)e^{|\sfu|},
	\quad
	B_1(\sfu,\sfy):=e^{-|\sfu|}\sfG_{1/2}(\sfu,\sfy),
	\quad 
	A_2(\sfx,\sfu):=q(\sfx,\sfu)e^{-|\sfu|},
	\quad
	B_2(\sfu,\sfy):=e^{|\sfu|}r(\sfu,\sfy).
\end{equation*}
We show that these four operators are Hilbert--Schmidt.

Since $q(\sfx,\sfu)\le \sfG_{1/2}(\sfx,\sfu)=\sfG_{1/2}(\sfu,\sfx)$,
\begin{equation*}
	\|B_1\|_{\mathrm{HS}}^2,\ \|A_2\|_{\mathrm{HS}}^2
	\le
	\frac{1}{\sqrt{2\pi}}\int_\bbR e^{-2|\sfu|}\,\rd \sfu
	<\infty.
\end{equation*}

Since $a(\sfx,\sfu)\le \sfG_{1/2}(\sfx,\sfu)$ and
\begin{equation*}
	e^{2|\sfu|}\sfG_{1/2}(\sfx,\sfu)\le\frac{e}{\sqrt{\pi}}e^{2\sfx},
	\qquad \sfx>0,
\end{equation*}
we have
\begin{equation*}
	\int_\bbR |A_1(\sfx,\sfu)|^2\,\rd \sfu
	\le
	\frac{e}{\sqrt{\pi}}e^{2\sfx}\prob_{\B(0)=\sfx}(\btau\le 1/2).
\end{equation*}
Since $\h$ is one-periodic and upper semicontinuous, there is $M\ge0$ such that $\h\le M$. The event $\{\btau\le1/2\}$ implies $\min_{0\le t\le1/2}\B(t)\le M$. Hence, by the reflection principle, there are constants $C,c>0$ such that
\begin{equation*}
	\prob_{\B(0)=\sfx}(\btau\le1/2)
	\le
	\mathbf{1}_{\sfx<M+1}
	+
	Ce^{-c(\sfx-M)^2}\mathbf{1}_{\sfx\ge M+1}.
\end{equation*}
Thus $A_1$ is Hilbert--Schmidt.

Finally, since $r(\sfu,\sfy)\le \sfG_{1/2}(\sfu,\sfy)\le\pi^{-1/2}$,
\begin{equation*}
	\int_0^\infty r(\sfu,\sfy)^2\,\rd \sfy
	\le
	\frac{1}{\sqrt{\pi}}\prob_{\B(1/2)=\sfu}\bigl(\B(1)>0,\ \sigma<1\bigr).
\end{equation*}
By the reflection principle and the Gaussian tail bound, there are constants $C,c>0$ such that
\begin{equation*}
	\prob_{\B(1/2)=\sfu}\bigl(\B(1)>0,\ \sigma<1\bigr)
	\le
	Ce^{-c\sfu^2}\mathbf{1}_{\sfu\le-1}
	+
	\mathbf{1}_{-1<\sfu<M+1}
	+
	Ce^{-c(\sfu-M)^2}\mathbf{1}_{\sfu\ge M+1}.
\end{equation*}
It follows that
\begin{equation*}
	\|B_2\|_{\mathrm{HS}}^2
	=
	\int_\bbR e^{2|\sfu|}\left(\int_0^\infty r(\sfu,\sfy)^2\,\rd \sfy\right)\rd \sfu
	<\infty.
\end{equation*}
Therefore, $A_1B_1$ and $A_2B_2$ are trace class, and \eqref{eq:G_minus_Q_midpoint} proves (a).

(b) Let $R:L^2(\bbR)\to L^2(\bbR_+)$ be restriction and let $E:L^2(\bbR_+)\to L^2(\bbR)$ be extension by zero. Since
$\limKe_\h(\rz)=E\bigl(R\limKe_\h(\rz)E\bigr)R$, 
it is enough to prove that $R\limKe_\h(\rz)E$ is trace class on $L^2(\bbR_+)$.

Set 
$\mathsf Q_\h:=\mathsf G_+-\sfTp_\h$. 
Since $\h(0)=\h(-1)=0$, its kernel is
\begin{equation}\label{eq:Qs_definition}
	\mathsf Q_\h(\sfx,\sfy)
	:=
	\frac{\prob_{\B(0)=\sfx}\bigl(\B(1)\in\rd\sfy,\ \B(t)>\h(-t)\text{ for all }t\in[0,1]\bigr)}{\rd\sfy},
	\qquad \sfx,\sfy>0.
\end{equation}
Moreover, from \eqref{eq:limkernel2TT}, 
\begin{equation*}
	\sfT_\h E=\mathsf GE-E\mathsf Q_\h
\end{equation*}
as operators from $L^2(\bbR_+)$ to $L^2(\bbR)$. Hence, using \eqref{eq:limkernel},
\begin{equation} \label{eq:IRKEf}
	\I_++R\limKe_\h(\rz)E
	=
	R(\I-\rz\mathsf G)^{-1} E (\I_+-\rz \mathsf Q_\h).
\end{equation}
We decompose it as 
\begin{equation}\label{eq:RlimKeAB}
	\I_++R\limKe_\h(\rz)E=AB,
\end{equation}
where
\begin{equation*}
	A:=R(\I-\rz\mathsf G)^{-1}E(\I_+-\rz\mathsf G_+),
	\qquad
	B:=(\I_+-\rz\mathsf G_+)^{-1}(\I_+-\rz\mathsf Q_\h).
\end{equation*}

By part (a),
\begin{equation}\label{eq:KeB_rewriting}
	B-\I_+
	=
	\rz(\I_+-\rz\mathsf G_+)^{-1}(\mathsf G_+-\mathsf Q_\h)
	=
	\rz(\I_+-\rz\mathsf G_+)^{-1}\sfTp_\h
\end{equation}
is trace class.

Let $P_-:=\I-ER:L^2(\bbR)\to L^2(\bbR)$ be the projection onto $L^2(( -\infty,0])$. Since $\I_+-\rz\mathsf G_+=R(\I-\rz\mathsf G)E$,
\begin{equation*}
	A-\I_+
	=
	\rz R(\I-\rz\mathsf G)^{-1}P_-\mathsf GE.
\end{equation*}
Using \eqref{eq:limkernel2SS} and $R P_-=0$, we find $R(\I-\rz\mathsf G)^{-1}P_-
	=R \sfS_\rz P_-$, and thus, 
\begin{equation*}
	\|R(\I-\rz\mathsf G)^{-1}P_-\|_{\mathrm{HS}}^2
	=
	\frac{1}{2\pi}\sum_{k,\ell\ge1}\frac{\sqrt{k\ell}\,\rz^k\overline{\rz}^{\ell}}{k+\ell}
	<\infty.
\end{equation*}
Also,
\begin{equation*}
	\|P_-\mathsf GE\|_{\mathrm{HS}}^2
	=
	\int_{-\infty}^0\int_0^\infty\mathsf G(\sfx,\sfy)^2\,\rd\sfy\,\rd\sfx
	=
	\frac{1}{4\pi}.
\end{equation*}
Thus $A-\I_+$ is trace class. It follows from \eqref{eq:RlimKeAB} that $R\limKe_\h(\rz)E$ is trace class, and hence so is $\limKe_\h(\rz)$. 

(c) From \eqref{eq:RlimKeAB}, 
\begin{equation}\label{eq:Kh_AB_det}
	\det(\I+\limKe_\h(\rz))_{L^2(\bbR)}
	=
	\det(\I_++R\limKe_\h(\rz)E)_{L^2(\bbR_+)}
	=
	\det(A)_{L^2(\bbR_+)}\det(B)_{L^2(\bbR_+)}.
\end{equation}
For $\h=\h_{\mathrm{pnw}}$, the no-hit condition in \eqref{eq:Qs_definition} is automatic for paths with endpoints in $\bbR_+$, and therefore $\mathsf Q_{\h_{\mathrm{pnw}}}=\mathsf G_+$. Thus the corresponding operator $B$ is $\I_+$, and
\begin{equation*}
	\det(A)_{L^2(\bbR_+)}
	=
	\det(\I+\limKe_{\h_{\mathrm{pnw}}}(\rz))_{L^2(\bbR)}.
\end{equation*}
Substituting this identity into \eqref{eq:Kh_AB_det} and using \eqref{eq:KeB_rewriting} proves \eqref{eq:KasQ_factorization}.
\end{proof}

\begin{cor}\label{lm:translation_rC}
Let $\h\in\uc_1$. For every $|\rz|<1$, the energy function $\rE_\h(\rz)$ is independent of the choice of $\ra$ in the effective domain of $\h$. 
\end{cor}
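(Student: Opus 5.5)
The plan is to reduce the statement, via Proposition~\ref{result:KasQ}(c), to a cyclic-invariance property of a "loop" determinant plus an explicit Wiener--Hopf-type computation for a half-line restriction of $\rI-\rz\mathsf G$. Fix two points $\ra,\ra'$ in the effective domain and write $c=\h(\ra)$. By \eqref{eq:KasQ_factorization} and \eqref{eq:KeB_rewriting},
\begin{equation*}
\det(\I+\limKe_{\h_\ra}(\rz))=e^{2B(\rz)}\det\bigl((\I_+-\rz\mathsf G_+)^{-1}(\I_+-\rz\mathsf Q_{\h_\ra})\bigr)_{L^2(\bbR_+)}.
\end{equation*}
It therefore suffices to show that $e^{-cA_1(\rz)}\det\bigl((\I_+-\rz\mathsf G_+)^{-1}(\I_+-\rz\mathsf Q_{\h_\ra})\bigr)$ does not depend on $\ra$.

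\textbf{Step 1: undo the vertical normalization.} Conjugating by the spatial shift $\sfx\mapsto\sfx+c$ gives $\mathsf Q_{\h_\ra}\cong\widetilde{\mathsf Q}_\ra$. Here $\widetilde{\mathsf Q}_\ra(\sfx,\sfy)$ is the density of Brownian paths from $\sfx$ to $\sfy$ on $[0,1]$ that stay strictly above $\h(\ra-t)$. Likewise $\mathsf G_+$ becomes $\mathsf G$ compressed to $(c,\infty)$. Because $\h(\ra)=\h(\ra-1)=c$, the kernel $\widetilde{\mathsf Q}_\ra$ vanishes unless both $\sfx,\sfy>c$. So $\widetilde{\mathsf Q}_\ra$ may be regarded as an operator on $L^2(\bbR)$ supported on $(c,\infty)^2$. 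The quantity to control is then a regularized ratio
\begin{equation*}
\frac{\det(\I-\rz\widetilde{\mathsf Q}_\ra)}{\det(\I-\rz\mathsf G)_{(c,\infty)}}.
\end{equation*}
Neither factor is trace class separately.

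\textbf{Step 2: cyclic invariance of the numerator.} Split $[0,1]$ at $s=\ra-\ra'$ (mod $1$, using periodicity of $\h$). By the Markov property, $\widetilde{\mathsf Q}_\ra=PQ$ and $\widetilde{\mathsf Q}_{\ra'}=QP$, where $P$ and $Q$ are the killed transition kernels over the two subintervals. The identity $\det(\I-\rz PQ)=\det(\I-\rz QP)$ then formally gives independence of $a$. To make this rigorous, I would not take $\det(\I-\rz\widetilde{\mathsf Q})$ alone. Instead I would compare against free kernels: write $\mathsf G=\mathsf G_s\mathsf G_{1-s}$ and $P=\mathsf G_s-P'$, $Q=\mathsf G_{1-s}-Q'$, where $P',Q'$ are the "hit" parts. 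As in the proof of Proposition~\ref{result:KasQ}(a), these hit parts factor into Hilbert--Schmidt operators with exponential weights $e^{\pm|\sfu|}$. This yields trace-class differences, to which the cyclic identity applies.

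\textbf{Step 3: the half-line shift and the main obstacle.} The cyclic move in Step 2 changes the normalizing half-line from $(c,\infty)$ to $(c',\infty)$, with $c'=\h(\ra')$. So the remaining task is to show that, for the translation-invariant operator $\I-\rz\mathsf G$, the relative determinant of its compressions to $(c,\infty)$ and $(c',\infty)$ equals $e^{(c'-c)A_1(\rz)}$. I would prove this by differentiating in the endpoint $c$. The derivative of the log is the diagonal value at the boundary of $\sum_k\rz^k\mathsf G^k/k$ (a Wiener--Hopf/Kac--Akhiezer type computation). Since $\mathsf G^k(\sfx,\sfx)=1/\sqrt{2\pi k}$, this diagonal value is
\begin{equation*}
\sum_k \frac{\rz^k}{k\sqrt{2\pi k}}=\frac{\polylog_{3/2}(\rz)}{\sqrt{2\pi}}=-A_1(\rz),
\end{equation*}
giving exactly the factor $e^{-cA_1(\rz)}$ and hence the cancellation.

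I expect the main obstacle to be carrying out Steps 2--3 rigorously, since only suitable products and differences are trace class. I would first try to realize everything as determinants of $\I+(\text{trace class})$ on $L^2(\bbR)$ conjugated by the weight $e^{|\sfx|}$. Failing that, I would verify the identity for $\h$ with finitely many finite values, where all objects are explicit, and then pass to general $\h\in\uc_1$ by approximation and continuity of the Fredholm determinant in trace norm.
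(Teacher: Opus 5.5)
The heuristic is right: loop invariance under cyclic rotation, plus a half-line-shift anomaly of $-A_1(\rz)$ per unit of height, and your Step~3 value is correct. But the content of the corollary sits in Step~2 and in the hand-off to Step~3, and there the proposal has a genuine gap.

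First, the regularization you describe does not give trace-class differences. On $L^2(\bbR)$ the killed kernel $P$ over $[0,s]$ vanishes for starting points $\sfx\le c=\h(\ra)$, so $P'=\mathsf G_s-P$ contains $\mathbf{1}_{\le c}\mathsf G_s$, which is not even compact. The weighted Hilbert--Schmidt factorization of Proposition~\ref{result:KasQ}(a) works only because there $\sfx>0$ and the hitting probability has a Gaussian tail as $\sfx\to+\infty$; as $\sfx\to-\infty$ a hit is certain.

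Second, and more fundamentally, a relative determinant $\det\bigl((\I-\rz A)^{-1}(\I-\rz B)\bigr)$ is not preserved when $B$ alone is replaced by a cyclic permutation, or even a unitary conjugate, of itself. For example, $\mathbf{1}_{>c'}\mathsf G\mathbf{1}_{>c'}$ is a translate of $\mathbf{1}_{>c}\mathsf G\mathbf{1}_{>c}$, yet their relative determinant is $e^{(c-c')A_1(\rz)}$; this anomaly is the whole statement.

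Suppose you regularize correctly, e.g. with $X=\mathbf{1}_{>c}\mathsf G_s$ and $Y=\mathsf G_{1-s}\mathbf{1}_{>c}$, for which $X-P$ and $Y-Q$ are trace class. Then the $2\times2$ block-operator form of $\det(\I-\rz PQ)=\det(\I-\rz QP)$ yields
\begin{equation*}
\det\bigl((\I-\rz\,\mathbf{1}_{>c}\mathsf G\mathbf{1}_{>c})^{-1}(\I-\rz PQ)\bigr)
=\det\bigl((\I-\rz\,\mathsf G_{1-s}\mathbf{1}_{>c}\mathsf G_s)^{-1}(\I-\rz QP)\bigr)_{L^2(\bbR)}.
\end{equation*}
After your Step~1, the left side is $\Delta_\ra:=\det\bigl((\I_+-\rz\mathsf G_+)^{-1}(\I_+-\rz\mathsf Q_{\h_\ra})\bigr)$. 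So the cyclic move carries the normalizer to $\mathsf G_{1-s}\mathbf{1}_{>c}\mathsf G_s$, not to $\mathbf{1}_{>c'}\mathsf G\mathbf{1}_{>c'}$ as Step~3 assumes. To finish you would also need
\begin{equation*}
\det\bigl((\I-\rz\,\mathsf G_{1-s}\mathbf{1}_{>c}\mathsf G_s)^{-1}(\I-\rz\,\mathbf{1}_{>c}\mathsf G\mathbf{1}_{>c})\bigr)=1,
\end{equation*}
i.e. that the half-line shift is the only anomaly. This is true, but it is another cyclic identity of exactly the delicate kind at issue, and the proposal gives no argument for it. The fallback does not help either: already for two finite values per period nothing is explicit.

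The missing idea is to make the cyclic move at the level of traces, which is what the paper does. Proposition~\ref{result:KasQ}(c) reduces the claim to $\log(\Delta_\ra/\Delta_{\ra'})=-dA_1(\rz)$ with $d=c'-c$. Since $\mathsf G_+-\mathsf Q_{\h_\ra}$ is trace class and both operators are contractions,
\begin{equation*}
\log\frac{\Delta_\ra}{\Delta_{\ra'}}=\sum_{n\ge1}\frac{\rz^n}{n}\operatorname{Tr}\bigl((\mathsf Q_{\h_{\ra'}})^n-(\mathsf Q_{\h_\ra})^n\bigr).
\end{equation*}
The diagonal of $(\mathsf Q_{\h_\ra})^n$ at $\sfx$ is $(2\pi n)^{-1/2}\prob(\sfx>M_\ra)$, where $M_\ra=\sup_{0\le t\le n}(\h_\ra(-t)-\hat{\B}(t))$ and $\hat{\B}$ is a Brownian bridge on $[0,n]$. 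Rotating the bridge cyclically by $\ra'-\ra$ gives again a bridge, because $n$ is an integer and $\h$ is $1$-periodic. Hence $\bbE M_{\ra'}=\bbE M_\ra-d$, each trace equals $d/\sqrt{2\pi n}$, and the sum is $-dA_1(\rz)$.

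This is your loop invariance and boundary shift carried out in a single computation. It is rigorous because only traces of trace-class differences and first moments of bridge maxima enter, so no regularized cyclic identity for determinants is ever needed.
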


\begin{proof}
The claim is immediate for $\rz=0$. Fix $0<|\rz|<1$. Let $\ra$ and $\ra'$ be points in the effective domain of $\h$. By periodicity and symmetry in $\ra,\ra'$, it is enough to assume $0\le\ra<\ra'<1$. 

For $\ra$ in the effective domain of $\h$, define
\begin{equation*}
	\Delta_\ra(\rz)
	:=
	\det\left(\I_++\rz(\I_+-\rz\mathsf G_+)^{-1}\sfTp_{\h_\ra}\right)_{L^2(\bbR_+)}
	=
	\det\left((\I_+-\rz\mathsf G_+)^{-1}(\I_+-\rz\mathsf Q_{\h_\ra})\right)_{L^2(\bbR_+)},
\end{equation*}
where $\mathsf G_+$ and $\sfTp_\h$ are as in Proposition~\ref{result:KasQ}, and
$\mathsf Q_\h:=\mathsf G_+-\sfTp_\h$, whose kernel is given by \eqref{eq:Qs_definition}. Proposition~\ref{result:KasQ}(c) then gives
\begin{equation}\label{eq:K_Delta_a}
	\det(\I+\limKe_{\h_\ra}(\rz))_{L^2(\bbR)}\Delta_{\ra'}(\rz)
	=
	\det(\I+\limKe_{\h_{\ra'}}(\rz))_{L^2(\bbR)}\Delta_\ra(\rz).
\end{equation}

Both $\mathsf G_+$ and $\mathsf Q_{\h_\ra}$ are contractions, and their difference is trace class by Proposition~\ref{result:KasQ}(a). Hence the logarithmic expansion of the relative Fredholm determinant gives
\begin{equation*}
	\log\Delta_\ra(\rz)
	=
	\sum_{n\ge1}\frac{\rz^n}{n}\operatorname{Tr}\left(\mathsf G_+^n-(\mathsf Q_{\h_\ra})^n\right),
\end{equation*}
where the logarithm is chosen to vanish at $\rz=0$. The series is absolutely convergent since
$\|\mathsf G_+^n-(\mathsf Q_{\h_\ra})^n\|_1 \le n\|\mathsf G_+-\mathsf Q_{\h_\ra}\|_1$. 
Therefore,
\begin{equation}\label{eq:Delta_ratio_log}
	\log\frac{\Delta_\ra(\rz)}{\Delta_{\ra'}(\rz)}
	=
	\sum_{n\ge1}\frac{\rz^n}{n}\operatorname{Tr}\left((\mathsf Q_{\h_{\ra'}})^n-(\mathsf Q_{\h_\ra})^n\right).
\end{equation}

By periodicity and the Markov property,
\begin{equation*}
	(\mathsf Q_{\h_\ra})^n(\sfx,\sfy)
	=
	\frac{\prob_{\B(0)=\sfx}\bigl(\B(n)\in\rd\sfy,\ \B(t)>\h_\ra(-t)\text{ for all }t\in[0,n]\bigr)}{\rd\sfy}.
\end{equation*}
Let $\hat{\B}$ be a standard Brownian bridge from $0$ to $0$ over $[0,n]$, and set
\begin{equation*}
	M_\ra(\hat{\B})
	:=
	\sup_{0\le t\le n}\bigl(\h_\ra(-t)-\hat{\B}(t)\bigr).
\end{equation*}
Conditioning on the endpoint gives
\begin{equation}\label{eq:Q_diag_M}
	(\mathsf Q_{\h_\ra})^n(\sfx,\sfx)
	=
	\frac{1}{\sqrt{2\pi n}}\bbE\left[\mathbf{1}_{\sfx>M_\ra(\hat{\B})}\right].
\end{equation}

Note that 
\begin{equation*}
	\h_{\ra'}(-t)=\h_\ra(-t+c)-d, 
    \qquad
    c:=\ra'-\ra,
	\qquad
	d:=\h(\ra')-\h(\ra).
\end{equation*}
Define 
\begin{equation*}
	\gamma(u)
	:=
	\begin{cases}
	\hat{\B}(u+c)-\hat{\B}(c),&0\le u\le n-c,\\
	\hat{\B}(u-n+c)-\hat{\B}(c),&n-c\le u\le n.
	\end{cases}
\end{equation*}
A direct covariance computation shows that $\gamma$ is again a standard Brownian bridge over $[0,n]$. Using the one-periodicity of $\h_\ra$ and the fact that $n$ is an integer, we obtain
\begin{equation*}
	M_{\ra'}(\hat{\B})=M_\ra(\gamma)-d-\hat{\B}(c).
\end{equation*}
Thus, 
\begin{equation}\label{eq:M_expectation_shift}
	\bbE[M_{\ra'}(\hat{\B})]
	=
	\bbE[M_\ra(\hat{\B})]-d.
\end{equation}

By Proposition~\ref{result:KasQ}(a), the operators $\mathsf G_+^n-(\mathsf Q_{\h_\ra})^n$ and $(\mathsf Q_{\h_{\ra'}})^n-(\mathsf Q_{\h_\ra})^n$ are trace class. The Brownian-bridge representation shows that the latter has a continuous kernel. Moreover, $M_\ra(\hat{\B})$ and $M_{\ra'}(\hat{\B})$ have finite first moments because $\h_\ra$ and $\h_{\ra'}$ are bounded above and the extrema of a Brownian bridge have finite first moments. Using \eqref{eq:Q_diag_M}, we obtain
\begin{equation*}
\begin{aligned}
	\operatorname{Tr}\left((\mathsf Q_{\h_{\ra'}})^n-(\mathsf Q_{\h_\ra})^n\right)
	&=
	\lim_{R\to\infty}\operatorname{Tr}\left( \mathbf{1}_{(0,R)}\bigl((\mathsf Q_{\h_{\ra'}})^n-(\mathsf Q_{\h_\ra})^n\bigr) \mathbf{1}_{(0,R)} \right)\\
	&=
	\frac{1}{\sqrt{2\pi n}}\lim_{R\to\infty}\bbE\left[(R-M_{\ra'}(\hat{\B}))_+-(R-M_\ra(\hat{\B}))_+\right]
	=
	\frac{d}{\sqrt{2\pi n}},
\end{aligned}
\end{equation*}
where the last equality follows from dominated convergence and \eqref{eq:M_expectation_shift}. Substituting this identity into \eqref{eq:Delta_ratio_log}, we find
\begin{equation*}
	\log\frac{\Delta_\ra(\rz)}{\Delta_{\ra'}(\rz)}
	=
	\frac{d}{\sqrt{2\pi}}\sum_{n\ge1}\frac{\rz^n}{n^{3/2}}
	=
	-dA_1(\rz).
\end{equation*}
Combining this identity with \eqref{eq:K_Delta_a} gives
\begin{equation*}
	\det(\I+\limKe_{\h_\ra}(\rz))_{L^2(\bbR)}
	=
	e^{-dA_1(\rz)}\det(\I+\limKe_{\h_{\ra'}}(\rz))_{L^2(\bbR)}.
\end{equation*}
Since $d=\h(\ra')-\h(\ra)$, the definition of $\rE_\h(\rz)$ now proves the result.
\end{proof}

\subsection{Bounds for the periodic characteristic function and well-definedness of $\rD_\h(\bz)$}
\label{sec:pcfbound}

We first establish a bound for $\pchlim_\h$.

\begin{lm}\label{prop:plim_bound}
Let $\h\in\uc_1$, and let $\Mx\ge0$ satisfy $\max_{\XX\in[0,1]}\h(\XX)\le\Mx$. Then, for every $\Re(\xi)<0$ and $\arg(\eta)\in(-\frac{\pi}{4},\frac{\pi}{4})$,
\begin{equation}\label{eq:plim_bound}
    \left|\pchlim_\h(\eta,\xi)\right|
    \le
    \frac{\re^{(\Mx+1)(|\eta|+|\xi|)}}{1-\re^{-\frac12\Re(\eta^2)}}
    \left(\frac{1}{-\Re(\xi)}+\re^{\frac12(\Re(\xi))^2}\right).
\end{equation}
\end{lm}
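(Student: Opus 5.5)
We split the integral over $s$ into $s<\Mx+1$ and $s\ge\Mx+1$, and bound the expectation in \eqref{eq:pch_lim} by the sum of the absolute values of the two terms. For both terms we use the exponential martingale $e^{\eta\B(t)-\frac12\eta^2 t}$: at the hitting times its modulus is $e^{\Re(\eta)\B(\cdot)-\frac12\Re(\eta^2)\cdot}$. Since $\h\le\Mx$ and $\B$ is continuous, at any hitting time $\sigma\in\{\btau,\bntau\}$ (finite) we have $\B(\sigma)\le\h(-\sigma)\le\Mx$. Upper semicontinuity makes the hypograph closed, so the infimum is attained. Also $\B(\sigma)\ge -\infty$ is useless, so we need a lower bound too. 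Here we use that at the hitting time either $\sigma=0$ (then $\B(0)=s$) or the path arrives from above, so $\B(\sigma)\le \Mx$. On $\{\btau<1\}$ with $\B(0)=s\ge \h(0)$, I expect we can only control $|e^{\eta\B(\sigma)}|\le e^{|\eta|\max(\Mx,|\B(\sigma)|)}$. So the cleaner route is the following. When $\Re\eta\ge0$ (automatic since $\arg\eta\in(-\pi/4,\pi/4)$), we have $|e^{\eta \B(\sigma)}|\le e^{\Re(\eta)\Mx}\le e^{|\eta|\Mx}$ directly from $\B(\sigma)\le\Mx$, with no lower bound needed.

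For the time factor, $\btau<1$ gives $|e^{-\frac12\eta^2\btau}|\le 1$ since $\Re(\eta^2)>0$. For $\bntau$ we use periodicity. On $\{\btau<1\}$ the path has hit the hypograph at some time in $[0,1)$, but $\bntau$ may be large. Decompose according to $\bntau\in[k,k+1)$, $k\ge1$: the factor $e^{-\frac12\Re(\eta^2)\bntau}\le e^{-\frac12\Re(\eta^2)k}$. Summing over $k\ge0$ (including the $\btau$ term as $k=0$) gives the geometric factor $(1-e^{-\frac12\Re(\eta^2)})^{-1}$. Thus the expectation is bounded by $e^{|\eta|\Mx}(1-e^{-\frac12\Re(\eta^2)})^{-1}\,\prob_s(\btau<1)$, and the probability is bounded by $1$.

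It remains to integrate $|e^{-s\xi}|=e^{-s\Re\xi}$ against $\prob_{\B(0)=s}(\btau<1)$. For $s<\Mx+1$ we use the probability bound $1$, giving $\int_{-\infty}^{\Mx+1}e^{-s\Re\xi}\,\rd s=e^{-(\Mx+1)\Re\xi}/(-\Re\xi)\le e^{(\Mx+1)|\xi|}/(-\Re\xi)$. For $s\ge\Mx+1$, the event $\btau<1$ forces $\min_{[0,1]}\B\le\Mx$, so by the reflection principle $\prob\le 2\prob(N(0,1)\ge s-\Mx)\le e^{-(s-\Mx)^2/2}$ (up to the constant, which we absorb as in the bound $2\bar\Phi(u)\le e^{-u^2/2}$ for $u\ge0$). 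Then $\int_{\Mx+1}^\infty e^{-s\Re\xi-(s-\Mx)^2/2}\rd s\le e^{-\Mx\Re\xi}\int_\bbR e^{-u\Re\xi-u^2/2}\rd u$, which is $e^{-\Mx\Re\xi}\sqrt{2\pi}e^{\frac12(\Re\xi)^2}$. To match \eqref{eq:plim_bound} without the $\sqrt{2\pi}$ we restrict to $u\ge1$ and complete the square more carefully, or use the cruder tail bound. The constants are the only fiddly point, and the expected obstacle is exactly this bookkeeping of $e^{(\Mx+1)(|\eta|+|\xi|)}$ versus the Gaussian tail. The main conceptual step is the periodic geometric summation for $\bntau$. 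Combining the two ranges yields \eqref{eq:plim_bound}.
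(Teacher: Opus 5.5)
Your proposal is correct and follows essentially the same route as the paper. You bound the modulus of the exponential martingale by $e^{\Re(\eta)\Mx}$ using $\B(\sigma)\le\Mx$ and $\Re(\eta)>0$, sum geometrically over the unit intervals containing $\bntau$, and split the $s$-integral at $\Mx+1$ with the reflection principle. The constant you flagged does close with your proposed restriction to $u\ge1$: with $a=-\Re(\xi)>0$ and $\Phi$ the standard normal distribution function, $\int_1^\infty e^{ua-u^2/2}\,\rd u=\sqrt{2\pi}\,e^{a^2/2}\,\Phi(a-1)\le e^{a+a^2/2}$, which yields the same term $e^{\frac12a^2+(\Mx+1)a}$ as in the paper.
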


\begin{proof}
By \eqref{eq:pch_lim},
\begin{equation*}
    \pchlim_\h(\eta,\xi)=\int_\bbR \re^{-s\xi}E(s,\eta)\,\rd s,
    \qquad E(s,\eta):=\bbE_{\B(0)=s}\left[\re^{\eta\B(\btau)-\frac12\eta^2\btau}\bfone_{\btau<1}-\re^{\eta\B(\bntau)-\frac12\eta^2\bntau}\bfone_{\btau<1,\,\bntau<\infty}\right].
\end{equation*}
On $\{\btau<1\}$ and $\{\bntau<\infty\}$, respectively, $\B(\btau)\le\h(-\btau)\le\Mx$ and $\B(\bntau)\le\h(-\bntau)\le\Mx$. 
Since $\Re(\eta^2)>0$, 
\begin{equation*}
    \big|\re^{-\frac12\eta^2\btau} \big|\le 1
    \quad\text{on }\{\btau<1\},
    \qquad
    \big|\re^{-\frac12\eta^2\bntau}\big|\le \re^{-\frac12\Re(\eta^2)k}
    \quad\text{on }\{\bntau\in[k,k+1)\}.
\end{equation*}
Hence
\begin{equation*}
\begin{aligned}
	|E(s,\eta)|
	&\le
	\re^{\Mx\Re(\eta)}\prob_{\B(0)=s}(\btau<1)
	\left(1+\sum_{k=1}^\infty\re^{-\frac12\Re(\eta^2)k}\right) 
	=
	\frac{\re^{\Mx\Re(\eta)}}{1-\re^{-\frac12\Re(\eta^2)}}\prob_{\B(0)=s}(\btau<1).
\end{aligned}
\end{equation*}
Consequently,
\begin{equation}\label{eq:after_bracket}
    \left|\pchlim_\h(\eta,\xi)\right|
    \le
    \frac{\re^{\Mx\Re(\eta)}}{1-\re^{-\frac12\Re(\eta^2)}}
    \int_\bbR \re^{-s\Re(\xi)}\prob_{\B(0)=s}(\btau<1)\,\rd s.
\end{equation}
By the reflection principle,
\begin{equation*}
    \prob_{\B(0)=s}(\btau<1)
    \le
    \mathbf{1}_{s\le\Mx+1}
    +
    \sqrt{\frac{2}{\pi}}\re^{-(s-\Mx)^2/2}\mathbf{1}_{s>\Mx+1}.
\end{equation*}
Thus, with $a:=-\Re(\xi)>0$, 
\begin{equation}\label{eq:transform_bound}
    \int_\bbR \re^{sa}\prob_{\B(0)=s}(\btau<1)\,\rd s
    \le
    \frac{\re^{(\Mx+1)a}}{a}
    +
    \re^{\frac12a^2+(\Mx+1)a}.
\end{equation}
Inserting \eqref{eq:transform_bound} into \eqref{eq:after_bracket}, and using $\Re(\eta)\le|\eta|$ and $a=-\Re(\xi)\le|\xi|$, proves \eqref{eq:plim_bound}.
\end{proof}

The preceding bound implies that $\rD_\h(\bz)$ is well-defined. 

\begin{cor}
Let $m\ge1$, $\XX_1,\ldots,\XX_m\in\bbR$, $0<\TT_1\le\cdots\le\TT_m$, and $(\HH_1,\ldots,\HH_m)\in\dom_+^m(\TT_1,\ldots,\TT_m)$. For $\h\in\uc_1$ and $\bz=(\rz_1,\ldots,\rz_m)$ with $0<|\rz_i|<1$, the series \eqref{eq:series} defining $\rD_\h(\bz)$ converges absolutely.
\end{cor}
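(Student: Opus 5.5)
The plan is to bound each term $\mathrm{D}^{(\mathbf{n})}_\h(\bz)$ by a product over the nodes, using Lemma~\ref{prop:plim_bound} for the $\pchlim_\h$ entries and Hadamard's inequality for the determinants. The resulting bound should be summable against the weights $1/(n_1!\cdots n_m!)^2$. This follows the scheme used for the special initial conditions in~\cite{Baik-Liu19,Baik-Liu21}, with the explicit $\pchlim$ there replaced by the bound \eqref{eq:plim_bound}.

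\emph{Step 1: asymptotics of the nodes.} For $0<|\rz|<1$, the points of $\rL_\rz$ and $\rR_\rz$ lie on the hyperbola $\Re(\zeta^2)=-2\log|\rz|>0$. They are $\zeta_k\approx\pm\sqrt{4\pi\ri k}$ and $\pm\sqrt{-4\pi\ri k}$, with $|\zeta_k|\asymp\sqrt{|k|}$. Every node satisfies $\arg\zeta\to\pm\pi/4$ modulo $\pi$ and $\Re(\zeta^2)\ge c_\rz>0$. Hence $\eta\in\rR_\rz$ satisfies $\arg(\eta)\in(-\frac{\pi}{4},\frac{\pi}{4})$ with $1-e^{-\frac12\Re(\eta^2)}=1-|\rz|>0$. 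Also, $\xi\in\rL_\rz$ has $\Re(\xi)\le -c<0$ and $|\Re\xi|\asymp\sqrt{|k|}$. Lemma~\ref{prop:plim_bound} therefore gives, uniformly over nodes,
\[
|\pchlim_\h(\eta,\xi)|\le C\, e^{C(|\eta|+|\xi|)+\frac12(\Re\xi)^2}.
\]

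\emph{Step 2: decay from $\mathrm{f}_\ell$.} On the nodes, $\Re(\zeta^3)\sim c|\zeta|^3$ with the correct sign: $-\frac13\TT_1\xi^3$ for $\Re\xi<0$ near argument $\pm3\pi/4$ has real part $\approx-\frac{\TT_1}{3\sqrt2}|\xi|^3$. Since $\TT_1>0$ and $\TT_\ell-\TT_{\ell-1}\ge0$, the products $\mathrm{f}_\ell$ decay like $e^{-c|\zeta|^3}$ for $\ell=1$. For $\ell\ge2$ with $\TT_\ell=\TT_{\ell-1}$, the decay must come from the $\zeta^2$ and $\zeta$ terms. Here the assumption $\bm\HH\in\dom_+^m$, i.e.\ $\HH_\ell>\HH_{\ell-1}$, gives $\Re((\HH_\ell-\HH_{\ell-1})\zeta)\to-\infty$ linearly on $\rL$ (and symmetrically on $\rR$). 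Meanwhile, the $\zeta^2$ term has real part bounded on the nodes, because $\Re\zeta^2$ is constant there. The functions $\mathrm{g}_\ell$ and $e^{-\hftn}$ are bounded on the nodes, since $\hftn=O(\zeta^{-1})$. The prefactors $(1-\rz_{\ell+1}/\rz_\ell)^{n_\ell}$ contribute at most $C^{n}$.

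\emph{Step 3: the determinants.} I will bound the Cauchy determinants by Hadamard's inequality, using the fact that distinct nodes are separated; alternatively, I will use the product formula \eqref{eq:Cauchydt}. Each factor $\Cau$ has absolute value at most $\prod(\text{row norms})$, and each row norm is bounded by $C(1+\sum_k|\zeta_k|^{-2})^{1/2}$-type quantities. By Hadamard, $|\det[\pchlim_\h(\eta_i,\xi_j)]|\le n_1^{n_1/2}\prod_i\max_j|\cdot|$, and I will absorb the resulting factor $\prod_i e^{C|\eta_i|}\prod_j e^{C|\xi_j|+\frac12(\Re\xi_j)^2}$ into the weights. The key point is that the Gaussian growth $e^{\frac12(\Re\xi)^2}$ from Lemma~\ref{prop:plim_bound} is dominated by the cubic decay $e^{-c\TT_1|\xi^{(1)}|^3}$ from $\mathrm{f}_1$. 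This works because only the level-$1$ variables enter $\pchlim_\h$, and $\mathrm{f}_1$ carries the strictly positive $\TT_1$. The sum over each node set of $e^{-c|\zeta|^3+C|\zeta|^2}$ converges.

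\emph{Step 4: summation.} Combining the steps gives $|\mathrm{D}^{(\mathbf{n})}_\h(\bz)|\le C^{|\mathbf{n}|}\prod_\ell n_\ell^{n_\ell}$, with factors of this type from Hadamard. Against $(n_1!\cdots n_m!)^{-2}$, this is summable by Stirling's formula. The main obstacle is Step 2 when some $\TT_\ell=\TT_{\ell-1}$: the decay there is only exponential in $|\zeta|$, via $\HH_\ell>\HH_{\ell-1}$. I would check that this linear decay still makes the node sums $\sum_k e^{-c\sqrt{k}}\cdot|\zeta_k|^{-2}$ finite. It does, and the Cauchy-determinant denominators do not spoil it, because the node sets for different $\rz_\ell$ are disjoint and separated when $|\rz_\ell|$ are distinct. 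Where $|\rz_\ell|=|\rz_{\ell+1}|$, the estimate still holds for fixed $\bz$ because $\rz_\ell\ne\rz_{\ell+1}$ is not needed in $\rD$; separation follows from distinct arguments.
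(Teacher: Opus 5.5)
Your overall route is the paper's: Lemma~\ref{prop:plim_bound} gives growth at most $\re^{C(|\eta|+|\xi|)+\frac12(\Re\xi)^2}$ for $\pchlim_\h$ on the nodes. This growth is beaten by the cubic decay of $\mathrm f_1$ coming from $\TT_1>0$. Equal-time levels decay like $\re^{-c|\zeta|}$ because $\HH_\ell>\HH_{\ell-1}$, the factors $\re^{\pm\hftn}$ and $\mathrm g_\ell$ are bounded, and Hadamard's inequality yields $|\mathrm D^{(\mathbf n)}_\h(\bz)|\le\prod_\ell n_\ell^{n_\ell}C^{n_\ell}$. However, two supporting claims in your Step~3 are wrong as written and need repair.

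\emph{First, the nodes are not uniformly separated.} The sets $\rL_{\rz_\ell}$ and $\rL_{\rz_{\ell+1}}$ (and likewise the $\rR$ sets) lie on the hyperbolas $\Re(\zeta^2)=-2\log|\rz_\ell|$ and $\Re(\zeta^2)=-2\log|\rz_{\ell+1}|$, which share asymptotes. For example, if $\rz_{\ell+1}/\rz_\ell=s\in(0,1)$, then the $k$-th nodes satisfy $|\xi^{(\ell)}_k-\xi^{(\ell+1)}_k|\approx|\log s|/|\xi_k|\to0$. Hence Cauchy entries such as $1/(\xi^{(\ell)}_i-\xi^{(\ell+1)}_j)$ can be of order $|\zeta|$, and your row-norm bound of type $(1+\sum_k|\zeta_k|^{-2})^{1/2}$ fails. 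What is true, and what the paper uses, is polynomial growth. For same-side pairs from adjacent levels, $|w^2-w'^2|\ge d:=\min_{k\in\bbZ}|2\log(\rz_{\ell+1}/\rz_\ell)+4\pi\ri k|>0$, so $|w-w'|^{-1}\le d^{-1}(1+|w|)(1+|w'|)$; opposite-side pairs are separated by the fixed real-part gap. Factor these weights out of rows and columns, apply Hadamard to the bounded remainder, and absorb the polynomial weights into the exponential decay of the $\mathrm f_\ell$. Note that $d>0$ is exactly the condition $\rz_\ell\ne\rz_{\ell+1}$; otherwise some Cauchy denominators vanish. So your closing remark that this condition is not needed is incorrect; it is implicit in the setting of \eqref{eq:multi_time1}.

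\emph{Second, Hadamard must be applied after extracting the weights.} In the form $n_1^{n_1/2}\prod_i\max_j|\pchlim_\h(\eta_i,\xi_j)|$ it produces $\bigl(\max_j\re^{\frac12(\Re\xi_j)^2}\bigr)^{n_1}$ rather than $\prod_j\re^{\frac12(\Re\xi_j)^2}$. After summing over nodes against the single decay $\re^{-c|\xi_j|^3}$, this grows like $\re^{cn_1^3}$, which $(n_1!)^{-2}$ cannot compensate. To obtain the product form you name, pull the row factors $\re^{C|\eta_i|}$ and the column factors $\re^{C|\xi_j|+\frac12(\Re\xi_j)^2}$ out of the determinant before applying Hadamard. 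Equivalently, as in the paper, bound the entries $\pchlim_\h(\eta_i,\xi_j)\mathrm f_1(\eta_i)\mathrm f_1(\xi_j)$ by $C\re^{-c|\eta_i|^3}\re^{-c|\xi_j|^3}$ and factor these out. With these two fixes your argument closes.
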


\begin{proof}
We estimate the terms $\mathrm D_\h^{(\mathbf n)}(\bz)$ in \eqref{eq:series}. The functions $\hftn(\zeta,\rz_\ell)$ satisfy $\hftn(\zeta,\rz_\ell)=O(\zeta^{-1})$ as $|\zeta|\to\infty$ away from the imaginary axis. Hence the factors $\re^{\pm\hftn}$ and $\mathrm g_\ell$ are bounded on $\rL_{\rz_\ell}\cup\rR_{\rz_\ell}$, while the Cauchy determinants in \eqref{eq:Dn_limnoh} grow at most polynomially in the root variables.

The decay comes from the factors $\mathrm f_\ell$ in \eqref{eq:f_lim}. For $\zeta\in\rL_{\rz_\ell}\cup\rR_{\rz_\ell}$, the term involving $\zeta^2$ is bounded because $\re^{-\zeta^2/2}=\rz_\ell$. Thus, it remains to control the terms involving $\zeta^3$ and $\zeta$. If $\TT_\ell>\TT_{\ell-1}$, then there exist constants $C,c>0$ such that
\begin{equation*}
	|\mathrm f_\ell(\zeta)|\le C\re^{-c|\zeta|^3},
	\qquad
	\zeta\in\rL_{\rz_\ell}\cup\rR_{\rz_\ell}.
\end{equation*}
This applies in particular to $\ell=1$, since $\TT_0=0$ and $\TT_1>0$. If $\TT_\ell=\TT_{\ell-1}$, then $\beta_\ell-\beta_{\ell-1}>0$, and hence
\begin{equation*}
	|\mathrm f_\ell(\zeta)|\le C\re^{-c|\zeta|}, \qquad
	\zeta\in\rL_{\rz_\ell}\cup\rR_{\rz_\ell}. 
\end{equation*}

Lemma~\ref{prop:plim_bound} implies that $\pchlim_\h(\eta,\xi)$ has at most quadratic exponential growth in the root variables. Since it is evaluated only for $\eta\in\rR_{\rz_1}$ and $\xi\in\rL_{\rz_1}$, the product $\pchlim_\h(\eta,\xi)\mathrm f_1(\eta)\mathrm f_1(\xi)$ 
has cubic exponential decay. Consequently, the summand in \eqref{eq:Dn_lim} decays.

Combining these estimates with Hadamard's inequality and summing over the root sets, we obtain a constant $C>0$ such that $|\mathrm D_\h^{(\mathbf n)}(\bz)| \le \prod_{\ell=1}^m n_\ell^{n_\ell}C^{n_\ell}$. 
(The bound can be improved to $\prod_{\ell=1}^m C^{n_\ell}$ if we combine the Cauchy determinant with the functions $\mathrm{f}_\ell$.)
Thus, the series \eqref{eq:series} converges absolutely. 
\end{proof}

\subsection{Shift invariance}
\label{sec:proof_transition_rc}

We prove that $\mathbb{F}_{\h}$ is invariant under horizontal and vertical shifts of $\h$. Once the convergence of the PTASEP distribution functions to $\mathbb{F}_{\h}$ is established, this invariance follows immediately from the invariance of the PTASEP under relabeling and common translations of the particle positions. Here, we give a direct proof from the formula for $\mathbb{F}_{\h}$. We begin with the following lemma. 

\begin{lm}\label{lm:change_start}
Let $\h\in\uc_1$, and let $\eta,\xi\in\bbC$ satisfy 
\begin{equation}\label{eq:eta_xi_same_root}
	\arg(\eta)\in(-\pi/4,\pi/4),\qquad \Re(\xi)<0,\qquad \re^{-\eta^2/2}=\re^{-\xi^2/2}.
\end{equation} 
Define
\begin{equation*}
	M(t):=\re^{\eta\B(t)-\frac12\eta^2t},
\end{equation*}
with the convention $M(\infty):=0$, where $\B$ is a Brownian motion. 
For $c\in\bbR$, define 
\begin{equation}\label{eq:Jc_definition}
	J_c(\eta,\xi)
	:=
	\int_\bbR \re^{-s\xi}\bbE_{\B(c)=s}\left[M(\btau_c)-M(\btau_{c+1})\right] \,\rd s, 
	\qquad 
	\btau_c:=\inf\{\sfx\ge c:\B(\sfx)\le\h(-\sfx)\}.
\end{equation}
Then, for every $c,c'\in\bbR$, 
\begin{equation}\label{eq:Jc_change_start}
	\re^{c\xi^2/2}J_c(\eta,\xi)
	=
	\re^{c'\xi^2/2}J_{c'}(\eta,\xi).
\end{equation}

\end{lm}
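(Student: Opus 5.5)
The plan is to interpret the weight $\re^{-s\xi}\,\rd s$ as an initial "measure" for the Brownian motion. This measure is space--time harmonic: the heat semigroup transports it to another exponential measure with an explicit scalar factor. The identity~\eqref{eq:Jc_change_start} should then follow from the Markov property, the one-periodicity of $\h$, and the relation $\re^{-\eta^2/2}=\re^{-\xi^2/2}$.

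\emph{Step 1 (Gaussian transform).} For $t>0$ and $\Re(\xi)<0$ we have
\begin{equation*}
\int_\bbR \re^{-s\xi}\sfG_t(s,u)\,\rd s=\re^{t\xi^2/2}\re^{-u\xi}.
\end{equation*}
Consequently, for a bounded (or suitably integrable) functional $F$ of the path on $[c',\infty)$ and $c\le c'$,
\begin{equation*}
\re^{c\xi^2/2}\int_\bbR \re^{-s\xi}\,\bbE_{\B(c)=s}[F]\,\rd s=\re^{c'\xi^2/2}\int_\bbR \re^{-u\xi}\,\bbE_{\B(c')=u}[F]\,\rd u .
\end{equation*}
The proof is to condition on $\B(c')$ and apply Fubini. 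Write $\Lambda_c[F]$ for the left-hand side; then $\re^{c\xi^2/2}J_c=\Lambda_c[M(\btau_c)-M(\btau_{c+1})]$.

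\emph{Step 2 (decomposition).} By symmetry we may take $c<c'$. Using $\btau_c\le\btau_{c'}$ and $\btau_{c+1}\le\btau_{c'+1}$, write
\begin{equation*}
M(\btau_c)-M(\btau_{c+1})=\bigl[M(\btau_{c'})-M(\btau_{c'+1})\bigr]+\bigl[M(\btau_c)-M(\btau_{c'})\bigr]-\bigl[M(\btau_{c+1})-M(\btau_{c'+1})\bigr].
\end{equation*}
The first bracket depends only on the path after $c'$, so by Step 1 its $\Lambda_c$-value equals $\re^{c'\xi^2/2}J_{c'}$. It therefore suffices to show
\begin{equation*}
\Lambda_c\bigl[M(\btau_c)-M(\btau_{c'})\bigr]=\Lambda_c\bigl[M(\btau_{c+1})-M(\btau_{c'+1})\bigr].
\end{equation*}
The right-hand functional depends only on the path after time $c+1$, so by Step 1 it equals $\Lambda_{c+1}[M(\btau_{c+1})-M(\btau_{c'+1})]$.

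\emph{Step 3 (time shift by one period).} Apply the substitution $\tilde\B(t)=\B(t+1)$. Periodicity gives $\h(-(t+1))=\h(-t)$, so the hitting times transform as $\btau_{c+1}(\B)=\btau_c(\tilde\B)+1$, and similarly with $c'$. Moreover, $M(t+1)$ computed from $\B$ equals $\re^{-\eta^2/2}$ times $M(t)$ computed from $\tilde\B$. On the other side, $\Lambda_{c+1}$ carries the prefactor $\re^{(c+1)\xi^2/2}=\re^{\xi^2/2}\re^{c\xi^2/2}$. Since $\re^{-\eta^2/2}\re^{\xi^2/2}=1$, the two factors cancel and we get $\Lambda_{c+1}[M(\btau_{c+1})-M(\btau_{c'+1})]=\Lambda_c[M(\btau_c)-M(\btau_{c'})]$, which concludes the argument. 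The convention $M(\infty)=0$ is consistent with the shift, because $\btau=\infty$ is preserved.

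\emph{Main obstacle: justifying Fubini and the splitting.} The functionals are complex and unbounded, and the measure $\re^{-s\xi}\rd s$ is infinite. I would argue as in Lemma~\ref{prop:plim_bound}. Each term is supported on the event of a hit before time $c'+1$ or so, and $|M|\le \re^{\Mx\Re(\eta)}$ on hits; the time factor contributes $|\re^{-\eta^2 t/2}|$, which is summable over periods because $\Re(\eta^2)>0$. The hitting probability from level $s$ decays like a Gaussian as $s\to+\infty$, while $|\re^{-s\xi}|$ decays exponentially as $s\to-\infty$ since $\Re(\xi)<0$. Each piece in Step 2 is then absolutely integrable. The individual terms $M(\btau_{c'})$ are not restricted to a hit before $c+1$, but the tail estimate over $[k,k+1)$ handles them the same way. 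This dominates everything needed to apply Step 1, i.e., conditioning at the intermediate time together with Fubini.
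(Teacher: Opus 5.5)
Your proposal is correct and follows the paper's proof essentially step for step. It uses the same three-term decomposition, the same conditioning at an intermediate time via $\int_\bbR \re^{-s\xi}\sfG_t(s,u)\,\rd s=\re^{t\xi^2/2}\re^{-u\xi}$, and the same one-period time shift, in which $\re^{-\eta^2/2}=\re^{-\xi^2/2}$ cancels the extra factor. The only difference is that the paper first reduces to $c<c'<c+1$ and chains along a partition, whereas you treat arbitrary $c<c'$ directly; this is legitimate because neither the conditioning nor the shift step uses $c'<c+1$.
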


\begin{proof}
Since $\Re(\eta^2)>0$ and $\B(t)/t\to0$ almost surely, $M(t)\to0$ almost surely as $t\to\infty$. 
Note that $\btau=\btau_0$ and $\bntau=\btau_1$. After translating time by $c$, the estimates in the proof of Lemma~\ref{prop:plim_bound} show that the integral in \eqref{eq:Jc_definition} is absolutely convergent.

It is enough to prove \eqref{eq:Jc_change_start} when $c<c'<c+1$. The general case follows by applying this case successively along a finite partition from $c$ to $c'$.

For $c<c'<c+1$, write
\begin{equation}\label{eq:decomposition}
	M(\btau_c)-M(\btau_{c+1})
	=
	\bigl(M(\btau_c)-M(\btau_{c'})\bigr)
	+
	\bigl(M(\btau_{c'})-M(\btau_{c'+1})\bigr)
	-
	\bigl(M(\btau_{c+1})-M(\btau_{c'+1})\bigr).
\end{equation}
Set
\begin{equation*}
	A:=\int_\bbR \re^{-s\xi}\bbE_{\B(c)=s}\left[M(\btau_c)-M(\btau_{c'})\right] \,\rd s,
	\qquad
	C:=\int_\bbR \re^{-s\xi}\bbE_{\B(c)=s}\left[M(\btau_{c+1})-M(\btau_{c'+1})\right] \,\rd s.
\end{equation*}
Conditioning on $\B(c+1)$ and evaluating the Gaussian integral, we obtain
\begin{equation}\label{eq:C_condition}
	C
	=
	\re^{\xi^2/2}
	\int_\bbR \re^{-r\xi}\bbE_{\B(c+1)=r}\left[M(\btau_{c+1})-M(\btau_{c'+1})\right] \,\rd r.
\end{equation}
Shifting the Brownian time parameter by $1$ gives $C=\re^{(\xi^2-\eta^2)/2}A$. 
By \eqref{eq:eta_xi_same_root}, we find that $C=A$.

Taking expectations in \eqref{eq:decomposition} and integrating over $s$, the first and third terms therefore cancel, and
\begin{equation*}
	J_c(\eta,\xi)
	=
	\int_\bbR \re^{-s\xi}\bbE_{\B(c)=s}\left[M(\btau_{c'})-M(\btau_{c'+1})\right] \,\rd s.
\end{equation*}
Conditioning on $\B(c')$ and evaluating the Gaussian integral, we find
\begin{equation*}
	J_c(\eta,\xi)
	=
	\int_\bbR\int_\bbR
	\re^{-s\xi}
	\frac{\re^{-\frac{(r-s)^2}{2(c'-c)}}}{\sqrt{2\pi(c'-c)}}
	\bbE_{\B(c')=r}\left[M(\btau_{c'})-M(\btau_{c'+1})\right] \,\rd r\,\rd s
	=
	\re^{\frac12(c'-c)\xi^2}J_{c'}(\eta,\xi).
\end{equation*}
\end{proof}

We next deduce the shift property of the periodic characteristic function.

\begin{cor}[Shift property of $\pchlim_\h$]\label{prop:translation_pchlim}
Let $\h\in\uc_1$, let $c_1,c_2\in\bbR$, and set
\begin{equation*}
	\h'(\XX):=\h(\XX-c_1)+c_2.
\end{equation*}
Then, for every $0<|\rz|<1$, $\eta\in\rR_\rz$, and $\xi\in\rL_\rz$,
\begin{equation}\label{eq:pchilmshftid}
	\pchlim_{\h'}(\eta,\xi)
	=
	\pchlim_\h(\eta,\xi)
	\re^{-\frac12c_1(\xi^2-\eta^2)+c_2(\eta-\xi)}.
\end{equation}
\end{cor}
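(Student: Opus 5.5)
The plan is to handle the vertical shift $c_2$ and the horizontal shift $c_1$ separately. Since $\h'=(\h(\cdot-c_1))+c_2$, it suffices to prove the two special cases, because the general factor is the product of the two.

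\emph{Vertical shift.} Take $c_1=0$, so $\h'=\h+c_2$. The hitting times for $\h'$ started from $\B(0)=s$ coincide with the hitting times for $\h$ of the process $\B-c_2$ started from $s-c_2$. Writing $\B=\tilde\B+c_2$, we have $e^{\eta\B(\btau)}=e^{\eta c_2}e^{\eta\tilde\B(\btau)}$, and the same holds at $\bntau$. Substituting $s=s'+c_2$ in the $s$-integral produces $e^{-c_2\xi}$. This gives the factor $e^{c_2(\eta-\xi)}$ directly, with no constraint relating $\eta$ and $\xi$ needed.

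\emph{Horizontal shift.} Take $c_2=0$, so $\h'(\XX)=\h(\XX-c_1)$ and $\h'(-\sfx)=\h(-(\sfx+c_1))$. The first step is to recast $\pchlim_\h$ in the language of Lemma~\ref{lm:change_start}. Because $\btau_1=\bntau$ and $\{\btau<1\}$ is exactly the event $\{\btau_0\neq\btau_1\}$, we get $M(\btau_0)-M(\btau_1)=(M(\btau)-M(\bntau))\mathbf 1_{\btau<1}$. Here $M(\btau_1)=0$ when $\bntau=\infty$, and on $\{\btau\ge1\}$ the two times are equal. Hence $\pchlim_\h=J_0$ for $\h$, which is why the indicator $\mathbf 1_{\btau<1}$ appears in the definition. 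Now consider Brownian motion $\B'(\sfx)=\B(\sfx+c_1)$ for $\sfx\ge0$, started at $\B(c_1)=s$. Its hitting times $\btau'_0$ and $\btau'_1$ for $\h'$ equal $\btau_{c_1}-c_1$ and $\btau_{c_1+1}-c_1$ for $\h$. Therefore
\begin{equation*}
M'(\btau'_k)=e^{\eta\B(\btau_{c_1+k})-\frac12\eta^2\btau_{c_1+k}}\,e^{\frac12\eta^2c_1},\qquad k=0,1,
\end{equation*}
which yields $\pchlim_{\h'}(\eta,\xi)=e^{\frac12 c_1\eta^2}J_{c_1}(\eta,\xi)$. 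Applying Lemma~\ref{lm:change_start} with $c=c_1$ and $c'=0$ gives $J_{c_1}=e^{-c_1\xi^2/2}J_0=e^{-c_1\xi^2/2}\pchlim_\h$, and hence the factor $e^{-\frac12c_1(\xi^2-\eta^2)}$. The hypothesis $\eta\in\rR_\rz$, $\xi\in\rL_\rz$ ensures $e^{-\eta^2/2}=e^{-\xi^2/2}$, which is exactly what the lemma requires.

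The only real care point is the bookkeeping in the horizontal step: checking that the $J$-formulation with time origin $c_1$ reproduces exactly the time-shifted indicator structure, and tracking the $e^{\frac12\eta^2c_1}$ factor coming from the time shift inside $M$. Absolute convergence needed to substitute under the integral is already covered by the estimates of Lemma~\ref{prop:plim_bound}, as noted in the proof of Lemma~\ref{lm:change_start}.
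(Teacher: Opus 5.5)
Your proposal is correct and follows essentially the same route as the paper. The vertical shift is handled by translating the Brownian motion and the integration variable, and the horizontal shift by a time-shifted Brownian motion giving $\pchlim_{\h'}(\eta,\xi)=e^{c_1\eta^2/2}J_{c_1}(\eta,\xi)$, followed by Lemma~\ref{lm:change_start} and the indicator-removal identity $J_0=\pchlim_\h$, which your observation $\{\btau<1\}=\{\btau_0\neq\btau_1\}$ justifies exactly as the paper does.
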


\begin{proof}
For a vertical shift, translating the Brownian motion and the integration variable in \eqref{eq:pch_lim} gives
\begin{equation}\label{eq:pch_vertical_shift}
	\pchlim_{\h+c_2}(\eta,\xi)
	=
	\re^{c_2(\eta-\xi)}\pchlim_\h(\eta,\xi).
\end{equation}

For a horizontal shift, let $\B$ be a Brownian motion indexed by $t\ge0$, and define
\begin{equation*}
	\widehat{\B}(t):=\B(t-c_1),
	\qquad t\ge c_1.
\end{equation*}
If
\begin{equation*}
	T_0:=\inf\{t\ge0:\B(t)\le\h(-t-c_1)\},
	\qquad
	T_1:=\inf\{t\ge1:\B(t)\le\h(-t-c_1)\},
\end{equation*}
then
\begin{equation*}
	T_0+c_1=\widehat{\btau}_{c_1},
	\qquad
	T_1+c_1=\widehat{\btau}_{c_1+1},
\end{equation*}
where the stopping times on the right are defined using $\widehat{\B}$ and the boundary $\h(-\cdot)$. Hence
\begin{equation*}
	\pchlim_{\h(\cdot-c_1)}(\eta,\xi)
	=
	\re^{c_1\eta^2/2}J_{c_1}(\eta,\xi),
\end{equation*}
where $J_{c_1}$ is defined using $\widehat{\B}$. Here the indicator $\mathbf{1}_{T_1<\infty}$ is removed by the convention $M(\infty)=0$, while the indicator $\mathbf{1}_{T_0<1}$ may be omitted because $T_0=T_1$ on $\{T_0\ge1\}$.

Since $\eta\in\rR_\rz$ and $\xi\in\rL_\rz$, we have $\re^{-\eta^2/2}=\rz=\re^{-\xi^2/2}$. Lemma~\ref{lm:change_start}, applied with $c=0$ and $c'=c_1$, gives
\begin{equation*}
	J_{c_1}(\eta,\xi)=\re^{-c_1\xi^2/2}J_0(\eta,\xi).
\end{equation*}
The same indicator-removal argument gives $J_0(\eta,\xi)=\pchlim_\h(\eta,\xi)$. Therefore,
\begin{equation}\label{eq:pch_horizontal_shift}
	\pchlim_{\h(\cdot-c_1)}(\eta,\xi)
	=
	\re^{-\frac12c_1(\xi^2-\eta^2)}\pchlim_\h(\eta,\xi).
\end{equation}
Combining \eqref{eq:pch_vertical_shift} and \eqref{eq:pch_horizontal_shift} proves \eqref{eq:pchilmshftid}.
\end{proof}

We can now prove the shift invariance property in Theorem~\ref{thm:main2}\textnormal{(g)}.

\begin{cor}\label{cor:shiftinvlmf}
Let $\h\in\uc_1$ and $m\ge1$. If $\h'(\XX)=\h(\XX-a)+b$ for some $a,b\in\bbR$, then
\begin{equation}\label{eq:Feqst}
	\mathbb{F}_{\h'}^{(m)}
	\bigl(\bm{\HH}+b(1,\ldots,1);\,(\XX_i+a,\TT_i)_{i=1}^m\bigr)
	=
	\mathbb{F}_\h^{(m)}
	\bigl(\bm{\HH};(\XX_i,\TT_i)_{i=1}^m\bigr).
\end{equation}
\end{cor}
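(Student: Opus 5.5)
The plan is to prove \eqref{eq:Feqst} first in case (i) of Definition~\ref{def:F}, by showing that the integrand $\rC_\h(\bz)\rD_\h(\bz)$ is unchanged, for each fixed $\bz$ on the nested contours, when $(\h,\bm{\HH},\bm{\XX})$ is replaced by $(\h',\bm{\HH}+b(1,\ldots,1),\bm{\XX}+a)$. Cases (ii) and (iii) then follow. The shift $\bm{\HH}\mapsto\bm{\HH}+b(1,\ldots,1)$ preserves $\dom^m_+$ and $\dom^m$, and it commutes with the boundary limit \eqref{eq:multi_time2}. Both sides of \eqref{eq:multi_time3} use the same permutation $\sigma$, because the times are unchanged. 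Throughout, $\TT_i$ is left alone.

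\emph{The factor $\rC$.} If $\ra$ lies in the effective domain of $\h$, then $\ra+a$ lies in that of $\h'$. Moreover
\[
\h'_{\ra+a}(\sfx)=\h(\sfx+\ra)+b-\h(\ra)-b=\h_\ra(\sfx),
\qquad
\h'(\ra+a)=\h(\ra)+b .
\]
By Corollary~\ref{lm:translation_rC} we may use this choice of normalization point, which gives $\rE_{\h'}(\rz_1)=e^{-bA_1(\rz_1)}\rE_\h(\rz_1)$. The function $\rC(\bz)$ in \eqref{eq:Climnoh} does not involve $\XX_i$. Shifting every $\HH_\ell$ by $b$ multiplies it by
\[
\prod_{\ell=1}^m e^{b(A_1(\rz_\ell)-A_1(\rz_{\ell+1}))}=e^{bA_1(\rz_1)},
\]
since the product telescopes and $A_1(\rz_{m+1})=A_1(0)=0$. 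This factor cancels the one from $\rE$, so $\rC_{\h'}=\rC_\h$.

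\emph{The factor $\rD$.} In \eqref{eq:Dn_limnoh}, the data $\XX,\HH$ enter only through the increments in $\mathrm f_\ell$. For $\ell\ge2$ these increments are unchanged. For $\ell=1$ we have $\XX_0=\HH_0=0$, so $\mathrm f_1(\zeta)$ picks up the factor $e^{\frac a2\zeta^2+b\zeta}$ when $\Re\zeta<0$ and $e^{-\frac a2\zeta^2-b\zeta}$ when $\Re\zeta>0$. Hence for $\xi\in\rL_{\rz_1}$ and $\eta\in\rR_{\rz_1}$,
\[
\mathrm f_1(\xi)\mathrm f_1(\eta)\ \text{is multiplied by}\ e^{\frac a2(\xi^2-\eta^2)+b(\xi-\eta)} .
\]
On the other side, Corollary~\ref{prop:translation_pchlim} with $c_1=a$ and $c_2=b$ gives
\[
\pchlim_{\h'}(\eta,\xi)=\pchlim_\h(\eta,\xi)\,e^{-\frac a2(\xi^2-\eta^2)+b(\eta-\xi)} .
\]
This factor splits as $u(\eta)v(\xi)$, with $u(\eta)=e^{\frac a2\eta^2+b\eta}$ and $v(\xi)=e^{-\frac a2\xi^2-b\xi}$. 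Therefore
\[
\det\bigl[\pchlim_{\h'}(\eta_i^{(1)},\xi_j^{(1)})\bigr]=\prod_i u(\eta_i^{(1)})\prod_j v(\xi_j^{(1)})\,\det\bigl[\pchlim_\h(\eta_i^{(1)},\xi_j^{(1)})\bigr].
\]
This is exactly the reciprocal of the factor picked up by $\prod_i\mathrm f_1(\xi_i^{(1)})\mathrm f_1(\eta_i^{(1)})$. (In fact the $a$-parts are individually trivial, since $e^{-\xi^2/2}=e^{-\eta^2/2}=\rz_1$, but the cancellation does not need this.)

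Consequently every summand of \eqref{eq:Dn_lim} is unchanged, so $\rD_{\h'}=\rD_\h$ term by term, with absolute convergence already established. Integrating over the contours then yields \eqref{eq:Feqst}. The only step with real content is the shift property of $\pchlim_\h$, and it is supplied by Corollary~\ref{prop:translation_pchlim}. The rest is bookkeeping, where the main care is matching the sign conventions of $\mathrm f_1$ on the two half-planes.
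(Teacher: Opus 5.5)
Your argument is correct and is essentially the paper's proof: reduce to case (i) of Definition~\ref{def:F}, cancel the factor $e^{bA_1(\rz_1)}$ coming from $\rC$ against the one coming from $\rE_{\h'}$ via Corollary~\ref{lm:translation_rC}, and cancel the change in $\mathrm f_1$ against Corollary~\ref{prop:translation_pchlim} summand by summand in $\rD_\h$. One small correction to your parenthetical aside: the relation $e^{-\xi^2/2}=e^{-\eta^2/2}$ only gives $\xi^2-\eta^2\in4\pi\ri\bbZ$, so $e^{\frac a2(\xi^2-\eta^2)}$ is in general a nontrivial phase when $a$ is not an integer, and the $a$-parts are therefore not individually trivial; you do not use this claim, so the proof is unaffected.
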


\begin{proof}
It is enough to consider the parameters in part~\textnormal{(i)} of Definition~\ref{def:F}. 
Write $\rC(\bz;\HH_1,\ldots,\HH_m)$ and $\rC_\h(\bz;\HH_1,\ldots,\HH_m)$ to indicate the dependence on the height parameters. By \eqref{eq:Climnoh}, 
$\rC(\bz;\HH_1+b,\ldots,\HH_m+b) =\re^{bA_1(\rz_1)}\rC(\bz;\HH_1,\ldots,\HH_m)$ and 
by Corollary~\ref{lm:translation_rC} and the definition of the energy function, $\rE_{\h'}(\rz_1)=\re^{-bA_1(\rz_1)}\rE_\h(\rz_1)$. 
Thus, 
\begin{equation*}
	\rC_{\h'}(\bz;\HH_1+b,\ldots,\HH_m+b)
	=
	\rC_\h(\bz;\HH_1,\ldots,\HH_m).
\end{equation*}

As for the $\rD$ factors, replacing every $\XX_i$ by $\XX_i+a$ and every $\HH_i$ by $\HH_i+b$ changes only the $\ell=1$ factors in \eqref{eq:f_lim}, multiplying $\mathrm f_1(\xi)$ and $\mathrm f_1(\eta)$ by $\re^{\frac12a\xi^2+b\xi}$ and $\re^{-\frac12a\eta^2-b\eta}$, 
respectively. On the other hand, Corollary~\ref{prop:translation_pchlim} multiplies the determinant in \eqref{eq:Dn_lim} by the inverse product of these factors. 
Thus every summand in \eqref{eq:series} is unchanged, and hence so is $\rD_\h(\bz)$. 
The result now follows from the contour integral formula \eqref{eq:multi_time1}. 
\end{proof}

\section{The PTASEP multipoint distribution formula}
\label{sec:PTASEP_formula}

We prove Theorem~\ref{thm:main} by taking the large-time, large-period limit of the PTASEP multipoint distribution formula. In this section, we recall the formula from~\cite{Baik-Liu21}. The initial condition enters through two functions, the \emph{PTASEP energy function} and the \emph{modified PTASEP characteristic function}, for which we derive new probabilistic representations in Section~\ref{sec:proof_PTASEP}.

\subsection{Bethe roots and initial-condition-dependent functions}

The following polynomial and its roots, introduced in~\cite{Baik-Liu18}, play a central role in the analysis of the PTASEP.

\begin{defn}\label{defn:Bethe}
Let $L>N$ be positive integers. The \emph{Bethe polynomial} associated with $z\in\bbC$ is
\begin{equation}
	q_z(w):=w^N(w+1)^{L-N}-z^L.
\end{equation}
Its roots are called the \emph{Bethe roots} associated with $z$. Define
\begin{equation}
	\bfr_{N,L}:=\left(\frac{N}{L}\right)^{N/L}\left(1-\frac{N}{L}\right)^{1-N/L}.
\end{equation}
\end{defn}

The Bethe roots lie on the level curve 
\begin{equation*}
    \big\{w\in\bbC:|w|^{N/L}|w+1|^{1-N/L}=|z| \big\}, 
\end{equation*}
a generalized Cassini oval. 
Its geometry depends on $|z|$ relative to $\bfr_{N,L}$:
\begin{itemize}
\item If $|z|<\bfr_{N,L}$, the level curve has two disjoint components, one in $\{\Re(w)<-N/L\}$ and the other in $\{\Re(w)>-N/L\}$.
\item If $|z|=\bfr_{N,L}$, the level curve has a self-intersection at $w=-N/L$ and is a generalized lemniscate of Bernoulli.
\item If $|z|>\bfr_{N,L}$, the level curve is a smooth Jordan curve.
\end{itemize}
See Figure~\ref{fig:roots_comparison}.

\begin{figure}[htbp]
     \centering
     \begin{subfigure}[b]{0.32\textwidth}
         \centering
         \includegraphics[width=\textwidth]{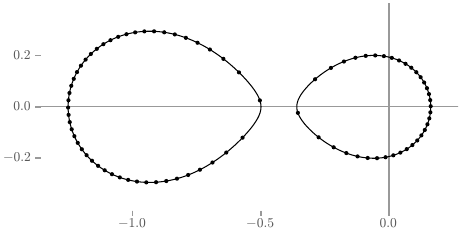}
         \caption{$z=0.99\bfr_{N,L}\re^{\ri\pi/20}$}
     \end{subfigure}
     \hfill
     \begin{subfigure}[b]{0.32\textwidth}
         \centering
         \includegraphics[width=\textwidth]{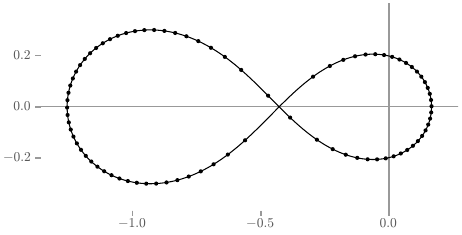}
         \caption{$z=\bfr_{N,L}\re^{\ri\pi/20}$}
     \end{subfigure}
     \hfill
     \begin{subfigure}[b]{0.32\textwidth}
         \centering
         \includegraphics[width=\textwidth]{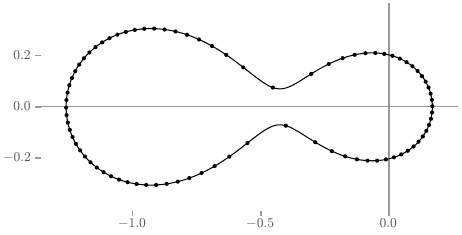}
         \caption{$z=1.01\bfr_{N,L}\re^{\ri\pi/20}$}
     \end{subfigure}
     \caption{Bethe roots and level curves for $w^N(w+1)^{L-N}=z^L$ with $N=36$, $L=84$. Here $\bfr_{N,L} =3^{3/7}4^{4/7}/7$.}
     \label{fig:roots_comparison}
\end{figure}

\begin{defn}\label{def:leftright}
Let $L>N$ be positive integers and let $0<|z|<\bfr_{N,L}$. Define the sets of \emph{left} and \emph{right Bethe roots} by
\begin{equation}\label{eq:lrBethe}
	\cL_z:=\{w\in\bbC:q_z(w)=0,\ \Re(w)<-N/L\},
	\qquad
	\cR_z:=\{w\in\bbC:q_z(w)=0,\ \Re(w)>-N/L\},
\end{equation}
and the \emph{left} and \emph{right Bethe polynomials} by
\begin{equation}
	q_{z,\rL}(w):=\prod_{u\in\cL_z}(w-u),
	\qquad
	q_{z,\rR}(w):=\prod_{v\in\cR_z}(w-v).
\end{equation}
\end{defn}

The sets $\cL_z$ and $\cR_z$ contain $L-N$ and $N$ roots, respectively; see~\cite[Section~7]{Baik-Liu18}.

\begin{defn}\label{def:cGlam}
For $Y=(y_1,\ldots,y_N)\in\bbZ^N$, define\footnote{This differs from the notation in~\cite{Baik-Liu21}, where the same function was written as
\begin{equation*}
	\cG_\lambda(W)
	=
	\frac{\det[w_i^{-j}(w_i+1)^{\lambda_j}]_{i,j=1}^N}
	{\det[w_i^{-j}]_{i,j=1}^N}
\end{equation*}
for $\lambda\in\bbZ^N$, with $\lambda_j=y_j+j$.} 
\begin{equation}
	\mcG_Y(W)
	:=
	\frac{\det[w_i^{-j}(w_i+1)^{y_j+j}]_{i,j=1}^N}
	{\det[w_i^{-j}]_{i,j=1}^N},
	\qquad W=(w_1,\ldots,w_N)\in\bbC^N.
\end{equation}
\end{defn}

Since $\mcG_Y(W)$ is symmetric in $w_1,\ldots,w_N$, we may regard $W$ as a set or a vector. 
This symmetric Laurent polynomial is related to the eigenfunctions of the PTASEP; see~\cite[Section~6]{Motegi-Sakai13}.
It is also related to the symmetric functions in~\cite{Motegi-Sakai13} and to the inhomogeneous Schur functions in~\cite{Borodin17}.

We now define the two initial-condition-dependent functions appearing in the distribution formula.

\begin{defn}\label{def:characteristic}
Let $L>N$ be positive integers, let $Y=(y_1,\ldots,y_N)\in\bbZ^N$, and let $0<|z|<\bfr_{N,L}$. Define the \emph{PTASEP energy function} associated with $Y$ by
\begin{equation}\label{eq:energyfunc}
	\cE_Y(z):=\mcG_Y(\cR_z)
	=
	\frac{\det[v_i^{-j}(v_i+1)^{y_j+j}]_{i,j=1}^N}
	{\det[v_i^{-j}]_{i,j=1}^N},
\end{equation}
where $\cR_z=\{v_1,\ldots,v_N\}$. If $\cE_Y(z)\neq0$, define\footnote{These functions appeared in~\cite[(3.8), (3.9)]{Baik-Liu21} as the ``global energy function'' and the ``characteristic function''; the latter did not include the factor $1/(v-u)$. We use ``PTASEP'' to distinguish these prelimit objects from their scaling limits and ``modified'' to indicate this additional factor. Later we also introduce normalized versions and the PTASEP characteristic function.}
the \emph{modified PTASEP characteristic function} associated with $Y$ by
\begin{equation}\label{eq:charY}
	\mdpch_Y(v,u;z)
	:=
	\frac{\mcG_Y(\cR_z\cup\{u\}\setminus\{v\})}
	{(v-u)\mcG_Y(\cR_z)},
	\qquad
	v\in\cR_z,
	\quad
	u\in\bbC\setminus\{-1,0,v\}.
\end{equation}
\end{defn}

As defined, $\mdpch_Y(v,u;z)$ may have poles at zeros of $\cE_Y(z)$. 
However, Theorem~\ref{thm:characteristic} shows that, for $Y\in\Pconfno_{N,L}$, it extends analytically through these possible poles.

For the periodic step initial condition $y_i=-i$, $1\le i\le N$, we find~\cite[Lemma~3.8]{Baik-Liu21}
\begin{equation*}
	\cE_{\mathrm{step}}(z)=1,
	\qquad
	\mdpch_{\mathrm{step}}(v,u;z)=\frac{1}{v-u}.
\end{equation*}

\subsection{The PTASEP multipoint distribution formula}

The following theorem gives the multipoint distribution formula for the PTASEP with an arbitrary periodic initial condition.

\begin{thm}[Theorem~3.1 of~\cite{Baik-Liu21}]
\label{thm:PTASEP_multi}
Let $L>N$ be positive integers, and consider $\mathrm{PTASEP}_Y(L,N)$ with $Y\in\Pconf$. Let $m\ge1$, and let $(k_i,t_i)_{i=1}^m$ be distinct points in $\bbZ\times[0,\infty)$ satisfying $0<t_1\le t_2\le\cdots\le t_m$. Then, for all $a_1,\ldots,a_m\in\bbZ$,
\begin{equation}\label{eq:PTASEP_multi}
	\prob_Y\left(\bigcap_{i=1}^m\{\sx_{k_i}(t_i)\ge a_i\}\right)
	=
	\oint\cdots\oint
	\mathscr{C}_Y(\bm{z})\mathscr{D}_Y(\bm{z})
	\frac{\rd z_1}{2\pi\ri z_1}\cdots\frac{\rd z_m}{2\pi\ri z_m},
\end{equation}
where $\bm{z}=(z_1,\ldots,z_m)$ and the contours are nested circles centered at the origin satisfying $0<|z_m|<\cdots<|z_1|<\bfr_{N,L}$. The functions $\mathscr{C}_Y(\bm{z})$ and $\mathscr{D}_Y(\bm{z})$ are defined in Definitions~\ref{def:Czoriginal} and~\ref{def:Dz}, respectively.
\end{thm}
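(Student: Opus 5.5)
This theorem is quoted from \cite[Theorem~3.1]{Baik-Liu21}, so the plan is to retrace that derivation rather than find a new argument. It has three stages.

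\textbf{Stage 1: finite-$N$ transition probabilities.} Start from the Bethe-ansatz transition probability of PTASEP on the configuration space $\Pconf$. We expand it in the eigenfunctions of the generator. These are indexed by $z$ and by the $N$-element subsets $W$ of the Bethe roots of $q_z$. For such a subset, the projection of the initial state $Y$ onto the eigenfunction is exactly the symmetric function $\mcG_Y(W)$ of Definition~\ref{def:cGlam}. A single contour integral in $z$ then collects the contributions of all eigenvalues sharing the same $z^L$. This produces a formula of the form
\[
\prob_Y(\sx(t)=X)=\oint \sum_{W}\mcG_Y(W)\,\overline{\mcG}_X(W)\,\prod_{w\in W}(\cdots)\,\frac{\rd z}{2\pi\ri z},
\]
where the product collects the time factor $e^{t w}$ and the Jacobian factors coming from $q_z'$.

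\textbf{Stage 2: chaining to multipoint probabilities.} We apply the Markov property at the times $t_1\le\cdots\le t_m$. At each intermediate time we sum over the configurations satisfying $\sx_{k_i}(t_i)\ge a_i$. We evaluate these sums via the Cauchy--Binet identity and geometric sums. The result pairs eigenfunction sets $W^{(i)}$ and $W^{(i+1)}$ with parameters $z_i, z_{i+1}$. This pairing produces Cauchy-type factors $1/(w-w')$ and the factors $(1-z_{i+1}^L/z_i^L)$. Convergence of the geometric sums requires the nested contours $|z_m|<\cdots<|z_1|$.

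\textbf{Stage 3: reorganisation into $\mathscr C_Y\mathscr D_Y$.} For $|z|<\bfr_{N,L}$ the Bethe roots split into the left set $\cL_z$ and the right set $\cR_z$. We write each $W^{(i)}$ as $\cR_{z_i}$ with finitely many roots exchanged for left roots. Summing over the number of exchanges gives a Fredholm-type series, and the terms with no exchanges assemble into $\mathscr C_Y$. The dependence on $Y$ survives only at the first time, through the factor $\mcG_Y(\cR_{z_1}\cup U\setminus V)$. Dividing by $\mcG_Y(\cR_{z_1})=\cE_Y(z_1)$ moves $\cE_Y$ into $\mathscr C_Y$. The remaining ratio is expanded by a Cauchy--Binet / Jacobi-type identity into the determinant $\det[\mdpch_Y(v_i,u_j;z_1)]$, which sits inside $\mathscr D_Y$.

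The main obstacle is Stage 3. Specifically, it requires showing that the multi-exchange ratio $\mcG_Y(\cR_{z_1}\cup U\setminus V)/\mcG_Y(\cR_{z_1})$ factors as a determinant of single-exchange quantities. We would prove this from the determinantal structure of $\mcG_Y$ via the Desnanot--Jacobi identity. A further difficulty is justifying that the apparent poles at zeros of $\cE_Y$ cancel.
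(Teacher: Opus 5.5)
The paper gives no proof of this statement; it imports the formula verbatim from \cite{Baik-Liu21}, and your outline is essentially the derivation given there. That outline runs as follows: a Cauchy--Binet expansion of the finite-$N$ transition probability over $N$-subsets of Bethe roots; chaining through the constrained intermediate sums with nested $|z_\ell|$; then expanding each $W^{(\ell)}$ around $\cR_{z_\ell}$, so that the $Y$-dependence sits only in $\mcG_Y(\cR_{z_1}\cup U\setminus V)/\cE_Y(z_1)$.

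The step you flag as the main obstacle is routine, though Desnanot--Jacobi alone only gives two exchanges; use its Sylvester generalization or argue directly. Write each new row as a combination of the old rows and apply Cramer's rule, as in the proof of Lemma~\ref{prop:key_identity}. This gives the $n$-exchange identity for $\det[w_i^{-j}(w_i+1)^{y_j+j}]$ and $\det[w_i^{-j}]$ simultaneously, and hence $\cE_Y(z_1)\det[\mdpch_Y(v_i,u_j;z_1)]=\mcG_Y(\cR_{z_1}\cup U\setminus V)\,\Cau(V;U)$. The right-hand side has no poles, so the apparent singularities at zeros of $\cE_Y$ cancel in $\mathscr{C}_Y\mathscr{D}_Y$.
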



In the next two definitions, we fix the data $Y$, $\bm{z}=(z_1,\ldots,z_m)$, $(k_i,t_i)_{i=1}^m$, and $a_1,\ldots,a_m$ as in Theorem~\ref{thm:PTASEP_multi}. 
For an ordered tuple $W=(w_1,\ldots,w_n)$ and finite sets $A,B\subset\bbC$, define
\begin{equation*}
	\Delta(W):=\prod_{1\le i<j\le n}(w_j-w_i),
	\qquad
	\Delta(A;B):=\prod_{a\in A}\prod_{b\in B}(a-b).
\end{equation*}

\begin{defn}[The function $\mathscr{C}_Y(\bm{z})$]
\label{def:Czoriginal}
Define
\begin{equation}\label{eq:CPTASEP}
\begin{aligned}
	\mathscr{C}_Y(\bm{z})
	:=\cE_Y(z_1)
	&\left[\prod_{\ell=1}^m\frac{\rG_\ell(z_\ell)}{\rG_{\ell-1}(z_\ell)}\right]
	\left[\prod_{\ell=1}^m
	\frac{\prod_{u\in\cL_{z_\ell}}(-u)^N\prod_{v\in\cR_{z_\ell}}(v+1)^{L-N}}
	{\Delta(\cR_{z_\ell};\cL_{z_\ell})}\right]\\
	&\quad\times
	\left[\prod_{\ell=2}^m\frac{z_{\ell-1}^L}{z_{\ell-1}^L-z_\ell^L}\right]
	\left[\prod_{\ell=2}^m
	\frac{\Delta(\cR_{z_\ell};\cL_{z_{\ell-1}})}
	{\prod_{u\in\cL_{z_{\ell-1}}}(-u)^N\prod_{v\in\cR_{z_\ell}}(v+1)^{L-N}}\right],
\end{aligned}
\end{equation}
where  $\cE_Y(z)$ is the PTASEP energy function from Definition~\ref{def:characteristic}, $\rG_0(z):=1$, and 
\begin{equation*}
	\rG_\ell(z)
	:=
	\prod_{u\in\cL_z}(-u)^{k_\ell}
	\prod_{v\in\cR_z}(v+1)^{-a_\ell-k_\ell}\re^{t_\ell v},
	\qquad 1\le\ell\le m,
\end{equation*}
\end{defn}

The function $\mathscr{D}_Y(\bm{z})$ admits both Fredholm determinant and series representations. We state only the series representation, which is the prelimit analogue of Definition~\ref{def:D_series}. 
The operator form is analogous to that in Section~\ref{sec:Fredholm_D}. For $1\le\ell\le m$, define
\begin{equation}\label{eq:f}
	f_\ell(w):=
	\begin{dcases}
	\frac{w^{k_\ell}(w+1)^{a_{\ell-1}+k_{\ell-1}}\re^{t_\ell w}}
	{w^{k_{\ell-1}}(w+1)^{a_\ell+k_\ell}\re^{t_{\ell-1}w}}
	\quad &\text{if }\Re(w)<-N/L,\\
	\frac{w^{k_{\ell-1}}(w+1)^{a_\ell+k_\ell}\re^{t_{\ell-1}w}}
	{w^{k_\ell}(w+1)^{a_{\ell-1}+k_{\ell-1}}\re^{t_\ell w}}
	\quad &\text{if }\Re(w)>-N/L,
	\end{dcases}
\end{equation}
with the convention $a_0=k_0=t_0:=0$. Define also
\begin{equation*}
	g_\ell(w):=\frac{H_{z_\ell}(w)^2}{H_{z_{\ell-1}}(w)H_{z_{\ell+1}}(w)},
\end{equation*}
where $z_0=z_{m+1}:=0$, $H_0(w):=1$, and
\begin{equation*}
	H_z(w):=
	\begin{cases}
	\displaystyle\frac{q_{z,\rL}(w)}{(w+1)^{L-N}},&\text{if }\Re(w)>-N/L,\\[6pt]
	\displaystyle\frac{q_{z,\rR}(w)}{w^N},&\text{if }\Re(w)<-N/L.
	\end{cases}
\end{equation*}
Finally, set
\begin{equation*}
	A_\ell(w):=f_\ell(w)g_\ell(w)\frac{w(w+1)}{Lw+N}.
\end{equation*}

\begin{defn}[Series formula for $\mathscr{D}_Y(\bm{z})$]
\label{def:Dz}
Let $\mdpch_Y$ be the modified PTASEP characteristic function from Definition~\ref{def:characteristic}, and recall the Cauchy determinant notation in \eqref{eq:Cauchydt}. For $\mathbf{n}=(n_1,\ldots,n_m)\in(\bbZ_{\ge0})^m$, set
\begin{equation*}
\begin{aligned}
	d^{(\mathbf{n})}(\bm{z};U,V)
	:=&
	\prod_{\ell=1}^{m-1}
	\left(1-\frac{z_{\ell+1}^L}{z_\ell^L}\right)^{n_\ell}
	\left(1-\frac{z_\ell^L}{z_{\ell+1}^L}\right)^{n_{\ell+1}}\\
	&\quad\times
	\left[\prod_{\ell=1}^m\prod_{i=1}^{n_\ell}
	A_\ell(u_i^{(\ell)})A_\ell(v_i^{(\ell)})\right]
	\left[\prod_{\ell=1}^m
	\Cau\left(U^{(\ell)}\sqcup V^{(\ell+1)};
	V^{(\ell)}\sqcup U^{(\ell+1)}\right)\right].
\end{aligned}
\end{equation*}
Here $U=U^{(1)}\sqcup\cdots\sqcup U^{(m)}$ and $V=V^{(1)}\sqcup\cdots\sqcup V^{(m)}$, where\footnote{We set $U^{(m+1)}=V^{(m+1)}:=\varnothing$.}
\begin{equation*}
	U^{(\ell)}=(u_1^{(\ell)},\ldots,u_{n_\ell}^{(\ell)}),
	\qquad
	V^{(\ell)}=(v_1^{(\ell)},\ldots,v_{n_\ell}^{(\ell)}),
	\qquad 1\le\ell\le m. 
\end{equation*}
For $Y\in\Pconfno_{N,L}$, set
\begin{equation}\label{eq:Dn}
	\mathscr{D}^{(\mathbf{n})}_Y(\bm{z})
	:=
	\sum_{\substack{
	U^{(\ell)}\in(\cL_{z_\ell})^{n_\ell},\ V^{(\ell)}\in(\cR_{z_\ell})^{n_\ell}\\
	1\le\ell\le m}}
	\det\left[\mdpch_Y(v_i^{(1)},u_j^{(1)};z_1)\right]_{1\le i,j\le n_1}
	d^{(\mathbf{n})}(\bm{z};U,V).
\end{equation}
Finally, define
\begin{equation*}
	\mathscr{D}_Y(\bm{z})
	:=
	\sum_{\mathbf{n}\in(\bbZ_{\ge0})^m}
	\frac{(-1)^{n_1+\cdots+n_m}}{(n_1!\cdots n_m!)^2}
	\mathscr{D}^{(\mathbf{n})}_Y(\bm{z}).
\end{equation*}
\end{defn}

\section{Probabilistic representations for the PTASEP energy and characteristic functions}
\label{sec:proof_PTASEP}

In this section, we derive probabilistic representations of the PTASEP energy function and the modified PTASEP characteristic function after simple algebraic normalizations. These representations are well suited to asymptotic analysis. The main results are Theorems~\ref{thm:characteristic} and~\ref{thm:energy}.

Throughout this section, let $L>N$ be positive integers and let $0<|z|<\bfr_{N,L}$. 
We begin by removing explicit algebraic factors from the two quantities that depend on the initial condition.

\begin{defn}\label{def:nepcf}
Let $\cE_Y(z)$ and $\mdpch_Y(v,u;z)$ be the PTASEP energy function and the modified PTASEP characteristic function from Definition~\ref{def:characteristic}. Define the \emph{normalized PTASEP energy function}
\begin{equation}\label{eq:nef}
    \ncE_Y(z)
    :=
    \frac{\prod_{u\in\cL_z}(-u)^N\prod_{v\in\cR_z}(v+1)^{L-N}}
    {\prod_{v\in\cR_z,\,u\in\cL_z}(v-u)}
    \,\cE_Y(z).
\end{equation}
If $\cE_Y(z)\neq0$, define the \emph{normalized PTASEP characteristic function} by
\begin{equation}\label{eq:npcf}
    \npch_Y(v,u;z)
    :=
    \frac{q_{z,\rL}(v)q_{z,\rR}(u)}{u^N(v+1)^{L-N}}
    \,\mdpch_Y(v,u;z) 
\end{equation}
for $v\in\cR_z$ and $u\in\bbC\setminus\{-1,0,v\}$. 
\end{defn}

We will use the following radii associated with the right and left Bethe roots.

\begin{defn}\label{defn:r12}
For $0<|z|<\bfr_{N,L}$, define
\begin{equation*}
    r_{1,N,L}(z):=\max_{v\in\cR_z}|v|,
    \qquad
    r_{2,N,L}(z):=\max_{u\in\cL_z}|u+1|.
\end{equation*}
\end{defn}

\begin{lm}
For $0<|z|<\bfr_{N,L}$,
\begin{equation*}
    r_{2,N,L}(z)<1-\frac{N}{L}<1-r_{1,N,L}(z).
\end{equation*}
\end{lm}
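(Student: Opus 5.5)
The plan is to read the roots off the level curve directly. Write $\rho:=N/L\in(0,1)$ and
\begin{equation*}
	\phi(w):=\rho\log|w|+(1-\rho)\log|w+1|,
\end{equation*}
so that every Bethe root lies on $\{\phi=\log|z|\}$, and $\log\bfr_{N,L}=\phi(-\rho)$. The inequalities to prove are equivalent to
\begin{equation*}
	|v|<\rho\ \text{ for all } v\in\cR_z,
	\qquad
	|u+1|<1-\rho\ \text{ for all } u\in\cL_z .
\end{equation*}
Both sets are finite, so strict pointwise bounds give strict bounds for the maxima $r_{1,N,L}(z)$ and $r_{2,N,L}(z)$.

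\emph{Step 1: two barrier circles.} On the circle $|w|=\rho$ we have $|w+1|\ge 1-|w|=1-\rho$. Hence
\begin{equation*}
	\phi(w)\ge \rho\log\rho+(1-\rho)\log(1-\rho)=\log\bfr_{N,L}>\log|z|
	\qquad\text{on } |w|=\rho .
\end{equation*}
Symmetrically, on the circle $|w+1|=1-\rho$ we have $|w|\ge 1-(1-\rho)=\rho$, so again $\phi(w)\ge\log\bfr_{N,L}>\log|z|$ there.

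\emph{Step 2: topology of the sublevel set.} The function $\phi$ is harmonic on $\bbC\setminus\{0,-1\}$, tends to $-\infty$ at $0$ and $-1$, and tends to $+\infty$ at $\infty$. By the minimum principle, every component of the open set $\{\phi<\log|z|\}$ is bounded and contains $0$ or $-1$.

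Let $A_0$ be the component containing $0$. It is connected and, by Step 1, does not meet the circle $|w|=\rho$, so $A_0\subset\{|w|<\rho\}$. In the same way, the component $A_1$ containing $-1$ lies in $\{|w+1|<1-\rho\}$. These two discs are disjoint, so $A_0\neq A_1$, and $\{\phi<\log|z|\}=A_0\cup A_1$.

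\emph{Step 3: locating the level curve.} Since $\phi$ is harmonic and nonconstant, every point with $\phi(w)=\log|z|$ is a limit of points where $\phi<\log|z|$. Therefore
\begin{equation*}
	\{\phi=\log|z|\}=\partial A_0\cup\partial A_1 .
\end{equation*}
Points of $\partial A_0$ satisfy $|w|\le\rho$. Equality $|w|=\rho$ is excluded, because Step 1 gives $\phi>\log|z|$ on that circle. So $\partial A_0\subset\{|w|<\rho\}$, and this disc lies in the half-plane $\{\Re(w)>-\rho\}$. Likewise $\partial A_1\subset\{|w+1|<1-\rho\}\subset\{\Re(w)<-\rho\}$.

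Since $\cR_z$ consists of the roots with $\Re(w)>-\rho$, these must lie on $\partial A_0$, giving $|v|<\rho$. Likewise the roots in $\cL_z$ lie on $\partial A_1$, giving $|u+1|<1-\rho$. This yields
\begin{equation*}
	r_{2,N,L}(z)<1-\frac{N}{L}<1-r_{1,N,L}(z).
\end{equation*}

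The main care needed is in Step 2: the claim that every component of the sublevel set contains a singularity, which is the minimum principle applied to the harmonic function $\phi$. Everything else is the elementary triangle inequality in Step 1.
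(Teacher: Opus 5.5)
Your proof is correct, but it follows a different route from the paper's. The paper argues pointwise. If some $v\in\cR_z$ had $|v|\ge\rho$, the half-plane condition $\Re(v)>-\rho$ would give $|v+1|^2=|v|^2+1+2\Re(v)>(1-\rho)^2$. Then $|v|^{\rho}|v+1|^{1-\rho}>\rho^{\rho}(1-\rho)^{1-\rho}=\bfr_{N,L}>|z|$, a contradiction. The bound for $\cL_z$ follows from the symmetry $w\mapsto -1-w$, $\rho\mapsto 1-\rho$.

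So by using $\Re(w)>-\rho$, the paper gets your Step~1 bound $\phi>\log\bfr_{N,L}$ on the whole region $\{\Re(w)>-\rho,\ |w|\ge\rho\}$, not just on the circle $|w|=\rho$. Nothing has to be propagated inward from the circle, so Steps~2--3 (minimum principle, components, boundary identification) are not needed.

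Your route does buy some structure. It shows that $\{\phi<\log|z|\}$ has exactly two components, one around $0$ inside $\{|w|<\rho\}$ and one around $-1$ inside $\{|w+1|<1-\rho\}$. This is the two-component picture of the level curve that the paper states without proof.

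Your Step~1 barrier estimate is also exactly the input for Rouch\'e's theorem. Since $|w^N(w+1)^{L-N}|\ge\bfr_{N,L}^L>|z|^L$ on both circles, $q_z$ has $N$ zeros in $\{|w|<\rho\}$ and $L-N$ zeros in $\{|w+1|<1-\rho\}$. Because $\deg q_z=L$, all Bethe roots lie in the two discs. That gives the lemma directly, and it also recovers the counts $|\cR_z|=N$ and $|\cL_z|=L-N$, which the paper quotes from earlier work.
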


\begin{proof}
Set $\alpha:=N/L$. If $q_z(w)=0$, then $|w|^{\alpha}|w+1|^{1-\alpha}=|z| <\bfr_{N,L}= \alpha^{\alpha}(1-\alpha)^{1-\alpha}$. 
Suppose $v\in\cR_z$ and $|v|\ge\alpha$. Since $\Re(v)>-\alpha$,
\begin{equation*}
    |v+1|^2=|v|^2+1+2\Re(v)>\alpha^2+1-2\alpha=(1-\alpha)^2.
\end{equation*}
Thus $|v+1|>1-\alpha$, contradicting the inequality $|v|^{\alpha}|v+1|^{1-\alpha}<\alpha^{\alpha}(1-\alpha)^{1-\alpha}$. Hence $|v|<\alpha$ for every $v\in\cR_z$, and therefore $r_{1,N,L}(z)<N/L$. The bound $r_{2,N,L}(z)<1-N/L$ follows by the same argument after replacing $w$ by $-1-w$ and $\alpha$ by $1-\alpha$.
\end{proof}

\subsection{Geometric random walks and hitting times}
\label{sec:probtheorems}

We first introduce the geometric random walk and the hitting times appearing in the probabilistic representations.

\begin{defn}\label{def:char}
Fix $0<\rho<1$. Let $(G_k)_{k\ge0}$ be the geometric random walk with transition probabilities
\begin{equation}\label{eq:transition_general}
    \prob(G_{k+1}=b\mid G_k=a)
    :=
    \frac{\rho}{1-\rho}(1-\rho)^{a-b}
    \mathbf{1}_{b-a\in\bbZ_{<0}},
\end{equation}
where $\bbZ_{<0}:=\{-1,-2,-3,\ldots\}$. Given a sequence $Y=(y_i)_{i\ge1}$, define the stopping times\footnote{Here $\tau$ and $\ntau$ denote stopping times of the geometric random walk and should not be confused with the time parameters $\tau_i$ from \eqref{eq:scaling_1}. When the dependence on the initial condition needs to be emphasized, we write $\tau_Y$ and $\tau_Y^*$, respectively.}
\begin{equation}\label{eq:hitting_time}
    \tau=\tau_Y:=\inf\{m\ge0:G_m>y_{m+1}\},
    \qquad
    \ntau=\tau_Y^*:=\inf\{m\ge N:G_m>y_{m+1}\}.
\end{equation}
\end{defn}

Thus, $\tau$ is the first time the random walk enters the strict epigraph of $Y$, while $\tau^*$ is the first such time at or after time $N$.

\begin{defn}\label{def:chardef2}
For a sequence $Y=(y_i)_{i\ge1}$, define the \emph{characteristic function} associated with $Y$ by
\begin{equation}\label{eq:ess_hitting}
    \rch_Y(v,u)
    :=
    \sum_{x\in\bbZ}
    \left(\frac{u+1}{1-\rho}\right)^x
    \bbE_{G_0=x}\left[
    \frac{(1-\rho)^{G_\tau}}{(v+1)^{G_\tau+1}}
    \left(\frac{-v(1-\rho)}{(v+1)\rho}\right)^\tau
    \mathbf{1}_{\tau<N}
    \right].
\end{equation}
If, in addition, $Y\in\Pconf$, define
\begin{equation}\label{eq:ess_hittingstar}
    \rch_Y^*(v,u)
    :=
    \sum_{x\in\bbZ}
    \left(\frac{u+1}{1-\rho}\right)^x
    \bbE_{G_0=x}\left[
    \frac{(1-\rho)^{G_\ntau}}{(v+1)^{G_\ntau+1}}
    \left(\frac{-v(1-\rho)}{(v+1)\rho}\right)^\ntau
    \mathbf{1}_{\tau<N,\,\ntau<\infty}
    \right],
\end{equation}
and the \emph{PTASEP characteristic function}
\begin{equation}\label{eq:mpchdefn}
    \mpch_Y(v,u):=\rch_Y(v,u)-\rch_Y^*(v,u).
\end{equation}
\end{defn}

We prove the well-definedness and basic properties of $\rch_Y$ and $\mpch_Y$ in Subsection~\ref{sec:grw}. When $\rho=1/2$, the function $\rch_Y$ agrees with the characteristic function introduced in~\cite[Theorem~3.4]{Liao-Liu25} for TASEP on the infinite line. The additional term $\rch_Y^*$ records the first hit at or after one full period and is the new feature of the periodic setting. 
The first main result identifies this probabilistic function with the normalized algebraic quantity in Definition~\ref{def:nepcf}.

\begin{thm}\label{thm:characteristic}
Let $0<|z|<\bfr_{N,L}$, and let $r_{1,N,L}(z)$ and $r_{2,N,L}(z)$ be as in Definition~\ref{defn:r12}. Let $Y\in\Pconf$, and let $\rho\in(0,1)$ satisfy
\begin{equation*}
    r_{1,N,L}(z)<\rho<1-r_{2,N,L}(z),
    \qquad
    \rho^N(1-\rho)^{L-N}>|z|^L.
\end{equation*}
Assume that $\cE_Y(z)\neq0$. Then, for every $v\in\cR_z$ and every $u$ satisfying $0<|u+1|<1-\rho$,
\begin{equation}\label{eq:char_rewriting}
    \npch_Y(v,u;z)=\mpch_Y(v,u),
\end{equation}
where $\mpch_Y$ is defined using the geometric random walk parameter $\rho$.
\end{thm}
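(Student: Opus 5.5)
The plan is to reduce \eqref{eq:char_rewriting} to a finite-dimensional interpolation problem and then check that the random-walk expression $\mpch_Y(v,u)$ solves it.

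\textbf{Algebraic side.} Write $M=[v_i^{-j}(v_i+1)^{y_j+j}]_{i,j=1}^N$ with rows indexed by $\cR_z=\{v_1,\ldots,v_N\}$. Note that $\det M=\cE_Y(z)\det[v_i^{-j}]\neq0$.

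Replacing the row of $v$ by $u$ and expanding along that row (Cramer's rule) gives
\begin{equation*}
\frac{\mcG_Y(\cR_z\cup\{u\}\setminus\{v\})}{\mcG_Y(\cR_z)}
=\frac{\det[w_i^{-j}]_{w\in\cR_z}}{\det[w_i^{-j}]_{w\in\cR_z\cup\{u\}\setminus\{v\}}}
\sum_{j=1}^N c_j(v)\,u^{-j}(u+1)^{y_j+j}.
\end{equation*}
Here the coefficients $c_j(v)$ are characterized by
\begin{equation*}
\sum_j c_j(v)\,v'^{-j}(v'+1)^{y_j+j}=\delta_{v,v'}, \qquad v'\in\cR_z.
\end{equation*}
The Vandermonde-type ratio in front is explicit: it is a product of $(v'-v)/(v'-u)$ over $v'\in\cR_z\setminus\{v\}$, times powers of $u/v$. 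Combined with the prefactor $q_{z,\rL}(v)q_{z,\rR}(u)/(u^N(v+1)^{L-N})$ and the factor $1/(v-u)$, all $u$-dependence except the sum cancels.

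The upshot is the following characterization. $\npch_Y(v,\cdot\,;z)$ is the unique function $\Phi(u)$ such that $\Phi(u)\,u^{N}\cdot(\text{explicit }v\text{-factor})$ lies in $\mathrm{span}\{u^{-j}(u+1)^{y_j+j}\}_{j=1}^N$, and this element vanishes at $\cR_z\setminus\{v\}$ and takes a prescribed value at $u=v$. Uniqueness holds because $\det M\neq0$. I would state this as a lemma first.

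\textbf{Probabilistic side.} Next I show that $\mpch_Y(v,u)$ has these properties.

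Step 1: establish convergence of the $x$-sum and the expectations. This uses the window $0<|u+1|<1-\rho$, the condition $r_1<\rho<1-r_2$, and the bound $|v|<N/L$. The factor $\bigl(-v(1-\rho)/((v+1)\rho)\bigr)^\tau$ is then controlled, and $\rho^N(1-\rho)^{L-N}>|z|^L$ makes the $\tau^*$ series (over successive periods) converge. This is the analogue of Lemma~\ref{prop:plim_bound}.

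Step 2: decompose according to $\tau=k<N$ and $G_\tau=g>y_{k+1}$. Before $\tau$ the walk stays weakly below $y_{1},\ldots,y_{k}$. Summing $\bigl((u+1)/(1-\rho)\bigr)^x$ against the geometric transition kernel, one step at a time, produces rational factors in $u$. The aim is to show that the $\rch_Y$ part equals a combination of $u^{-j}(u+1)^{y_j+j}$ plus a "boundary" remainder that is not in the span.

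Step 3: show that $\rch^*_Y$ cancels this remainder exactly. Here I use periodicity, $y_{i+N}=y_i-L$: restarting the walk after one period shifts positions by $-L$ and times by $N$. Together with the Bethe equation $v^N(v+1)^{L-N}=z^L$, this multiplies the relevant generating function by a factor equal to $1$ on $\cR_z$. This mirrors the telescoping in Lemma~\ref{lm:change_start}.

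Step 4: check the interpolation conditions. At $u=v'\in\cR_z$, the generating function in $x$ becomes a martingale-type identity for the walk: $(v'+1)^{G_m}\bigl(\ldots\bigr)^m$ is harmonic, by the choice of the weights in \eqref{eq:ess_hitting}. Optional stopping at $\tau$ and $\tau^*$ then yields $\rch_Y-\rch_Y^*=0$ for $v'\neq v$ and the correct constant for $v'=v$; the latter comes from the pole of the $x$-sum when $u\to v$.

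\textbf{Main obstacle.} I expect Step 3 combined with Step 4 to be the main difficulty, namely showing that the difference $\rch_Y-\rch_Y^*$, rather than either term alone, lands in the $N$-dimensional span and has the right values. This requires careful bookkeeping of the first-hit decomposition across periods. Absolute convergence is needed so that optional stopping at $\tau^*$, which may be infinite, is justified. I would first verify everything for the step initial condition $y_i=-i$, where $\npch=q_{z,\rL}(v)q_{z,\rR}(u)/(u^N(v+1)^{L-N}(v-u))$, to calibrate the constants.
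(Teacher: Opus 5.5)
Your Cramer-rule reduction is valid apart from a bookkeeping slip, but the probabilistic half, which carries the whole proof, has a genuine gap.

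The slip: there is no extra $u^N$. The Vandermonde ratio cancels the $u^{-N}$ in the prefactor, giving exactly $\npch_Y(v,u;z)=-\frac{Lv+N}{v(v+1)}P_v(u)$ with $P_v(u)=\sum_j c_j(v)u^{-j}(u+1)^{y_j+j}$ and $P_v(v')=\delta_{v,v'}$ on $\cR_z$.

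The gap: your characterization is in the variable $u$. You must therefore show that $u\mapsto\mpch_Y(v,u)$, which is only given by a series on $0<|u+1|<1-\rho$, continues to an element of the span with prescribed values at the nodes $v'\in\cR_z$.

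\begin{itemize}
\item The nodes lie outside the disk. The hypothesis $r_{1,N,L}(z)<\rho$ gives $|v'|<\rho$, hence $|v'+1|>1-\rho$. So every node lies outside the region where $\rch^*_Y$ is controlled, and ``set $u=v'$ and apply optional stopping'' in Step~4 has no meaning until the continuation is already known.
\item At $u=v$ it is worse. The $x>y_1$ part of $\rch_Y$ sums to $\frac{(u+1)^{y_1+1}}{(v+1)^{y_1+1}(v-u)}$, a genuine pole, while the required value $-\frac{Lv+N}{v(v+1)}$ is finite. The pole must cancel against $\rch^*_Y$, and the constant is a finite part, not something read off from the pole.
\item Step~3 does not supply the continuation. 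Shifting by one period multiplies the $v$-weights in \eqref{eq:ess_hitting} by $\frac{(-v)^N(v+1)^{L-N}}{\rho^N(1-\rho)^{L-N}}=\frac{(-1)^Nz^L}{\rho^N(1-\rho)^{L-N}}$. This has modulus $<1$ by hypothesis, not $1$. Combined with the factor from the generating function over starting points, it becomes $\frac{v^N(v+1)^{L-N}}{u^N(u+1)^{L-N}}$. That equals $1$ only when $u$ is itself a root of $q_z$, and inside your disk those are the left roots $\cL_z$, not your nodes.
\end{itemize}

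So membership of $\frac{v(v+1)}{Lv+N}\mpch_Y(v,\cdot)$ in the span is asserted without any mechanism.

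The missing idea is to read the same Cramer identity column-wise: fix $u$ in the disk and treat everything as a function of $v\in\cR_z$. Then $\alpha(v)=\frac{v(v+1)}{Lv+N}\npch_Y(v,u;z)$ is the unique solution of $\sum_{v\in\cR_z}v^{-i}(v+1)^{y_i+i}\alpha(v)=-u^{-i}(u+1)^{y_i+i}$ for $1\le i\le N$. No continuation in $u$ is ever needed.

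To verify that $\mpch_Y$ solves this system:
\begin{itemize}
\item Write the root sum as a contour integral around $\cR_z$, using analyticity in $v$ from Lemma~\ref{prop:mpchwelldefined}. Expand in powers of $z^L$ and shrink each term to a residue at $v=0$. Each residue turns the weights into the binomial martingale $X^{(I)}_n$ of Lemma~\ref{lem:martingalecomp}, with $I=kN+i$.
\item For $k=0$, the $\rch_Y$ part gives $(-1)^{i-1}(1-\rho)^x\binom{x-y_i-1}{i-1}\mathbf{1}_{x\ge y_i+i}$. The $\rch^*_Y$ part gives $0$ because $\tau^*\ge N\ge i$.
\item For $k\ge1$, periodicity $y_{kN+i}=y_i-kL$ together with optional stopping shows the $\rch_Y$ and $\rch^*_Y$ contributions coincide and cancel.
\item Finally, \eqref{eq:taylor} sums the $k=0$ term over $x$ to $-u^{-i}(u+1)^{y_i+i}$.
\end{itemize}
Periodicity thus enters as an order-by-order cancellation in $z^L$ between $\rch_Y$ and $\rch^*_Y$, not as a factor equal to $1$.
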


Once $\rho$ is fixed, the definition of $\mpch_Y(v,u)$ does not involve $z$.
In \eqref{eq:char_rewriting}, it is evaluated at $v\in\cR_z$. Since $\mpch_Y$ is defined without dividing by $\cE_Y(z)$, the right-hand side provides a natural extension of $\npch_Y$ through possible zeros of $\cE_Y$.

The second main result gives a Fredholm determinant representation of the normalized PTASEP energy function. We first introduce the operator appearing in the formula.

\begin{defn}\label{def:chardef2K}
Fix $\rho\in(0,1)$, and let $(G_k)_{k\ge0}$ be the geometric random walk with parameter $\rho$ from Definition~\ref{def:char}. For $|\rz|<1$, $Y\in\Pconf$ with $y_1<0$, and $x,y\in\bbZ$, define\footnote{It is straightforward to check that $I+S_\rz=(I-\rz H)^{-1}$ 
on $\ell^2(\bbZ)$, where $H(x,y):=\prob_{G_0=x}(G_N=-L+y)$.}
\begin{equation}\label{eq:formaS}
    S_{\rz}(x,y)
    :=
    \sum_{k=1}^\infty \rz^k\prob_{G_0=x}(G_{kN}=-kL+y),
    \qquad
    T_Y(x,y)
    :=
    \prob_{G_0=x}(G_N=-L+y-1,\ \tau<N).
\end{equation}
Define the operator $\Ke_{Y,\rz}$ on $\ell^2(\bbZ_{\le0})$ by
\begin{equation}\label{eq:onemrhoK}
    \Ke_{Y,\rz}
    :=
    -\rz\,\mathbf{1}_{\le0}\Sigma(I+S_{\rz})T_Y\mathbf{1}_{\le0},
\end{equation}
where $S_\rz$ and $T_Y$ denote the operators on $\ell^2(\bbZ)$ with the kernels  \eqref{eq:formaS}, and $\Sigma$ is the shift operator
\begin{equation*}
    (\Sigma f)(x):=f(x-1).
\end{equation*}
\end{defn}

\begin{thm}\label{thm:energy}
Let $0<|z|<\bfr_{N,L}$, and let $r_{1,N,L}(z)$ and $r_{2,N,L}(z)$ be as in Definition~\ref{defn:r12}. Let $\rho\in(0,1)$ satisfy
\begin{equation*}
    r_{1,N,L}(z)<\rho<1-r_{2,N,L}(z),
    \qquad
    \rho^N(1-\rho)^{L-N}>|z|^L,
\end{equation*}
and set
\begin{equation*}
    \rz:=\frac{z^L}{(-\rho)^N(1-\rho)^{L-N}}.
\end{equation*}
Then $|\rz|<1$. For every $Y\in\Pconf$ with $y_1<0$, the operator $\Ke_{Y,\rz}$ defined in Definition~\ref{def:chardef2K} using the geometric random walk parameter $\rho$ is trace class on $\ell^2(\bbZ_{\le0})$, and
\begin{equation}\label{eq:energy_fredholm}
    \ncE_Y(z)=\det(I-\Ke_{Y,\rz})_{\ell^2(\bbZ_{\le0})}. 
\end{equation}
\end{thm}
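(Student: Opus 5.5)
We first check $|\rz|<1$. This is immediate from $\rho^N(1-\rho)^{L-N}>|z|^L$, since $|\rz|=|z|^L/(\rho^N(1-\rho)^{L-N})$.

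For the identity we work algebraically with the finite $N\times N$ determinant defining $\cE_Y(z)=\mcG_Y(\cR_z)$. The first step is to write each entry $v_i^{-j}(v_i+1)^{y_j+j}$ through a generating function of the geometric walk. The one-step transition of $G$ has symbol
\[
\sum_{d\ge1}\frac{\rho}{1-\rho}(1-\rho)^{d}w^{-d}=\frac{\rho}{w-(1-\rho)}.
\]
After the change of variables $w=v+1$ and rescaling by $(1-\rho)$, the ratio $(v+1)^{y_j+j}/v^{j}$ becomes, up to the factor $(-v(1-\rho)/((v+1)\rho))^{j}$ visible in $\rch_Y$, a contour integral of $j$-step transition probabilities. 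The condition $r_1<\rho<1-r_2$ is exactly what lets us place a circle of radius $1-\rho$ around $-1$ that separates $\cR_z$ from $\cL_z$, so that these expansions converge. The Vandermonde denominator $\det[v_i^{-j}]$ is then removed by Cauchy--Binet / Andréief, in the spirit of the Lindström--Gessel--Viennot and Karlin--McGregor formulas used in \cite{Matetski-Quastel-Remenik21,Liao-Liu25}.

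Next we exploit the fact that the $v_i$ are exactly the right Bethe roots. Using $v^N(v+1)^{L-N}=z^L$, together with the resulting identity $\prod_{v\in\cR_z}$ versus $\prod_{u\in\cL_z}$, the normalizing prefactor in \eqref{eq:nef} converts the symmetric function over $\cR_z$ into a contour integral around $\cR_z$. The integrand is $1/q_z$ times a rational function, and expanding $q_z(w)^{-1}=\sum_{k\ge0} z^{kL}/(w^N(w+1)^{L-N})^{k+1}$ produces the series in $\rz$. Here the $N$-step transition densities evaluated at shift $-kL$ appear, and they assemble into $S_\rz$ and the resolvent $(I-\rz H)^{-1}$ from the footnote to Definition~\ref{def:chardef2K}. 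In this way $\ncE_Y(z)$ becomes a Fredholm determinant of an operator on $\ell^2(\bbZ)$ built from $N$-step walk transitions weighted by the initial data.

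The third step brings in the hitting time $\tau$. Following the MQR path-integral argument, we expand the determinant over $y_1,\dots,y_N$ as a sum over nonintersecting configurations and use the strict ordering $y_N<\dots<y_1$ with $y_1<0$. The reflection/stopping argument rewrites the dependence on $Y$ as "walk killed on hitting the epigraph of $Y$ before time $N$." Concretely, $\Pconf$-periodicity lets the $N$-step "no hit" operator $Q$ play the role that $\mathsf Q_\h$ plays in Proposition~\ref{result:KasQ}, with $T_Y=H\Sigma^{-1}-Q$. A factorization identity analogous to \eqref{eq:IRKEf} then gives $\det(I-\Ke_{Y,\rz})$ on $\ell^2(\bbZ_{\le0})$. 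The restriction to $\bbZ_{\le0}$ and the shift $\Sigma$ come from $y_1<0$ and from the strict-epigraph convention $G_m>y_{m+1}$.

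Trace class is proved as in Proposition~\ref{result:KasQ}(a): split at time $N/2$ into products of Hilbert--Schmidt operators, using geometric tails of $G$ and the upper bound on $y_i+iL/N$ implied by periodicity. I expect the main obstacle to be the third step. That step must turn the finite algebraic determinant into the stopped-walk kernel $T_Y$ with the precise shift and boundary conventions, and check that both sides agree as analytic functions. The check can be done first for $|z|$ small, where all series converge absolutely, and then extended by analytic continuation in $z$.
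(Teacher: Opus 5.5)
Your outline has the right endpoints and some correct ingredients: Cauchy--Binet to remove $\det[v_i^{-j}]$, converting the sum over $\cR_z$ into contour integrals, and the geometric expansion of $1/(u^N(u+1)^{L-N}-z^L)$, which is indeed where $S_\rz$ comes from. However, the step you flag as the main obstacle is not supplied, and the mechanism you sketch for it does not apply. The determinant $\det[v_i^{-j}(v_i+1)^{y_j+j}]$ is indexed by the Bethe roots with $Y$ fixed. There is no sum over configurations of $y_1,\dots,y_N$ to expand, and no Karlin--McGregor path structure on which a reflection argument could generate the event $\{\tau<N\}$. (Even in \cite{Matetski-Quastel-Remenik21}, the hitting time arises from solving a boundary-value problem for biorthogonal functions, not from a path expansion.) Invoking a factorization ``analogous to \eqref{eq:IRKEf}'' with $T_Y=H\Sigma^{-1}-Q$ is circular. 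That identity starts from a Fredholm determinant $\det(\I+\limKe_\h(\rz))$ and factors it, whereas the whole difficulty here is to pass from an $N\times N$ determinant to a Fredholm determinant in the first place.

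The missing argument goes as follows. After Cauchy--Binet, $\ncE_Y(z)=\det M_Y$ with $M_Y=P+Q$. Here $P(i,j)=\oint_0 v^{i-j-1}(v+1)^{L-N+y_j+j}\frac{\rd v}{2\pi\ri}$ is upper triangular with unit diagonal, and $Q$ is an integral over a contour $\Gamma_\rL$ enclosing $-1$ and $\cL_z$. Moving the $\cR_z$-contour to $\Gamma_\rL$ requires the integrand to be $O(v^{-2})$ at infinity, which is where $y_1<0$ is used. Then $\det M_Y=\det(I+P^{-1}Q)$. Expanding $1/(v-u)$ in powers of $(u+1)/(v+1)$ on $|u+1|<1-\rho<|v+1|$ writes $P^{-1}Q=BA$ with Hilbert--Schmidt factors $A:\ell^2\{1,\dots,N\}\to\ell^2(\bbZ_{\le0})$ and $B:\ell^2(\bbZ_{\le0})\to\ell^2\{1,\dots,N\}$, so $\det M_Y=\det(I+AB)$. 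Since $AB$ has rank at most $N$, trace class is automatic and your time-$N/2$ splitting is unnecessary.

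The real obstruction is that $P^{-1}$ involves implicitly defined biorthogonal functions $\alpha_i$. To eliminate them you need a function $H(v,u)$, analytic at $v=0$, satisfying the $N$ reproducing relations \eqref{eq:Hdefiningreal}. The probabilistic core of the theorem is that an explicit multiple of $\rch_Y$ does this:
\[
\oint_0 v^{-i}(v+1)^{y_i+i}\rch_Y(v,u)\frac{\rd v}{2\pi\ri}=-u^{-i}(u+1)^{y_i+i},\qquad 1\le i\le N.
\]
This is proved by optional stopping at $\tau\wedge(i-1)$ for the martingale $X_n^{(i)}=(1-\rho)^{G_n}\bigl(\frac{1-\rho}{\rho}\bigr)^n\binom{G_n-y_i-1}{i-n-1}\mathbf{1}_{G_n+n\ge y_i+i}$. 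Only after this does the residue computation in $v$, combined with the strong Markov property at $\tau$ (appending an $(N-\tau)$-step geometric transition), produce $-\rz\,T_Y$. The $u$-integral produces $\delta_{x=x'+1}+S_\rz(x-1,x')$. Neither the reproducing system nor the martingale appears in your proposal, and without them there is no route from $\det M_Y$ to $\det(I-\Ke_{Y,\rz})$.
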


Although the representations in Theorems~\ref{thm:characteristic} and~\ref{thm:energy} involve the geometric random walk parameter $\rho$, we show below that their values are independent of its choice whenever the corresponding admissibility conditions are satisfied. In the asymptotic analysis, however, we make the specific choice $\rho=N/L$.

\subsection{Proof outline}
\label{sec:pfofalg}

The proofs of Theorems~\ref{thm:characteristic} and~\ref{thm:energy} both consist of an algebraic reduction followed by a probabilistic solution of the resulting equations.

We first perform the algebraic reductions. For the normalized PTASEP characteristic function, Lemma~\ref{prop:key_identity} shows that $\npch_Y(v,u;z)$, viewed as a function of $v\in\cR_z$, is characterized as the unique solution of the finite system \eqref{eq:identity_generaldensity}. Thus, to prove Theorem~\ref{thm:characteristic}, it is enough to show that $\mpch_Y(v,u)$ satisfies the same system.

The reduction for the normalized PTASEP energy function is different. Lemma~\ref{lm:energy_factorize} first rewrites $\ncE_Y(z)$ as the finite determinant $\det(M_Y)$. Using the general result in Proposition~\ref{prop:orthogonalization}, Corollary~\ref{lm:MYassumeH} shows that this determinant can be converted into a Fredholm determinant on $\ell^2(\bbZ_{\le0})$, once one finds a function $H(v,u)$ satisfying the reproducing relations \eqref{eq:Hdefiningreal}.
We then obtain the required probabilistic solutions from the geometric random walk. 

We establish basic properties of $\rch_Y$ and $\mpch_Y$ in Section~\ref{sec:grw}, prove two reproducing identities in Section~\ref{sec:chmpchadv}, and complete the proofs of Theorems~\ref{thm:characteristic} and~\ref{thm:energy} in Section~\ref{sec:proof_char}.

\subsection{Algebraic characterization of the normalized PTASEP characteristic function}
\label{sec:npcfchar}

We show that $\npch_Y(v,u;z)$, viewed as a function of $v\in\cR_z$, is the unique solution of an explicit system of linear equations.

\begin{lm}\label{prop:key_identity}
Fix $u\in\bbC\setminus\{-1,0\}$ and $z$ with $0<|z|<\bfr_{N,L}$ such that $\cE_Y(z)\neq0$. Then
\begin{equation*}
    \alpha(v):=\frac{v(v+1)}{Lv+N}\npch_Y(v,u;z),
    \qquad v\in\cR_z,
\end{equation*}
is the unique solution of the system of $N$ linear equations
\begin{equation}\label{eq:identity_generaldensity}
    \sum_{v\in\cR_z}v^{-i}(v+1)^{y_i+i}\alpha(v)
    =
    -u^{-i}(u+1)^{y_i+i},
    \qquad 1\le i\le N.
\end{equation}
\end{lm}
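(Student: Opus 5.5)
The plan is to prove uniqueness and existence separately. Uniqueness comes from the fact that the coefficient matrix of \eqref{eq:identity_generaldensity} has nonzero determinant. Existence comes from solving the system by Cramer's rule and checking that the resulting formula is exactly $\frac{v(v+1)}{Lv+N}\npch_Y(v,u;z)$.

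\emph{Uniqueness.} Enumerate $\cR_z=\{v_1,\ldots,v_N\}$; these roots are distinct. The coefficient matrix is $M:=[v_k^{-i}(v_k+1)^{y_i+i}]_{i,k=1}^N$. By Definition~\ref{def:cGlam}, $\det M=\mcG_Y(\cR_z)\det[v_k^{-j}]$. Factoring out $v_k^{-N}$ turns the second determinant into a Vandermonde determinant in the $v_k$, which is nonzero. Since $\cE_Y(z)=\mcG_Y(\cR_z)\ne0$, the matrix $M$ is invertible and the solution is unique.

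\emph{Existence via Cramer's rule.} Let $c(u)$ be the column vector $(u^{-i}(u+1)^{y_i+i})_{i=1}^N$. Then $\alpha(v_k)=-\det M^{(k)}/\det M$, where $M^{(k)}$ is $M$ with its $k$-th column replaced by $c(u)$. Now $M^{(k)}$ is the same kind of matrix built on the tuple $W'$ obtained from $(v_1,\ldots,v_N)$ by putting $u$ in position $k$. Hence
\begin{equation*}
\det M^{(k)}=\mcG_Y(\cR_z\cup\{u\}\setminus\{v_k\})\,\det[w_i'^{-j}].
\end{equation*}
This uses that $\mcG_Y$ is symmetric, so the ordering affects only the Vandermonde factor. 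The ratio of the two power determinants is
\begin{equation*}
\frac{\det[w_i'^{-j}]}{\det[v_i^{-j}]}=\Bigl(\frac{v_k}{u}\Bigr)^{N}\prod_{v'\in\cR_z\setminus\{v_k\}}\frac{u-v'}{v_k-v'}=\Bigl(\frac{v_k}{u}\Bigr)^{N}\frac{q_{z,\rR}(u)}{(u-v_k)\,q_{z,\rR}'(v_k)}.
\end{equation*}

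\emph{Identifying the formula.} Since $q_z=q_{z,\rL}q_{z,\rR}$ and $q_{z,\rR}(v_k)=0$, we have $q_z'(v_k)=q_{z,\rL}(v_k)q_{z,\rR}'(v_k)$. Differentiating directly gives
\begin{equation*}
q_z'(w)=w^{N-1}(w+1)^{L-N-1}(Lw+N).
\end{equation*}
Therefore
\begin{equation*}
\frac{1}{q_{z,\rR}'(v)}=\frac{q_{z,\rL}(v)}{v^{N-1}(v+1)^{L-N-1}(Lv+N)}.
\end{equation*}
Substituting this into the Cramer expression gives
\begin{equation*}
\alpha(v)=\frac{v(v+1)}{Lv+N}\cdot\frac{q_{z,\rL}(v)q_{z,\rR}(u)}{u^N(v+1)^{L-N}}\cdot\frac{\mcG_Y(\cR_z\cup\{u\}\setminus\{v\})}{(v-u)\mcG_Y(\cR_z)}.
\end{equation*}
The sign is absorbed by turning $u-v$ into $v-u$. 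By \eqref{eq:charY} and \eqref{eq:npcf}, this is $\frac{v(v+1)}{Lv+N}\npch_Y(v,u;z)$.

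The only delicate step is the bookkeeping in the Vandermonde ratio. One must keep the same column position for $u$ so that no stray sign appears, and use the symmetry of $\mcG_Y$. Also, $u\notin\{-1,0\}$ keeps every entry finite. If $u$ happens to lie in $\cR_z\setminus\{v\}$, both sides vanish, or one can extend by continuity in $u$.
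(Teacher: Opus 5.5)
Your proposal is correct and follows the paper's proof essentially step for step. Both arguments apply Cramer's rule to the coefficient matrix, which is invertible because $\cE_Y(z)\neq0$ and the right Bethe roots are distinct. Both then rewrite the Vandermonde ratio through $q_{z,\rR}(u)$ and $q_{z,\rR}'(v_k)$, and conclude using $q_z'(v_k)=q_{z,\rL}(v_k)q_{z,\rR}'(v_k)$ together with the explicit formula for $q_z'$. Your remark about degenerate $u\in\cR_z$ is a harmless refinement that the paper leaves implicit.
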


\begin{proof}
Write $\cR_z=\{v_1,\ldots,v_N\}$ and set $\alpha_j:=\alpha(v_j)$. Then \eqref{eq:identity_generaldensity} becomes
\begin{equation*}
    \sum_{j=1}^N v_j^{-i}(v_j+1)^{y_i+i}\alpha_j
    =
    -u^{-i}(u+1)^{y_i+i},
    \qquad 1\le i\le N.
\end{equation*}
Since
\begin{equation*}
    \det\left[v_j^{-i}(v_j+1)^{y_i+i}\right]_{i,j=1}^N
    =
    \cE_Y(z)\det\left[v_j^{-i}\right]_{i,j=1}^N
\end{equation*}
is not zero, the system has a unique solution.

By Cramer's rule,
\begin{equation*}
    \alpha_k
    =
    \frac{
    \det\left[
    v_j^{-i}(v_j+1)^{y_i+i}\mathbf{1}_{j\ne k}
    -
    u^{-i}(u+1)^{y_i+i}\mathbf{1}_{j=k}
    \right]_{i,j=1}^N
    }{
    \det\left[v_j^{-i}(v_j+1)^{y_i+i}\right]_{i,j=1}^N
    }.
\end{equation*}
Using Definition~\ref{def:cGlam} and \eqref{eq:charY}, we obtain
\begin{equation*}
    \alpha_k
    =
    (v_k-u)\mdpch_Y(v_k,u;z)
    \frac{
    \det\left[
    v_j^{-i}\mathbf{1}_{j\ne k}
    -
    u^{-i}\mathbf{1}_{j=k}
    \right]_{i,j=1}^N
    }{
    \det\left[v_j^{-i}\right]_{i,j=1}^N
    }.
\end{equation*}
Since $q_{z,\rR}(w)=\prod_{j=1}^N(w-v_j)$, the Vandermonde determinant formula gives
\begin{equation*}
    \det\left[
    v_j^{-i}\mathbf{1}_{j\ne k}
    -
    u^{-i}\mathbf{1}_{j=k}
    \right]_{i,j=1}^N
    =
    \frac{v_k^Nq_{z,\rR}(u)}
    {u^Nq'_{z,\rR}(v_k)(v_k-u)}
    \det\left[v_j^{-i}\right]_{i,j=1}^N.
\end{equation*}
Therefore,
\begin{equation}\label{eq:alphak_cramer}
    \alpha_k
    =
    \frac{v_k^Nq_{z,\rR}(u)}
    {u^Nq'_{z,\rR}(v_k)}
    \mdpch_Y(v_k,u;z)
    =
    \frac{v_k^N(v_k+1)^{L-N}}
    {q'_{z,\rR}(v_k)q_{z,\rL}(v_k)}
    \npch_Y(v_k,u;z).
\end{equation}
Since $v_k\in\cR_z$, $q'_{z,\rR}(v_k)q_{z,\rL}(v_k)=q'_z(v_k)$. 
Moreover, $q'_z(w) =\frac{Lw+N}{w(w+1)}w^N(w+1)^{L-N}$. 
Hence \eqref{eq:alphak_cramer} becomes
\begin{equation*}
    \alpha_k
    =
    \frac{v_k(v_k+1)}{Lv_k+N}\npch_Y(v_k,u;z),
\end{equation*}
as desired.
\end{proof}

\subsection{Rewriting the normalized PTASEP energy function}
\label{sec:nefrewrite}

We first express $\ncE_Y(z)$ as an $N\times N$ determinant. We then convert this finite determinant into a Fredholm determinant on $\ell^2(\bbZ_{\le0})$ using a general  identity.

\begin{lm}\label{lm:energy_factorize}
The normalized PTASEP energy function satisfies
\begin{equation}\label{eq:MYentry}
	\ncE_Y(z)=\det(M_Y),
	\qquad
	M_Y(i,j):=
	\sum_{v\in\cR_z}
	v^{i-j-1}(v+1)^{L-N+y_j+j}
	\frac{v(v+1)}{Lv+N},
\end{equation}
for $1\le i,j\le N$.
\end{lm}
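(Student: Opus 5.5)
The identity is a finite-dimensional determinant computation: write the numerator determinant of $\cE_Y(z)$ as a product of two $N\times N$ matrices, one of which cancels the Vandermonde-type denominator and the other of which absorbs the algebraic prefactor in \eqref{eq:nef}. Write $\cR_z=\{v_1,\ldots,v_N\}$. The denominator $\det[v_k^{-j}]_{k,j=1}^N$ is a Vandermonde determinant in the variables $v_k^{-1}$. Rather than dividing by it directly, introduce the $N\times N$ matrix
\begin{equation*}
	P(i,k):=v_k^{i}\,\frac{v_k(v_k+1)^{L-N+1}}{Lv_k+N}\,\frac{1}{\,q'_{z,\rR}(v_k)\,}\cdot c_k ,
\end{equation*}
with $c_k$ chosen so that $\det P\cdot\det[v_k^{-j}(v_k+1)^{y_j+j}]$ reproduces $\det(M_Y)$. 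Concretely, we have
\begin{equation*}
	M_Y=\bigl[v_k^{i-1}\omega_k\bigr]_{i,k}\,\bigl[v_k^{-j}(v_k+1)^{y_j+j}\bigr]_{k,j},
	\qquad \omega_k:=\frac{v_k(v_k+1)^{L-N+1}}{Lv_k+N}.
\end{equation*}
This holds directly from the definition of $M_Y(i,j)$ as a sum over $v\in\cR_z$, since $v^{i-j-1}(v+1)^{L-N+y_j+j}\frac{v(v+1)}{Lv+N}=v^{i-1}\omega\cdot v^{-j}(v+1)^{y_j+j}$. Hence $\det M_Y=\prod_k\omega_k\cdot\Delta(\cR_z)\cdot\cE_Y(z)\det[v_k^{-j}]$, up to the sign conventions of the Vandermonde ordering.

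The remaining task is to check the purely algebraic identity
\begin{equation*}
	\prod_{k}\omega_k\;\det[v_k^{i-1}]\det[v_k^{-j}]
	=\frac{\prod_{u\in\cL_z}(-u)^N\prod_{v\in\cR_z}(v+1)^{L-N}}{\prod_{v\in\cR_z,u\in\cL_z}(v-u)}.
\end{equation*}
First, $\det[v_k^{i-1}]\det[v_k^{-j}]=\prod_k v_k^{-N}\cdot\det[v_k^{i-1}]\det[v_k^{N-j}]$. The product $\det[v_k^{i-1}]\det[v_k^{N-j}]$ is a signed square of the Vandermonde, namely $(-1)^{N(N-1)/2}\Delta(\cR_z)^2=\prod_k q'_{z,\rR}(v_k)$ with the appropriate sign, which can be verified by reversing columns. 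Second, as in the proof of Lemma~\ref{prop:key_identity}, we have $q'_{z,\rR}(v_k)q_{z,\rL}(v_k)=q_z'(v_k)=\frac{Lv_k+N}{v_k(v_k+1)}v_k^N(v_k+1)^{L-N}$. Therefore
\begin{equation*}
	\omega_k v_k^{-N}q'_{z,\rR}(v_k)=\frac{(v_k+1)^{L-N}}{q_{z,\rL}(v_k)}\cdot\frac{v_k^N(v_k+1)^{L-N}}{v_k^N(v_k+1)^{L-N}}=\frac{(v_k+1)^{L-N}}{q_{z,\rL}(v_k)}.
\end{equation*}
Since $\prod_k q_{z,\rL}(v_k)=\prod_{v,u}(v-u)$, the product over $k$ gives $\prod_v(v+1)^{L-N}/\prod_{v,u}(v-u)$.

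What remains is the factor $\prod_{u\in\cL_z}(-u)^N$. I expect this to be the main obstacle, since it amounts to bookkeeping of which exponent $L-N+\cdot$ to use, and the precise exponent in $\omega_k$ must be checked against \eqref{eq:MYentry}. I would resolve it using $\prod_{u\in\cL_z}(-u)\prod_{v\in\cR_z}(-v)=q_z(0)\cdot(\pm1)=\pm z^L$ together with $\prod_{v\in\cR_z}v^N$ appearing from the Vandermonde normalization, or equivalently $\prod_{u}(-u)^N=\prod_v q_{z,\rL}(-1)$-type evaluations. The overall sign $(-1)^{N(N-1)/2}$ and the powers $(-1)^{N(L-N)}$ must cancel, and I would confirm this on the periodic step case $y_i=-i$, where $\cE_Y=1$ is known.
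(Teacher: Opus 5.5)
Your strategy is the paper's: the factorization $M_Y=[v_k^{i-1}\omega_k]_{i,k}\,[v_k^{-j}(v_k+1)^{y_j+j}]_{k,j}$ (correct, and with no sign ambiguity since $\det[v_k^{i-1}]=\Delta(\cR_z)$), the signed Vandermonde square $\det[v_k^{i-1}]\det[v_k^{N-j}]=(-1)^{N(N-1)/2}\Delta(\cR_z)^2=\prod_k q'_{z,\rR}(v_k)$ (an exact equality, so no sign is left over), $q_z'=q_{z,\rL}q'_{z,\rR}$ on $\cR_z$, and $\prod_{v\in\cR_z}q_{z,\rL}(v)=\Delta(\cR_z;\cL_z)$.

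\textbf{Error in the evaluation of $\omega_k v_k^{-N}q'_{z,\rR}(v_k)$.} Your displayed computation drops a factor. Since $q'_{z,\rR}(v_k)=(Lv_k+N)v_k^{N-1}(v_k+1)^{L-N-1}/q_{z,\rL}(v_k)$, one gets
\[
\omega_k\, v_k^{-N}\,q'_{z,\rR}(v_k)=\frac{(v_k+1)^{2(L-N)}}{q_{z,\rL}(v_k)},
\]
not $(v_k+1)^{L-N}/q_{z,\rL}(v_k)$. With your value, what would remain to prove is $\prod_{u\in\cL_z}(-u)^N=1$, which is false. So neither sign bookkeeping nor rechecking the exponent in $\omega_k$ can close the argument. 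In fact no factor $(-1)^{N(L-N)}$ appears anywhere, and the exponent in $\omega_k$ is correct.

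\textbf{The missing identity.} With the correct value, the proof reduces exactly to
\[
\prod_{u\in\cL_z}(-u)^N=\prod_{v\in\cR_z}(v+1)^{L-N}.
\]
This is the one nontrivial input, and your proposal leaves it unproved; a check on the step initial condition would not establish it. The paper quotes it from \cite[(4.42)]{Baik-Liu19}. It also follows in two lines along the route you hint at:
\begin{itemize}
\item From $q_z(0)=-z^L=\prod_{u\in\cL_z}(-u)\prod_{v\in\cR_z}(-v)$ you get $\prod_{u}(-u)^N=(-1)^{N(N+1)}z^{NL}/\prod_{v}v^N=z^{NL}/\prod_{v}v^N$.
\item The Bethe equation $v^N(v+1)^{L-N}=z^L$, applied to each of the $N$ roots $v\in\cR_z$, gives $\prod_{v}(v+1)^{L-N}=z^{NL}/\prod_{v}v^N$.
\end{itemize}
Once this is inserted, your computation gives $\prod_k\omega_k\det[v_k^{i-1}]\det[v_k^{-j}]=\prod_{v}(v+1)^{2(L-N)}/\Delta(\cR_z;\cL_z)$. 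This equals the normalization factor in \eqref{eq:nef}, which completes the proof.
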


\begin{proof}
Write $\cR_z=\{v_1,\ldots,v_N\}$. By the Vandermonde determinant formula,
\begin{equation*}
	\frac{1}{\det[v_i^{-j}]_{i,j=1}^N}
	=
	(-1)^{\frac{N(N-1)}{2}}
	\frac{\prod_{v\in\cR_z}v^N}{\Delta(\cR_z)^2}
	\det[v_i^{j-1}]_{i,j=1}^N.
\end{equation*}
Hence, by \eqref{eq:energyfunc} and the Cauchy--Binet formula,
\begin{equation*}
	\cE_Y(z)
	=
	\frac{(-1)^{\frac{N(N-1)}{2}}\prod_{v\in\cR_z}v^N}
	{\Delta(\cR_z)^2}
	\det\left[
	\sum_{v\in\cR_z}v^{i-j-1}(v+1)^{y_j+j}
	\right]_{i,j=1}^N.
\end{equation*}
It remains to compare the normalization factors. For $v\in\cR_z$, since
$q_z'(v)=q_{z,\rL}(v)q'_{z,\rR}(v)$, we have
\begin{equation*}
	\frac{Lv+N}{v(v+1)}
	=
	\frac{q_{z,\rL}(v)q'_{z,\rR}(v)}
	{v^N(v+1)^{L-N}}.
\end{equation*}
Combining this with
\begin{equation*}
	\prod_{v\in\cR_z}q'_{z,\rR}(v)
	=
	(-1)^{N(N-1)/2}\Delta(\cR_z)^2,
	\quad
	\prod_{u\in\cL_z}(-u)^N
	=
	\prod_{v\in\cR_z}(v+1)^{L-N},
	\quad
	\prod_{v\in\cR_z}q_{z,\rL}(v)=\Delta(\cR_z;\cL_z),
\end{equation*}
from~\cite[(4.42) and (4.52)]{Baik-Liu19}, 
we obtain
\begin{equation*}
	\frac{(-1)^{\frac{N(N-1)}{2}}}{\Delta(\cR_z)^2}
	\left[\prod_{v\in\cR_z}\frac{v^N}{(v+1)^{L-N}}\right]
	\left[\prod_{v\in\cR_z}\frac{Lv+N}{v(v+1)}\right]
	=
	\frac{\Delta(\cR_z;\cL_z)}
	{\prod_{u\in\cL_z}(-u)^N\prod_{v\in\cR_z}(v+1)^{L-N}}.
\end{equation*}
Thus, multiplying by the normalization factor in \eqref{eq:nef} gives \eqref{eq:MYentry}.
\end{proof}

\begin{lm}\label{lm:MY_contour}
Let $Y\in\bbZ_{<0}^N$. Then the matrix entries in \eqref{eq:MYentry} can be written as
\begin{equation*}
	M_Y(i,j)
	=
	\oint_0 v^{i-j-1}(v+1)^{L-N+y_j+j}\frac{\rd v}{2\pi\ri}
	-
	\oint_{\Gamma_\rL}
	u^{i-j-1}
	\frac{z^L(u+1)^{L-N+y_j+j}}
	{u^N(u+1)^{L-N}-z^L}
	\frac{\rd u}{2\pi\ri},
\end{equation*}
where\footnote{Throughout the paper, $\oint_0$ denotes integration over a small positively oriented circle around $0$.} $\Gamma_\rL$ is a positively oriented simple closed contour enclosing $-1$ and all left Bethe roots, while excluding $0$ and all right Bethe roots.
\end{lm}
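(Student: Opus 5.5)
The plan is to write the finite sum over right Bethe roots as a residue sum and then deform contours. For $v\in\cR_z$ we have $q_z(v)=0$ and $q_z'(v)=\frac{Lv+N}{v(v+1)}v^N(v+1)^{L-N}$, so
\begin{equation*}
	\frac{v(v+1)}{Lv+N}=\frac{v^N(v+1)^{L-N}}{q_z'(v)} .
\end{equation*}
Hence each summand in \eqref{eq:MYentry} is the residue at $w=v$ of
\begin{equation*}
	F(w):=w^{i-j-1}(w+1)^{L-N+y_j+j}\,\frac{w^N(w+1)^{L-N}}{q_z(w)} .
\end{equation*}
It follows that $M_Y(i,j)=\oint_{\Gamma_\rR}F(w)\frac{\rd w}{2\pi\ri}$, where $\Gamma_\rR$ encloses the right Bethe roots together with $0$ but excludes $-1$ and the left roots. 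To justify this, I will check that $F$ has no pole at $0$. The exponent of $w$ in $F$ is $i-j-1+N\ge i-1\ge0$, so $F$ is analytic there. The two components of the level curve, separated by $\Re(w)=-N/L$, allow such a contour.

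Next I will use the decomposition
\begin{equation*}
	\frac{w^N(w+1)^{L-N}}{q_z(w)}=1+\frac{z^L}{q_z(w)} .
\end{equation*}
This splits $F$ into $w^{i-j-1}(w+1)^{L-N+y_j+j}$ plus $w^{i-j-1}(w+1)^{L-N+y_j+j}z^L/q_z(w)$. The first piece has only a possible pole at $0$ inside $\Gamma_\rR$, because $y_j<0$ makes the exponent $L-N+y_j+j$ possibly negative but its singularity sits at $-1$, outside $\Gamma_\rR$. Its integral is therefore $\oint_0$.

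For the second piece, I will deform $\Gamma_\rR$ outward to a large circle and then subtract the contribution enclosing $-1$ and the left roots. This gives
\begin{equation*}
	\oint_{\Gamma_\rR}=\oint_{|w|=R}-\oint_{\Gamma_\rL} .
\end{equation*}
The large-circle integral should vanish as $R\to\infty$. Its integrand has degree
\begin{equation*}
	(i-j-1)+(L-N+y_j+j)-L=i-1-N+y_j\le -2,
\end{equation*}
using $i\le N$ and $y_j\le-1$. This is exactly where the hypothesis $Y\in\bbZ_{<0}^N$ enters. Only the remaining $\Gamma_\rL$ term survives, and it carries the minus sign in the claimed formula.

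The main point to verify carefully is the pole bookkeeping at $0$ and $-1$. The second piece $w^{i-j-1}(\cdots)/q_z(w)$ may have a pole at $0$ when $i<j+1$. In that case $\Gamma_\rR$ already encloses $0$ in both the original and the deformed versions, so nothing changes. The role of $-1$ is handled by $\Gamma_\rL$, whose integrand may be singular there. Beyond this, the argument needs only the consistency of the contour topology and the decay estimate above.
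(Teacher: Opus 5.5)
Your proposal is correct and matches the paper's proof: the paper likewise writes $M_Y(i,j)$ as $\oint_{\Gamma_\rR}$ of the same integrand via the residue theorem, splits it using $1+z^L/q_z$, and discards the large-circle contribution by the $O(w^{-2})$ decay that follows from $i\le N$ and $y_j\le -1$. Your explicit checks that the integrand has no pole at $0$ and that $q_z'(v)=\frac{Lv+N}{v(v+1)}v^N(v+1)^{L-N}$ are details the paper leaves implicit.
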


\begin{proof}
Let $\Gamma_\rR$ be a positively oriented simple closed contour enclosing $0$ and all right Bethe roots, while excluding $-1$ and all left Bethe roots. By the residue theorem,
\begin{equation*}
	M_Y(i,j)
	=
	\oint_{\Gamma_\rR}
	v^{i-j-1}(v+1)^{L-N+y_j+j}
	\frac{v^N(v+1)^{L-N}}
	{v^N(v+1)^{L-N}-z^L}
	\frac{\rd v}{2\pi\ri}.
\end{equation*}
Using
\begin{equation*}
	\frac{v^N(v+1)^{L-N}}
	{v^N(v+1)^{L-N}-z^L}
	=
	1+\frac{z^L}{v^N(v+1)^{L-N}-z^L},
\end{equation*}
and noting 
\begin{equation*}
	v^{i-j-1}(v+1)^{L-N+y_j+j}
	\frac{1}{v^N(v+1)^{L-N}-z^L}
	=
	O(v^{-2})
\end{equation*}
as $v\to\infty$, since $i\le N$ and $y_j\le-1$, the result follows. 
\end{proof}

The next proposition converts an $N\times N$ determinant of the form in Lemma~\ref{lm:MY_contour} into a Fredholm determinant on $\ell^2(\bbZ_{\le0})$. 
After the first version of this paper was written, this identity was extended in~\cite{Liu-Tejaswi26}. In addition, \cite{Petrov26} identified the matrix on the left-hand side of \eqref{eq:orthogonalization} as a special case of a tilted Toeplitz matrix and obtained a different Fredholm determinant representation. 
The functions $\alpha_i(v)$ in \eqref{eq:alphafcnzl} appeared previously, in a slightly different form, in~\cite[Section~2.1.3]{Liu22}.

\begin{prop}\label{prop:orthogonalization}
Let $r_1,r_2\in(0,1)$ satisfy $r_2<1-r_1$. Let $F_1,\ldots,F_N$ be analytic in $\{v\in\bbC:|v|<r_1\}$ and satisfy $F_j(0)=1$ for all $j$. Let $\gamma$ be an oriented contour contained in $\{u\in\bbC:|u+1|<r_2\}$, and let $G_1,\ldots,G_N$ be continuous functions on $\gamma$. Suppose there exists a continuous function $H(v,u)$, analytic in $v$ at $v=0$, such that
\begin{equation}\label{eq:Hdefining}
	\oint_0 v^{-j}F_j(v)H(v,u)\frac{\rd v}{2\pi\ri}
	=
	-G_j(u),
	\qquad 1\le j\le N.
\end{equation}
Then, for every $r\in(r_2,1-r_1)$, the operator $K$ on $\ell^2(\bbZ_{\le0})$ with kernel
\begin{equation}\label{eq:kernel_doublecontour}
	K(x,y)
	:=
	r^{x-y}
	\oint_0\frac{\rd v}{2\pi\ri}
	\int_\gamma\frac{\rd u}{2\pi\ri}
	\frac{u^N(v+1)^{y-1}}{(u+1)^xv^N}
	H(v,u),
	\qquad x,y\in\bbZ_{\le0},
\end{equation}
is trace class, and
\begin{equation}\label{eq:orthogonalization}
	\det\left[
	\oint_0 v^{i-j-1}F_j(v)\frac{\rd v}{2\pi\ri}
	+
	\int_\gamma u^{i-1}G_j(u)\frac{\rd u}{2\pi\ri}
	\right]_{i,j=1}^N
	=
	\det(I+K)_{\ell^2(\bbZ_{\le0})}.
\end{equation}
\end{prop}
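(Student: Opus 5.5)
The plan is to reduce both sides of \eqref{eq:orthogonalization} to the same $N\times N$ determinant. On the left we factor out a unit upper-triangular matrix. On the right we observe that $K$ has rank at most $N$, and then apply $\det(I+XY)=\det(I+YX)$.

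\emph{Step 1 (left-hand side).} Set $A(i,j):=\oint_0 v^{i-j-1}F_j(v)\frac{\rd v}{2\pi\ri}=[v^{j-i}]F_j$. This vanishes for $i>j$ and equals $F_j(0)=1$ for $i=j$, so $A$ is unit upper triangular and $\det A=1$. Next expand $H(v,u)=\sum_{k\ge0}h_k(u)v^k$ near $v=0$; the $h_k$ are continuous on $\gamma$, by analyticity of $H$ in $v$ uniformly along the compact contour $\gamma$, which I will check or assume from the setting. Then \eqref{eq:Hdefining} gives
\begin{equation*}
G_j(u)=-\sum_{k\ge0}h_k(u)[v^{j-1-k}]F_j=-\sum_{k=0}^{N-1}h_k(u)A(k+1,j),
\end{equation*}
because $A(k+1,j)=0$ for $k+1>j$. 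Hence the second summand in the matrix of \eqref{eq:orthogonalization} equals $-(PA)(i,j)$, where
\begin{equation*}
P(i,l):=\int_\gamma u^{i-1}h_{l-1}(u)\frac{\rd u}{2\pi\ri},\qquad 1\le i,l\le N.
\end{equation*}
The left-hand side is therefore $\det((I-P)A)=\det(I-P)$.

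\emph{Step 2 (finite rank of $K$).} In \eqref{eq:kernel_doublecontour} the $v$-integral only sees the terms of $H$ of degree $<N$, since $(v+1)^{y-1}$ is analytic at $0$. This gives
\begin{equation*}
\oint_0 v^{-N}(v+1)^{y-1}H(v,u)\frac{\rd v}{2\pi\ri}=\sum_{k=0}^{N-1}h_k(u)\binom{y-1}{N-1-k}.
\end{equation*}
Consequently $K(x,y)=\sum_{k=0}^{N-1}a_k(x)b_k(y)$, with
\begin{equation*}
a_k(x)=r^{x}\int_\gamma u^N(u+1)^{-x}h_k(u)\frac{\rd u}{2\pi\ri},\qquad b_k(y)=r^{-y}\binom{y-1}{N-1-k}.
\end{equation*}
For $x\le0$ we have $|a_k(x)|\le C(r_2/r)^{|x|}$ because $|u+1|<r_2$ on $\gamma$. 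Also $|b_k(y)|\le C|y|^{N}r^{|y|}$. Both families are therefore in $\ell^2(\bbZ_{\le0})$, and $K$ is a finite-rank, hence trace class, operator. Only $r>r_2$ and $r<1$ are needed here. The condition $r<1-r_1$ enters in the next step, through the requirement $|u+1|<|w+1|$ on the $w$-contour.

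\emph{Step 3 (the main computation).} By $\det(I+XY)=\det(I+YX)$ we get $\det(I+K)=\det(I+M)_{N\times N}$, with $M(k,l)=\sum_{x\le0}b_k(x)a_l(x)$. The factors $r^{\pm x}$ cancel, so
\begin{equation*}
M(k,l)=\int_\gamma u^N h_l(u)\sum_{x\le0}\binom{x-1}{N-1-k}(u+1)^{-x}\,\frac{\rd u}{2\pi\ri}.
\end{equation*}
I will evaluate the inner sum by writing the binomial as a residue, $\binom{x-1}{N-1-k}=\oint_0 w^{k-N}(w+1)^{x-1}\frac{\rd w}{2\pi\ri}$. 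Take the $w$-contour to be a circle of radius less than $r_1$. Then $|w+1|>1-r_1>r_2>|u+1|$, so the geometric series $\sum_{x\le0}\bigl((u+1)/(w+1)\bigr)^{-x}$ converges and equals $(w+1)/(w-u)$. The inner sum thus becomes
\begin{equation*}
\oint_0\frac{w^{k-N}}{w-u}\frac{\rd w}{2\pi\ri}=-u^{k-N},
\end{equation*}
obtained by expanding in $w/u$, which is legitimate because $|u|>1-r_2>r_1$ on $\gamma$. Hence $M(k,l)=-\int_\gamma u^{k}h_l(u)\frac{\rd u}{2\pi\ri}=-P(k+1,l+1)$, and $\det(I+M)=\det(I-P)$, which matches Step 1.

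I expect the main obstacle to be the justification of the interchanges, not the algebra. Three points need care. First, summing over $x$ inside the $w$- and $u$-integrals; this rests on the strict separation $|u+1|<r_2<1-r_1<|w+1|$. Second, the uniform convergence of the expansion of $H$ in $v$ along $\gamma$. Third, checking the index bookkeeping between $k=0,\dots,N-1$ and $i=1,\dots,N$, which must be exact for $M=-P$ to hold entrywise.
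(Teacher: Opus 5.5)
Your proof is correct, but it takes a different route from the paper's.

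\textbf{The paper's route.} It splits the matrix into its unit upper-triangular $v$-part and its $\gamma$-part. It inverts the triangular part explicitly through the biorthogonal functions $\alpha_i(v)=\sum_{k\ge i}c_{ik}v^{k-1}/F_k(v)$. It then expands $1/(v-u)$ in powers of $(u+1)/r$ and $r/(v+1)$, which factors the resulting $N\times N$ matrix through $\ell^2(\bbZ_{\le0})$. Only at the very end does it invoke \eqref{eq:Hdefining}, to show that the composed kernel equals $K$. This last step uses an auxiliary function $E(v)$ whose pole at $0$ is removed by biorthogonality.

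\textbf{Your route.} You use \eqref{eq:Hdefining} at the outset. In terms of the Taylor coefficients $h_k$, it says exactly that the $\gamma$-part equals $-PA$, so the left side is $\det(I-P)$ and no inversion is needed. You then read off that $K$ has rank at most $N$ with explicit factors $a_k,b_k$. Finally you evaluate $\sum_x b_k(x)a_l(x)$ with one geometric series and one residue.

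\textbf{What each buys.} Your argument is shorter and more transparent, and it shows slightly more:
\begin{itemize}
\item both sides depend on $H$ only through $h_0,\dots,h_{N-1}$;
\item because the weights $r^{\pm x}$ cancel in the $N\times N$ matrix, the identity holds for every $r\in(r_2,1)$.
\end{itemize}
The paper's argument, before $H$ is ever used, gives a Fredholm representation $\det(I+AB)$ valid for arbitrary continuous $G_j$. The function $H$ serves only to bring the kernel into the double-contour form \eqref{eq:kernel_doublecontour}.

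\textbf{Minor corrections.}
\begin{itemize}
\item Your remark that $r<1-r_1$ ``enters'' Step~3 is misattributed. By then $r$ has disappeared. What you actually use is $r_2<1-r_1$, and a $w$-circle of radius less than $1-r_2$ would suffice even without that.
\item The bound on $b_k(y)$ should read $C(1+|y|)^N r^{|y|}$, since $b_k(0)\neq0$.
\item You do not need uniform convergence of the Taylor series of $H$ along $\gamma$. You only need continuity of $h_0,\dots,h_{N-1}$ on $\gamma$. This follows from having a common $v$-circle for all $u\in\gamma$, which is already implicit in the definition of $K$; the paper's step $\oint_0 H E=0$ relies on the same thing.
\end{itemize}
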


\begin{proof}
Denote the matrix on the left-hand side of \eqref{eq:orthogonalization} by $R$, and write $R=P+Q$, where
\begin{equation*}
	P(i,j):=\oint_0 v^{i-j-1}F_j(v)\frac{\rd v}{2\pi\ri},
	\qquad
	Q(i,j):=\int_\gamma u^{i-1}G_j(u)\frac{\rd u}{2\pi\ri}.
\end{equation*}
Since $P$ is upper triangular with diagonal entries equal to $1$, we have
$\det(R)=\det(I+P^{-1}Q)$. 

We first compute $P^{-1}$. Let $\alpha_1,\ldots,\alpha_N$ be functions of the form
\begin{equation}\label{eq:alphafcnzl}
	\alpha_i(v)=\sum_{k=i}^N c_{ik}\frac{v^{k-1}}{F_k(v)}
\end{equation}
satisfying
\begin{equation}\label{eq:lineareq}
	\oint_0\alpha_i(v)\frac{F_j(v)}{v^j}\frac{\rd v}{2\pi\ri}
	=
	\delta_{ij},
	\qquad 1\le i,j\le N.
\end{equation}
The coefficients $c_{ik}$ are determined successively by an upper-triangular system, with $c_{ii}=1$. Hence the functions $\alpha_i$ exist and are unique, and $\alpha_i$ has a zero of order exactly $i-1$ at $v=0$.
We claim that
\begin{equation*}
	P^{-1}(i,j)
	=
	\oint_0\alpha_i(v)v^{-j}\frac{\rd v}{2\pi\ri}.
\end{equation*}
Indeed, for sufficiently small $0<\epsilon<\epsilon'$,
\begin{equation*}
\begin{aligned}
	\sum_{k=1}^N
	\left(\oint_{|v|=\epsilon}\alpha_i(v)v^{-k}\frac{\rd v}{2\pi\ri}\right)
	P(k,j)
	&=
	\oint_{|v|=\epsilon}\frac{\rd v}{2\pi\ri}
	\oint_{|w|=\epsilon'}\frac{\rd w}{2\pi\ri}
	\frac{\alpha_i(v)w^{-j}F_j(w)}{v-w}
	\left(1-\frac{w^N}{v^N}\right).
\end{aligned}
\end{equation*}
The contribution from the first term in the parenthesis vanishes because its integrand is analytic in $v$ at $0$. For the second term, the residue at $w=v$ gives
\begin{equation*}
	-
	\oint_{|v|=\epsilon}\frac{\rd v}{2\pi\ri}
	\oint_{|w|=\epsilon'}\frac{\rd w}{2\pi\ri}
	\frac{\alpha_i(v)w^{N-j}F_j(w)}{(v-w)v^N}
	=
	\oint_{|v|=\epsilon}\alpha_i(v)v^{-j}F_j(v)\frac{\rd v}{2\pi\ri}
	=
	\delta_{ij}.
\end{equation*}
This proves the claim.

Summing over the intermediate index gives
\begin{equation*}
	(P^{-1}Q)(i,j)
	=
	\oint_0\frac{\rd v}{2\pi\ri}
	\int_\gamma\frac{\rd u}{2\pi\ri}
	\frac{\alpha_i(v)G_j(u)}{v-u}
	\left(1-\frac{u^N}{v^N}\right)
	=
	-\oint_0\frac{\rd v}{2\pi\ri}
	\int_\gamma\frac{\rd u}{2\pi\ri}
	\frac{\alpha_i(v)G_j(u)u^N}{(v-u)v^N},
\end{equation*}
where the term without $u^N/v^N$ vanishes because it is analytic in $v$ at $0$.

Choose the $v$-contour small enough that $|v|<r_1$. Since $r\in(r_2,1-r_1)$, we find that $|u+1|<r<|v+1|$ 
for $u\in\gamma$ and $v$ on the $v$-contour. Expanding
\begin{equation*}
	\frac{1}{v-u}
	=
	\frac{1}{v+1}
	\sum_{k=0}^\infty
	\left(\frac{u+1}{r}\right)^k
	\left(\frac{r}{v+1}\right)^k,
\end{equation*}
we obtain
\begin{equation*}
	(P^{-1}Q)(i,j)
	=
	\sum_{x\in\bbZ_{\le0}}B(i,x)A(x,j),
\end{equation*}
where
\begin{equation*}
	B(i,x)
	:=
	r^{-x}
	\oint_0
	\frac{\alpha_i(v)(v+1)^{x-1}}{v^N}
	\frac{\rd v}{2\pi\ri},
	\qquad
	A(x,j)
	:=
	-r^x
	\int_\gamma
	\frac{u^NG_j(u)}{(u+1)^x}
	\frac{\rd u}{2\pi\ri}.
\end{equation*}
Viewing $A:\ell^2\{1,\ldots,N\}\to\ell^2(\bbZ_{\le0})$ and
$B:\ell^2(\bbZ_{\le0})\to\ell^2\{1,\ldots,N\}$ as operators with these kernels, we have $\det(R)=\det(I+BA)$. 
Since $\alpha_1,\ldots,\alpha_N$ and $G_1,\ldots,G_N$ are bounded on the relevant contours, and $r\in(r_2,1-r_1)$, we find that $A$ and $B$ are Hilbert--Schmidt. 
Thus, $AB$ and $BA$ are trace class and 
\begin{equation*}
	\det(R)=\det(I+AB).
\end{equation*}

It remains to evaluate the kernel of $AB$. For $x,y\in\bbZ_{\le0}$,
\begin{equation*}
	(AB)(x,y)
	=
	-r^{x-y}
	\int_\gamma\frac{\rd u}{2\pi\ri}
	\frac{u^N}{(u+1)^x}
	\oint_0\frac{\rd w}{2\pi\ri}
	\frac{(w+1)^{y-1}}{w^N}
	\sum_{i=1}^N\alpha_i(w)G_i(u).
\end{equation*}
Using the formula \eqref{eq:Hdefining} for $G_i(u)$, we obtain
\begin{equation*}
\begin{aligned}
	(AB)(x,y)
	=
	r^{x-y}
	\int_\gamma\frac{\rd u}{2\pi\ri}
	\frac{u^N}{(u+1)^x}
	\oint_0\frac{\rd v}{2\pi\ri}
	H(v,u)
	\sum_{i=1}^N
	\left(
	\oint_0\frac{(w+1)^{y-1}\alpha_i(w)}{w^N}
	\frac{\rd w}{2\pi\ri}
	\right)
	\frac{F_i(v)}{v^i}.
\end{aligned}
\end{equation*}
Define
\begin{equation*}
	E(v)
	:=
	\frac{(v+1)^{y-1}}{v^N}
	-
	\sum_{i=1}^N
	\left(
	\oint_0
	\frac{(w+1)^{y-1}\alpha_i(w)}{w^N}
	\frac{\rd w}{2\pi\ri}
	\right)
	\frac{F_i(v)}{v^i}.
\end{equation*}
The function $E(v)$ has a pole at $v=0$ of order at most $N$. By \eqref{eq:lineareq},
\begin{equation*}
	\oint_0E(v)\alpha_j(v)\frac{\rd v}{2\pi\ri}=0,
	\qquad 1\le j\le N.
\end{equation*}
Since $\alpha_j(v)=v^{j-1}+O(v^j)$ as $v\to0$, it follows that $E$ is analytic at $v=0$. Since $H(v,u)$ is also analytic there,
\begin{equation*}
	\oint_0H(v,u)E(v)\frac{\rd v}{2\pi\ri}=0.
\end{equation*}
Consequently,
\begin{equation*}
	(AB)(x,y)
	=
	r^{x-y}
	\int_\gamma\frac{\rd u}{2\pi\ri}
	\frac{u^N}{(u+1)^x}
	\oint_0\frac{\rd v}{2\pi\ri}
	H(v,u)\frac{(v+1)^{y-1}}{v^N}
	=
	K(x,y).
\end{equation*}
This completes the proof.
\end{proof}

The principal consequence of Proposition~\ref{prop:orthogonalization} is the following.

\begin{cor}\label{lm:MYassumeH}
Let $Y\in\bbZ_{<0}^N$. Let $0<|z|<\bfr_{N,L}$, and let $r_{1,N,L}(z)$ and $r_{2,N,L}(z)$ be as in Definition~\ref{defn:r12}. Fix
\begin{equation*}
	r\in\bigl(r_{2,N,L}(z),1-r_{1,N,L}(z)\bigr),
\end{equation*}
and let $\Gamma_\rL$ be a positively oriented simple closed contour contained in
\begin{equation*}
	\{u\in\bbC:|u+1|<r\}
\end{equation*}
that encloses all left Bethe roots and $-1$. 
Suppose there exists a continuous function $H(v,u)$, analytic in $v$ at $v=0$, such that, for $u\in \Gamma_\rL$, 
\begin{equation}\label{eq:Hdefiningreal}
	\oint_0v^{-j}(v+1)^{L-N+y_j+j}H(v,u)\frac{\rd v}{2\pi\ri}
	=
	\frac{z^L(u+1)^{L-N+y_j+j}}
	{u^j\bigl(u^N(u+1)^{L-N}-z^L\bigr)},
	\qquad 1\le j\le N.
\end{equation}
Then the operator $K$ in \eqref{eq:kernel_doublecontour}, with $\gamma=\Gamma_\rL$, is trace class on $\ell^2(\bbZ_{\le0})$, and
\begin{equation*}
	\ncE_Y(z)=\det(I+K)_{\ell^2(\bbZ_{\le0})}.
\end{equation*}
\end{cor}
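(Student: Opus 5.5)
This corollary follows by specializing Proposition~\ref{prop:orthogonalization} to the matrix produced by Lemmas~\ref{lm:energy_factorize} and~\ref{lm:MY_contour}. By Lemma~\ref{lm:energy_factorize}, $\ncE_Y(z)=\det(M_Y)$. Lemma~\ref{lm:MY_contour} writes
\begin{equation*}
	M_Y(i,j)=\oint_0 v^{i-j-1}F_j(v)\frac{\rd v}{2\pi\ri}+\int_{\Gamma_\rL}u^{i-1}G_j(u)\frac{\rd u}{2\pi\ri},
\end{equation*}
with the choices
\begin{equation*}
	F_j(v):=(v+1)^{L-N+y_j+j},
	\qquad
	G_j(u):=-\frac{z^L(u+1)^{L-N+y_j+j}}{u^j\bigl(u^N(u+1)^{L-N}-z^L\bigr)}.
\end{equation*}
With these choices, hypothesis \eqref{eq:Hdefiningreal} is exactly \eqref{eq:Hdefining}. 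The conclusion then reads $\det(M_Y)=\det(I+K)$, and $K$ is trace class, as the proposition asserts.

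It remains to check the remaining hypotheses of Proposition~\ref{prop:orthogonalization}. We take $r_1$ slightly larger than $r_{1,N,L}(z)$ and $r_2$ with $r_{2,N,L}(z)<r_2<r$, which is possible since $\Gamma_\rL$ is a compact contour inside the open disk $\{|u+1|<r\}$. Then $r_2<r<1-r_{1,N,L}(z)$, and we shrink $r_1$ toward $r_{1,N,L}(z)$ so that also $r<1-r_1$, i.e.\ $r\in(r_2,1-r_1)$.

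\begin{itemize}
\item $F_j$ is a polynomial or rational function in $v$ whose only possible pole is at $v=-1$. Since $|v|<r_1<1$ excludes $v=-1$, $F_j$ is analytic in $\{|v|<r_1\}$, and clearly $F_j(0)=1$.
\item $G_j$ is continuous on $\Gamma_\rL$ provided the contour avoids $u=0$ and the left Bethe roots, which are the zeros of $u^N(u+1)^{L-N}-z^L$ near $-1$. Both conditions hold because $\Gamma_\rL$ lies in $\{|u+1|<r\}$ with $r<1$ and is chosen to avoid the roots it encloses.
\item $r_1,r_2\in(0,1)$ and $r_2<1-r_1$ hold by construction.
\end{itemize}

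A subtle point is that Lemma~\ref{lm:MY_contour} requires $\Gamma_\rL$ to exclude the right Bethe roots and $0$. Since all right Bethe roots satisfy $|v|<r_{1,N,L}(z)<1-r$, we have $|v+1|>r$ for each of them, so the disk $\{|u+1|<r\}$ contains neither them nor $0$. Hence the contour in the corollary is admissible for Lemma~\ref{lm:MY_contour}.

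The only step needing care, and the main obstacle, is this bookkeeping of radii: one interval must simultaneously contain the parameter $r$ of \eqref{eq:kernel_doublecontour}, accommodate the contour $\Gamma_\rL$, and be compatible with the constraint $|u+1|<r<|v+1|$ used in the proof of Proposition~\ref{prop:orthogonalization}. Everything else is direct substitution.
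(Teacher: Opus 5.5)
Your proposal is correct and is the paper's argument: apply Proposition~\ref{prop:orthogonalization} to the matrix of Lemma~\ref{lm:MY_contour} with exactly these $F_j$ and $G_j$, so that \eqref{eq:Hdefiningreal} becomes \eqref{eq:Hdefining}. Your radius bookkeeping, which the paper leaves implicit, is fine with one small correction: take $r_2\in(\max_{u\in\Gamma_\rL}|u+1|,\,r)$ rather than merely $r_2>r_{2,N,L}(z)$; and $r_1$ needs no lower bound, so any $r_1\in(0,1-r)$ works.
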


\begin{proof}
It follows from applying Proposition~\ref{prop:orthogonalization} with
\begin{equation*}
	F_j(v):=(v+1)^{L-N+y_j+j},
	\qquad
	G_j(u):=
	-\frac{z^L(u+1)^{L-N+y_j+j}}
	{u^j\bigl(u^N(u+1)^{L-N}-z^L\bigr)} 
\end{equation*}
to the matrix in Lemma~\ref{lm:MY_contour}.
\end{proof}

The next subsections solve the systems \eqref{eq:identity_generaldensity} and \eqref{eq:Hdefiningreal} probabilistically.

\subsection{Basic properties of \texorpdfstring{$\rch_Y$ and $\mpch_Y$}{chY and pchY}}
\label{sec:grw}

We prove some basic  properties of $\rch_Y$ and $\mpch_Y$, introduced in Definition~\ref{def:chardef2}.

\begin{lm}\label{prop:char_welldefine}
Let $Y=(y_1,\ldots,y_N)\in\bbZ^N$ with $y_1>y_2>\cdots>y_N$. Then the series defining $\rch_Y(v,u)$ in \eqref{eq:ess_hitting} converges absolutely and uniformly on every compact subset of
\begin{equation*}
	\{(v,u)\in\bbC^2:0<|u+1|<|v+1|\},
\end{equation*}
and $\rch_Y(v,u)$ is analytic there. Moreover, $\rch_Y(v,u)$ is independent of $\rho$.
\end{lm}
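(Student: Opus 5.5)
The plan is to bound each summand pointwise and then reorganize the sum so that $\rho$ disappears. Fix $x\in\bbZ$ and condition on the event $\{\tau=m\}$ with $0\le m<N$.

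\textbf{Step 1: the pre-$\tau$ path.} On $\{\tau=m\}$ we have $G_k\le y_{k+1}$ for $k<m$ and $G_m>y_{m+1}$. The walk strictly decreases, so $G_m\le G_0=x$ and $G_m\le y_m-1$ (when $m\ge1$). Hence $G_m$ ranges over the finite set $(y_{m+1},\min(x,y_m-1)]$, and for $m\ge1$ the event forces $x\le y_1$. For $m=0$ we get $\tau=0$ iff $x>y_1$, and then $G_\tau=x$.

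\textbf{Step 2: explicit form of the expectation.} For $m\ge1$ the probability of a path $x=g_0>g_1>\cdots>g_m=g$ equals $\rho^m(1-\rho)^{x-g-m}$. Multiplying by $\left(\frac{-v(1-\rho)}{(v+1)\rho}\right)^m$, by $(1-\rho)^{g}/(v+1)^{g+1}$, and by the prefactor $\bigl((u+1)/(1-\rho)\bigr)^x$, the powers of $\rho$ and $1-\rho$ cancel exactly. The $x$-th term therefore becomes
\[
\sum_{m}\ \sum_{\text{paths}} (u+1)^x\,\frac{(-v)^m}{(v+1)^{g+1+m}},
\]
where the inner sum runs over strictly decreasing integer paths $x=g_0>\cdots>g_m$ with $g_k\le y_{k+1}$ for $k<m$ and $g_m>y_{m+1}$. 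This expression contains no $\rho$, which proves that $\rch_Y$ is independent of $\rho$, provided the sums converge absolutely. (The $m=0$ term is $(u+1)^x/(v+1)^{x+1}$ for $x>y_1$.)

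\textbf{Step 3: absolute convergence.} The $m=0$ part is a geometric series in $\bigl((u+1)/(v+1)\bigr)^x$ over $x>y_1$. It converges absolutely and uniformly on compacts of $\{|u+1|<|v+1|\}$, since there $\sup|u+1|/|v+1|<1$ and $|u+1|,|v+1|$ stay in a compact range. For $m\ge1$ we need $x\le y_1$ and $x\to-\infty$. The endpoint $g$ lies in the finite set $(y_{m+1},y_m-1]$, so $g$ and $m$ are bounded. The number of paths from $x$ with $g_1\le y_2,\ldots$ grows only polynomially in $|x|$: for $m\ge2$ the intermediate $g_k$ are confined to finite windows, and the choice of $g_1$ contributes $O(|x|)$ at most. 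Each term is $|u+1|^{x}$ times a bounded factor. For $x\to-\infty$, $|u+1|^x=|u+1|^{-|x|}$, which grows when $|u+1|<1$.

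This growth is the expected obstacle. The fix is to recall that the stated domain has the $x$-sum weighted as a whole, and to pair the factors correctly: $(u+1)^x(v+1)^{-g-1}$ with $g$ bounded gives $|u+1|^{x}$. This diverges for $x\to-\infty$ unless $|u+1|>1$. I therefore re-examine the bookkeeping. For $m\ge1$ and $x\le y_1$, the path constraint $g_0=x\le y_1$ together with strict decrease and the finite window for $g$ bounded below by $y_{m+1}$ forces $x>g_m>y_{m+1}\ge y_N$. So $x$ itself ranges over the finite set $(y_N,y_1]$. Only finitely many $x$ contribute for $m\ge1$, the convergence issue lives entirely in the $m=0$ geometric tail, and the polynomial path count is irrelevant.

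\textbf{Step 4: analyticity.} Each term is a rational function analytic where $u+1\ne0$ and $v+1\ne0$. Uniform convergence on compacts then gives analyticity by Weierstrass's theorem.
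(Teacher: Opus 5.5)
Your argument is correct and is essentially the paper's proof: you split off the geometric tail $x>y_1$, where $\tau=0$, and observe that every other $x$ lies in a finite window with only finitely many contributing paths (the paper uses the window $y_N+N\le x\le y_1$, you use $y_N<x\le y_1$). You then cancel the powers of $\rho$ and $1-\rho$ path by path, exactly as the paper does. In a write-up, delete the abandoned divergence discussion at the start of Step 3: the finiteness of the $x$-range that you note at its end already settles the $m\ge1$ terms.
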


\begin{proof}
Consider the random walk started from $G_0=x_0$. If $x_0<y_N+N$, then $G_i\le y_{i+1}$ for $0\le i\le N-1$, since $G$ decreases by at least one at each step and $y_{i+1}+i+1\ge y_N+N$. 
Hence $\mathbf{1}_{\tau<N}=0$. If $x_0>y_1$, then $\tau=0$ and $G_\tau=x_0$. Therefore,
\begin{equation*}
	\rch_Y(v,u)
	=
	\sum_{x_0=y_1+1}^\infty
	\frac{(u+1)^{x_0}}{(v+1)^{x_0+1}}
	+
	\sum_{x_0=y_N+N}^{y_1}
	\left(\frac{u+1}{1-\rho}\right)^{x_0}a_{x_0},
\end{equation*}
where
\begin{equation*}
	a_{x_0}
	:=
	\bbE_{G_0=x_0}\left[
	\frac{(1-\rho)^{G_\tau}}{(v+1)^{G_\tau+1}}
	\left(\frac{-v(1-\rho)}{(v+1)\rho}\right)^\tau
	\mathbf{1}_{\tau<N}
	\right].
\end{equation*}
The first series converges absolutely and uniformly on every compact subset of the stated domain. For each fixed $x_0$, only finitely many paths contribute to $a_{x_0}$, so the second sum is finite and its summands are analytic on the same domain. This proves the convergence and analyticity.

To prove independence of $\rho$, condition on the path up to time $\tau$. Then
\begin{equation*}
\begin{aligned}
	a_{x_0}
	=
	\sum_{k=0}^{N-1}
	\left(\frac{-v(1-\rho)}{(v+1)\rho}\right)^k
	\sum_{x_1,\ldots,x_k\in\bbZ}
	\frac{(1-\rho)^{x_k}}{(v+1)^{x_k+1}}
	\prob_{G_0=x_0}(G_1=x_1,\ldots,G_k=x_k)
	\mathbf{1}_{(x_0,\ldots,x_k)\in A_k},
\end{aligned}
\end{equation*}
where
\begin{equation*}
	\mathbf{1}_{(x_0,\ldots,x_k)\in A_k}
	:=
	\mathbf{1}_{x_0>x_1>\cdots>x_k}
	\left[\prod_{i=0}^{k-1}\mathbf{1}_{x_i\le y_{i+1}}\right]
	\mathbf{1}_{x_k>y_{k+1}}.
\end{equation*}
Since 
\begin{equation*}
	\prob_{G_0=x_0}(G_1=x_1,\ldots,G_k=x_k)
	=
	\rho^k(1-\rho)^{x_0-x_k-k}, 
\end{equation*}
we obtain
\begin{equation*}
	\rch_Y(v,u)
	=
	\sum_{x_0=y_1+1}^\infty
	\frac{(u+1)^{x_0}}{(v+1)^{x_0+1}}
	+
	\sum_{x_0=y_N+N}^{y_1}
	(u+1)^{x_0}
	\sum_{k=0}^{N-1}
	\left(\frac{-v}{v+1}\right)^k
	\sum_{x_1,\ldots,x_k\in\bbZ}
	\frac{\mathbf{1}_{(x_0,\ldots,x_k)\in A_k}}{(v+1)^{x_k+1}},
\end{equation*}
which is independent of $\rho$.
\end{proof}

\begin{lm}\label{prop:mpchwelldefined}
Let $Y\in\Pconf$. The series defining $\mpch_Y(v,u)$ in \eqref{eq:mpchdefn} converges absolutely and uniformly on every compact subset of
\begin{equation}\label{eq:mpch_region}
	\left\{(v,u)\in\bbC^2:
	0<|u+1|<1-\rho<|v+1|,\ 
	|v^N(v+1)^{L-N}|<\rho^N(1-\rho)^{L-N}
	\right\},
\end{equation}
and $\mpch_Y(v,u)$ is analytic there. Moreover, for fixed $(v,u)$ with $0<|u+1|<|v+1|$, the value $\mpch_Y(v,u)$ is independent of any choice of $\rho\in(0,1)$ satisfying
\begin{equation*}
	|u+1|<1-\rho<|v+1|,
	\qquad
	|v^N(v+1)^{L-N}|<\rho^N(1-\rho)^{L-N}.
\end{equation*}
\end{lm}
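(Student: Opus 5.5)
Since $\mpch_Y=\rch_Y-\rch_Y^*$ and Lemma~\ref{prop:char_welldefine} already handles $\rch_Y$ on the larger region $0<|u+1|<|v+1|$, the work is to treat $\rch_Y^*$. It has to be controlled on the region \eqref{eq:mpch_region}, and its value has to be shown independent of $\rho$. I will decompose according to the value of $\ntau$. On $\{\tau<N\}$ the walk started at $x_0$ satisfies $x_0\ge y_N+N$. Since $\tau<N$ is then forced by $x_0>y_1$ or by the finitely many configurations, the sum over $x_0$ splits into a tail $x_0>y_1$ (where $\tau=0$) and a finite range $y_N+N\le x_0\le y_1$, exactly as in the proof of Lemma~\ref{prop:char_welldefine}.

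For the tail, I will use periodicity $y_{i+N}=y_i-L$. For $m\ge N$ the walk hits the epigraph at time $m$ iff $G_m>y_{m+1}$. After time $N$ the walk has dropped by at least $N$, while the profile drops by exactly $L$ per period, so $\ntau$ is finite a.s. I will write the expectation as a sum over $k\ge1$ of contributions with $\ntau\in[kN,(k+1)N)$. Using the explicit path weight $\rho^k(1-\rho)^{x_0-x_k-k}$, the $(1-\rho)$ factors cancel, leaving weights $(u+1)^{x_0}(-v)^{\ntau}(v+1)^{-G_\ntau-1-\ntau}$ times counts of admissible paths. The same cancellation gives a $\rho$-free combinatorial expression for each fixed $x_0$ and fixed $\ntau$, as in Lemma~\ref{prop:char_welldefine}, provided the resulting series converges absolutely.

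The main step is convergence, which I will prove probabilistically rather than combinatorially. The bound $|u+1|<1-\rho$ makes $\bigl(\tfrac{u+1}{1-\rho}\bigr)^{x_0}$ summable over the tail. The condition $1-\rho<|v+1|$ controls $(1-\rho)^{G}/|v+1|^{G+1}$ when $G\to+\infty$. For each full period the factor $\bigl|\tfrac{v(1-\rho)}{(v+1)\rho}\bigr|^{N}$ is combined with the descent of $G$ by about $L$, which contributes $\bigl(\tfrac{1-\rho}{|v+1|}\bigr)^{-L}$ relative to the previous period. Together these give a ratio of size
\[
\frac{|v|^N|v+1|^{L-N}}{\rho^N(1-\rho)^{L-N}}<1,
\]
which is the second condition in \eqref{eq:mpch_region}. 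Making this precise is the expected obstacle. I will bound $|G_\ntau-G_{kN}|$ and the walk's overshoot below the profile using exponential moments of geometric increments, together with $G_{kN}\le x_0-kN$ and $G_\ntau>y_{\ntau+1}\ge y_1-L(k+1)-N$. The bound on $G$ from above comes from the hitting condition and the bound from below from monotonicity; this yields a geometric series in $k$ with ratio comparable to the displayed quantity, uniformly on compacts. Analyticity then follows by Weierstrass.

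Independence of $\rho$: after the cancellation, each term (fixed $x_0$, fixed path to $\ntau$) is $\rho$-free. The absolutely convergent rearrangement is therefore a $\rho$-free series, valid whenever the admissibility conditions hold. Two admissible $\rho$ give the same absolutely convergent sum, which proves the claim.
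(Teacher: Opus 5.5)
Your skeleton matches the paper's: reduce to $\rch_Y^*$, group $\ntau$ by periods, use $y_{i+N}=y_i-L$ to produce a geometric ratio, and cancel the $\rho$-factors path by path. However, the step you flag as the main obstacle is misdirected, and the route you sketch for it would not give the stated region.

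Write $\alpha_v:=(1-\rho)/|v+1|$, $\beta_v:=|v|(1-\rho)/(\rho|v+1|)$ and $\gamma_v:=\beta_v^N\alpha_v^{-L}=|v|^N|v+1|^{L-N}/(\rho^N(1-\rho)^{L-N})$. On \eqref{eq:mpch_region} we have $\alpha_v<1$ and $\gamma_v<1$. The integrand of the expectation $b_x$ in \eqref{eq:ess_hittingstar} has modulus $\alpha_v^{G_\ntau}\beta_v^{\ntau}/|v+1|$, which is \emph{decreasing} in $G_\ntau$. So all you need is a \emph{lower} bound on $G_\ntau$, and it is deterministic: $G_\ntau\ge y_{\ntau+1}+1$ from the hitting condition. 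You have the roles swapped: the hitting condition gives the lower bound, while monotonicity gives the upper bound $G_{kN}\le x_0-kN$, which plays no role. Bound every probability $\prob(\ntau=m,\,G_\ntau=n)$ by $1$ and regroup with $m+1=kN+i$. This gives
\[
|b_x|\le\sum_{m\ge N}\beta_v^{m}\sum_{n\ge y_{m+1}+1}\frac{\alpha_v^{n}}{|v+1|}
=\frac{\alpha_v}{|v+1|-(1-\rho)}\Bigl(\sum_{i=1}^{N}\beta_v^{i-1}\alpha_v^{y_i}\Bigr)\sum_{k\ge1}\gamma_v^{k},
\]
uniformly in $x\ge y_N+N$. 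Summing against $|(u+1)/(1-\rho)|^x$ then gives normal convergence on compacts. The law of the walk plays no role at all.

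Exponential moments of the increments are therefore not just unnecessary but the wrong tool. Controlling $G_\ntau$ from below through the increments requires $\bbE[\alpha_v^{-(G_j-G_{j+1})}]=\frac{\rho}{1-\rho}\sum_{d\ge1}|v+1|^d$. This is finite only if $|v+1|<1$, which \eqref{eq:mpch_region} does not imply: take $v>0$ small, which is exactly the situation of right Bethe roots with $\Re(v)>0$. Moreover, any constant lost per period makes the ratio only ``comparable'' to $\gamma_v$, whereas convergence on the whole region needs the ratio to be exactly $\gamma_v$.

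Two smaller points. First, the claim that $\ntau<\infty$ a.s.\ is false when $\rho<N/L$, since $G_m-y_{m+1}$ then drifts to $-\infty$. Your argument does not prove it for any $\rho$, and it is not needed because of the indicator $\mathbf{1}_{\ntau<\infty}$. Second, splitting $x_0$ into $x_0>y_1$ and a finite range does nothing for $\rch_Y^*$, since even for fixed $x_0$ infinitely many paths contribute.

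Once the deterministic bound is in place, your $\rho$-independence argument is fine. Path by path, the moduli of the $\rho$-free terms equal those of the $\rho$-weighted ones, so the same bound gives the absolute convergence needed for the rearrangement.
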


\begin{proof}
By Lemma~\ref{prop:char_welldefine}, it is enough to consider $\rch_Y^*(v,u)$. Since $\mathbf{1}_{\tau<N}=0$ for $x<y_N+N$,
\begin{equation}\label{eq:Ivua}
	\rch_Y^*(v,u)
	=
	\sum_{x=y_N+N}^\infty
	\left(\frac{u+1}{1-\rho}\right)^x b_x,
	\qquad
	b_x
	:=
	\bbE_{G_0=x}\left[
	\frac{(1-\rho)^{G_\ntau}}{(v+1)^{G_\ntau+1}}
	\left(\frac{-v(1-\rho)}{(v+1)\rho}\right)^\ntau
	\mathbf{1}_{\tau<N,\,\ntau<\infty}
	\right].
\end{equation}

For every $x\ge y_N+N$, partitioning according to the events $\{\ntau=m,G_\ntau=n\}$, and using the bounds $G_\ntau\ge y_{\ntau+1}+1$ and 
$\prob_{G_0=x}(\ntau=m,\ G_\ntau=n)\le1$, we obtain
\begin{equation*}
	|b_x|
	\le
	\sum_{m=N}^\infty
	\left|\frac{v(1-\rho)}{(v+1)\rho}\right|^m
	\sum_{n=y_{m+1}+1}^\infty
	\frac{(1-\rho)^n}{|v+1|^{n+1}}.
\end{equation*}
Set
\begin{equation*}
	\alpha_v:=\frac{1-\rho}{|v+1|},
	\qquad
	\beta_v:=\frac{|v|(1-\rho)}{\rho|v+1|},
	\qquad
	\gamma_v:=\frac{|v|^N|v+1|^{L-N}}{\rho^N(1-\rho)^{L-N}}.
\end{equation*}
On the region \eqref{eq:mpch_region}, we have $\alpha_v<1$ and $\gamma_v<1$. Since
\begin{equation*}
	\sum_{n=y_{m+1}+1}^\infty
	\frac{(1-\rho)^n}{|v+1|^{n+1}}
	=
	\frac{\alpha_v^{y_{m+1}+1}}{|v+1|-(1-\rho)},
\end{equation*}
and $y_{kN+i}=y_i-kL$, regrouping the sum according to $m+1=kN+i$, with $1\le i\le N$ and $k\ge1$, gives
\begin{equation*}
	|b_x|
	\le
	\frac{\alpha_v}{|v+1|-(1-\rho)}
	\left(\sum_{i=1}^N\beta_v^{i-1}\alpha_v^{y_i}\right)
	\left(\sum_{k=1}^\infty\gamma_v^k\right).
\end{equation*}
This bound is independent of $x$ and is locally bounded throughout \eqref{eq:mpch_region}. 
On every compact subset of this region, the geometric series appearing in the bound converge uniformly. 
It follows that $\rch_Y^*(v,u)$, and hence $\mpch_Y(v,u)$, is analytic on \eqref{eq:mpch_region}.

It remains to prove independence of $\rho$. 
Conditioning on the full path up to time $\ntau$ and rearranging the resulting sums, we obtain
\begin{equation*}
\begin{aligned}
	b_{x_0}
	=
	\sum_{k=0}^{N-1}
	\sum_{\ell=N}^\infty
	\sum_{x_1,\ldots,x_\ell\in\bbZ}
	\frac{(1-\rho)^{x_\ell}}{(v+1)^{x_\ell+1}}
	\left(\frac{-v(1-\rho)}{(v+1)\rho}\right)^\ell
	\prob_{G_0=x_0}(G_1=x_1,\ldots,G_\ell=x_\ell)
	\mathbf{1}_{(x_0,\ldots,x_\ell)\in B_{k,\ell}},
\end{aligned}
\end{equation*}
where
\begin{equation*}
	\mathbf{1}_{(x_0,\ldots,x_\ell)\in B_{k,\ell}}
	:=
	\mathbf{1}_{x_0>x_1>\cdots>x_\ell}
	\left[\prod_{i=0}^{k-1}\mathbf{1}_{x_i\le y_{i+1}}\right]
	\mathbf{1}_{x_k>y_{k+1}}
	\left[\prod_{i=N}^{\ell-1}\mathbf{1}_{x_i\le y_{i+1}}\right]
	\mathbf{1}_{x_\ell>y_{\ell+1}}.
\end{equation*}
When we substitute 
\begin{equation*}
	\prob_{G_0=x_0}(G_1=x_1,\ldots,G_\ell=x_\ell)
	=
	\rho^\ell(1-\rho)^{x_0-x_\ell-\ell}
\end{equation*}
into the preceding formula and then into \eqref{eq:Ivua}, all factors involving $\rho$ cancel, yielding
\begin{equation*}
\begin{aligned}
	\rch_Y^*(v,u)
	=
	\sum_{x_0=y_N+N}^\infty
	(u+1)^{x_0}
	\sum_{k=0}^{N-1}
	\sum_{\ell=N}^\infty
	\sum_{x_1,\ldots,x_\ell\in\bbZ}
	\frac{1}{(v+1)^{x_\ell+1}}
	\left(\frac{-v}{v+1}\right)^\ell
	\mathbf{1}_{(x_0,\ldots,x_\ell)\in B_{k,\ell}}.
\end{aligned}
\end{equation*}
Thus $\rch_Y^*(v,u)$ is independent of $\rho$, and so is $\mpch_Y(v,u)$. 
\end{proof}

\subsection{Probabilistic solutions of the linear systems}
\label{sec:chmpchadv}

We show that $\rch_Y$ and $\mpch_Y$ satisfy the two systems of linear equations in Subsections~\ref{sec:npcfchar} and~\ref{sec:nefrewrite}. 

\begin{prop}\label{prop:charlineareqs}
Let $Y=(y_1,\ldots,y_N)\in\bbZ^N$ satisfy $y_1>\cdots>y_N$. Then the characteristic function $\rch_Y(v,u)$ defined in \eqref{eq:ess_hitting} satisfies
\begin{equation}\label{eq:reproducing}
	\oint_0 v^{-i}(v+1)^{y_i+i}\rch_Y(v,u)\frac{\rd v}{2\pi\ri}
	=
	-u^{-i}(u+1)^{y_i+i}
\end{equation}
for $1\le i\le N$ and $0<|u+1|<1$.
\end{prop}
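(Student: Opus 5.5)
The plan is to verify \eqref{eq:reproducing} directly from the $\rho$-free path expansion of $\rch_Y$ obtained in the proof of Lemma~\ref{prop:char_welldefine}:
\begin{equation*}
	\rch_Y(v,u)
	=
	\sum_{x_0=y_1+1}^\infty
	\frac{(u+1)^{x_0}}{(v+1)^{x_0+1}}
	+
	\sum_{x_0=y_N+N}^{y_1}
	(u+1)^{x_0}
	\sum_{k=0}^{N-1}
	\left(\frac{-v}{v+1}\right)^k
	\sum_{x_1,\ldots,x_k}
	\frac{\mathbf{1}_{(x_0,\ldots,x_k)\in A_k}}{(v+1)^{x_k+1}}.
\end{equation*}
Since $0<|u+1|<1$, the $x_0$-sum converges uniformly for $v$ on a small circle around $0$. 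We can therefore integrate term by term against $v^{-i}(v+1)^{y_i+i}$. For a single path ending at time $k$ at height $x_k$, the $v$-integral is
\begin{equation*}
	(-1)^k\oint_0 v^{k-i}(v+1)^{y_i+i-k-x_k-1}\frac{\rd v}{2\pi\ri}.
\end{equation*}
This vanishes when $k\ge i$. So only paths with $\tau<i$ contribute, and the statement is really about the walk before time $i$. Equivalently, we must compute
\begin{equation*}
	\sum_{x_0}(u+1)^{x_0}\,\bbE_{x_0}\bigl[\Phi_i(\tau,G_\tau)\mathbf{1}_{\tau<i}\bigr],
\end{equation*}
where $\Phi_i(k,n)$ is the coefficient above, carrying the appropriate $\rho$-weights.

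The key step is a space-time harmonicity (optional stopping) identity. Put $m:=y_i+i$. We claim the function
\begin{equation*}
	h_i(k,x):=\oint_0 v^{-i}(v+1)^{m}\left(\frac{-v(1-\rho)}{(v+1)\rho}\right)^k\frac{(1-\rho)^{x}}{(v+1)^{x+1}}\frac{\rd v}{2\pi\ri}
\end{equation*}
satisfies the one-step identity $\bbE[h_i(k+1,G_{k+1})\mid G_k=x]=h_i(k,x)$, modulo a boundary correction. This should follow from summing the geometric series over the jump $x-G_{k+1}\ge1$ inside the contour integral. Only finitely many $v$-Taylor coefficients matter, so the computation reduces to an identity of coefficient extractions, i.e.\ binomial identities. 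We will use truncations to justify the interchange.

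Given this, optional stopping at $\tau\wedge i$ converts $\bbE_{x_0}[h_i(\tau,G_\tau)\mathbf{1}_{\tau<i}]$ into $h_i(0,x_0)$ minus a term at time $i$. The time-$i$ term vanishes because $h_i(i,\cdot)\equiv0$: the factor $v^{i-i}(v+1)^{\cdots}$ has no pole at $0$. On $\{\tau\ge i\}$ the path stays weakly below $y_1,\ldots,y_i$, and this is what produces the truncation.

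Once $\tau$ is eliminated, the left side of \eqref{eq:reproducing} becomes a sum over all $x_0$ of $(u+1)^{x_0}$ times $\oint_0 v^{-i}(v+1)^{m-x_0-1}$, with only finitely many $x_0$ on the lower side (as in the proof of Lemma~\ref{prop:char_welldefine}). We evaluate it by resumming. For large $x_0$ the series $\sum_{x_0}(u+1)^{x_0}(v+1)^{-x_0-1}$ gives $1/(v-u)$ times a power. The remaining finitely many lower terms are polynomial in $v$ with no pole of order $\ge i$ and contribute nothing. Deforming the $v$-contour past $v=u$ then picks up the residue $-u^{-i}(u+1)^{m}$, which is the right-hand side.

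The main obstacle is the optional stopping step: the convergence and bookkeeping of the boundary terms where the walk sits weakly below the profile, and identifying exactly which $x_0$-range survives. If the martingale formulation becomes messy, I would fall back on induction on $i$ via the first-step decomposition at time $0$: the case $\tau=0$ gives the explicit geometric term, and the case $\tau\ge1$ reduces to the shifted profile $(y_2,\ldots,y_N)$ with index $i-1$, using the identity $\frac{-v}{v+1}\cdot v^{-i}=-v^{-(i-1)}(v+1)^{-1}$.
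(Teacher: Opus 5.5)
You have the paper's strategy: interchange the integral with the $x_0$-sum, keep only $\{\tau<i\}$, apply optional stopping to the contour-integral function $h_i$, then resum in $x_0$. But the optional-stopping step, which you rightly flag as the main obstacle, is stated incorrectly, and the error propagates to the resummation.

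\textbf{The optional-stopping step.} Evaluating the residue gives, for $k<i$,
\[
h_i(k,x)=(-1)^{i-1}(1-\rho)^x\Bigl(\frac{1-\rho}{\rho}\Bigr)^k\binom{x-y_i-1}{i-k-1}
\]
with a generalized binomial coefficient. This is $(1-\rho)^x$ times a nonzero polynomial in $x$, so $h_i$ is not space-time harmonic. For $k\le i-2$ the expectation $\bbE[h_i(k+1,G_{k+1})\mid G_k=x]$ diverges: the geometric decay of the step law is exactly cancelled by $(1-\rho)^{x'}$. By the hockey-stick identity, the function that is a martingale is the truncation $h_i(k,x)\mathbf{1}_{x+k\ge y_i+i}$, and only for $0\le k\le i-1$. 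The step from $i-1$ to $i$ fails even after truncation, because $h_i(i-1,x)=(1-\rho)^x\bigl(\frac{1-\rho}{-\rho}\bigr)^{i-1}\neq0$ while $h_i(i,\cdot)\equiv0$.

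So ``stop at $\tau\wedge i$; the time-$i$ term vanishes because $h_i(i,\cdot)\equiv0$'' is not a valid argument. Taken literally it gives $\bbE_{x_0}[h_i(\tau,G_\tau)\mathbf{1}_{\tau<i}]=h_i(0,x_0)$ for every $x_0$. This is false already for $i=1$ and $x_0\le y_1$: the left side is $0$ and the right side is $(1-\rho)^{x_0}$.

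The missing argument has three parts:
\begin{enumerate}
\item[(a)] Stop at $\tau\wedge(i-1)$ instead of $\tau\wedge i$.
\item[(b)] On $\{\tau<i\}$ one has $G_\tau+\tau\ge y_{\tau+1}+\tau+1\ge y_i+i$, so the indicator equals $1$ at the stopping time.
\item[(c)] On $\{\tau\ge i\}$ one has $G_{i-1}\le y_i$, so the truncated martingale is $0$ at time $i-1$.
\end{enumerate}
Together these give
\[
\bbE_{x_0}\bigl[h_i(\tau,G_\tau)\mathbf{1}_{\tau<i}\bigr]=(-1)^{i-1}(1-\rho)^{x_0}\binom{x_0-y_i-1}{i-1}\mathbf{1}_{x_0\ge y_i+i},
\]
which is Corollary~\ref{cor:szerocomp}, proved via Lemma~\ref{lem:martingalecomp}(b).

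\textbf{The resummation.} The same missing truncation undermines your final step. The surviving range is $x_0\ge y_i+i$ (equivalently $x_0>y_i$), not everything above the cutoff $y_N+N$ of Lemma~\ref{prop:char_welldefine}. For $x_0\le y_i$,
\[
\oint_0 v^{-i}(v+1)^{y_i+i-x_0-1}\frac{\rd v}{2\pi\ri}=\binom{y_i+i-x_0-1}{i-1}\ge1 ,
\]
so ``the lower terms contribute nothing'' holds only for $y_i<x_0<y_i+i$. Once the range $x_0>y_i$ is established, the rest of your endgame is correct and is a fine substitute for \eqref{eq:taylor}. The sum resums to $(u+1)^{y_i+1}(v+1)^{i-1}/(v-u)$, and since the integrand is $O(v^{-2})$ at infinity, the residue at $v=u$ gives the right-hand side.

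\textbf{Your fallback.} The first-step induction would close the gap if you run it for fixed $x_0$, not on the integrated identity for the shifted profile. Write $I_i(x_0)$ for the $v$-integral of the $x_0$-th expectation in \eqref{eq:ess_hitting}. The case $x_0>y_1$ is explicit. For $x_0\le y_1$, the first step gives
\[
I_i(x_0)=-\sum_{x_1<x_0}(1-\rho)^{x_0-x_1}I'_{i-1}(x_1),
\]
where $I'_{i-1}$ refers to the profile $(y_2,\ldots,y_N)$. Induction and the hockey-stick identity then produce the displayed formula, indicator included. This is the martingale argument unrolled.
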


\begin{prop}\label{prop:mpchlineareq}
Let $0<|z|<\bfr_{N,L}$, and let $r_{1,N,L}(z)$ and $r_{2,N,L}(z)$ be as in Definition~\ref{defn:r12}. Let $\rho\in(0,1)$ satisfy
\begin{equation*}
	r_{1,N,L}(z)<\rho<1-r_{2,N,L}(z),
	\qquad
	\rho^N(1-\rho)^{L-N}>|z|^L.
\end{equation*}
Then, for every $Y\in\Pconf$, the PTASEP characteristic function $\mpch_Y(v,u)$ defined in \eqref{eq:mpchdefn}, using the geometric random walk parameter $\rho$, satisfies
\begin{equation}\label{eq:mpchdefineq}
	\sum_{v\in\cR_z}
	v^{-i}(v+1)^{y_i+i}\mpch_Y(v,u)\frac{v(v+1)}{Lv+N}
	=
	-u^{-i}(u+1)^{y_i+i}
\end{equation}
for $1\le i\le N$ and $0<|u+1|<1-\rho$.
\end{prop}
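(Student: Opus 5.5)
The plan is to turn the sum over right Bethe roots into a contour integral, expand in powers of $z^L$, and reduce each term to a reproducing identity of the type in Proposition~\ref{prop:charlineareq}, now applied to the infinite periodic sequence $(y_j)_{j\ge1}$. Since $q_z'(v)=\frac{Lv+N}{v(v+1)}v^N(v+1)^{L-N}$, the residue theorem gives
\begin{equation*}
	\sum_{v\in\cR_z}\Phi(v)\frac{v(v+1)}{Lv+N}
	=
	\oint_{\Gamma}\Phi(v)\frac{v^N(v+1)^{L-N}}{v^N(v+1)^{L-N}-z^L}\frac{\rd v}{2\pi\ri}
	-\operatorname{Res}_{v=0}(\cdots),
	\qquad \Phi(v):=v^{-i}(v+1)^{y_i+i}\mpch_Y(v,u).
\end{equation*}
Here $\Gamma$ lies in the analyticity region \eqref{eq:mpch_region} and encloses $0$ and $\cR_z$; such a $\Gamma$ exists by the assumptions on $\rho$, e.g.\ the level curve $|v^N(v+1)^{L-N}|=c$ with $|z|^L<c<\rho^N(1-\rho)^{L-N}$ shrunk suitably. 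I will check that $\mpch_Y(\cdot,u)$ is analytic at $v=0$: in $\rch_Y^*$ every term carries $(-v)^{\ntau}$ with $\ntau\ge N$, which should cancel the pole $v^{-i}$, $i\le N$. On $\Gamma$ I expand
\begin{equation*}
	\frac{v^N(v+1)^{L-N}}{v^N(v+1)^{L-N}-z^L}=\sum_{k\ge0}z^{kL}v^{-kN}(v+1)^{-k(L-N)}.
\end{equation*}
The $k$-th term then involves $v^{-(i+kN)}(v+1)^{y_{i+kN}+(i+kN)}$, using $y_{i+kN}=y_i-kL$. So the whole left side becomes $\sum_{k\ge0}z^{kL}I_{i+kN}$, where $I_j:=\oint_\Gamma v^{-j}(v+1)^{y_j+j}\mpch_Y(v,u)\,\frac{\rd v}{2\pi\ri}$. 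The residue at $0$ of the original (unexpanded) integrand vanishes since that integrand is analytic there.

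Next I compute $I_j$ probabilistically, extending the proof of Proposition~\ref{prop:charlineareq} to every index $j\ge1$. The basic one-step identity is that $\oint v^{-j}(v+1)^{y_j+j}(1-\rho)^{g}(v+1)^{-g-1}\bigl(\tfrac{-v(1-\rho)}{(v+1)\rho}\bigr)^m\frac{\rd v}{2\pi\ri}$ is an explicit binomial coefficient. Summed against the walk's law, it expresses the event that the walk stopped at time $m$ at height $g$ is (or is not) compatible with its continuation past level $y_j$ at time $j-1$. For $\rch_Y$ and $j\le N$ this yields $-u^{-j}(u+1)^{y_j+j}$, which is Proposition~\ref{prop:charlineareq}. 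For $j>N$ the $\rch_Y$ integral gives an additional term, which I expect to equal exactly the integral of the $\rch^*_Y$ part by the strong Markov property at time $N$: after time $N$, $\ntau$ is the first hitting time for the shifted sequence $(y_{N+m})_m=(y_m-L)_m$. Concretely, I aim to show $I_{i+kN}=\delta_{k0}\bigl(-u^{-i}(u+1)^{y_i+i}\bigr)$. This would require the contributions from $\rch_Y$ and $\rch_Y^*$ to cancel for every $k\ge1$. If exact cancellation fails, the fallback is a telescoping relation $I_{i+kN}=c\,I_{i+(k-1)N}$ in which the $z^{kL}$ weights combine to zero. The series converges because $|z|^L<\rho^N(1-\rho)^{L-N}$, as in the bound in Lemma~\ref{prop:mpchwelldefined}.

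The main obstacle is this cancellation step: justifying interchange of the $v$-integral on $\Gamma$ with the infinite sum over paths in $\rch_Y^*$, which holds uniformly on compacts of \eqref{eq:mpch_region}, and carrying out the strong Markov decomposition of $\rch_Y$ at time $N$ so that it matches $\rch^*_Y$ term by term. I would first verify it for the periodic step initial condition, where $\mdpch_{\mathrm{step}}=1/(v-u)$, to fix signs and normalizations.
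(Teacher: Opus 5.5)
Your reduction matches the paper's proof: the residue theorem over $\cR_z$, the geometric expansion in $z^L$, the periodicity $y_{i+kN}=y_i-kL$ that turns the $k$-th term into $I_{i+kN}$, and the $k=0$ term. That term follows from Proposition~\ref{prop:charlineareqs} together with the fact that the factor $v^{\ntau}$, $\ntau\ge N\ge i$, makes the $\rch_Y^*$ part of $I_i$ vanish. This is where that fact is actually used; analyticity of $\mpch_Y(\cdot,u)$ at $0$ already follows from Lemma~\ref{prop:mpchwelldefined}.

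The gap is the step that carries all the new content, $I_{i+kN}=0$ for $k\ge1$, which you only announce. Invoking the Markov property at time $N$ points the right way, but it is not an argument until you identify what functional is being conditioned. The observation that after time $N$ the walk sees $(y_m-L)_m$ is not what drives the cancellation; periodicity enters only through the exponent matching in the expansion. Note also that for $j>N$ it is the entire $\rch_Y$ contribution to $I_j$, not an excess over $-u^{-j}(u+1)^{y_j+j}$, that must equal the $\rch_Y^*$ contribution. Your fallback is empty: $I_j$ does not depend on $z$, while the identity must hold for all small $z$. Comparing coefficients in $z^L$ therefore forces $I_{i+kN}=0$ for every $k\ge1$.

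The missing idea is a martingale. Put $I=i+kN$ with $k\ge1$ and evaluate the $v$-residues with Lemma~\ref{lm:intzmo}. The $\rch_Y$ and $\rch_Y^*$ contributions to $I_I$ become $\sum_x\bigl(\frac{u+1}{1-\rho}\bigr)^x$ times $(-1)^{I-1}\bbE_{G_0=x}[X_\tau\mathbf{1}_{\tau<N}]$ and $(-1)^{I-1}\bbE_{G_0=x}[X_{\ntau}\mathbf{1}_{\tau<N,\,\ntau<I}]$ respectively, where
\[
X_n:=(1-\rho)^{G_n}\Bigl(\frac{1-\rho}{\rho}\Bigr)^{n}\binom{G_n-y_I-1}{I-n-1}\mathbf{1}_{G_n+n\ge y_I+I},\qquad 0\le n\le I-1 .
\]
Here $\mathbf{1}_{\ntau<\infty}$ can be replaced by $\mathbf{1}_{\ntau<I}$ because the $v$-integrand is analytic at $0$ when $\ntau\ge I$. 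The indicator in $X_n$ equals $1$ at both stopping times because $y_j+j$ is nonincreasing.

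By \eqref{eq:grws}, $X_n=\rho^{-I}(1-\rho)^{y_I+I}\,\prob(G_I=y_I\mid\mathcal{F}_n)$, so $(X_n)$ is a bounded martingale. This is the exact form of your heuristic reading of the residue. Since $X_{\ntau}\mathbf{1}_{\ntau<I}=X_{\ntau\wedge(I-1)}$ and $N\le I-1$, conditional optional stopping gives $\bbE[X_{\ntau}\mathbf{1}_{\ntau<I}\mid\mathcal{F}_N]=X_N$. Optional stopping at $\tau\wedge N$ gives $\bbE[X_N\mathbf{1}_{\tau<N}]=\bbE[X_\tau\mathbf{1}_{\tau<N}]$, so the two contributions coincide.

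Equivalently, $\prob_{G_0=x}(G_I=y_I,\ \tau<N)=\prob_{G_0=x}(G_I=y_I,\ \tau<N,\ \ntau<I)$. This holds because $G_I=y_I$ forces $G_{I-1}>y_I$, hence $\ntau\le I-1$ when $I>N$. This is Lemma~\ref{lem:martingalecomp}(c), and with it your outline becomes the paper's proof. The interchange of the $v$-integral with the $x$-sum and the expectation, which you list as the main obstacle, is routine given the uniform bound in Lemma~\ref{prop:mpchwelldefined}.
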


Proposition~\ref{prop:charlineareqs} for $\rch_Y$ was established in~\cite[Theorem~3.4]{Liao-Liu25}. Here we give a different proof. Proposition~\ref{prop:mpchlineareq} for $\mpch_Y$ is new.
The remainder of this subsection is devoted to the proofs of the two propositions.
We begin the proofs by introducing a martingale.

\begin{lm}\label{lem:martingalecomp}
Let $Y=(y_i)_{i\ge1}$, and let $(G_k)_{k\ge 0}$, $\tau$, and $\ntau$ be as in Definition~\ref{def:char}. For $i\ge1$ and $0\le n\le i-1$, define
\begin{equation*}
	X_n^{(i)}
	:=
	(1-\rho)^{G_n}
	\left(\frac{1-\rho}{\rho}\right)^n
	\binom{G_n-y_i-1}{i-n-1}
	\mathbf{1}_{G_n+n\ge y_i+i}.
\end{equation*}
Then the following hold.
\begin{enumerate}[(a)]
\item For each $i$, $(X_n^{(i)})_{n=0}^{i-1}$ is a martingale with respect to the filtration $\mathcal F_n:=\sigma(G_0,\ldots,G_n)$.

\item For every $x\in\bbZ$ and $i\ge1$,
\begin{equation*}
	\bbE_{G_0=x}\left[X_\tau^{(i)}\mathbf{1}_{\tau<i}\right]
	=
	(1-\rho)^x
	\binom{x-y_i-1}{i-1}
	\mathbf{1}_{x\ge y_i+i}.
\end{equation*}

\item For every $x\in\bbZ$ and $i>N$,
\begin{equation*}
	\bbE_{G_0=x}\left[
	X_\ntau^{(i)}\mathbf{1}_{\tau<N,\,\ntau<i}
	\right]
	=
	\bbE_{G_0=x}\left[
	X_\tau^{(i)}\mathbf{1}_{\tau<N}
	\right].
\end{equation*}
\end{enumerate}
\end{lm}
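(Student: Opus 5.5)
The plan is a direct computation for (a), followed by optional stopping at bounded stopping times for (b) and (c). No earlier result of the paper is needed beyond Definition~\ref{def:char}.

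\emph{Part (a).} Fix $i$ and $0\le n\le i-2$, and condition on $G_n=a$. Using \eqref{eq:transition_general}, the factor $(1-\rho)^{a-b}$ in the transition probability cancels against $(1-\rho)^{b}$ in $X^{(i)}_{n+1}$. This gives
\begin{equation*}
\bbE\bigl[X^{(i)}_{n+1}\mid G_n=a\bigr]
=(1-\rho)^a\Bigl(\frac{1-\rho}{\rho}\Bigr)^{n}
\sum_{b=y_i+i-n-1}^{a-1}\binom{b-y_i-1}{i-n-2}.
\end{equation*}
With $j=b-y_i-1$, the sum runs over $i-n-2\le j\le a-y_i-2$. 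By the hockey-stick identity it equals $\binom{a-y_i-1}{i-n-1}$ when $a+n\ge y_i+i$, and it is empty otherwise. This is exactly $X_n^{(i)}$, including the indicator. This binomial bookkeeping, especially checking that the indicator comes out exactly right, is the only real computation in the lemma.

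Integrability is automatic. The walk is strictly decreasing, and $X_n^{(i)}\neq0$ forces $G_n\ge y_i+i-n$. So under $G_0=x$, each $X^{(i)}_n$ takes finitely many values and is bounded.

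\emph{Part (b).} Apply optional stopping to the bounded stopping time $\tau\wedge(i-1)$. This gives $\bbE_{G_0=x}[X^{(i)}_{\tau\wedge(i-1)}]=X^{(i)}_0$, and $X^{(i)}_0$ is the right-hand side of (b). Now split according to $\tau$. If $\tau\le i-1$, then $X^{(i)}_{\tau\wedge(i-1)}=X^{(i)}_\tau\mathbf 1_{\tau<i}$. If $\tau>i-1$, then $G_{i-1}\le y_i$ by the definition of $\tau$. In that case the indicator $\mathbf 1_{G_{i-1}+i-1\ge y_i+i}$ vanishes, so $X^{(i)}_{i-1}=0$. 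Combining the two cases proves (b).

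\emph{Part (c).} Fix $i>N$. On $\{\tau<N\}$ we have $\tau<N\le\ntau$ and $\tau\le i-1$. Hence $\tau\mathbf 1_{\tau<N}+(i-1)\mathbf 1_{\tau\ge N}$ and $(\ntau\wedge(i-1))\mathbf 1_{\tau<N}+(i-1)\mathbf 1_{\tau\ge N}$ are ordered bounded stopping times. Optional stopping between them yields
\begin{equation*}
\bbE\bigl[X^{(i)}_{\ntau\wedge(i-1)}\mathbf 1_{\tau<N}\bigr]=\bbE\bigl[X^{(i)}_\tau\mathbf 1_{\tau<N}\bigr].
\end{equation*}
This uses that $\{\tau<N\}\in\mathcal F_\tau$. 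On $\{\ntau>i-1\}$, the definition of $\ntau$ gives $G_{i-1}\le y_i$, since $i-1\ge N$. So, exactly as in (b), $X^{(i)}_{i-1}=0$ there. Therefore the left side equals $\bbE[X^{(i)}_\ntau\mathbf 1_{\tau<N,\,\ntau<i}]$, which is (c).
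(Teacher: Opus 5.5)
Your proof is correct and follows the paper's argument: the hockey-stick computation for (a), and for (b) and (c) optional stopping at bounded truncations, using that $X^{(i)}_{i-1}=0$ whenever the relevant stopping time exceeds $i-1$. The only difference is cosmetic and lies in (c). The paper conditions on $\mathcal F_N$, obtains $X^{(i)}_N$, and then samples at $\tau\wedge N$. You instead apply optional stopping once to two composite bounded stopping times, and you also make explicit the integrability that the paper leaves implicit.
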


\begin{proof}
For part~\textnormal{(a)}, $X_n^{(i)}$ is clearly $\mathcal F_n$-measurable. Write $G_n=G_{n-1}+Z$, where $Z$ is independent of $\mathcal F_{n-1}$ and
\begin{equation*}
	\prob(Z=a)
	=
	\frac{\rho}{1-\rho}(1-\rho)^{-a}\mathbf{1}_{a<0}.
\end{equation*}
Then
\begin{equation*}
\begin{aligned}
	\bbE\left[X_n^{(i)}\mid\mathcal F_{n-1}\right]
	&=
	\mathbf{1}_{G_{n-1}+n-1\ge y_i+i}
	(1-\rho)^{G_{n-1}}
	\left(\frac{1-\rho}{\rho}\right)^{n-1}
	\sum_{a=y_i+i-n-G_{n-1}}^{-1}
	\binom{G_{n-1}+a-y_i-1}{i-n-1}\\
	&=
	(1-\rho)^{G_{n-1}}
	\left(\frac{1-\rho}{\rho}\right)^{n-1}
	\binom{G_{n-1}-y_i-1}{i-n}
	\mathbf{1}_{G_{n-1}+n-1\ge y_i+i},
\end{aligned}
\end{equation*}
where the last equality follows from the hockey-stick identity. This is $X_{n-1}^{(i)}$.

For part~\textnormal{(b)}, we claim that
\begin{equation*}
	X_\tau^{(i)}\mathbf{1}_{\tau<i}
	=
	X_{\tau\wedge(i-1)}^{(i)}.
\end{equation*}
Indeed, if $\tau\ge i$ and $X_{i-1}^{(i)}\neq0$, then $G_{i-1}\ge y_i+1$, which implies $\tau\le i-1$, a contradiction. The optional stopping theorem applied to the bounded stopping time $\tau\wedge(i-1)$ therefore gives
\begin{equation*}
\begin{aligned}
	\bbE_{G_0=x}\left[X_\tau^{(i)}\mathbf{1}_{\tau<i}\right]
	&=
	\bbE_{G_0=x}\left[X_{\tau\wedge(i-1)}^{(i)}\right]
	=
	X_0^{(i)}
	=
	(1-\rho)^x
	\binom{x-y_i-1}{i-1}
	\mathbf{1}_{x\ge y_i+i}.
\end{aligned}
\end{equation*}

For part~\textnormal{(c)}, suppose $i>N$. Since $\mathbf{1}_{\tau<N}$ is $\mathcal F_N$-measurable,
\begin{equation*}
\begin{aligned}
	\bbE_{G_0=x}\left[
	X_\ntau^{(i)}\mathbf{1}_{\tau<N,\,\ntau<i}
	\right]
	&=
	\bbE_{G_0=x}\left[
	\mathbf{1}_{\tau<N}
	\bbE\left[
	X_\ntau^{(i)}\mathbf{1}_{\ntau<i}\mid\mathcal F_N
	\right]
	\right].
\end{aligned}
\end{equation*}
As in part~\textnormal{(b)},
\begin{equation*}
	X_\ntau^{(i)}\mathbf{1}_{\ntau<i}
	=
	X_{\ntau\wedge(i-1)}^{(i)}.
\end{equation*}
Since $N\le\ntau\wedge(i-1)\le i-1$, conditional optional stopping gives
\begin{equation*}
	\bbE\left[
	X_\ntau^{(i)}\mathbf{1}_{\ntau<i}\mid\mathcal F_N
	\right]
	=
	X_N^{(i)}.
\end{equation*}
Hence
\begin{equation*}
	\bbE_{G_0=x}\left[
	X_\ntau^{(i)}\mathbf{1}_{\tau<N,\,\ntau<i}
	\right]
	=
	\bbE_{G_0=x}\left[
	\mathbf{1}_{\tau<N}X_N^{(i)}
	\right].
\end{equation*}
Finally, optional sampling at the bounded stopping time $\tau\wedge N$ yields
\begin{equation*}
	\bbE_{G_0=x}\left[
	\mathbf{1}_{\tau<N}X_N^{(i)}
	\right]
	=
	\bbE_{G_0=x}\left[
	\mathbf{1}_{\tau<N}X_\tau^{(i)}
	\right],
\end{equation*}
which proves part~\textnormal{(c)}.
\end{proof}

We record two elementary residue evaluations and a Taylor expansion. 
Their proofs are straightforward. 

\begin{lm}\label{lm:intzmo}
For integers $n$ and $\ell$,
\begin{equation*}
	\oint_0\frac{1}{v^n(v+1)^\ell}\frac{\rd v}{2\pi\ri}
	=
	(-1)^{n-1}\binom{\ell+n-2}{n-1}\mathbf{1}_{n>0},
	\qquad
	\oint_{-1}\frac{1}{u^n(u+1)^\ell}\frac{\rd u}{2\pi\ri}
	=
	(-1)^n\binom{\ell+n-2}{\ell-1}\mathbf{1}_{\ell>0}.
\end{equation*}
Moreover, for integers $i\ge1$ and $k$, and for $|u+1|<1$,
\begin{equation}\label{eq:taylor}
	\sum_{x\in\bbZ}
	(-1)^{i-1}(u+1)^x
	\binom{x-k-1}{i-1}
	\mathbf{1}_{x\ge k+i}
	=
	-u^{-i}(u+1)^{k+i}.
\end{equation}
\end{lm}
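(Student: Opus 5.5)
The statement has three parts: two residue evaluations and a generating-function identity. Each part is a direct coefficient extraction, and I will handle them in that order.

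\emph{First residue.} I expand $(1+v)^{-\ell}=\sum_{k\ge0}\binom{-\ell}{k}v^k$ around $v=0$. This is valid for every integer $\ell$; when $\ell\le0$ the series is a finite polynomial. The residue of $v^{-n}(v+1)^{-\ell}$ at $0$ is the coefficient of $v^{n-1}$ in this expansion. It vanishes unless $n\ge1$, and for $n\ge1$ it equals
\[
\binom{-\ell}{n-1}=(-1)^{n-1}\binom{\ell+n-2}{n-1},
\]
by the standard negation rule for binomial coefficients. If $\ell\le 0$, the upper index $\ell+n-2$ may be smaller than $n-1$. I will check that both sides are then consistent, interpreting binomials with general upper index as polynomials in that index.

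\emph{Second residue.} I substitute $w=u+1$, so that $u^{-n}=(w-1)^{-n}=(-1)^n(1-w)^{-n}=(-1)^n\sum_k\binom{n+k-1}{k}w^k$. Again this is valid for all integers $n$, with a finite expansion when $n\le0$. The residue at $w=0$ of $w^{-\ell}(w-1)^{-n}$ is the coefficient of $w^{\ell-1}$. It vanishes unless $\ell\ge1$, and then it equals
\[
(-1)^n\binom{n+\ell-2}{\ell-1}.
\]

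\emph{Generating-function identity \eqref{eq:taylor}.} I set $m=x-k-i\ge0$ and use $\binom{x-k-1}{i-1}=\binom{m+i-1}{i-1}$. The left-hand side becomes
\[
(-1)^{i-1}(u+1)^{k+i}\sum_{m\ge0}\binom{m+i-1}{i-1}(u+1)^m=(-1)^{i-1}(u+1)^{k+i}\bigl(1-(u+1)\bigr)^{-i},
\]
where the sum is the negative binomial series, convergent for $|u+1|<1$. Since $(1-(u+1))^{-i}=(-u)^{-i}=(-1)^iu^{-i}$, the total is $-u^{-i}(u+1)^{k+i}$, as claimed.

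The only point needing care is the binomial convention in the first residue when $\ell\le0$. In that case $(v+1)^{-\ell}$ is a polynomial of degree $-\ell$, and its coefficient $\binom{-\ell}{n-1}$ must be read with the polynomial convention. The identity $\binom{-\ell}{n-1}=(-1)^{n-1}\binom{\ell+n-2}{n-1}$ holds as polynomials in $\ell$, so the stated formula is correct provided $\binom{a}{b}$ is interpreted as $a(a-1)\cdots(a-b+1)/b!$. The second residue needs the analogous remark for $n\le0$. I expect no further obstacle.
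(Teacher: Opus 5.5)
Your proposal is correct. The paper omits this proof as straightforward, and your binomial-series coefficient extraction is exactly the intended computation: direct expansion of $(1+v)^{-\ell}$ for the first residue, the substitution $w=u+1$ for the second, and the negative binomial series for \eqref{eq:taylor}. Your remark that the binomials must be read with the polynomial convention is also accurate, since the paper relies on it: in the proof of Theorem~\ref{thm:energy}, the $k=0$ term of $P(x,x')$ produces $\delta_{x=x'+1}$ only because $\binom{-1}{0}=1$.
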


\begin{cor}\label{cor:szerocomp}
For every $x\in\bbZ$ and $1\le i\le N$,
\begin{equation*}
	\bbE_{G_0=x}\left[
	\frac{(1-\rho)^{G_\tau+\tau}}{(-\rho)^\tau}
	\mathbf{1}_{\tau<N}
	\oint_0
	\frac{1}{v^{i-\tau}(v+1)^{G_\tau+1+\tau-y_i-i}}
	\frac{\rd v}{2\pi\ri}
	\right]
	=
	(-1)^{i-1}(1-\rho)^x
	\binom{x-y_i-1}{i-1}
	\mathbf{1}_{x\ge y_i+i}.
\end{equation*}
\end{cor}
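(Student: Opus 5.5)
We evaluate the inner $v$-residue pointwise on each realization of $(\tau,G_\tau)$ using Lemma~\ref{lm:intzmo}, and then recognize the resulting expression as the stopped martingale value $X^{(i)}_\tau$ of Lemma~\ref{lem:martingalecomp}.

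\textbf{Step 1: evaluate the residue.} On $\{\tau<N\}$, apply the first formula of Lemma~\ref{lm:intzmo} with $n=i-\tau$ and $\ell=G_\tau+1+\tau-y_i-i$. This gives
\begin{equation*}
	\oint_0\frac{1}{v^{i-\tau}(v+1)^{G_\tau+1+\tau-y_i-i}}\frac{\rd v}{2\pi\ri}
	=(-1)^{i-\tau-1}\binom{G_\tau-y_i-1}{i-\tau-1}\mathbf{1}_{\tau<i}.
\end{equation*}
The indicator $\mathbf{1}_{\tau<i}$ comes from $\mathbf{1}_{n>0}$.

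\textbf{Step 2: check the support condition.} We need the binomial to vanish unless $G_\tau+\tau\ge y_i+i$, so that it matches the factor $\mathbf{1}_{G_\tau+\tau\ge y_i+i}$ in $X_\tau^{(i)}$. Here the upper entry is $G_\tau-y_i-1$ and the lower entry is $i-\tau-1\ge0$. Although the upper entry could be negative, we can argue directly. At time $\tau<i$ we have $G_\tau>y_{\tau+1}$. Since $Y$ is strictly decreasing by at least one per index, $y_{\tau+1}\ge y_i+(i-\tau-1)$. Hence $G_\tau\ge y_i+i-\tau$, so the upper entry is at least the lower entry, which is nonnegative. Thus the indicator holds automatically, and the binomial is the ordinary nonnegative one.

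\textbf{Step 3: assemble.} Multiplying by the prefactor $(1-\rho)^{G_\tau+\tau}(-\rho)^{-\tau}$ and using $(-1)^{-\tau}(-1)^{i-\tau-1}=(-1)^{i-1}$, the integrand becomes
\begin{equation*}
	(-1)^{i-1}X^{(i)}_\tau\,\mathbf{1}_{\tau<i}\mathbf{1}_{\tau<N}.
\end{equation*}
Since $i\le N$, we have $\mathbf{1}_{\tau<i}\mathbf{1}_{\tau<N}=\mathbf{1}_{\tau<i}$.

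\textbf{Step 4: conclude.} Taking expectations and applying Lemma~\ref{lem:martingalecomp}(b) gives exactly the right-hand side.

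The main care point is the sign and indicator bookkeeping in Step 2, in particular the generalized binomial convention of Lemma~\ref{lm:intzmo} when $\ell\le0$. That case cannot occur here, because $\ell\ge1$ follows from the bound in Step 2.
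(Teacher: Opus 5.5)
Your proof is correct and follows essentially the same route as the paper. In both, you evaluate the $v$-residue with Lemma~\ref{lm:intzmo} and use $i\le N$ to reduce the indicators to $\mathbf{1}_{\tau<i}$. You then check that $G_\tau+\tau\ge y_{\tau+1}+\tau+1\ge y_i+i$ on $\{\tau<i\}$, so the integrand equals $(-1)^{i-1}X^{(i)}_\tau\mathbf{1}_{\tau<i}$, and conclude with Lemma~\ref{lem:martingalecomp}(b).
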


\begin{proof}
By Lemma~\ref{lm:intzmo}, the left-hand side equals
\begin{equation*}
	(-1)^{i-1}
	\bbE_{G_0=x}\left[
	(1-\rho)^{G_\tau}
	\left(\frac{1-\rho}{\rho}\right)^\tau
	\binom{G_\tau-y_i-1}{i-\tau-1}
	\mathbf{1}_{\tau<N,\,\tau<i}
	\right].
\end{equation*}
Since $i\le N$, the two indicators reduce to $\mathbf{1}_{\tau<i}$. Moreover, on $\{\tau<i\}$, 
$G_\tau+\tau \ge y_{\tau+1}+\tau+1 \ge y_i+i$ since the sequence $y_j+j$ is nonincreasing. Thus the preceding expression is $(-1)^{i-1} \bbE_{G_0=x} [ X_\tau^{(i)}\mathbf{1}_{\tau<i}]$ 
and the result follows from Lemma~\ref{lem:martingalecomp}\textnormal{(b)}.
\end{proof}

We now prove the propositions. 

\begin{proof}[Proof of Proposition~\ref{prop:charlineareqs}]
Fix $1\le i\le N$ and $u$ with $0<|u+1|<1$. Choose the circle in \eqref{eq:reproducing} small enough that 
$|u+1|<|v+1|$ for all $v$ on the $v$-contour. 
The normal convergence in Lemma~\ref{prop:char_welldefine} then justifies interchanging the contour integral, the sum over $x$, and the expectation. The left-hand side of \eqref{eq:reproducing} becomes
\begin{equation*}
\begin{aligned}
	\sum_{x\in\bbZ}
	\left(\frac{u+1}{1-\rho}\right)^x
	\bbE_{G_0=x}\left[
	(1-\rho)^{G_\tau}
	\left(\frac{1-\rho}{-\rho}\right)^\tau
	\mathbf{1}_{\tau<N}
	\oint_0
	\frac{1}{v^{i-\tau}(v+1)^{G_\tau+1+\tau-y_i-i}}
	\frac{\rd v}{2\pi\ri}
	\right].
\end{aligned}
\end{equation*}
By Corollary~\ref{cor:szerocomp}, this equals
\begin{equation*}
	\sum_{x\in\bbZ}
	(-1)^{i-1}(u+1)^x
	\binom{x-y_i-1}{i-1}
	\mathbf{1}_{x\ge y_i+i},
\end{equation*}
which is $-u^{-i}(u+1)^{y_i+i}$ by \eqref{eq:taylor}.
\end{proof}

\begin{proof}[Proof of Proposition~\ref{prop:mpchlineareq}]
For every function $f$ analytic on a domain containing
\begin{equation*}
	\left\{v\in\bbC:
	|v^N(v+1)^{L-N}|\le |z|^L,\ \Re(v)>-N/L
	\right\},
\end{equation*}
the residue theorem gives
\begin{equation*}
	\sum_{v\in\cR_z}
	\frac{f(v)}{v^N(v+1)^{L-N}}
	\frac{v(v+1)}{Lv+N}
	=
	\sum_{v\in\cR_z}\frac{f(v)}{q_z'(v)}
	=
	\oint_{\Gamma_\rR}
	\frac{f(v)}{v^N(v+1)^{L-N}-z^L}
	\frac{\rd v}{2\pi\ri},
\end{equation*}
where $\Gamma_\rR$ is a positively oriented simple closed contour enclosing all right Bethe roots. We choose $\Gamma_\rR$ within the analyticity region of $\mpch_Y$ so that
\begin{equation*}
	|z|^L
	<
	|v^N(v+1)^{L-N}|
	<
	\rho^N(1-\rho)^{L-N}
\end{equation*}
for all $v\in \Gamma_\rR$. Expanding the denominator as a geometric series and deforming each term to a small circle around $0$, we obtain
\begin{equation}\label{eq:sumfqint}
	\sum_{v\in\cR_z}
	\frac{f(v)}{v^N(v+1)^{L-N}}
	\frac{v(v+1)}{Lv+N}
	=
	\sum_{k=0}^\infty z^{kL}
	\oint_0
	\frac{f(v)}
	{\bigl(v^N(v+1)^{L-N}\bigr)^{k+1}}
	\frac{\rd v}{2\pi\ri}.
\end{equation}

Fix $1\le i\le N$ and apply \eqref{eq:sumfqint} with
\begin{equation*}
	f(v):=
	v^{N-i}(v+1)^{L-N+y_i+i}\mpch_Y(v,u).
\end{equation*}
By Lemma~\ref{prop:mpchwelldefined} and the assumptions on $\rho$, this function is analytic in the required region. 
Denoting the left-hand side of \eqref{eq:mpchdefineq} by $\mathrm{LHS}$ and using \eqref{eq:mpchdefn}, we obtain
\begin{equation}\label{eq:sum_integral_2}
	\mathrm{LHS}
	=
	\sum_{x\in\bbZ}
	\left(\frac{u+1}{1-\rho}\right)^x
	\sum_{k=0}^\infty
	\bigl(s_k(x)-s_k^*(x)\bigr)z^{kL},
\end{equation}
where
\begin{equation*}
	s_k(x)
	:=
	\oint_0
	\bbE_{G_0=x}\left[
	\frac{(1-\rho)^{G_\tau}}{(v+1)^{G_\tau+1}}
	\left(\frac{-v(1-\rho)}{(v+1)\rho}\right)^\tau
	\mathbf{1}_{\tau<N}
	\right]
	\frac{(v+1)^{y_i+i}}
	{v^i\bigl(v^N(v+1)^{L-N}\bigr)^k}
	\frac{\rd v}{2\pi\ri},
\end{equation*}
and
\begin{equation*}
	s_k^*(x)
	:=
	\oint_0
	\bbE_{G_0=x}\left[
	\frac{(1-\rho)^{G_\ntau}}{(v+1)^{G_\ntau+1}}
	\left(\frac{-v(1-\rho)}{(v+1)\rho}\right)^\ntau
	\mathbf{1}_{\tau<N,\,\ntau<\infty}
	\right]
	\frac{(v+1)^{y_i+i}}
	{v^i\bigl(v^N(v+1)^{L-N}\bigr)^k}
	\frac{\rd v}{2\pi\ri}.
\end{equation*}

For $k=0$, Corollary~\ref{cor:szerocomp} gives
\begin{equation*}
	s_0(x)
	=
	(-1)^{i-1}(1-\rho)^x
	\binom{x-y_i-1}{i-1}
	\mathbf{1}_{x\ge y_i+i}.
\end{equation*}
On the other hand,
\begin{equation*}
\begin{aligned}
	s_0^*(x)
	=
	\bbE_{G_0=x}\left[
	\frac{(1-\rho)^{G_\ntau+\ntau}}{(-\rho)^\ntau}
	\mathbf{1}_{\tau<N,\,\ntau<\infty}
	\oint_0
	\frac{v^{\ntau-i}}
	{(v+1)^{G_\ntau+1+\ntau-y_i-i}}
	\frac{\rd v}{2\pi\ri}
	\right]
	=
	0,
\end{aligned}
\end{equation*}
because $\ntau\ge N\ge i$ makes the integrand analytic at $v=0$.

Now let $k\ge1$ and set $I:=kN+i$. Recall that $Y\in\Pconf$. By periodicity, $y_I=y_i-kL$. Thus,
\begin{equation*}
\begin{aligned}
	s_k(x)
	=
	\bbE_{G_0=x}\left[
	(1-\rho)^{G_\tau}
	\left(\frac{1-\rho}{-\rho}\right)^\tau
	\mathbf{1}_{\tau<N}
	\oint_0
	\frac{1}{v^{I-\tau}(v+1)^{G_\tau+1+\tau-y_I-I}}
	\frac{\rd v}{2\pi\ri}
	\right]
\end{aligned}
\end{equation*}
and
\begin{equation*}
\begin{aligned}
	s_k^*(x)
	=
	\bbE_{G_0=x}\left[
	(1-\rho)^{G_\ntau}
	\left(\frac{1-\rho}{-\rho}\right)^\ntau
	\mathbf{1}_{\tau<N,\,\ntau<I}
	\oint_0
	\frac{1}{v^{I-\ntau}(v+1)^{G_\ntau+1+\ntau-y_I-I}}
	\frac{\rd v}{2\pi\ri}
	\right],
\end{aligned}
\end{equation*}
where $\mathbf{1}_{\ntau<\infty}$ has been replaced by $\mathbf{1}_{\ntau<I}$ because the integral vanishes when $\ntau\ge I$. 
Evaluating the integrals using Lemma~\ref{lm:intzmo}, we obtain
\begin{equation}\label{eq:expectation}
	s_k(x)
	=
	(-1)^{I-1}
	\bbE_{G_0=x}\left[
	(1-\rho)^{G_\tau}
	\left(\frac{1-\rho}{\rho}\right)^\tau
	\binom{G_\tau-y_I-1}{I-\tau-1}
	\mathbf{1}_{\tau<N}
	\right]
\end{equation}
and
\begin{equation}\label{eq:expectation_new}
	s_k^*(x)
	=
	(-1)^{I-1}
	\bbE_{G_0=x}\left[
	(1-\rho)^{G_\ntau}
	\left(\frac{1-\rho}{\rho}\right)^\ntau
	\binom{G_\ntau-y_I-1}{I-\ntau-1}
	\mathbf{1}_{\tau<N,\,\ntau<I}
	\right], 
\end{equation}
where no additional indicator arises in \eqref{eq:expectation} since $\tau<N<I$. 
Moreover, on $\{\tau<N\}$,
\begin{equation*}
	G_\tau+\tau
	\ge
	y_{\tau+1}+\tau+1
	\ge
	y_I+I,
\end{equation*}
and the analogous inequality holds on $\{\ntau<I\}$. 
Thus the indicators appearing in the definition of $X^{(I)}$ in Lemma~\ref{lem:martingalecomp} are equal to one in \eqref{eq:expectation} and \eqref{eq:expectation_new}. 
Consequently, Lemma~\ref{lem:martingalecomp}\textnormal{(c)} implies that 
\begin{equation*}
	s_k(x)=s_k^*(x),
	\qquad k\ge1.
\end{equation*}

Substituting these identities into \eqref{eq:sum_integral_2}, only the term $k=0$ remains. Hence
\begin{equation*}
	\mathrm{LHS}
	=
	(-1)^{i-1}
	\sum_{x\in\bbZ}
	(u+1)^x
	\binom{x-y_i-1}{i-1}
	\mathbf{1}_{x\ge y_i+i}
	=
	-\frac{(u+1)^{y_i+i}}{u^i},
\end{equation*}
where the last equality follows from \eqref{eq:taylor}.
\end{proof}

\subsection{Proofs of Theorems~\ref{thm:characteristic} and~\ref{thm:energy}}
\label{sec:proof_char}

We now prove Theorems~\ref{thm:characteristic} and~\ref{thm:energy}.

\begin{proof}[Proof of Theorem~\ref{thm:characteristic}]
By Lemma~\ref{prop:key_identity}, $v\mapsto\npch_Y(v,u;z)$ is the unique solution of the linear system \eqref{eq:identity_generaldensity}. 
By Proposition~\ref{prop:mpchlineareq}, $v\mapsto\mpch_Y(v,u)$ also satisfies the same system. 
The result follows by uniqueness.
\end{proof}

\begin{proof}[Proof of Theorem~\ref{thm:energy}]
Since $y_1<0$, we have $(y_1,\ldots,y_N)\in\bbZ_{<0}^N$. By Proposition~\ref{prop:charlineareqs}, the function $H(v,u)$ in Corollary~\ref{lm:MYassumeH} can be chosen as
\begin{equation*}
	H(v,u)
	:=
	-\frac{z^L(u+1)^{L-N}}
	{(v+1)^{L-N}\bigl(u^N(u+1)^{L-N}-z^L\bigr)}
	\rch_Y(v,u).
\end{equation*}
Let $r_0$ be a number satisfying $r_{2,N,L}(z)<r_0<1-\rho$ and $(1-r_0)^N r_0^{L-N}>|z|^L$. 
Then, $\Gamma_\rL:=\{u\in\bbC:|u+1|=r_0\}$ encloses all left Bethe roots and $-1$, but excludes $0$ and all right Bethe roots, and 
\begin{equation*}
	|u+1|<1-\rho,
	\qquad
	|u^N(u+1)^{L-N}|
	\ge
	(1-r_0)^N r_0^{L-N}
	>
	|z|^L
\end{equation*}
for $u\in\Gamma_\rL$. 
Corollary~\ref{lm:MYassumeH}, with $r=1-\rho$, implies $\ncE_Y(z)=\det(I+K)_{\ell^2(\bbZ_{\le0})}$, 
where $K$ is trace class and has kernel
\begin{equation}\label{eq:Knotfancy}
	K(x,y)
	=
	-(1-\rho)^{x-y}z^L
	\oint_0\frac{\rd v}{2\pi\ri}
	\oint_{\Gamma_\rL}\frac{\rd u}{2\pi\ri}
	\frac{
	u^N(u+1)^{L-N-x}\rch_Y(v,u)
	}{
	v^N(v+1)^{L-N-y+1}
	\bigl(u^N(u+1)^{L-N}-z^L\bigr)
	},
\end{equation}
for $x,y\in\bbZ_{\le0}$.

We rewrite this kernel in operator form. Inserting \eqref{eq:ess_hitting} into \eqref{eq:Knotfancy}, we obtain
\begin{equation*}
	K(x,y)
	=
	-\sum_{x'\in\bbZ}P(x,x')R(x',y),
\end{equation*}
where
\begin{equation*}
	P(x,x')
	:=
	(1-\rho)^{x-x'-1}
	\oint_{\Gamma_\rL}
	\frac{u^N(u+1)^{L-N-x+x'}}
	{u^N(u+1)^{L-N}-z^L}
	\frac{\rd u}{2\pi\ri}
\end{equation*}
and
\begin{equation*}
\begin{aligned}
	R(x',y)
	:=
	z^L(1-\rho)^{1-y}
	\bbE_{G_0=x'}\left[
	\mathbf{1}_{\tau<N}
	\frac{(-1)^\tau(1-\rho)^{G_\tau+\tau}}{\rho^\tau}
	\oint_0
	\frac{1}
	{v^{N-\tau}(v+1)^{G_\tau+\tau+2+L-N-y}}
	\frac{\rd v}{2\pi\ri}
	\right].
\end{aligned}
\end{equation*}

We first evaluate $R$. By Lemma~\ref{lm:intzmo},
\begin{equation*}
	R(x',y)
	=
	-z^L(-1)^N(1-\rho)^{1-y}
	\bbE_{G_0=x'}\left[
	\mathbf{1}_{\tau<N}
	\frac{(1-\rho)^{G_\tau+\tau}}{\rho^\tau}
	\binom{G_\tau+L-y}{N-\tau-1}
	\right].
\end{equation*}
Let $\widetilde G$ be an independent geometric random walk with the same parameter $\rho$. For $n\ge1$,
\begin{equation}\label{eq:grws}
	\prob(\widetilde G_n=-\ell\mid\widetilde G_0=0)
	=
	\rho^n(1-\rho)^{\ell-n}
	\binom{\ell-1}{n-1}
	\mathbf{1}_{\ell\ge n}.
\end{equation}
Hence, on $\{\tau<N\}$,
\begin{equation*}
\begin{aligned}
	\prob\bigl(
	\widetilde G_{N-\tau}=-L+y-1
	\mid\widetilde G_0=G_\tau
	\bigr)
	=
	\rho^{N-\tau}
	(1-\rho)^{L-y+1+G_\tau-N+\tau}
	\binom{L-y+G_\tau}{N-\tau-1}.
\end{aligned}
\end{equation*}
Using $z^L= (-\rho)^N(1-\rho)^{L-N} \rz$, 
we therefore obtain
\begin{equation*}
\begin{aligned}
	R(x',y)
	&=
	-\rz\,
	\bbE_{G_0=x'}\left[
	\mathbf{1}_{\tau<N}
	\prob\bigl(
	\widetilde G_{N-\tau}=-L+y-1
	\mid\widetilde G_0=G_\tau
	\bigr)
	\right]\\
	&=
	-\rz\,\prob_{G_0=x'}(G_N=-L+y-1,\ \tau<N)
	=
	-\rz\,T_Y(x',y),
\end{aligned}
\end{equation*}
where the second equality follows from the strong Markov property.

We next evaluate $P$. Since $|u^N(u+1)^{L-N}|>|z|^L$ for $u\in\Gamma_\rL$, 
we may expand the denominator as a geometric series and deform each resulting contour to a small circle around $-1$ to obtain 
\begin{equation*}
	\frac{P(x,x')}{(1-\rho)^{x-x'-1}}
	=
	\sum_{k\ge0}z^{kL}
	\oint_{-1}
	\frac{1}
	{u^{kN}(u+1)^{k(L-N)+x-x'}}
	\frac{\rd u}{2\pi\ri}.
\end{equation*}
By Lemma~\ref{lm:intzmo},
\begin{equation*}
	\frac{P(x,x')}{(1-\rho)^{x-x'-1}}
	=
	\delta_{x=x'+1}
	+ 
	\sum_{k\ge1}
	(-1)^{kN}z^{kL}
	\binom{kL+x-x'-2}{k(L-N)+x-x'-1}
	\mathbf{1}_{k(L-N)+x-x'\ge1}.
\end{equation*}
On the other hand, \eqref{eq:grws} gives
\begin{equation*}
	\prob_{G_0=x-1}(G_{kN}=-kL+x')
	=
	\rho^{kN}
	(1-\rho)^{kL+x-x'-1-kN}
	\binom{kL+x-x'-2}{kN-1}
	\mathbf{1}_{k(L-N)+x-x'\ge1}.
\end{equation*}
Thus, by the definition of $S_\rz$,
\begin{equation*}
	P(x,x')
	=
	\delta_{x=x'+1}+S_\rz(x-1,x').
\end{equation*}

Combining the evaluations of $P$ and $R$, we find
\begin{equation*}
	K(x,y)
	=
	-\sum_{x'\in\bbZ}P(x,x')R(x',y)
	=
	\rz\sum_{x'\in\bbZ}
	\bigl(\delta_{x=x'+1}+S_\rz(x-1,x')\bigr)T_Y(x',y).
\end{equation*}
By Definition~\ref{def:chardef2K}, this is the kernel of $-\Ke_{Y,\rz}$ on $\ell^2(\bbZ_{\le0})$. Hence $K=-\Ke_{Y,\rz}$. Since $K$ is trace class, so is $\Ke_{Y,\rz}$, and
\begin{equation*}
	\ncE_Y(z)
	=
	\det(I+K)_{\ell^2(\bbZ_{\le0})}
	=
	\det(I-\Ke_{Y,\rz})_{\ell^2(\bbZ_{\le0})}.
\end{equation*}
This proves the theorem.
\end{proof}

\section{Proof of Theorem~\ref{thm:main} for generic time-ordered parameters under the normalization \texorpdfstring{$y_1^{(L)}=-1$}{y\_1\textasciicircum(L) = -1}}
\label{sec:asymptotics}

In this section, we prove Theorem~\ref{thm:main} under the additional assumptions
\begin{equation*}
    0<\TT_1\le\cdots\le\TT_m,
    \qquad
    (\HH_1,\ldots,\HH_m)\in\dom_+^m(\TT_1,\ldots,\TT_m),
\end{equation*}
and
\begin{equation*}
    y_1^{(L)}=-1 \qquad \text{for all $L\ge 1$.}
\end{equation*}
The time-ordering and threshold conditions correspond to part~\textnormal{(i)} of Definition~\ref{def:F}, where $\Fscaled_\h$ is given directly by the contour integral \eqref{eq:multi_time1}. 
The normalization $y_1^{(L)}=-1$ is removed in Section~\ref{sec:asymptotics_general}. 

\subsection{Setup}
\label{sec:setup}

Fix $0<\rho_-<\rho_+<1$. 
Let $N_L$ be a sequence of integers such that
\begin{equation*}
	\rho_-\le \rho_L \le \rho_+,
	\qquad
	\rho_L:=N_L/L,
\end{equation*}
for all sufficiently large $L$. 
Fix $\h\in\uc_1$, and let $Y_L=(y_i^{(L)})_{i\in\bbZ}\in\Pconfno_{N_L,L}$ satisfy
\begin{equation*}
	y_1^{(L)}=-1
\end{equation*}
for all $L$, and 
\begin{equation*}
	\h_L(\XX)
	:=
	\frac{-y^{(L)}_{-\XX N_L+1}-1+\XX L}
	{\sqrt{\frac{1-\rho_L}{\rho_L}}\,L^{1/2}}
	\to \h(\XX)
	\qquad \text{in $\uc_1$,}
\end{equation*}
where $y_a^{(L)}$ for non-integer $a$ is defined by linear interpolation. 
The last two assumptions imply that $\h$ necessarily satisfies $\h\in \uc_1^0$, i.e.,
\begin{equation*}
	\h(0)=0.
\end{equation*}

Fix an integer $m\ge1$, and let $0<\TT_1\le\cdots\le\TT_m$, $(\HH_1,\ldots,\HH_m)\in\dom_+^m(\TT_1,\ldots,\TT_m)$, and $\XX_1,\ldots,\XX_m\in\bbR$. 
Set, as in \eqref{eq:scaling_1},
\begin{equation}\label{eq:tkii} 
        t_i:=\TT_i\frac{L^{3/2}}{\sqrt{\rho_L(1-\rho_L)}},
        \qquad
        k_i:=\lfloor \rho_L^2t_i-\XX_iN_L \rfloor,
        \qquad
	a_i:=\left\lfloor \XX_iL+(1-2\rho_L)t_i-\HH_i\sqrt{\frac{1-\rho_L}{\rho_L}}\,L^{1/2} \right\rfloor
\end{equation}
for $1\le i\le m$. 
Let $(\sx_k^{(L)}(\cdot))_{k\in\bbZ}$ be distributed as $\mathrm{PTASEP}_{Y_L}(L,N_L)$. 
By Theorem~\ref{thm:PTASEP_multi},
\begin{equation*}
	\prob\left(\bigcap_{i=1}^m \left\{\sx_{k_i}^{(L)}(t_i)\ge a_i\right\}\right)
	=
	\oint\cdots\oint
	\mathscr{C}_{Y_L}(\bmz)\mathscr{D}_{Y_L}(\bmz)\,
	\frac{\rd z_1}{2\pi\ri z_1}\cdots\frac{\rd z_m}{2\pi\ri z_m},
\end{equation*}
where $\bmz=(z_1,\ldots,z_m)$ and the contours are nested circles satisfying
\begin{equation*}
	0<|z_m|<\cdots<|z_1|<\bfr_{N_L,L}
	:=
	\rho_L^{\rho_L}(1-\rho_L)^{1-\rho_L}.
\end{equation*}
It is immediate from the definitions that $\mathscr{C}_{Y_L}(\bmz)$ and $\mathscr{D}_{Y_L}(\bmz)$ depend on $z_1,\ldots,z_m$ only through $z_1^L,\ldots,z_m^L$. 
We change variables to $\rz_1,\ldots,\rz_m$ by
\begin{equation} \label{eq:zL_to_rz}
	z_i^L=(-\rho_L)^{N_L}(1-\rho_L)^{L-N_L}\rz_i.
\end{equation}
Then, as in~\cite{Baik-Liu18, Baik-Liu21},
\begin{equation} \label{eq:F_L_rz}
	\prob\left(\bigcap_{i=1}^m \left\{\sx_{k_i}^{(L)}(t_i)\ge a_i\right\}\right)
	=
	\oint\cdots\oint
	\mathscr{C}_{Y_L}(\bz)\mathscr{D}_{Y_L}(\bz)\,
	\frac{\rd\rz_1}{2\pi\ri\rz_1}\cdots\frac{\rd\rz_m}{2\pi\ri\rz_m},
\end{equation}
where $\bz=(\rz_1,\ldots,\rz_m)$, and we abuse notation by writing 
$\mathscr{C}_{Y_L}(\bz)$ and $\mathscr{D}_{Y_L}(\bz)$ for $\mathscr{C}_{Y_L}(\bmz(\bz))$ and $\mathscr{D}_{Y_L}(\bmz(\bz))$. 
The new contours satisfy $0<|\rz_m|<\cdots<|\rz_1|<1$. 

\begin{prop}\label{prop:CYconvergence}
Under the above assumptions, 
\begin{equation*}
    \lim_{L\to\infty} \mathscr{C}_{Y_L}(\bz)=\rC_\h(\bz)
\end{equation*}
uniformly in $\bz$ on compact subsets of $\{(\rz_1,\ldots,\rz_m)\in\bbC^m:0<|\rz_m|<\cdots<|\rz_1|<1\}$.
\end{prop}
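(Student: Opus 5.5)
The plan is to split $\mathscr{C}_{Y_L}(\bz)$ into its initial-condition-independent part and the energy factor. Comparing Definition~\ref{def:Czoriginal} with \eqref{eq:nef}, the factor $\prod_{u\in\cL_{z_1}}(-u)^N\prod_{v\in\cR_{z_1}}(v+1)^{L-N}/\Delta(\cR_{z_1};\cL_{z_1})$ for $\ell=1$ combines with $\cE_{Y_L}(z_1)$ into $\ncE_{Y_L}(z_1)$. The remaining product is exactly $\mathscr{C}_{\mathrm{step}}(\bz)/\ncE_{\mathrm{step}}(z_1)$, with $\ncE_{\mathrm{step}}$ explicit since $\cE_{\mathrm{step}}=1$. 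This step-type product has already been analysed in~\cite{Baik-Liu19,Baik-Liu21}, where it was shown to converge uniformly on compacts to $\rC(\bz)e^{2B(\rz_1)}$ (the $e^{2B(\rz_1)}$ coming from the normalized Vandermonde ratio). I would quote that result, taking care that the $a_i,k_i$ scaling in \eqref{eq:tkii} matches theirs and absorbs the floor functions. Since Proposition~\ref{prop:pnw22} gives $\det(\I+\limKe_{\h_{\mathrm{pnw}}}(\rz))=e^{2B(\rz)}$, and by Proposition~\ref{result:KasQ}(c), it then suffices to prove that, uniformly for $\rz$ in compact subsets of $0<|\rz|<1$,
\begin{equation*}
\ncE_{Y_L}(z)\to \det(\I+\limKe_{\h}(\rz))_{L^2(\bbR)},
\end{equation*}
where $\h\in\uc_1^0$ and $\ra=0$, so that $\rE_\h(\rz)=\det(\I+\limKe_\h(\rz))$. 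If the normalization in the step-type product is arranged so that $e^{2B}$ is not separated out, I would use the factorization \eqref{eq:KasQ_factorization} to match the two bookkeepings.

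For the energy factor I would use Theorem~\ref{thm:energy} with $\rho=\rho_L$, which is admissible for $|\rz|<1$ because $r_{1}<\rho_L<1-r_2$. This gives $\ncE_{Y_L}(z)=\det(I-\Ke_{Y_L,\rz})_{\ell^2(\bbZ_{\le0})}$. Next I rescale $x=-\lfloor \sfx\,c_L L^{1/2}\rfloor$ with $c_L=\sqrt{(1-\rho_L)/\rho_L}$, so that $\bbZ_{\le 0}$ maps to $\bbR_{\ge0}$. Under this scaling:
\begin{itemize}
\item The centered geometric walk $G_{\lfloor sN\rfloor}+sL$, divided by $c_LL^{1/2}$, converges by Donsker and a local CLT to Brownian motion with diffusion coefficient $1$; the centering follows since the mean step is $-1/\rho$ and the variance is $(1-\rho)/\rho^2$.
\item The event $\{\tau<N\}$, namely the walk exceeding $y_{m+1}$ before time $N$, converges to the event that the reflected Brownian motion hits the hypograph of $\h(-\cdot)$ before time $1$, i.e.\ $\{\btau<1\}$. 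This matches the orientation of $\h_L$ in \eqref{eq:initial_scaling}.
\item Hence $T_{Y_L}$ converges to $\sfT_\h$, $S_\rz$ converges to $\sfS_\rz$ termwise via the local CLT for $G_{kN}$, and $\Sigma$ converges to the identity.
\end{itemize}
The sign change $x\mapsto -x$, together with $-\rz$ versus $\rz$, should produce the operator $-\limKe_\h(\rz)$ conjugated, so that $\det(I-\Ke)\to\det(\I+\limKe_\h)$. The exact sign bookkeeping between $(-\rho)^N$ in $\rz$ and the kernel will have to be checked.

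The main obstacle is upgrading the pointwise kernel convergence to convergence of Fredholm determinants. For this I would mimic the proof of Proposition~\ref{result:KasQ}: factor $T_{Y_L}$ through the midpoint time $N/2$ into a sum of products of Hilbert--Schmidt operators with weights $e^{\pm|\sfx|}$. Uniform-in-$L$ Gaussian tail bounds for the geometric walk (an exponential martingale or Chernoff bound), combined with $\h_L\le M$ uniformly, a consequence of $\uc$ convergence and periodicity, should give Hilbert--Schmidt convergence of each factor and hence trace-norm convergence. The series for $S_\rz$ is controlled geometrically because $|\rz|<1$ uniformly on compacts. The hitting-event convergence $\{\tau<N\}\to\{\btau<1\}$ requires the usual MQR argument: the $\limsup$ and $\liminf$ conditions \eqref{eq:UC_criterion}, plus the fact that Brownian motion immediately crosses the boundary after hitting it, so the boundary has probability zero. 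I expect this step, together with the uniform trace-class bounds, to be the bulk of the work.
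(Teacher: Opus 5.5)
Your architecture is the paper's: factor out the energy, compare with the periodic step initial condition, quote \cite{Baik-Liu19} for the part that does not depend on the initial condition, and prove $\ncE_{Y_L}(z_1)\to\det(\rI+\limKe_\h(\rz_1))$ through Theorem~\ref{thm:energy}. However, as you state it, the first step misplaces the factor $e^{2B(\rz_1)}$. You write $\mathscr{C}_{Y_L}(\bz)=\ncE_{Y_L}(z_1)R_L(\bz)$ with $R_L=\mathscr{C}_{\mathrm{step}_L}/\ncE_{\mathrm{step}_L}(z_1)$, and then assert $R_L\to e^{2B(\rz_1)}\rC(\bz)$, ``the $e^{2B(\rz_1)}$ coming from the normalized Vandermonde ratio''. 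But the $\ell=1$ normalized Vandermonde ratio \emph{is} $\ncE_{\mathrm{step}_L}(z_1)$, which you have already divided out. What \cite{Baik-Liu19} gives is $\mathscr{C}_{\mathrm{step}_L}(\bz)\to e^{2B(\rz_1)}\rC(\bz)$, and the correct statement is $R_L(\bz)\to\rC(\bz)$. Combining your limit for $R_L$ with your (correct) target $\ncE_{Y_L}\to\det(\rI+\limKe_\h)$ gives $e^{2B(\rz_1)}\rC_\h(\bz)$. For the step initial condition itself this would be $e^{4B(\rz_1)}\rC(\bz)$, contradicting the very result you quote. Proposition~\ref{result:KasQ}(c) cannot repair this. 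It is an identity between limiting objects and does not change the limit of $R_L$. Making your bookkeeping consistent would require $\ncE_{Y_L}\to\det(\I_++\rz(\I_+-\rz\mathsf G_+)^{-1}\sfTp_\h)$, which contradicts what your own second step proves. (If by ``this step-type product'' you meant $\mathscr{C}_{\mathrm{step}_L}$ itself, the claim is right, but you then still need $\lim\ncE_{\mathrm{step}_L}(z_1)=e^{2B(\rz_1)}$, supplied below.)

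The repair is exactly the paper's proof, and it is where Proposition~\ref{prop:pnw22} actually enters. The step initial condition has $y_1=-1$ and its profile converges to $\h_{\mathrm{pnw}}$. Your energy convergence applied to it therefore gives $\ncE_{\mathrm{step}_L}(z_1)\to\det(\rI+\limKe_{\mathrm{pnw}}(\rz_1))=e^{2B(\rz_1)}$. Hence $R_L\to\rC(\bz)$ and $\mathscr{C}_{Y_L}(\bz)\to\det(\rI+\limKe_\h(\rz_1))\rC(\bz)=\rC_\h(\bz)$; Proposition~\ref{result:KasQ}(c) is not needed.

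For the energy convergence itself, your plan differs from the paper only in the determinant step. The paper does not prove trace-norm convergence. It proves the uniform bound $|\widehat{\Ke}_{L,\rz}(\sfx,\sfy)|\le Ce^{-c(\sfx+\sfy)}\mathbf 1_{\sfx,\sfy>0}$, using four ingredients: dominating $\btau_L$ by the flat hitting time, a Doob maximal estimate, \eqref{eq:PGscaled}, and a convolution bound for the $\widehat S_{L,\rz}$ term. It combines this with pointwise kernel convergence and passes to the limit term by term in the Fredholm series, using Hadamard's inequality. Your midpoint Hilbert--Schmidt factorization would give the stronger trace-norm convergence. But it requires $L^2$-convergence, not just bounds, of each discrete factor after embedding into $L^2(\bbR)$ and composing with $\mathbf 1_{\le0}\Sigma(I+S_\rz)$, which converges only strongly. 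Also, the $e^{\pm|u|}$ weights are controlled only for large $L$, because \eqref{eq:PGscaled} gives Gaussian decay only for $|\sfx-\sfx'|\lesssim L^{1/2}(\theta'-\theta)$. Your route is workable, but the pointwise-bound argument is considerably lighter.
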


\begin{prop}\label{prop:DYconvergence00}
Under the above assumptions, 
\begin{equation*}
    \lim_{L\to\infty}\mathscr{D}_{Y_L}(\bz)=\rD_\h(\bz)
\end{equation*}
uniformly in $\bz$ on compact subsets of $\{(\rz_1,\ldots,\rz_m)\in\bbC^m:0<|\rz_m|<\cdots<|\rz_1|<1\}$.
\end{prop}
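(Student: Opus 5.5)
The plan is to prove Proposition~\ref{prop:DYconvergence00} term by term in the series of Definition~\ref{def:Dz}, and then to justify exchanging the limit with the sum over $\mathbf n$ by dominated convergence. Under the change of variables \eqref{eq:zL_to_rz} we rescale the Bethe roots near the critical point $w=-\rho_L$ by
\[
w=-\rho_L+\sqrt{\rho_L(1-\rho_L)}\,L^{-1/2}\zeta .
\]
With this scaling the equation $q_z(w)=0$ becomes $e^{-\zeta^2/2}(1+O(\zeta^3L^{-1/2}))=\rz$, so the left and right Bethe roots of size $O(L^{1/2})$ in $\zeta$ converge to $\rL_\rz$ and $\rR_\rz$. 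The remaining roots, far from $-\rho_L$, form the region where we need decay estimates.

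\textbf{Step 1 (the $Y$-independent part).} For the factors $f_\ell$, $g_\ell$, $A_\ell$ and the Cauchy determinants in $d^{(\mathbf n)}(\bm z;U,V)$, the convergence to $\mathrm f_\ell$, $\mathrm g_\ell$, $\hftn$ and the factors in \eqref{eq:Dn_limnoh} is the same as in \cite{Baik-Liu19,Baik-Liu21}. These factors do not see the initial condition, and we import those estimates. In particular, $f_\ell$ gives the bound $e^{-c|\zeta|^3}$ for $|\zeta|\le \epsilon L^{1/2}$ and exponential smallness $e^{-cL^{3/2}}$ for roots outside this window. The Jacobian $w(w+1)/(Lw+N)$ supplies the factors $1/\xi$ and $1/\eta$, together with the measure change turning sums over roots into sums over $\rL_{\rz}$ and $\rR_{\rz}$.

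\textbf{Step 2 (characteristic function).} The only $Y$-dependent factor is $\det[\mdpch_{Y_L}(v_i^{(1)},u_j^{(1)};z_1)]$. We convert it via Definition~\ref{def:nepcf} into $\npch_{Y_L}$. The prefactors $q_{z,\rL}(v)q_{z,\rR}(u)/(u^N(v+1)^{L-N})$ are exactly those which, combined with $g_1$, produce the $e^{-\hftn(\cdot,\rz_1)}$ factors of \eqref{eq:Dn_limnoh}; this bookkeeping also follows \cite{Baik-Liu21}. By Theorem~\ref{thm:characteristic} with $\rho=\rho_L$ (admissible since $r_1<\rho_L<1-r_2$ and $|z|<\bfr$), we have $\npch_{Y_L}=\mpch_{Y_L}$, which is a probabilistic expression valid even at zeros of $\cE_{Y_L}$.

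We then show that, with $v\leftrightarrow \eta$, $u\leftrightarrow\xi$ and $x=-\rho_L^{-1}(1-\rho_L)\,\cdot$ rescaled by $\sqrt{(1-\rho_L)/\rho_L}\,L^{1/2}$, the following convergence holds for $\xi\in\rL_{\rz}$ and $\eta\in\rR_{\rz}$:
\[
c_L\,\mpch_{Y_L}(v,u)\to \pchlim_\h(\eta,\xi).
\]
Here $c_L$ is an explicit scaling factor (a power of $L^{1/2}$ absorbed by the Jacobian). The key points are the following.
\begin{itemize}
\item The walk $G$ with parameter $\rho_L$ has drift $-1/\rho_L$ per step. After centering by the mean, in time units of $N_L$ and space units $\sqrt{(1-\rho_L)/\rho_L}\,L^{1/2}$, it converges to the Brownian motion $\B$.
\item The boundary $y_{m+1}$ converges to $\h(-\cdot)$ by \eqref{eq:initial_scaling}.
\item The weight $\frac{(1-\rho)^{G}}{(v+1)^{G+1}}(\frac{-v(1-\rho)}{(v+1)\rho})^{n}$ converges to $e^{\eta\B-\frac12\eta^2 t}$.
\item The factor $(\frac{u+1}{1-\rho})^x$ converges to $e^{-s\xi}$.
\end{itemize}
Convergence of the hitting times $\tau\to\btau$ and $\ntau\to\bntau$ in the uc topology is handled as in \cite{Matetski-Quastel-Remenik21,Liao-Liu25}. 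There one uses the criterion \eqref{eq:UC_criterion}, and the regularity of Brownian motion, to show that the hitting times of the hypograph are continuous functionals almost surely. For $\ntau$, the same argument applies on each period by periodicity.

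Uniform bounds then mimic Lemma~\ref{prop:plim_bound} at the discrete level, using the bound on $b_x$ from Lemma~\ref{prop:mpchwelldefined}: $\gamma_v<1$ gives summability over periods, and a reflection or Chernoff estimate for $G$ controls the sum over $x$. This yields
\[
|c_L\mpch_{Y_L}|\le Ce^{C(|\xi|+|\eta|)+C\Re(\xi)^2},
\]
uniformly in $L$ and over all roots, including those far from the critical point (where one uses that $\h_L\le M$ uniformly, a consequence of the uc convergence).

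\textbf{Step 3 (summation).} Combining Steps 1 and 2 with Hadamard's inequality gives a bound on each $\mathbf n$-term that is summable uniformly in $L$ and in $\bz$ on compacts, exactly as in the well-definedness corollary for $\rD_\h$. Dominated convergence over the roots, and then over $\mathbf n$, finishes the proof. Uniformity in $\bz$ on compacts follows because all the bounds depend continuously on $|\rz_\ell|$ bounded away from $0$ and $1$.

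\textbf{Main obstacle.} I expect the main obstacle to be Step 2. There are two difficulties: the joint convergence of $\ntau$, whose contributions from later periods must be controlled uniformly, together with the complex exponential weights, which are not bounded martingales; and the uniform bound for roots outside the $O(L^{1/2})$ window. For the first I would truncate $\ntau$ to finitely many periods, using the geometric factor $\gamma_v^k$. For the second I would use the crude bound $|b_x|\le C$ from Lemma~\ref{prop:mpchwelldefined} together with the superexponential decay of $f_1$ there.
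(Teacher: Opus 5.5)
Your overall architecture matches the paper's: termwise convergence over the rescaled Bethe roots, a bound on $L^{-1/2}\mpch_{Y_L}$ with Gaussian growth in $\xi,\eta$ compensated by cubic decay of $f_1$, Hadamard's inequality, and dominated convergence in $\mathbf n$. The pointwise convergence in Step~2, however, has a genuine gap.

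You let $\rch_{Y_L}$ and $\rch^*_{Y_L}$ converge separately, using almost-sure continuity of the hitting times. But $\mathbf 1_{\btau<1}$ is not an a.s.\ continuous functional. Since $\h(-1)=\h(0)=0$, $\btau$ can have an atom at $1$; for $\h_{\mathrm{pnw}}$ and $s>0$, $\prob_{\B(0)=s}(\btau=1)=\prob_{\B(0)=s}(\B(1)\le0)>0$. The prelimit event $\{\tau_L<N\}$ may then approximate $\{\btau\le1\}$ rather than $\{\btau<1\}$.

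Here is a concrete example. Take $y_i=-i$ for $1\le i\le N-1$ and $y_N=-1-L+\lceil L/N\rceil$. Then $\h_L\to\h_{\mathrm{pnw}}$ in $\uc_1$. For $x\le-1$, however, $\tau_L<N$ holds if and only if $G_{N-1}>y_N$, an event of limiting probability $\prob_{\B(0)=s}(\B(1)<0)$. Hence $L^{-1/2}\sqrt{\rho_L(1-\rho_L)}\,\rch_{Y_L}(v_L,u_L)$ converges to the first term of $\pchlim_{\h_{\mathrm{pnw}}}(\eta,\xi)$ plus the additional term $\int_0^\infty e^{-s\xi}\bbE_{\B(0)=s}\bigl[e^{\eta\B(1)-\frac12\eta^2}\mathbf 1_{\B(1)<0}\bigr]\,\rd s$. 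The same extra term appears in the $\rch^*$ part. Only the difference has the right limit. In the limit the two terms cancel on $\{\btau=1\}$ because $\bntau=1$ there, but in the prelimit this cancellation is only approximate and must be proved.

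The missing idea is the rearrangement in the proof of Lemma~\ref{prop:pch_convergence}:
\begin{enumerate}[(i)]
\item Use optional stopping to move the weight in $\rch_Y$ from time $\tau$ to time $N$.
\item Split according to whether $G_N>y_{N+1}$, where $\ntau=N$ and the two contributions cancel exactly, or $G_N\le y_{N+1}$.
\item Express the remainder through $\prob(G_N=\cdot,\ \tau<N)$, with endpoint strictly above $\h_L(-1)=0$.
\end{enumerate}
Lemma~\ref{lm:T_convergence} then shows, via a Gaussian transition estimate, that a hit near time $1$ followed by an endpoint $\sfy>0$ is negligible. This is exactly what a continuous-mapping argument cannot detect.

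Two smaller points. First, the cubic decay $|f_1(u_L)/Q_{1,L}|\le Ce^{-c|\xi_L|^3}$ over the whole root set is not available in \cite{Baik-Liu19}, which only states a uniform bound on $f_\ell$. It must be proved (this is Lemma~\ref{lm:main_decay}), and it is indispensable because $\mpch_{Y_L}$ can grow like $e^{c|\xi|^2}$. Second, for roots outside the $O(L^{1/2})$ window, the bound from Lemma~\ref{prop:mpchwelldefined}, summed over $x\ge y_N+N$, is of size $e^{O(L)}$ rather than $O(1)$. This is harmless only because $f_1$ is $O(e^{-cL^{3/2}})$ there.
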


Since
\begin{equation*}
	\oint\cdots\oint
	\rC_\h(\bz)\rD_\h(\bz)\,
	\frac{\rd\rz_1}{2\pi\ri\rz_1}\cdots\frac{\rd\rz_m}{2\pi\ri\rz_m}
	=
	\mathbb{F}_\h(\HH_1,\ldots,\HH_m;(\XX_1,\TT_1),\ldots,(\XX_m,\TT_m)),
\end{equation*}
we conclude the following.

\begin{cor}\label{cor:thm13geny1}
Theorem~\ref{thm:main} holds when $\h\in\uc_1^0$, $0<\TT_1\le\cdots\le\TT_m$, and
$(\HH_1,\ldots,\HH_m)\in\dom_+^m(\TT_1,\ldots,\TT_m)$, 
under the assumption that $y_1^{(L)}=-1$ for all $L$. Moreover, the convergence is locally uniform in $(\XX_i,\TT_i,\HH_i)_{i=1}^m$ on this parameter domain.
\end{cor}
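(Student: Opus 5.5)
The corollary follows by passing to the limit under the $\rz$-integrals in \eqref{eq:F_L_rz}, using Propositions~\ref{prop:CYconvergence} and~\ref{prop:DYconvergence00}. First, I would relate the event in \eqref{eq:lim_bbF_particle} to the events $\{\sx^{(L)}_{k_i}(t_i)\ge a_i\}$ with $a_i$ as in \eqref{eq:tkii}. Since the particle positions are integers, the condition
\begin{equation*}
\frac{-\sx_{k_i}^{(L)}(t_i)+\XX_iL+(1-2\rho_L)t_i}{\sqrt{(1-\rho_L)/\rho_L}\,L^{1/2}}\le\HH_i
\end{equation*}
is equivalent to $\sx_{k_i}^{(L)}(t_i)\ge \lceil \XX_iL+(1-2\rho_L)t_i-\HH_i\sqrt{(1-\rho_L)/\rho_L}L^{1/2}\rceil$. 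This threshold differs from $a_i$ by at most $1$. I would handle that discrepancy by sandwiching. The probability is monotone in each $\HH_i$, and shifting $a_i$ by $\pm1$ corresponds to shifting $\HH_i$ by $O(L^{-1/2})$. So the left side of \eqref{eq:lim_bbF_particle} lies between the finite-$L$ formulas evaluated at $\HH_i\pm\epsilon$ for any fixed $\epsilon>0$ and $L$ large. Note that $\dom_+^m$ is open, so $\HH\pm\epsilon(1,\dots,1)$ stays in it.

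Next, fix the nested circles $|\rz_m|=r_m<\cdots<|\rz_1|=r_1<1$ in \eqref{eq:F_L_rz}. These form a compact subset of the domain in the two propositions. On this compact set $\mathscr C_{Y_L}\to\rC_\h$ and $\mathscr D_{Y_L}\to\rD_\h$ uniformly. The limits are continuous, hence bounded, so the products converge uniformly. The contours are of finite length, so the integrals converge to \eqref{eq:multi_time1}, and this gives $\mathbb F_\h^{(m)}$. The change of variables \eqref{eq:zL_to_rz} is already built into \eqref{eq:F_L_rz}.

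Combining the sandwich with continuity of $\mathbb F_\h^{(m)}$ in $\bm\HH$ on $\dom_+^m$ then proves the limit. This continuity follows from the absolute convergence established in Section~\ref{sec:propertymulti} and the explicit dependence on $\HH$ through $\rC$ and $\mathrm f_\ell$.

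For local uniformity, I would note that the convergence in both propositions is (implicitly) uniform over compact sets of parameters $(\XX_i,\TT_i,\HH_i)$ in the allowed domain. The $k_i,a_i$ depend on the parameters only through floor functions with $O(1)$ errors, and the estimates underlying those propositions are locally uniform in the parameters. I would rerun the argument with the parameters ranging over a compact set, using equicontinuity of the limit together with the sandwich.

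The main obstacle is justifying that the convergence in the propositions is uniform jointly in the parameters, rather than only for fixed parameters. I would address it by inspecting the dominated-convergence bounds for $\mathscr D_{Y_L}$, which come from the cubic decay of $\mathrm f_\ell$ when $\TT_\ell>\TT_{\ell-1}$ and linear decay when $\TT_\ell=\TT_{\ell-1}$ with $\HH_\ell>\HH_{\ell-1}$. These constants are locally uniform on $\dom_+^m$.
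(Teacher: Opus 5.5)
Your proposal is correct and follows the paper's route. The corollary is obtained by passing to the limit in \eqref{eq:F_L_rz} on fixed nested circles, using the compact-uniform convergence in Propositions~\ref{prop:CYconvergence} and~\ref{prop:DYconvergence00}, with local uniformity inherited from the locally uniform estimates behind those propositions. Your monotone sandwich, combined with continuity of $\mathbb F_\h^{(m)}$ in $\bm\HH$, correctly handles the gap between the ceiling threshold implicit in \eqref{eq:lim_bbF_particle} and the floor defining $a_i$, a detail the paper leaves implicit.
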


It remains to prove Propositions~\ref{prop:CYconvergence} and~\ref{prop:DYconvergence00}. Their dependence on the initial condition enters through the normalized PTASEP energy function $\ncE_{Y_L}$ and the normalized PTASEP characteristic function $\npch_{Y_L}$, treated in Subsections~\ref{sec:energyconv} and~\ref{sec:convergence_pcf}. The remaining asymptotic ingredients are largely available from~\cite{Baik-Liu16, Baik-Liu19, Baik-Liu21} and are assembled in Subsection~\ref{sec:completionthm12}.
Unless stated otherwise, all estimates and limits below hold locally uniformly in $\bz$ and in the parameters on compact subsets of their respective domains; we will not mention this explicitly each time.

We first record a geometric-random-walk convergence result and transition estimate that will be used repeatedly.

\begin{lm}\label{lm:rw_hitting_limit}
Let $G_k^{(L)}$ be a geometric random walk with parameter $\rho_L$, and define the rescaled random walk\footnote{In this paper, $\mathbb{Z}_+=\{0,1,2,\ldots\}$.}
\begin{equation}\label{eq:rescaled_rw}
    \B_L(\alpha)
    :=
    -\frac{G_{\alpha N_L}^{(L)}+\alpha L}
    {\sqrt{\frac{1-\rho_L}{\rho_L}}\,L^{1/2}},
    \qquad
    \alpha\in\frac{1}{N_L}\mathbb{Z}_{+} .
\end{equation}
Extend $\B_L(\alpha)$ to all $\alpha\ge0$ by linear interpolation, and define
\begin{equation}\label{eq:rescaled_hittingtime}
    \btau_L
    :=
    \inf \big\{\alpha\in\frac1{N_L}\mathbb{Z}_+:
    \B_L(\alpha)\le \h_L(-\alpha) \big\}.
\end{equation}
Let $\sfx_L$ be a sequence satisfying $\sfx_L\to\sfx\ne\h(0)$, and consider $\B_L$ with $\B_L(0)=\sfx_L$. Then, 
\begin{equation}\label{eq:btauweak}
    (\btau_L,\B_L(\btau_L))
    \Longrightarrow
    (\btau,\B(\btau))
\end{equation}
where $\B$ is a standard Brownian motion with $\B(0)=\sfx$, and 
$\btau:=\inf\{t\ge0:\B(t)\le\h(-t)\}$, as in Definition~\ref{def:hittingtimes}. 
Moreover, there exist constants $C,c>0$, depending only on $\rho_-$ and $\rho_+$, such that
\begin{equation}\label{eq:PGscaled}
    \sqrt{\frac{1-\rho_L}{\rho_L}}\,L^{1/2}
    \prob_{\B_L(\theta)=\sfx}
    \left(\B_L(\theta')=\sfx'\right)
    \le
    \frac{C}{\sqrt{\theta'-\theta}}
    \exp\left\{
    -c\left(
    \frac{(\sfx-\sfx')^2}{\theta'-\theta}
    \wedge
    L^{1/2}|\sfx-\sfx'|
    \right)
    \right\}
\end{equation}
for all $L\ge 1$, $0\le\theta<\theta'$ with
$\theta,\theta'\in N_L^{-1}\mathbb{Z}_+$, and $\sfx, \sfx'\in \bbR$. 
\end{lm}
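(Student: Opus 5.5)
The two claims are of different nature, and I would prove the local estimate \eqref{eq:PGscaled} first, since it also supplies the tightness needed for the weak convergence.

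\emph{Step 1: the transition estimate.} By \eqref{eq:grws}, the increment of $G^{(L)}$ over $n=(\theta'-\theta)N_L$ steps is minus a sum of $n$ i.i.d.\ geometric variables with mean $1/\rho_L$, variance $(1-\rho_L)/\rho_L^2$, and exponential moments finite in a neighborhood of the origin uniformly for $\rho_L\in[\rho_-,\rho_+]$. The centering $\alpha L$ in \eqref{eq:rescaled_rw} is exactly $n/\rho_L$, and the variance of the rescaled increment is $(\theta'-\theta)$, since $n(1-\rho_L)/\rho_L^2$ divided by $\frac{1-\rho_L}{\rho_L}L$ equals $\theta'-\theta$. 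I would then apply a uniform local large-deviation (Bernstein-type) estimate for lattice sums: the point mass at deviation $d$ is at most $Cn^{-1/2}\exp(-c\min(d^2/n,d))$. One way to get this is to use an exponential tilt with parameter $\lambda\sim\min(d/n,\lambda_0)$ and bound the tilted point probability by $Cn^{-1/2}$ via the local CLT, uniformly in the tilt. Alternatively, the explicit negative-binomial formula and Stirling give the same bound. Converting via $d=\sqrt{\frac{1-\rho_L}{\rho_L}}L^{1/2}|\sfx-\sfx'|$ and $n\asymp L(\theta'-\theta)$ gives exactly the two regimes in \eqref{eq:PGscaled}.

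\emph{Step 2: the weak convergence.} Donsker's invariance principle, for triangular arrays with uniformly bounded exponential moments, gives $\B_L\Rightarrow\B$ in $C[0,\infty)$ on compacts. By Skorokhod, I would couple so that the convergence is almost surely locally uniform. The remaining task is to show $\btau_L\to\btau$ almost surely under this coupling; then $\B_L(\btau_L)\to\B(\btau)$ follows from uniform convergence and continuity. Note that $\btau<\infty$ almost surely, since $\h$ is periodic and not identically $-\infty$.

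\emph{Lower bound.} I would show $\liminf\btau_L\ge\btau$. If $\btau_L\to s<\btau$ along a subsequence, then $\B_L(\btau_L)\le\h_L(-\btau_L)$. Passing to the limit, the first half of \eqref{eq:UC_criterion} gives $\B(s)\le\limsup\h_L(-\btau_L)\le\h(-s)$, which contradicts the definition of $\btau$. Here $s>0$ is not needed, because at $s=0$ the hypothesis $\sfx\ne\h(0)$ handles it: if $\sfx>\h(0)$ no early hit occurs, and if $\sfx<\h(0)$ then $\btau=0$ and $\btau_L=0$ eventually.

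\emph{Upper bound (main obstacle).} I would show $\limsup\btau_L\le\btau$. This needs that, almost surely, Brownian motion enters the hypograph strictly right after $\btau$: for every $\epsilon>0$ there is $t\in(\btau,\btau+\epsilon)$ with $\B(t)<\h(-t)-\delta$ for some $\delta>0$, where $\h(-t)$ is approximated by $\h_L$ at nearby points. This is the standard regularity fact used in \cite{Matetski-Quastel-Remenik21} for hitting times of upper semicontinuous barriers. By the strong Markov property at $\btau$ and the law of the iterated logarithm, $\B$ dips below $\B(\btau)$ immediately. Since $\B(\btau)\le\h(-\btau)$, this produces points with $\B(t)<\h(-\btau)$; the subtlety is that $\h(-t)$ could be far below $\h(-\btau)$ for $t\neq\btau$. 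I would handle this using the liminf half of \eqref{eq:UC_criterion}: choose $\alpha_L\to-\btau'$ with $\h_L(\alpha_L)\to\h(-\btau')$ at a point $\btau'$ where the hit is strict. Isolated barrier spikes, as in the narrow wedge, require checking that hits at spikes occur with a strict inequality, which holds almost surely since Brownian motion does not sit exactly at a countable family of heights at given times. Finally, the discreteness of $\btau_L\in N_L^{-1}\bbZ_+$ and the linear interpolation are absorbed by the modulus of continuity of $\B_L$.
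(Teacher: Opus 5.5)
Your Step~1 is the paper's argument. The paper only invokes a uniform local large/moderate-deviation bound for sums of geometric variables, and your exponential tilt plus a local CLT uniform in the tilt, with the variance normalization you check, is exactly that. For \eqref{eq:btauweak} the paper gives no argument and cites the hitting-time invariance principle \cite[(B.20)]{Matetski-Quastel-Remenik21}. You instead try to prove it by a Skorokhod coupling. Your lower bound $\liminf_L\btau_L\ge\btau$ is correct, but the upper bound has a genuine gap.

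The upper bound needs the following: almost surely, for every $\epsilon>0$ there is $t\in[\btau,\btau+\epsilon)$ with $\B(t)<\h(-t)$ strictly. Given that, the argument finishes as you describe, using the $\liminf$ half of \eqref{eq:UC_criterion}, rounding to the mesh endpoint where the interpolated $\h_L$ is larger, and uniform convergence of $\B_L$. Your justification does not establish this property. The law of the iterated logarithm only yields times $t>\btau$ with $\B(t)<\B(\btau)$. That is useless if $\h(-t)$ falls faster than $\sqrt{t-\btau}$ right after $\btau$. The remark about spikes covers only barriers whose dangerous times form a fixed countable set. For a general upper semicontinuous $\h$, the set of times at which $\h(-\cdot)$ drops steeply to the right can be uncountable, e.g.\ steep Cantor-type staircases. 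Deferring to ``the standard regularity fact used in MQR'' is just the citation the paper makes.

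One way to close the gap is as follows.
\begin{itemize}
\item The function $\psi(t):=\B(t)-\h(-t)$ is lower semicontinuous. On the bad event with $\btau>0$ (no strict contact during $[\btau,\btau+\epsilon)$), one has $\psi>0$ on $[0,\btau)$, $\psi\ge0$ on $[\btau,\btau+\epsilon)$, and $\psi(\btau)=0$. So $0$ is a local-minimum value of $\psi$.
\item Every real function has only countably many local-minimum values, since each equals $\inf\psi$ over an interval with rational endpoints.
\item Moving the starting point by $c$ shifts $\psi$ by $c$. By Fubini, the bad event is therefore null for Lebesgue-a.e.\ starting point, for any barrier.
\item Apply this at time $\epsilon$ to the barrier $\h(-\epsilon-\cdot)$, where $\B(\epsilon)$ has a density, and let $\epsilon\downarrow0$. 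This gives the claim for your $\sfx$. An equality contact at time $0$ is excluded by $\sfx\ne\h(0)$.
\end{itemize}

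Separately, your claim that $\sfx<\h(0)$ forces $\btau_L=0$ eventually needs $\liminf_L\h_L(0)>\sfx$. This does not follow from $\uc_1$ convergence: the approximating sequence may approach $0$ from the right, i.e.\ from negative times the walk never sees. For example, a periodic step profile shifted right by $L^{-1/4}$ converges to $\h_{\mathrm{pnw}}$ in $\uc_1$, while $\h_L(-\alpha)\to-\infty$ uniformly for small $\alpha\ge0$. The claim holds here only because the standing normalization $y_1^{(L)}=-1$ of Section~\ref{sec:asymptotics} gives $\h_L(0)=0=\h(0)$, and you should say so.
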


\begin{proof}
The convergence \eqref{eq:btauweak} follows from the hitting-time invariance principle in~\cite[(B.20)]{Matetski-Quastel-Remenik21}; the condition $\sfx\ne\h(0)$ excludes the discontinuity of the hitting-time map at the initial point. The estimate \eqref{eq:PGscaled} follows from a local large/moderate-deviation bound for sums of geometric random variables, uniformly for $\rho_L\in[\rho_-,\rho_+]$, after rewriting it in the rescaled variables.
\end{proof}

\subsection{Convergence of the normalized PTASEP energy function}
\label{sec:energyconv}

We prove the convergence of the normalized PTASEP energy function appearing in $\mathscr{C}_{Y_L}(\bz)$. Since $0<|z|<\bfr_{N_L,L}$ and, by Lemma~\ref{defn:r12},
$r_{1,N_L,L}(z)<\rho_L<1-r_{2,N_L,L}(z)$, 
Theorem~\ref{thm:energy} applies. 
Setting $\rz:=\frac{z^L}{(-\rho_L)^{N_L}(1-\rho_L)^{L-N_L}}$, 
Theorem~\ref{thm:energy} gives
\begin{equation*}
	\ncE_{Y_L}(z)=\det(I-\Ke_{Y_L,\rz})_{\ell^2(\bbZ_{\le0})},
\end{equation*}
where $\Ke_{Y_L,\rz}$ is the operator defined in \eqref{eq:onemrhoK} using the geometric random walk with parameter $\rho_L$. 
Its kernel is
\begin{equation*}
	\Ke_{Y_L,\rz}(x,y)
	=
	-\rz \big(
	T_{Y_L}(x-1,y)
	+
	\sum_{x'\in\bbZ}S_\rz(x-1,x')T_{Y_L}(x',y)
	\big),
	\qquad x,y\in\bbZ_{\le0},
\end{equation*}
where
\begin{equation*}
	S_\rz(x,x')
	:=
	\sum_{k=1}^\infty
	\rz^k\,\prob_{G_0^{(L)}=x}
	\big(G_{kN_L}^{(L)}=-kL+x' \big)
\end{equation*}
and
\begin{equation*}
	T_{Y_L}(x,y)
	:=
	\prob_{G_0^{(L)}=x}
	\big(G_{N_L}^{(L)}=-L+y-1,\ \tau<N_L \big).
\end{equation*}

We scale the kernel and take the limit. 
Set
\begin{equation}
	\rhosr:=\sqrt{\frac{1-\rho_L}{\rho_L}},
\end{equation}
and define the scaled kernels
\begin{equation*}
	\widehat{S}_{L,\rz}(\sfx,\sfx')
	:=
	\rhosr L^{1/2}
	S_\rz \big(
	\lfloor-\rhosr L^{1/2}\sfx\rfloor,
	\lfloor-\rhosr L^{1/2}\sfx'\rfloor
	\big),
\end{equation*}
\begin{equation*}
	\widehat{T}_L(\sfx,\sfy)
	:=
	\rhosr L^{1/2}
	T_{Y_L} \big(
	\lfloor-\rhosr L^{1/2}\sfx\rfloor,
	\lfloor-\rhosr L^{1/2}\sfy\rfloor
	\big),
\end{equation*}
and
\begin{equation} \label{eq:Kerec}
\begin{split}
	\widehat{\Ke}_{L,\rz}(\sfx,\sfy)
	:=
	\mathbf{1}_{\sfx>0}\,
	\rhosr L^{1/2}
	\Ke_{Y_L,\rz} \big(
	\lfloor-\rhosr L^{1/2}\sfx\rfloor,
	\lfloor-\rhosr L^{1/2}\sfy\rfloor
	\big)
	\mathbf{1}_{\sfy>0}.
\end{split}
\end{equation}
By construction,
\begin{equation*}
	\det(I-\Ke_{Y_L,\rz})_{\ell^2(\bbZ_{\le0})}
	=
	\det(I-\widehat{\Ke}_{L,\rz})_{L^2(\bbR)}.
\end{equation*}

We will prove
\begin{equation} \label{eq:energy_conv}
	\lim_{L\to\infty}
	\det(I-\Ke_{Y_L,\rz})_{\ell^2(\bbZ_{\le0})}
	=
	\det(\rI+\limKe_\h(\rz))_{L^2(\bbR)},
\end{equation}
uniformly on compact subsets of $0<|\rz|<1$, where $\limKe_\h(\rz)$ is the operator given in \eqref{eq:limkernel}. 
Sections~\ref{sec:ptwisecon} and~\ref{sec:tracebounds} establish convergence and uniform bounds for $\widehat S_{L,\rz}$ and $\widehat T_L$, respectively. These estimates are combined in Section~\ref{sec:Frdconv} to prove \eqref{eq:energy_conv}. 

\subsubsection{Convergence and bounds for the $\widehat{S}_{L,\rz}$ kernel}
\label{sec:ptwisecon}

From the definitions,
\begin{equation*}
	\widehat{S}_{L,\rz}(\sfx,\sfx')
	=
	\rhosr L^{1/2}
	\sum_{k=1}^\infty \rz^k\,
	\prob_{G_0^{(L)}=0}
	\bigl(
	    G_{kN_L}^{(L)}
	    =
	    -kL+\lfloor-\rhosr L^{1/2}\sfx'\rfloor
	    -\lfloor-\rhosr L^{1/2}\sfx\rfloor
	\bigr).
\end{equation*}

\begin{lm} \label{lm:S_bound}
For every $\sfx,\sfx'\in\bbR$,
\begin{equation}\label{eq:Shatlimit}
     \lim_{L\to\infty}\widehat{S}_{L,\rz}(\sfx,\sfx')
     =
     \sfS_\rz(\sfx,\sfx'),
     \qquad
     \sfS_\rz(\sfx,\sfx')
     =
     \sum_{k\ge1}\rz^k
     \frac{1}{\sqrt{2\pi k}}
     \re^{-\frac{(\sfx-\sfx')^2}{2k}}.
\end{equation}
Furthermore, for every $\delta\in(0,1)$, there exist constants $C,c>0$ such that
\begin{equation}\label{eq:S_upperbound}
        |\widehat{S}_{L,\rz}(\sfx,\sfx')|
        \le
        C\re^{-c|\sfx-\sfx'|}
\end{equation}
for all $\sfx,\sfx'\in\bbR$, $|\rz|\le1-\delta$, and $L\ge 1$.
\end{lm}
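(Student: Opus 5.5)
Our plan is to treat $\widehat S_{L,\rz}$ term by term in $k$, using the rescaled walk $\B_L$ of Lemma~\ref{lm:rw_hitting_limit}. Set $d_L:=\lfloor-\rhosr L^{1/2}\sfx'\rfloor-\lfloor-\rhosr L^{1/2}\sfx\rfloor$. Then $d_L/(\rhosr L^{1/2})\to \sfx-\sfx'$. The event $G^{(L)}_{kN_L}=-kL+d_L$ with $G^{(L)}_0=0$ means, in rescaled variables, $\B_L(k)-\B_L(0)=-d_L/(\rhosr L^{1/2})$. So the $k$-th summand is $\rz^k$ times $\rhosr L^{1/2}\,\prob_{\B_L(0)=\sfx}(\B_L(k)=\sfx'_L)$, where $\sfx'_L\to\sfx'$ lies on the lattice.

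\textbf{Pointwise limit.} For fixed $k$ we apply the local central limit theorem to sums of $kN_L$ i.i.d.\ geometric variables. These have mean $-1/\rho_L$ per step, which gives the centering $-kL$, and variance $(1-\rho_L)/\rho_L^2$ per step. Summing over $kN_L=k\rho_L L$ steps gives total variance $kL(1-\rho_L)/\rho_L=k(\rhosr L^{1/2})^2$. The rescaled variance is therefore exactly $k$, and the $k$-th term converges to $\rz^k(2\pi k)^{-1/2}e^{-(\sfx-\sfx')^2/(2k)}$. The local CLT should hold uniformly for $\rho_L\in[\rho_-,\rho_+]$, since the characteristic functions depend continuously on $\rho$ and the variables are lattice with span $1$. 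To interchange limit and sum we use the domination from \eqref{eq:PGscaled}: the $k$-th term is at most $C|\rz|^k k^{-1/2}$, which is summable for $|\rz|<1$. Dominated convergence then gives \eqref{eq:Shatlimit}.

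\textbf{Uniform bound.} Fix $|\rz|\le1-\delta$ and write $D:=|\sfx-\sfx'|$ up to an $O(L^{-1/2})$ discretization error, which is harmless. By \eqref{eq:PGscaled},
\begin{equation*}
|\widehat S_{L,\rz}(\sfx,\sfx')|\le C\sum_{k\ge1}(1-\delta)^k k^{-1/2}\exp\bigl\{-c\bigl(\tfrac{D^2}{k}\wedge L^{1/2}D\bigr)\bigr\}.
\end{equation*}
We bound the two branches of the minimum separately. For the Gaussian branch, we use $(1-\delta)^k e^{-cD^2/k}\le e^{-c'(k+D^2/k)}\le e^{-2\sqrt{cc'}\,D}e^{-c'k/2}$ (after halving $c'$). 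This yields $Ce^{-c''D}$. For the large-deviation branch, we have $e^{-cL^{1/2}D}\le e^{-cD}$ for $L\ge1$, and the sum over $k$ is then bounded by $\sum_k(1-\delta)^k$. Combining the two branches gives \eqref{eq:S_upperbound}.

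\textbf{Main obstacle.} The main difficulty is making the local limit theorem and the estimate \eqref{eq:PGscaled} uniform both in $\rho_L$ and in lattice rounding. Since \eqref{eq:PGscaled} is already granted by Lemma~\ref{lm:rw_hitting_limit}, the only remaining input is the fixed-$k$ local CLT, which is standard. The geometric weights $(1-\delta)^k$ control all $k$ simultaneously.
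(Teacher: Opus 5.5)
Your proposal is correct and follows essentially the same route as the paper. The pointwise limit comes from the fixed-$k$ local CLT together with the domination $C|\rz|^k k^{-1/2}$ from \eqref{eq:PGscaled}, and the uniform bound comes from inserting \eqref{eq:PGscaled} into the series and exploiting $|\rz|^k\le(1-\delta)^k$. The only cosmetic difference is in the final estimate: the paper splits the $k$-sum at $k_0=|\sfx-\sfx'|/\sqrt{-\log|\rz|}$, whereas you use $e^{-c(A\wedge B)}\le e^{-cA}+e^{-cB}$ and balance $(1-\delta)^k e^{-cD^2/k}$ by AM--GM. Both give the same exponential decay in $|\sfx-\sfx'|$.
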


\begin{proof}
Since $G_{kN_L}^{(L)}$ has mean $-kL$ and variance $kL\rhosr^2$, the local central limit theorem~\cite[Chapter~VII, Theorem~16]{Petrov75} implies 
\begin{equation*}
\begin{aligned}
    &\lim_{L\to\infty}
    \rhosr L^{1/2}
    \prob_{G_0^{(L)}=0}
    \bigl(
        G_{kN_L}^{(L)}
        =
        -kL+\lfloor-\rhosr L^{1/2}\sfx'\rfloor
        -\lfloor-\rhosr L^{1/2}\sfx\rfloor
    \bigr)
    =
    \frac{1}{\sqrt{2\pi k}}
    \re^{-\frac{(\sfx-\sfx')^2}{2k}}.
\end{aligned}
\end{equation*}
Moreover, \eqref{eq:PGscaled} implies that the left-hand side above is bounded by $C/\sqrt{k}$, uniformly in $L$. 
Since $\sum_{k\ge1}|\rz|^k/\sqrt{k}<\infty$, we obtain \eqref{eq:Shatlimit} by dominated convergence. 

By the estimate \eqref{eq:PGscaled},
\begin{equation*}
        |\widehat{S}_{L,\rz}(\sfx,\sfx')|
        \le
        C\sum_{k\ge1}
        |\rz|^k
        \re^{-c \big(
        \frac{(\sfx-\sfx')^2}{k}
        \wedge
        L^{1/2}|\sfx-\sfx'|
        \big) }.
\end{equation*}
Set $d:=|\sfx-\sfx'|$,  $a:=-\log|\rz|$, and $k_0:=d/\sqrt{a}$. 
For $1\le k\le k_0$, $\frac{d^2}{k}\wedge L^{1/2}d \ge (\sqrt{a}\wedge 1)d$. 
Thus
\begin{equation*}
\begin{aligned}
        |\widehat{S}_{L,\rz}(\sfx,\sfx')|
	&\le
	C\re^{-c(\sqrt{a}\wedge1)d}
        \sum_{1\le k\le k_0}|\rz|^k
        +
	C\sum_{k>k_0}|\rz|^k
	\le
	\frac{C}{1-|\rz|}
	\re^{-c(\sqrt{a}\wedge1)d}
        +
	\frac{C}{1-|\rz|}
	\re^{-\sqrt{a}d}.
\end{aligned}
\end{equation*}
Therefore \eqref{eq:S_upperbound} follows.
\end{proof}

\subsubsection{Convergence and bounds for the $\widehat{T}_{L}$ kernel}
\label{sec:tracebounds}

From the definitions,
\begin{equation}
\label{eq:TY}
    \widehat{T}_L(\sfx,\sfy)
    =
    \rhosr L^{1/2}
    \prob_{\B_L(0)=\sfx_L}
    \big(\B_L(1)=\sfy_L,\ \btau_L\le 1-\frac1{N_L} \big),
\end{equation}
where
\begin{equation*}
    \sfx_L
    =
    \frac{\lfloor-\rhosr L^{1/2}\sfx\rfloor}{-\rhosr L^{1/2}}
    =
    \sfx+O(L^{-1/2}),
    \qquad
    \sfy_L
    =
    \frac{\lfloor-\rhosr L^{1/2}\sfy-1\rfloor}{-\rhosr L^{1/2}}
    =
    \sfy+O(L^{-1/2}).
\end{equation*}
We first prove pointwise convergence.

\begin{lm}\label{lm:T_convergence}
For every $\sfx\ne0$ and $\sfy>0$,
\begin{equation}
    \lim_{L\to\infty}\widehat{T}_L(\sfx,\sfy)=\sfT_\h(\sfx,\sfy), 
\end{equation}
where $\sfT_\h$ is defined in \eqref{eq:limkernel2TT}.
\end{lm}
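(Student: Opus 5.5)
We prove the pointwise convergence of the density $\widehat{T}_L(\sfx,\sfy)$ by splitting at an intermediate time and combining the hitting-time invariance principle in Lemma~\ref{lm:rw_hitting_limit} with the local limit theorem and the transition bound \eqref{eq:PGscaled}. Fix $\delta\in(0,1)$ small. By the strong Markov property at $\btau_L$ we write
\begin{equation*}
	\widehat{T}_L(\sfx,\sfy)
	=
	\bbE_{\B_L(0)=\sfx_L}\Bigl[\mathbf{1}_{\btau_L\le 1-\delta}\,
	p_L\bigl(1-\btau_L,\B_L(\btau_L),\sfy_L\bigr)\Bigr]
	+
	\rhosr L^{1/2}\prob_{\B_L(0)=\sfx_L}\bigl(\B_L(1)=\sfy_L,\ 1-\delta<\btau_L\le 1-\tfrac1{N_L}\bigr),
\end{equation*}
where $p_L(\theta,\sfu,\sfy):=\rhosr L^{1/2}\prob_{\B_L(0)=\sfu}(\B_L(\theta)=\sfy)$ is the rescaled transition density. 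The first term is the main one; the second is an error that vanishes as $\delta\to0$ uniformly in $L$.

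For the main term, the local central limit theorem (as used in Lemma~\ref{lm:S_bound}) gives $p_L(\theta,\sfu_L,\sfy_L)\to \sfG_\theta(\sfu,\sfy)$ uniformly for $\theta\in[\delta,1]$ and $\sfu_L\to\sfu$, and \eqref{eq:PGscaled} gives the uniform bound $p_L\le C\delta^{-1/2}$ on $\{\btau_L\le1-\delta\}$. Since $\sfx\neq0=\h(0)$, Lemma~\ref{lm:rw_hitting_limit} yields $(\btau_L,\B_L(\btau_L))\Rightarrow(\btau,\B(\btau))$. Passing to a Skorokhod coupling and using bounded convergence, the first term converges to $\bbE_{\B(0)=\sfx}[\mathbf{1}_{\btau\le1-\delta}\sfG_{1-\btau}(\B(\btau),\sfy)]$, provided $\prob(\btau=1-\delta)=0$. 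That holds for all but countably many $\delta$, and we choose $\delta$ accordingly. By the strong Markov property for Brownian motion, this limit equals $\prob_{\B(0)=\sfx}(\B(1)\in\rd\sfy,\ \btau\le 1-\delta)/\rd\sfy$, which increases to $\sfT_\h(\sfx,\sfy)$ as $\delta\downarrow0$. Here we use $\prob(\btau=1)=0$, or more precisely that hitting exactly at time $1$ contributes no density at $\sfy>0$.

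The main obstacle is the uniform smallness of the second term, which involves late hits followed by arrival at $\sfy>0$ at time $1$. On $\{1-\delta<\btau_L<1\}$, the walk is at some point $\B_L(\btau_L)\le\h_L(-\btau_L)$. Because $\h_L\to\h$ in $\uc_1$ with $\h(-1)=\h(0)=0$, we have $\limsup\h_L\le \epsilon(\delta)$ on $[-1,-1+\delta]$, with $\epsilon(\delta)\to0$ by upper semicontinuity. Since $\sfy>0$, choosing $\delta$ so that $\epsilon(\delta)<\sfy/2$ means the walk must travel a distance at least $\sfy/2$ in time at most $\delta$. Applying the strong Markov property again together with \eqref{eq:PGscaled}, the error is bounded by $\sup_{\theta\le\delta}C\theta^{-1/2}e^{-c(\sfy^2/(4\theta)\wedge L^{1/2}\sfy)}$. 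This tends to $0$ as $\delta\to0$ and then $L\to\infty$. The boundary case of hitting at the final step is excluded by the definition of $\btau_L$ via $\le1-\frac1{N_L}$, and it is handled the same way. Letting $L\to\infty$ and then $\delta\to0$ completes the argument.
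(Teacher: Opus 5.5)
Your proposal is correct and follows essentially the same route as the paper: you split according to whether $\btau_L\le 1-\delta$, treat the early-hit part via the weak convergence of $(\btau_L,\B_L(\btau_L))$ from Lemma~\ref{lm:rw_hitting_limit} and the local limit theorem (with $\delta$ chosen so that $\prob(\btau=1-\delta)=0$), and control the late-hit part using $\h(-1)=0$, upper semicontinuity, and \eqref{eq:PGscaled}. The only cosmetic difference is that the paper bounds the Brownian late-hit contribution explicitly by $\frac{1}{\sqrt{2\pi\delta}}e^{-\sfy^2/(8\delta)}$, whereas you use monotone convergence as $\delta\downarrow0$; your version needs no appeal to $\prob(\btau=1)=0$, since $\bigcup_\delta\{\btau\le1-\delta\}=\{\btau<1\}$ exactly.
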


\begin{proof}
We split the analysis according to whether $\btau$ and $\btau_L$ are close to $1$ or away from it.

Fix $\sfy>0$. 
Since $\h(-1)=\h(0)=0$, there is $\delta_0\in(0,1)$ such that
$\h(-\alpha)\le \sfy/2$ for all $\alpha\in(1-\delta_0,1)$. 
For every $\delta\in(0,\delta_0)$,
\begin{equation*}
    \frac{\prob_{\B(0)=\sfx}
    (\B(1)\in\rd\sfy,\ \btau\in(1-\delta,1))}{\rd\sfy} 
    =
    \iint_{\{(s,\sfx'):\,s\in(1-\delta,1),\,\sfx'\le\h(-s)\}}
    \prob_{\B(0)=\sfx}(\btau\in\rd s,\ \B(s)\in\rd\sfx')\,
    \sfG_{1-s}(\sfx',\sfy),
\end{equation*}
where $\sfG_{1-s}(\sfx',\sfy)$ is the transition density of a standard Brownian motion. 
Choosing $\delta_0$ smaller if necessary, for example so that $\delta_0<\sfy^2/4$, we have
\begin{equation}\label{eq:small_prob1}
    \frac{\prob_{\B(0)=\sfx}
    (\B(1)\in\rd\sfy,\ \btau\in(1-\delta,1))}{\rd\sfy}
    \le
    \sup_{\substack{s\in(1-\delta,1)\\ \sfx'\le\sfy/2}}
    \sfG_{1-s}(\sfx',\sfy)
    \le
    \frac{1}{\sqrt{2\pi\delta}}\re^{-\sfy^2/(8\delta)}
\end{equation}
for all $\delta\in(0,\delta_0)$.

Similarly, since $\h_L\to\h$ in $\uc_1$, we may choose $\delta_0$ so that
$\h_L(-\alpha)\le \sfy/4\le \sfy_L/2$ for all $\alpha\in(1-\delta_0,1)$ and all sufficiently large $L$. 
Considering the possible values of $\btau_L$, for every $\delta\in(0,\delta_0)$,
\begin{equation*}
\begin{split}
	\prob_{\B_L(0)=\sfx_L}
	\big(\B_L(1)=\sfy_L,\ \btau_L\in(1-\delta,1-\frac1{N_L}] \big)
	\le
	\sup_{\substack{k/N_L\in(1-\delta,1)\\ \sfx'\le\sfy_L/2}}
	\prob_{\B_L(k/N_L)=\sfx'} \big(\B_L(1)=\sfy_L \big).
\end{split}
\end{equation*}
Hence, by \eqref{eq:PGscaled}, there are constants $C,c>0$ such that
\begin{equation}\label{eq:small_prob2}
    \limsup_{L\to\infty}
    \rhosr L^{1/2}
    \prob_{\B_L(0)=\sfx_L}
    \big(\B_L(1)=\sfy_L,\ \btau_L\in(1-\delta,1-\frac1{N_L}] \big)
    \le
    \frac{C}{\delta^{1/2}}\re^{-c\sfy^2/\delta}.
\end{equation}
Thus, for any given $\sfy>0$ and $\varepsilon>0$, we may choose $\delta>0$ such that the left-hand sides of both \eqref{eq:small_prob1} and \eqref{eq:small_prob2} are less than $\varepsilon$. 
We also choose $\delta$ so that $\prob(\btau=1-\delta)=0$, which excludes at most countably many values of $\delta$.

It remains to analyze the part with hitting time bounded away from $1$. 
Consider
\begin{equation} \label{eq:PBLtemp}
\begin{aligned}
	&\rhosr L^{1/2}
	\prob_{\B_L(0)=\sfx_L}
	\bigl(\B_L(1)=\sfy_L,\ \btau_L\in[0,1-\delta]\bigr) \\
	&\qquad =
	\int_{[0,1-\delta]\times\bbR}
	\prob_{\B_L(0)=\sfx_L}
	\bigl(\B_L(s)\in\rd b,\ \btau_L\in\rd s \bigr)\,
	f_L(s,b),
\end{aligned}
\end{equation}
where
\begin{equation*}
	f_L(s,b)
	:=
	\rhosr L^{1/2}
	\prob_{\B_L(s)=b}(\B_L(1)=\sfy_L).
\end{equation*}
The uniform local limit theorem implies that 
\begin{equation*}
    f_L(s,b)\to \sfG_{1-s}(b,\sfy)
\end{equation*}
uniformly for $s\in[0,1-\delta]\cap N_L^{-1}\bbZ$ and for $b$ in the corresponding lattice range.
By Lemma~\ref{lm:rw_hitting_limit}, applied with $\sfx_L\to\sfx\ne\h(0)=0$,
\begin{equation*}
    \prob_{\B_L(0)=\sfx_L}
    (\B_L(\btau_L)\in\rd b,\ \btau_L\in\rd s)
    \to
    \prob_{\B(0)=\sfx}
    (\B(\btau)\in\rd b,\ \btau\in\rd s)
\end{equation*}
weakly as measures. 
Since $\sfG_{1-s}(b,\sfy)$ is bounded and continuous on $[0,1-\delta]\times\bbR$, and since $\prob(\btau=1-\delta)=0$, it follows that \eqref{eq:PBLtemp} converges to
\begin{equation*}
	\int_{[0,1-\delta]\times\bbR}
	\prob_{\B(0)=\sfx}
	(\B(s)\in\rd b,\ \btau\in\rd s)\,
	\sfG_{1-s}(b,\sfy)
	=
	\frac{\prob_{\B(0)=\sfx}
	(\B(1)\in\rd\sfy,\ \btau\in[0,1-\delta])}{\rd\sfy}.
\end{equation*}
Combining this convergence with the estimates \eqref{eq:small_prob1} and \eqref{eq:small_prob2}, we obtain
\begin{equation*}
    \limsup_{L\to\infty}
    |\widehat{T}_L(\sfx,\sfy)-\sfT_\h(\sfx,\sfy)|
    <2\varepsilon.
\end{equation*}
Since $\varepsilon>0$ is arbitrary, this proves the pointwise convergence.
\end{proof}

The next uniform estimate is obtained by comparison with the flat initial condition.

\begin{lm}\label{lm:trace_class_prelimit}
Let $\Mx\ge0$ be such that $\max_{\XX\in[0,1]}\h_L(-\XX)\le \Mx$ for all $L$. 
Then there exist constants $c,C>0$ such that
\begin{equation}\label{eq:T_superexponential}
\begin{aligned}
        \widehat{T}_L(\sfx,\sfy)
        &\le
        C e^{-c\left((\sfx-\sfy)^2\wedge L^{1/2}|\sfx-\sfy|\right)}
        \left(1-\bfone_{\sfx\ge\Mx+1,\,\sfy\ge\Mx+1}\right) \\
        &\quad
        + C e^{-c\left((\sfy-\Mx)^2\wedge L^{1/2}(\sfy-\Mx)\right)
        -\frac{(\sfx-\Mx)^2}{16}}
        \bfone_{\sfx\ge\Mx+1,\,\sfy\ge\Mx+1}
\end{aligned}
\end{equation}
for all $\sfx,\sfy\in\bbR$ and all sufficiently large $L$.
\end{lm}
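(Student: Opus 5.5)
The plan is to prove \eqref{eq:T_superexponential} by comparing with the flat initial condition $\h_L\equiv\Mx$. Recall the representation \eqref{eq:TY}:
\[
\widehat T_L(\sfx,\sfy)=\rhosr L^{1/2}\,\prob_{\B_L(0)=\sfx_L}\bigl(\B_L(1)=\sfy_L,\ \btau_L\le 1-\tfrac1{N_L}\bigr).
\]
The two regions in \eqref{eq:T_superexponential} will be handled separately.

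\emph{First term.} I would drop the event $\{\btau_L\le 1-1/N_L\}$ and apply the transition estimate \eqref{eq:PGscaled} with $\theta=0$ and $\theta'=1$. This gives
\[
\widehat T_L(\sfx,\sfy)\le C e^{-c((\sfx_L-\sfy_L)^2\wedge L^{1/2}|\sfx_L-\sfy_L|)}.
\]
Since $\sfx_L-\sfx$ and $\sfy_L-\sfy$ are $O(L^{-1/2})$, this becomes the first term after adjusting $C$ and $c$. This bound holds for all $\sfx,\sfy$, so it settles the case where $\sfx<\Mx+1$ or $\sfy<\Mx+1$.

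\emph{Second term, decomposition.} Now let $\sfx,\sfy\ge\Mx+1$. Because $\h_L(-\XX)\le\Mx$ on $[0,1]$, the event $\{\btau_L<1\}$ is contained in $\{\sigma_L<1\}$, where $\sigma_L$ is the first lattice time $\XX$ with $\B_L(\XX)\le\Mx$. Applying the strong Markov property at $\sigma_L$ gives
\[
\widehat T_L(\sfx,\sfy)\le \prob_{\B_L(0)=\sfx_L}(\sigma_L<1)\cdot\sup_{s\in N_L^{-1}\bbZ\cap[0,1-1/N_L],\ b\le\Mx}\rhosr L^{1/2}\,\prob_{\B_L(s)=b}(\B_L(1)=\sfy_L).
\]
The endpoint factor is controlled by \eqref{eq:PGscaled} with $d:=|b-\sfy_L|\ge \sfy-\Mx-O(L^{-1/2})\ge 1/2$. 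We have $d^2/(1-s)\ge d^2$ and $L^{1/2}d\ge L^{1/2}(\sfy-\Mx)/2$. Because $d\ge 1/2$, the prefactor $(1-s)^{-1/2}$ is absorbed by half of the Gaussian exponent: $u^{-1/2}e^{-cd^2/u}$ is bounded by $C e^{-c d^2/2}$ uniformly in $u\in(0,1]$. When $1-s$ is very small, the active term in the minimum is $L^{1/2}d$. In that case the crude bound $1-s\ge 1/N_L$ gives a prefactor at most $CL^{1/2}$, which is absorbed by $e^{-cL^{1/2}/4}$. Together these give the factor $C e^{-c((\sfy-\Mx)^2\wedge L^{1/2}(\sfy-\Mx))}$.

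\emph{Second term, the $\sfx$-factor (main obstacle).} The step I expect to be hardest is proving
\[
\prob(\sigma_L<1)\le C e^{-(\sfx-\Mx)^2/16}
\]
with a genuinely Gaussian tail for all $\sfx$, including $\sfx\gg L^{1/2}$. I would exploit the structure of $\B_L$. Its increments are centered and rescaled versions of geometric steps that are bounded below: each jump of $-G$ is at least $1$. So downward moves of $\B_L$ have sub-Gaussian tails. Concretely, $\bbE e^{-\lambda(\text{increment})}\le e^{\lambda^2 v/2}$ holds for all $\lambda\ge0$, with variance proxy $v\le C/N_L$ uniformly in $\rho_L\in[\rho_-,\rho_+]$. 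This follows because the negative-$\lambda$ Laplace transform of a geometric variable satisfies $\log\bbE e^{-\lambda(X-\bbE X)}\le \lambda^2\mathrm{Var}(X)/2$ for variables with only a positive unbounded tail.

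Then $\exp(-\lambda(\B_L(\XX)-\sfx_L)-\lambda^2\XX/2\cdot(1+o(1)))$ is a supermartingale, or more simply its exponential of the minus sign is a submartingale. Doob's maximal inequality gives
\[
\prob\Bigl(\min_{[0,1]}\B_L\le\Mx\Bigr)\le \exp\bigl(-\lambda(\sfx-\Mx)+\lambda^2\kappa/2\bigr)
\]
with $\kappa$ bounded by a constant near $1$ (at most $2$ for large $L$). Choosing $\lambda=(\sfx-\Mx)/\kappa$ yields $e^{-(\sfx-\Mx)^2/(2\kappa)}\le e^{-(\sfx-\Mx)^2/16}$, which leaves room for the $O(L^{-1/2})$ shift from $\sfx_L$. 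Multiplying the two factors gives the second term of \eqref{eq:T_superexponential}. If the variance-proxy constant turns out worse than needed, I would fall back to a weaker constant in place of $1/16$; the downstream trace-class argument needs only Gaussian decay in $\sfx$.
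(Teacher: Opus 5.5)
Your proposal is correct and follows the paper's proof essentially step for step: the first term comes from dropping the hitting event and applying \eqref{eq:PGscaled}, and for $\sfx,\sfy\ge\Mx+1$ the paper likewise compares with the flat hitting time $\bsigma_L\le\btau_L$, conditions at $\bsigma_L$ to get $\prob(\bsigma_L<1)$ times a uniform endpoint estimate, and bounds $\prob(\bsigma_L<1)$ by Doob's inequality for the exponential submartingale (Lemma~\ref{lm:flatstopping_tail}), using $\psi(\lambda)\le\frac{1-\rho_L}{2\rho_L^2}\lambda^2$ for $\lambda\ge0$. One correction: a one-sided bound with the variance as proxy does not follow merely from having only one unbounded tail, but it does hold for the geometric law, because tilting by $e^{-\lambda X}$ gives a geometric law with larger success probability and hence smaller variance; alternatively, $e^{-u}\le 1-u+u^2/2$ applied to $X-1\ge0$ gives proxy $\bbE(X-1)^2\le 2\,\mathrm{Var}(X)$, which still yields your $\kappa\le2$ and the constant $1/16$.
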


\begin{proof}
If $\sfx<\Mx+1$ or $\sfy<\Mx+1$, then by \eqref{eq:TY} and \eqref{eq:PGscaled}, 
\begin{equation*}
	\widehat{T}_L(\sfx,\sfy)
	\le
	\rhosr L^{1/2}
	\prob_{\B_L(0)=\sfx_L}(\B_L(1)=\sfy_L)
	\le
	C e^{-c\left((\sfx-\sfy)^2\wedge L^{1/2}|\sfx-\sfy|\right)}.
\end{equation*}

Suppose now that $\sfx,\sfy\ge\Mx+1$. 
Introduce the stopping time
\begin{equation*}
        \bsigma_L
        :=
        \inf \big\{\XX\in\frac1{N_L}\bbZ_+:\B_L(\XX)\le\Mx \big\}.
\end{equation*}
This is the hitting time corresponding to the flat boundary $\h_L^{\mathrm{flat}}(\alpha)=\Mx$. 
Since $\h_L(-\XX)\le\Mx$ for $\XX\in[0,1]$, we have $\bsigma_L\le\btau_L$, and hence from \eqref{eq:TY}, 
\begin{equation*}
	\widehat{T}_L(\sfx,\sfy)
	\le
	\rhosr L^{1/2}
	\prob_{\B_L(0)=\sfx_L}
	\left(\B_L(1)=\sfy_L,\ \bsigma_L<1\right).
\end{equation*}
Thus,
\begin{equation*}
	\widehat{T}_L(\sfx,\sfy)
	\le
	\rhosr L^{1/2}
	\sum_{\substack{0\le k<N_L\\ \sfz\in D_{L,k}\cap(-\infty,\Mx]}}
	\prob_{\B_L(0)=\sfx_L}
	\big( \bsigma_L=k/N_L,\ \B_L (k/N_L)=\sfz \big) 
	\prob_{\B_L(k/N_L)=\sfz}(\B_L(1)=\sfy_L),
\end{equation*}
where $D_{L,k}$ denotes the range of $\B_L(k/N_L)$. 
The estimate \eqref{eq:PGscaled} implies that 
\begin{equation*}
	\rhosr L^{1/2}
	\prob_{\B_L(k/N_L)=\sfz}(\B_L(1)=\sfy_L)
	\le
	C\sqrt{\frac{N_L}{N_L-k}}
	e^{-c \big(
	\frac{N_L(\sfy-\sfz)^2}{N_L-k}
	\wedge L^{1/2}|\sfy-\sfz|
	\big)} 
	\le
	C e^{-c \big((\sfy-\Mx)^2\wedge L^{1/2}(\sfy-\Mx) \big)}
\end{equation*}
uniformly for $0\le k<N_L$ and $\sfz\le\Mx$, where we used $\sfy-\sfz\ge1$ in the last step. 
Therefore,
\begin{equation*}
\begin{aligned}
	\widehat{T}_L(\sfx,\sfy)
	&\le
	C e^{-c\left((\sfy-\Mx)^2\wedge L^{1/2}(\sfy-\Mx)\right)}
	\sum_{\substack{0\le k<N_L\\ \sfz\in D_{L,k}\cap(-\infty,\Mx]}}
	\prob_{\B_L(0)=\sfx_L}
	\big( \bsigma_L=k/N_L,\ \B_L (k/N_L)=\sfz \big)  \\
	&=
	C e^{-c\left((\sfy-\Mx)^2\wedge L^{1/2}(\sfy-\Mx)\right)}
	\prob_{\B_L(0)=\sfx_L}(\bsigma_L<1).
\end{aligned}
\end{equation*}
By Lemma~\ref{lm:flatstopping_tail} below, and using $\sfx-\Mx\ge1$ and $\sfx_L=\sfx+O(L^{-1/2})$, we have
\begin{equation*}
	\prob_{\B_L(0)=\sfx_L}(\bsigma_L<1)
	\le
	e^{-\frac{(\sfx-\Mx)^2}{16}}
\end{equation*}
for all sufficiently large $L$. 
This proves the result.
\end{proof}

The following result was used in the proof of the previous lemma. It will be used again in the next subsection.

\begin{lm}\label{lm:flatstopping_tail}
Let $\{\B_L(\XX)\}_{\XX\ge0}$ be the rescaled geometric random walk defined in \eqref{eq:rescaled_rw}, and let
\begin{equation*}
	\bsigma_L
	:=
	\inf \big\{\XX\in\frac1{N_L}\bbZ_+:\B_L(\XX)\le\Mx \big\}.
\end{equation*}
Then, for every $\sfx>\Mx$ and all sufficiently large $L$,
\begin{equation} \label{eq:PBsfl}
        \prob_{\B_L(0)=\sfx}(\bsigma_L<1)
        \le
        e^{-\frac{(\sfx-\Mx)^2}{8}}.
\end{equation}
\end{lm}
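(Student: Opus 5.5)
The plan is to use an exponential martingale for the geometric random walk, which gives a Chernoff-type bound for the running minimum of $\B_L$ over $[0,1]$. Write $\B_L(\XX)=\sfx-\frac{1}{\rhosr L^{1/2}}\sum_{j=1}^{\XX N_L}(\xi_j-L/N_L)$, where $\xi_j:=G_{j-1}-G_j$ are i.i.d.\ with $\xi_j-1$ geometric, $\prob(\xi_j=a)=\rho_L(1-\rho_L)^{a-1}$ for $a\ge1$. Their mean is $1/\rho_L=L/N_L$ and their variance is $(1-\rho_L)/\rho_L^2$. Hence $W_n:=\sum_{j\le n}(\xi_j-1/\rho_L)$ is a centered random walk, and over $N_L$ steps its variance is $N_L(1-\rho_L)/\rho_L^2=\rhosr^2L$. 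This matches the normalization, so $\B_L$ has unit diffusion coefficient. The event $\{\bsigma_L<1\}$ is contained in $\{\max_{n<N_L}W_n\ge \rhosr L^{1/2}(\sfx-\Mx)\}$.

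First, for $\lambda>0$ small, I will use the submartingale $e^{\lambda W_n}$ and Doob's maximal inequality:
\begin{equation*}
\prob\Bigl(\max_{n\le N_L}W_n\ge a\Bigr)\le e^{-\lambda a}\,\bbE\bigl[e^{\lambda W_{N_L}}\bigr]=e^{-\lambda a+N_L\Lambda_L(\lambda)},
\end{equation*}
where $\Lambda_L(\lambda)=\log\bbE e^{\lambda(\xi-1/\rho_L)}$. This is finite for $\lambda<-\log(1-\rho_L)$, which is uniformly bounded below for $\rho_L\in[\rho_-,\rho_+]$. Second, I will take $a=\rhosr L^{1/2}d$ with $d=\sfx-\Mx$, and choose $\lambda=\theta/(\rhosr L^{1/2})$ with $\theta=d$. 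Taylor expansion gives $\Lambda_L(\lambda)=\frac12\frac{1-\rho_L}{\rho_L^2}\lambda^2+O(\lambda^3)$ uniformly in $\rho_L$. Then
\begin{equation*}
N_L\Lambda_L(\lambda)=\frac{\theta^2}{2}+O\bigl(\theta^3L^{-1/2}\bigr),
\end{equation*}
so the bound becomes $\exp(-d^2/2+O(d^3L^{-1/2}))$. For $L$ large, depending on $d$, this is at most $e^{-d^2/4}\le e^{-d^2/8}$. This already suffices, since the statement is for fixed $\sfx$ and all sufficiently large $L$.

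The main obstacle is the uniformity needed in Lemma~\ref{lm:trace_class_prelimit}, where the bound is applied for all $\sfx\ge\Mx+1$ with $L$ large independent of $\sfx$. For that I will instead choose $\theta=\min(d,\,\epsilon L^{1/2})$ with a small fixed $\epsilon$, so that $\lambda\le\epsilon/\rhosr$ stays inside the analyticity region. Uniformly there, $N_L\Lambda_L(\lambda)\le\theta^2/2\,(1+C\epsilon)$.

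If $d\le\epsilon L^{1/2}$, the exponent is at most $-d^2(1/2-C\epsilon)\le -d^2/8$. If $d>\epsilon L^{1/2}$, the exponent is at most $-\epsilon L^{1/2}d/2$, and this is $\le -d^2/8$ only when $d\lesssim L^{1/2}$. For larger $d$ I will handle the tail directly: reaching level $\Mx$ within one period requires $\sum_{j\le N_L}\xi_j\ge L+\rhosr L^{1/2}d$. Since $L^{1/2}d\gtrsim d^2$ in this regime, a large-$\lambda$ Chernoff bound with $\lambda$ fixed gives decay $e^{-cL^{1/2}d}$, which beats $e^{-d^2/8}$ only if $d\le CL^{1/2}$. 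If even larger $d$ is needed, I expect to observe that, because $G$ decreases by at least $1$ per step, $\B_L(\XX)\le\sfx+\XX(L-N_L)/(\rhosr L^{1/2})$ trivially. Then $\B_L$ can drop by at most $O(L^{1/2})$ only through rare large jumps, and I will bound the probability of a single jump exceeding $\rhosr L^{1/2}d/2$ by the geometric tail $N_L(1-\rho_L)^{cL^{1/2}d}$. I expect the bookkeeping in this regime to be the only delicate point. In the paper's later use only $\sfx_L=\sfx+O(L^{-1/2})$ and a constant-factor loss ($1/8$ versus $1/16$) are needed, which this absorbs.
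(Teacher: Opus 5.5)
\paragraph{The sign of the increments is reversed, so the proof bounds the wrong event.} You write $\B_L(\XX)=\sfx-\frac{1}{\rhosr L^{1/2}}\sum_{j\le \XX N_L}(\xi_j-1/\rho_L)$. From \eqref{eq:rescaled_rw}, however, $G_n+nL/N_L=G_0-\sum_{j\le n}(\xi_j-1/\rho_L)$, so in fact $\B_L(n/N_L)=\sfx+W_n/(\rhosr L^{1/2})$ with $W_n=\sum_{j\le n}(\xi_j-1/\rho_L)$. Hence, up to rounding of the starting point,
\begin{equation*}
\{\bsigma_L<1\}=\Bigl\{\min_{n<N_L}W_n\le-\rhosr L^{1/2}(\sfx-\Mx)\Bigr\}.
\end{equation*}
Your inclusion into $\{\max_{n<N_L}W_n\ge\rhosr L^{1/2}(\sfx-\Mx)\}$ is false: that event says $\B_L$ rises to $2\sfx-\Mx$. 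Intuitively, $\B_L$ drops when $G$ falls more slowly than $1/\rho_L$ per step, i.e.\ when the $\xi_j$ are small. For fixed $\sfx$, your Taylor argument survives once the sign is fixed, since the quadratic term of the log-MGF is unchanged. Everything after that, though, is built on the wrong tail.

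\paragraph{The missing idea is that the relevant increments are bounded above.} The increments that matter are $1/\rho_L-\xi_j$, and these are bounded above by $1/\rho_L-1$. So $\psi(\lambda):=\log\bbE e^{\lambda(1/\rho_L-\xi_1)}$ is finite for every $\lambda\ge0$, with no analyticity cutoff. Moreover $\psi''(\lambda)=q/(1-q)^2$ with $q=(1-\rho_L)e^{-\lambda}$, which decreases in $\lambda$. Since $\psi(0)=\psi'(0)=0$, this gives $\psi(\lambda)\le\frac{1-\rho_L}{2\rho_L^2}\lambda^2$ for all $\lambda\ge0$.

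Apply Doob's inequality to the submartingale $e^{-\lambda W_n}$, set $d=\sfx-\Mx$, and take $\lambda=d/(\rhosr L^{1/2})$. This yields $e^{-d^2/2}$, uniformly in $d$ and $L$ at once. That is exactly the paper's proof; the paper halves the threshold to absorb the rounding of $\sfx$, which turns the constant into $e^{-d^2/8}$. So the uniformity you correctly flag as needed in Lemma~\ref{lm:trace_class_prelimit} comes for free. Your $\epsilon L^{1/2}$ cutoff and jump bookkeeping are unnecessary.

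\paragraph{The large-$d$ plan could not have worked for the event you set up.} For upward deviations the probability really is of order $e^{-cL^{1/2}d}$, coming from a single large jump. That exceeds $e^{-d^2/8}$ once $d\gg L^{1/2}$, so no bookkeeping gives the claimed bound there. For the correct event, your ``trivial bound'' with the sign fixed reads $\B_L(\XX)\ge\sfx-\XX(L-N_L)/(\rhosr L^{1/2})$. The probability is therefore identically zero once $d>(L-N_L)/(\rhosr L^{1/2})$: large jumps of $\xi$ push $\B_L$ up, not down.
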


\begin{proof}
Let $\{Z_i\}_{i\ge1}$ be i.i.d. random variables with $\prob(Z_i=a)
= \frac{\rho_L}{1-\rho_L}(1-\rho_L)^{-a}\mathbf{1}_{a\in\bbZ_{<0}}$. 
Let $W_i:=Z_i-\bbE Z_i=Z_i+\frac1{\rho_L}$, and set $S_k:=\sum_{j=1}^k W_j$. 
From the definitions of $\B_L$ and $\bsigma_L$, for all sufficiently large $L$,
\begin{equation*}
	\prob_{\B_L(0)=\sfx}(\bsigma_L<1)
	\le
	\prob \big(
	\max_{1\le k<N_L}S_k
	\ge
	\frac12\rhosr L^{1/2}(\sfx-\Mx)
	\big),
\end{equation*}
where the factor $\frac12$ absorbs the rounding error between the discrete and rescaled variables.

By Doob's maximal inequality applied to the exponential submartingale,
\begin{equation*}
	\prob_{\B_L(0)=\sfx}(\bsigma_L<1)
	\le
	e^{-\frac12\lambda\rhosr L^{1/2}(\sfx-\Mx)}
	\bbE\left[e^{\lambda S_{N_L-1}}\right]
	=
	e^{-\frac12\lambda\rhosr L^{1/2}(\sfx-\Mx)+(N_L-1)\psi(\lambda)}
\end{equation*}
for all $\lambda>0$, where
\begin{equation*}
        \psi(\lambda)
        :=
        \log\bbE[e^{\lambda W_1}]
        =
        \frac{1-\rho_L}{\rho_L}\lambda
        -\log\bigl(1-(1-\rho_L)e^{-\lambda}\bigr)
        +\log\rho_L.
\end{equation*}
It is straightforward to check that 
\begin{equation*}
	0\le\psi(\lambda)
	\le
	\frac{1-\rho_L}{2\rho_L^2}\lambda^2,
	\qquad \lambda\ge0.
\end{equation*}
Thus, using $N_L=\rho_LL$ and $\rhosr^2=(1-\rho_L)/\rho_L$,
\begin{equation*}
	\prob_{\B_L(0)=\sfx}(\bsigma_L<1)
	\le
	\exp \big\{
	-\frac12\rhosr L^{1/2}(\sfx-\Mx)\lambda
	+\frac12\rhosr^2L\lambda^2
	\big\}
\end{equation*}
for all $\lambda>0$. 
Optimizing over $\lambda\ge0$ implies \eqref{eq:PBsfl}. 
\end{proof}

We use the following simplification of Lemma~\ref{lm:trace_class_prelimit} below. 

\begin{cor}\label{cor:T_arbitrary_exponential}
Let $\Mx\ge0$ be such that $\max_{\XX\in[0,1]}\h_L(-\XX)\le \Mx$ for all $L$.  
Set $b:=\Mx+1$. 
Then, for every $A>0$, there exist constants $C_A>0$ and $L_A\ge1$ such that
\begin{equation}\label{eq:T_arbitrary_exponential}
	\widehat T_L(\sfx,\sfy)
	\le
	C_A\re^{-A|\sfx-\sfy|}
	\left(1-\mathbf{1}_{\sfx\ge b,\,\sfy\ge b}\right)
	+
	C_A\re^{-A(\sfx+\sfy)}
	\mathbf{1}_{\sfx\ge b,\,\sfy\ge b}
\end{equation}
for all $\sfx,\sfy\in\bbR$ and all $L\ge L_A$.
\end{cor}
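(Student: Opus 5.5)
We derive Corollary~\ref{cor:T_arbitrary_exponential} directly from Lemma~\ref{lm:trace_class_prelimit}. The plan is to compare each exponent in \eqref{eq:T_superexponential} with the target exponent. This uses two elementary facts: a Gaussian or sub-exponential exponent dominates any linear exponent at large distances, and at bounded distances the linear exponent can be absorbed into the constant.

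\emph{First region:} not both $\sfx,\sfy\ge b$. Set $d:=|\sfx-\sfy|$. We must show that
\begin{equation*}
	c\bigl(d^2\wedge L^{1/2}d\bigr)\ge Ad-C'
\end{equation*}
for $L\ge L_A$. Split according to which term attains the minimum.
\begin{itemize}
\item If $d^2\le L^{1/2}d$, then $cd^2\ge Ad-A^2/(4c)$ by completing the square.
\item If $d^2> L^{1/2}d$, then $cL^{1/2}d\ge Ad$ as soon as $L^{1/2}\ge A/c$.
\end{itemize}
This fixes $L_A:=(A/c)^2$ and gives $C_A:=Ce^{A^2/(4c)}$ for this part.

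\emph{Second region:} $\sfx,\sfy\ge b=\Mx+1$. Write $\sfx=\Mx+p$ and $\sfy=\Mx+q$ with $p,q\ge1$. We need
\begin{equation*}
	c\bigl(q^2\wedge L^{1/2}q\bigr)+\frac{p^2}{16}\ge A(p+q)+2A\Mx-C''.
\end{equation*}
The $p$-part satisfies $p^2/16\ge Ap-4A^2$. The $q$-part is handled exactly as in the first region, again using $L^{1/2}\ge 2A/c$ in the linear regime. The constant $2A\Mx$ is absorbed into $C_A$, since $\Mx$ is fixed and independent of $L$.

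Taking the larger of the two constants and the larger of the two thresholds $L_A$ (together with "$L$ sufficiently large" from Lemma~\ref{lm:trace_class_prelimit}) yields \eqref{eq:T_arbitrary_exponential}.

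There is no real obstacle here. The only point needing care is that the sub-exponential cutoff $L^{1/2}|\cdot|$ is what forces the restriction $L\ge L_A$ to depend on $A$. The argument is routine bookkeeping.
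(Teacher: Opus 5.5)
Your proof is correct and is essentially the paper's argument: the paper packages your two-case split into the single inequality $s(s\wedge q)\ge rs-r^2/4$ for $q\ge r$ (with $r=A/c$ and $L_A=r^2$), and it handles the region $\sfx,\sfy\ge b$ with the same bound $(\sfx-\Mx)^2/16\ge A(\sfx-\Mx)-4A^2$. You also track explicitly the $2A\Mx$ term, which the paper's display absorbs silently, and the ``sufficiently large $L$'' threshold inherited from Lemma~\ref{lm:trace_class_prelimit}; your condition $L^{1/2}\ge 2A/c$ in the second region is stronger than needed, since $A/c$ suffices, but this is harmless.
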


\begin{proof}
Let $c>0$ be the constant in \eqref{eq:T_superexponential}. For $s,q,r\ge 0$ with $q\ge r$,
we have $s(s\wedge q)\ge rs-\frac{r^2}{4}$, 
which is clear when $s>q$, and follows by completing the square when $s\le q$. 

Fix $A>0$ and set $r=A/c$. If $L\ge L_A:=r^2$, then
\begin{equation*}
	c (d^2\wedge L^{1/2}d )
	=c\,d (d\wedge L^{1/2} )
	\ge Ad-\frac{A^2}{4c} 
\end{equation*}
for all $d\ge 0$. 
Applying this with $d=|\sfx-\sfy|$ yields the desired bound for the first term in \eqref{eq:T_superexponential}.

Now suppose that $\sfx,\sfy\ge\Mx+1$. Applying the same estimate with $d=\sfy-\Mx$, and using
\begin{equation*}
	\frac{(\sfx-\Mx)^2}{16}
	\ge A(\sfx-\Mx)-4A^2,
\end{equation*}
we obtain
\begin{equation*}
	c \big((\sfy-\Mx)^2\wedge L^{1/2}(\sfy-\Mx) \big)
	+\frac{(\sfx-\Mx)^2}{16}
	\ge
	A(\sfx+\sfy)-\frac{A^2}{4c}  - 4A^2.
\end{equation*}
The result now follows from \eqref{eq:T_superexponential}.
\end{proof}

\subsubsection{Convergence of the determinant}
\label{sec:Frdconv}

From the definition of the rescaled kernel, viewed as a piecewise-constant kernel on $L^2(\bbR)$,
\begin{equation}\label{eq:Ktatexpl}
\begin{aligned}
	-\widehat{\Ke_{L,\rz}}(\sfx,\sfy)
	&=
	\rz\,\mathbf{1}_{\sfx>0}
	\left(
	\widehat{T}_L\big(\sfx+\frac{1}{\rhosr L^{1/2}},\sfy \big) 
	+
	\int_\bbR
	\widehat{S}_{L,\rz} \big(\sfx+\frac{1}{\rhosr L^{1/2}},\sfx' \big)
	\widehat{T}_L(\sfx',\sfy)\,\rd\sfx'
	\right)
	\mathbf{1}_{\sfy>0}.
\end{aligned}
\end{equation}

\begin{lm}\label{lm:Khatesunif}
For every $\delta\in(0,1)$, there exist constants $C,c>0$ such that
\begin{equation}
	|\widehat{\Ke_{L,\rz}}(\sfx,\sfy)|
	\le
	Ce^{-c(\sfx+\sfy)} \mathbf{1}_{\sfx,\sfy>0}
\end{equation}
for all $|\rz|<1-\delta$, $\sfx,\sfy\in \bbR$, and all sufficiently large $L$.
\end{lm}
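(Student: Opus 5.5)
We will bound the two terms on the right-hand side of \eqref{eq:Ktatexpl} separately, using Corollary~\ref{cor:T_arbitrary_exponential} for $\widehat T_L$ and Lemma~\ref{lm:S_bound} for $\widehat S_{L,\rz}$. Fix $\delta$ and let $c_0,C_0$ be the constants in \eqref{eq:S_upperbound} for $|\rz|\le1-\delta$. Choose $A$ large, say $A\ge 2c_0+2$, and apply Corollary~\ref{cor:T_arbitrary_exponential} with this $A$ and $b=\Mx+1$. Since $|\rz|<1$, the prefactor $\rz$ is harmless. Only $\sfx,\sfy>0$ need to be considered, because of the indicators in \eqref{eq:Ktatexpl}.

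\emph{First term.} Write $\sfx_+:=\sfx+(\rhosr L^{1/2})^{-1}$, so that $0<\sfx_+\le\sfx+1$. We split into two regions.
\begin{itemize}
\item[(i)] If $\sfx_+,\sfy\ge b$, Corollary~\ref{cor:T_arbitrary_exponential} gives directly $\widehat T_L(\sfx_+,\sfy)\le C_A e^{-A(\sfx+\sfy)}$.
\item[(ii)] Otherwise one of $\sfx_+,\sfy$ lies in $(0,b)$. Then $\widehat T_L(\sfx_+,\sfy)\le C_Ae^{-A|\sfx_+-\sfy|}$. Since one coordinate is at most $b$, we have $|\sfx_+-\sfy|\ge \sfx_++\sfy-2b$, so the bound is at most $C_Ae^{2Ab}e^{-A(\sfx+\sfy)}$.
\end{itemize}
In both cases the first term is $\le Ce^{-A(\sfx+\sfy)}$.

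\emph{Second term.} Split the integral over $\sfx'$ according to whether $\sfx'\ge b$ or $\sfx'<b$.
\begin{itemize}
\item[(i)] For $\sfx'<b$ (including negative $\sfx'$), use $|\widehat S|\le C_0e^{-c_0|\sfx_+-\sfx'|}$ together with $\widehat T_L(\sfx',\sfy)\le C_Ae^{-A|\sfx'-\sfy|}$ (the second indicator term cannot occur since $\sfx'<b$). The integral is then at most a constant times $\int e^{-c_0|\sfx-\sfx'|-A|\sfx'-\sfy|}\,\rd\sfx'\le C e^{-c_0|\sfx-\sfy|}$, using $A>c_0$ and the triangle inequality. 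We also need decay in $\sfx+\sfy$, not merely in $|\sfx-\sfy|$. Because $\sfx'<b$, we have $|\sfx-\sfx'|\ge \sfx-b$ and $|\sfx'-\sfy|\ge \sfy-b$. Splitting each exponential in half, $e^{-c_0|\sfx-\sfx'|}\le e^{-\frac{c_0}{2}(\sfx-b)}e^{-\frac{c_0}{2}|\sfx-\sfx'|}$, and similarly for the $A$ factor, we obtain the bound $Ce^{-\frac{c_0}{2}(\sfx+\sfy)}$. This half-splitting is the main step to get right: it is what lets us trade one half of the exponential for integrability in $\sfx'$ while keeping the other half for decay in $\sfx+\sfy$.
\item[(ii)] For $\sfx'\ge b$, use $\widehat T_L(\sfx',\sfy)\le C_Ae^{-A|\sfx'-\sfy|}+C_Ae^{-A(\sfx'+\sfy)}$, where the first piece arises only when $\sfy<b$. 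In that case $|\sfx'-\sfy|\ge \sfx'+\sfy-2b$, so both pieces are bounded by $Ce^{-A(\sfx'+\sfy)}$. Then
\[
\int_b^\infty e^{-c_0|\sfx_+-\sfx'|-A\sfx'}\,\rd\sfx'\le Ce^{-c_0\sfx},
\]
using $e^{-c_0|\sfx-\sfx'|}\le e^{-c_0\sfx+c_0\sfx'}$ and $A>c_0$. Together with the factor $e^{-A\sfy}$, this gives $Ce^{-c_0\sfx-A\sfy}$.
\end{itemize}

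Collecting the two terms yields $|\widehat{\Ke}_{L,\rz}(\sfx,\sfy)|\le Ce^{-c(\sfx+\sfy)}\mathbf 1_{\sfx,\sfy>0}$ with $c=\min(c_0/2,1)$. The bound holds uniformly for $|\rz|<1-\delta$ and for $L\ge L_A$.
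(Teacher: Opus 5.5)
Your proposal is correct and follows essentially the paper's argument: the first term is bounded via Corollary~\ref{cor:T_arbitrary_exponential} exactly as the paper does, and the convolution term is bounded by combining Lemma~\ref{lm:S_bound} with that corollary. The only difference is in the final integral estimate: the paper invokes Lemma~\ref{lm:intesttmp}, which uses equal rates, produces a factor $1+\sfx$, and then absorbs it by decreasing $c$, whereas you split at $\sfx'=b$ and use $A>c_0$ together with halving the exponentials, which avoids the polynomial factor altogether.
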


\begin{proof}
It is enough to consider $\sfx,\sfy>0$. 
Since $\h$ is periodic and upper semicontinuous, it is bounded above. Thus, there exists $\Mx\ge0$ such that
$\max_{\XX\in[0,1]}\h_L(-\XX)\le \Mx$ 
for all sufficiently large $L$. 
Set $b:=\Mx+1$. 

By Corollary~\ref{cor:T_arbitrary_exponential} with $A=1$, there exists $C>0$ such that  
\begin{equation*}
        \mathbf{1}_{\sfx>0}\widehat{T}_L(\sfx,\sfy)\mathbf{1}_{\sfy>0}
        \le
        Ce^{-(\sfx+\sfy)}
\end{equation*}
for all $\sfx,\sfy\in\bbR$ and all sufficiently large $L$.
On the other hand, 
using \eqref{eq:Ktatexpl} and Lemma~\ref{lm:S_bound}, there are $C, c>0$ such that for $|\rz|\le1-\delta$,
\begin{equation*}
\begin{aligned}
	&\int_\bbR
	\left|
	\widehat{S}_{L,\rz} \big(\sfx+\frac{1}{\rhosr L^{1/2}},\sfx' \big)
	\widehat{T}_L(\sfx',\sfy)
	\right|\,\rd\sfx' 
	\le
	C\int_\bbR
	e^{-c|\sfx-\sfx'|}
	\left(
	e^{-c|\sfx'-\sfy|}
	\left(1-\bfone_{\sfx'\ge b,\,\sfy\ge b}\right)
	+
	e^{-c(\sfx'+\sfy)}
	\bfone_{\sfx'\ge b,\,\sfy\ge b}
	\right)\rd\sfx'.
\end{aligned}
\end{equation*}
By Lemma~\ref{lm:intesttmp} below, after rescaling the constants, the last integral is bounded by $C(1+\sfx)e^{-c(\sfx+\sfy)}$. 
Absorbing the factor $1+\sfx$ into the exponential by decreasing $c$, we obtain the result. 
\end{proof}

\begin{lm}\label{lm:intesttmp}
Fix $b>0$, and let
\begin{equation*}
	I(\sfx,\sfy)
	:=
	\int_\bbR e^{-|\sfx-\sfx'|}
	\left(
	e^{-|\sfx'-\sfy|}
	\left(1-\mathbf{1}_{\sfx'\ge b,\,\sfy\ge b}\right)
	+
	e^{-(\sfx'+\sfy)}
	\mathbf{1}_{\sfx'\ge b,\,\sfy\ge b}
	\right)\rd\sfx'.
\end{equation*}
Then
\begin{equation}\label{eq:Iintbd}
	I(\sfx,\sfy)
	\le
	e^{2b}(1+\sfx)e^{-\sfx-\sfy}, 
	\qquad \sfx,\sfy>0. 
\end{equation}
\end{lm}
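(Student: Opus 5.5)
We bound the integrand separately on two regions, and throughout we use $\sfx,\sfy>0$.

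\emph{Case $\sfy<b$.} The indicator $\mathbf{1}_{\sfx'\ge b,\,\sfy\ge b}$ vanishes, so
\begin{equation*}
I(\sfx,\sfy)=\int_\bbR e^{-|\sfx-\sfx'|-|\sfx'-\sfy|}\,\rd\sfx'=(1+|\sfx-\sfy|)e^{-|\sfx-\sfy|}.
\end{equation*}
This is the standard convolution of two Laplace kernels. Since $0<\sfy<b$, we have $|\sfx-\sfy|\le \sfx+b$, so $1+|\sfx-\sfy|\le (1+\sfx)(1+b)$. We also have $-|\sfx-\sfy|\le -\sfx+\sfy\le -\sfx-\sfy+2\sfy\le -\sfx-\sfy+2b$. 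These give $I\le (1+b)(1+\sfx)e^{2b}e^{-\sfx-\sfy}$. The factor $1+b$ is not absorbed by this crude bound, so we sharpen it by splitting the $\sfx'$-integral directly:
\begin{equation*}
\int e^{-|\sfx-\sfx'|-|\sfx'-\sfy|}\rd\sfx'=(1+|\sfx-\sfy|)e^{-|\sfx-\sfy|}.
\end{equation*}
If $\sfx\ge\sfy$, this equals $(1+\sfx-\sfy)e^{-\sfx+\sfy}\le(1+\sfx)e^{2\sfy}e^{-\sfx-\sfy}\le(1+\sfx)e^{2b}e^{-\sfx-\sfy}$. If $\sfx<\sfy<b$, then $(1+\sfy-\sfx)e^{\sfx-\sfy}\le 1\le e^{2b}(1+\sfx)e^{-\sfx-\sfy}$, since $\sfx+\sfy<2b$. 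This settles the case.

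\emph{Case $\sfy\ge b$.} Split at $\sfx'=b$. The part $\sfx'\ge b$ is bounded by
\begin{equation*}
e^{-\sfy}\int_\bbR e^{-|\sfx-\sfx'|-\sfx'}\mathbf{1}_{\sfx'\ge 0}\,\rd\sfx'.
\end{equation*}
For $\sfx'\in[0,\sfx]$ the integrand is $e^{-\sfx}$, contributing $\sfx e^{-\sfx}$. For $\sfx'>\sfx$ it is $e^{\sfx-2\sfx'}$, contributing $\tfrac12 e^{-\sfx}$. Altogether this part is at most $(1+\sfx)e^{-\sfx-\sfy}$.

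The part $\sfx'<b$ is $\int_{-\infty}^b e^{-|\sfx-\sfx'|-(\sfy-\sfx')}\rd\sfx'$, using $\sfx'<b\le\sfy$. We bound $e^{-|\sfx-\sfx'|}\le e^{-\sfx+|\sfx'|}$. On $\sfx'\ge0$ the integrand is then at most $e^{-\sfx-\sfy+2\sfx'}$, whose integral over $[0,b]$ is at most $\tfrac12 e^{2b}e^{-\sfx-\sfy}$. On $\sfx'<0$ we use $|\sfx-\sfx'|=\sfx-\sfx'$, so the integrand is $e^{-\sfx-\sfy+2\sfx'}$, contributing $\tfrac12 e^{-\sfx-\sfy}$.

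Summing the pieces gives $(1+\sfx)e^{-\sfx-\sfy}+\tfrac12(e^{2b}+1)e^{-\sfx-\sfy}$. Since $b>0$, we have $1+\tfrac12(e^{2b}+1)\le e^{2b}$, so the sum is at most $e^{2b}(1+\sfx)e^{-\sfx-\sfy}$.

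The only delicate point is tracking the constant so that it is exactly $e^{2b}$. The elementary inequality $\tfrac12(e^{2b}+3)\le e^{2b}$ handles this. Beyond that, the argument is the routine piecewise evaluation above.
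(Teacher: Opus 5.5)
Your decomposition is sound, and the case $\sfy<b$ is correct; it is the same Laplace-convolution computation the paper uses, split into $\sfx\ge\sfy$ and $\sfx<\sfy$. The problem is the final constant check in the case $\sfy\ge b$. The inequality $1+\tfrac12(e^{2b}+1)=\tfrac12(e^{2b}+3)\le e^{2b}$ is equivalent to $e^{2b}\ge 3$, i.e.\ $b\ge\tfrac12\log 3$, so it is false for $0<b<\tfrac12\log 3$. This is not merely a poorly chosen last inequality. The bound you actually assembled, $\bigl(1+\sfx+\tfrac12(e^{2b}+1)\bigr)e^{-\sfx-\sfy}$, exceeds the target $e^{2b}(1+\sfx)e^{-\sfx-\sfy}$ as $\sfx\downarrow 0$ whenever $e^{2b}<3$. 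As written, your argument therefore proves the lemma only for $b\ge\tfrac12\log 3$. That happens to cover the paper's application $b=\Mx+1\ge 1$, but not the statement for every $b>0$.

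The repair is to keep the constants you already computed instead of rounding them up. The $\sfx'\ge b$ part is at most $(\sfx+\tfrac12)e^{-\sfx-\sfy}$. The $[0,b)$ part is at most $\tfrac12(e^{2b}-1)e^{-\sfx-\sfy}$. The $\sfx'<0$ part equals $\tfrac12 e^{-\sfx-\sfy}$. Hence $I\le(\sfx+\tfrac12+\tfrac12 e^{2b})e^{-\sfx-\sfy}\le e^{2b}(1+\sfx)e^{-\sfx-\sfy}$, using only $e^{2b}\ge 1$.

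The paper avoids this piecewise bookkeeping altogether. For $\sfx'<b$ it writes $e^{-(\sfy-\sfx')}=e^{b}e^{-\sfy}e^{-|\sfx'-b|}$, and for $\sfx'\ge b$ it bounds $e^{-(\sfx'+\sfy)}\le e^{b}e^{-\sfy}e^{-|\sfx'-b|}$. This gives $I\le e^{b}e^{-\sfy}(1+|\sfx-b|)e^{-|\sfx-b|}\le e^{2b}(1+\sfx)e^{-\sfx-\sfy}$, by the same convolution estimate already used for $\sfy<b$.
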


\begin{proof}
For every $c\ge0$,
\begin{equation}\label{eq:intII}
	\int_\bbR e^{-|\sfx-\sfx'|-|\sfx'-c|}\,\rd\sfx'
	=
	(1+|\sfx-c|)e^{-|\sfx-c|}
	\le
	e^c(1+\sfx)e^{-\sfx}
\end{equation}
for all $\sfx\ge0$.
For $\sfx\ge c$, the inequality is immediate. For $0\le\sfx<c$, it follows from
\begin{equation*}
	1+c-\sfx\le e^{c-\sfx}(1+\sfx).
\end{equation*}

If $\sfy\in[0,b)$, then by \eqref{eq:intII} with $c=\sfy$,
\begin{equation*}
	I(\sfx,\sfy)
	=
	\int_\bbR e^{-|\sfx-\sfx'|-|\sfx'-\sfy|}\,\rd\sfx'
	\le
	e^\sfy(1+\sfx)e^{-\sfx}.
\end{equation*}
Since $\sfy<b$, we have $e^\sfy\le e^{2b-\sfy}$, and hence \eqref{eq:Iintbd} follows in this case.

If $\sfy\ge b$, then
\begin{equation*}
	\begin{aligned}
	I(\sfx,\sfy)
	&=
	\int_{-\infty}^b
	e^{-|\sfx-\sfx'|-|\sfx'-\sfy|}
	\,\rd\sfx'
	+
	\int_b^\infty
	e^{-|\sfx-\sfx'|-(\sfx'+\sfy)}
	\,\rd\sfx'.
	\end{aligned}
\end{equation*}
For $\sfx'<b$,
\begin{equation*}
	e^{-|\sfx'-\sfy|}
	=
	e^{-(\sfy-\sfx')}
	=
	e^b e^{-\sfy}e^{-|\sfx'-b|},
\end{equation*}
while for $\sfx'\ge b$,
\begin{equation*}
	e^{-(\sfx'+\sfy)}
	=
	e^{-b}e^{-\sfy}e^{-|\sfx'-b|}
	\le
	e^b e^{-\sfy}e^{-|\sfx'-b|}.
\end{equation*}
Using \eqref{eq:intII} with $c=b$, we obtain
\begin{equation*}
	\begin{aligned}
	I(\sfx,\sfy)
	&\le
	e^b e^{-\sfy}
	\int_\bbR
	e^{-|\sfx-\sfx'|-|\sfx'-b|}
	\,\rd\sfx'
	\le
	e^{2b}(1+\sfx)e^{-\sfx-\sfy}.
	\end{aligned}
\end{equation*}
This proves \eqref{eq:Iintbd}.
\end{proof}

\begin{cor}\label{lm:energy_kernelconvergence}
For every $\sfx,\sfy>0$,
\begin{equation}
	\lim_{L\to\infty}-\widehat{\Ke_{L,\rz}}(\sfx,\sfy)
	=
	\limKe_\h(\rz)(\sfx,\sfy).
\end{equation}
\end{cor}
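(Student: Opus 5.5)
We pass to the limit in the explicit representation \eqref{eq:Ktatexpl}, one term at a time, using the pointwise limits of Lemmas~\ref{lm:S_bound} and~\ref{lm:T_convergence} and dominating the integral term with the uniform bounds of Lemma~\ref{lm:S_bound} and Corollary~\ref{cor:T_arbitrary_exponential}. The target kernel is read off from \eqref{eq:limkernel} and \eqref{eq:limkernel2SS}:
\begin{equation*}
	\limKe_\h(\rz)(\sfx,\sfy)
	=
	\rz\,\mathbf{1}_{\sfx>0}\Bigl(\sfT_\h(\sfx,\sfy)+\int_\bbR \sfS_\rz(\sfx,\sfx')\sfT_\h(\sfx',\sfy)\,\rd\sfx'\Bigr)\mathbf{1}_{\sfy>0}.
\end{equation*}
So it suffices to fix $\sfx,\sfy>0$ and show that the two terms in the bracket of \eqref{eq:Ktatexpl} converge to the corresponding two terms here.

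\emph{First term.} We need $\widehat T_L(\sfx+\epsilon_L,\sfy)\to\sfT_\h(\sfx,\sfy)$, where $\epsilon_L:=1/(\rhosr L^{1/2})\to0$. Lemma~\ref{lm:T_convergence} gives convergence at fixed arguments, but the first argument is now shifted. Since $\widehat T_L(\sfx,\sfy)$ is defined through the lattice point $\lfloor-\rhosr L^{1/2}\sfx\rfloor$, replacing $\sfx$ by $\sfx+\epsilon_L$ only changes the starting point of the walk by one lattice unit. The proof of Lemma~\ref{lm:T_convergence} uses only $\sfx_L\to\sfx$, through Lemma~\ref{lm:rw_hitting_limit}, so it applies verbatim with $\sfx_L$ replaced by the perturbed starting point, which still converges to $\sfx$. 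Here $\sfx>0=\h(0)$, so the condition $\sfx\neq\h(0)$ is satisfied.

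\emph{Integral term.} For fixed $\sfx'$, Lemma~\ref{lm:S_bound} gives $\widehat S_{L,\rz}(\sfx+\epsilon_L,\sfx')\to\sfS_\rz(\sfx,\sfx')$. This needs a little care, but $\widehat S_{L,\rz}$ depends only on the difference of the lattice indices, and the local CLT used in that proof is uniform over lattice shifts of size $O(1)$. Lemma~\ref{lm:T_convergence} gives $\widehat T_L(\sfx',\sfy)\to\sfT_\h(\sfx',\sfy)$ for every $\sfx'\neq0$, which is Lebesgue-a.e. For domination, combine \eqref{eq:S_upperbound}, namely $|\widehat S_{L,\rz}|\le Ce^{-c|\sfx-\sfx'|}$, with Corollary~\ref{cor:T_arbitrary_exponential} taken with some $A>0$. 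For fixed $\sfx,\sfy$ this bounds the integrand by
\begin{equation*}
	Ce^{-c|\sfx-\sfx'|}\bigl(e^{-A|\sfx'-\sfy|}+e^{-A(\sfx'+\sfy)}\mathbf{1}_{\sfx'\ge b}\bigr),
\end{equation*}
uniformly in large $L$, and this is integrable in $\sfx'$. Dominated convergence then yields convergence of the integral term.

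The main obstacle is the first-term issue: the shifted argument $\sfx+\epsilon_L$, together with the exceptional point $\sfx'=0$ in Lemma~\ref{lm:T_convergence}. The point $\sfx'=0$ is harmless because it is a measure-zero set. For the shift, I would state explicitly that Lemma~\ref{lm:T_convergence} holds along any sequence of starting points $\sfx_L\to\sfx\neq0$; this follows directly from how Lemma~\ref{lm:rw_hitting_limit} is formulated. Combining the two terms and multiplying by $\rz\,\mathbf{1}_{\sfx>0}\mathbf{1}_{\sfy>0}$ proves the claim.
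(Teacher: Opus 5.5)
Your proposal is correct and follows the same route as the paper's proof. You pass to the limit termwise in \eqref{eq:Ktatexpl}, treating the first term with Lemma~\ref{lm:T_convergence} and the integral term by dominated convergence, with domination from Lemma~\ref{lm:S_bound} and Corollary~\ref{cor:T_arbitrary_exponential}. Your explicit handling of the one-lattice-unit shift $\sfx+1/(\rhosr L^{1/2})$ is a detail the paper leaves implicit, and your justification of it is sound: the proof of Lemma~\ref{lm:T_convergence} needs only a convergent starting point $\sfx_L\to\sfx\neq 0$, and the local CLT is uniform over shifts of size $O(1)$.
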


\begin{proof}
The first term in \eqref{eq:Ktatexpl} converges to $\rz\,\sfT_\h(\sfx,\sfy)$ by Lemma~\ref{lm:T_convergence}. 
For the second term, Lemma~\ref{lm:S_bound} and Corollary~\ref{cor:T_arbitrary_exponential} provide an integrable dominating function, while Lemmas~\ref{lm:S_bound} and~\ref{lm:T_convergence} give pointwise convergence of the integrand for almost every $\sfx'$. 
Therefore, by the dominated convergence theorem,
\begin{equation*}
	\lim_{L\to\infty}
	\int_\bbR
	\widehat{S}_{L,\rz}
	\big(\sfx+\frac{1}{\rhosr L^{1/2}},\sfx' \big)
	\widehat{T}_L(\sfx',\sfy)\,\rd\sfx' 
	=
	\int_\bbR
	\sfS_{\rz}(\sfx,\sfx')\sfT_\h(\sfx',\sfy)\,\rd\sfx'.
\end{equation*}
Combining this with \eqref{eq:Ktatexpl}, and using $\limKe_\h(\rz)
= \rz\,\mathbf{1}_{>0}(\rI+\sfS_\rz)\sfT_\h\mathbf{1}_{>0}$, 
gives the result.
\end{proof}

We obtain the main result of this subsection.

\begin{prop}
\label{prop:energy_convergence} 
Under the same assumptions as Corollary~\ref{cor:thm13geny1}, 
\begin{equation} \label{eq:enclimtt}
	\lim_{L\to\infty}
	\det\left(I-\Ke_{Y_L,\rz}\right)_{\ell^2(\bbZ_{\le0})}
	=
	\det\left(\rI+\limKe_\h(\rz)\right)_{L^2(\bbR)}
\end{equation}
uniformly for $\rz$ in compact subsets of $\{\rz\in\bbC:0<|\rz|<1\}$.
\end{prop}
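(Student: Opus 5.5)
We will deduce \eqref{eq:enclimtt} from the standard criterion for convergence of Fredholm determinants: if the kernels $-\widehat{\Ke_{L,\rz}}$ converge pointwise to $\limKe_\h(\rz)$ and are dominated by a fixed kernel of the form $Ce^{-c(\sfx+\sfy)}\mathbf{1}_{\sfx,\sfy>0}$, then the Fredholm series converge term by term. By construction, $\det(I-\Ke_{Y_L,\rz})_{\ell^2(\bbZ_{\le0})}=\det(\rI+(-\widehat{\Ke_{L,\rz}}))_{L^2(\bbR)}$, so it suffices to control the Fredholm expansion
\begin{equation*}
	\det(\rI+K)=\sum_{n\ge0}\frac{1}{n!}\int_{\bbR^n}\det\bigl[K(\sfx_i,\sfx_j)\bigr]_{i,j=1}^n\,\rd\sfx_1\cdots\rd\sfx_n
\end{equation*}
for $K=-\widehat{\Ke_{L,\rz}}$ and for $K=\limKe_\h(\rz)$. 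For the latter, Proposition~\ref{result:KasQ}(b) guarantees trace class, so this expansion equals the Fredholm determinant. Its integrability follows from the same bound, which passes to the limit.

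\textbf{Step 1 (termwise convergence).} Fix $n$. Corollary~\ref{lm:energy_kernelconvergence} gives pointwise convergence of the $n\times n$ determinant for almost every $(\sfx_1,\ldots,\sfx_n)\in\bbR_+^n$; the set where some $\sfx_i=0$ is null. Lemma~\ref{lm:Khatesunif} shows that $|K(\sfx_i,\sfx_j)|\le Ce^{-c\sfx_i/2}e^{-c\sfx_j/2}\cdot e^{-c(\sfx_i+\sfx_j)/2}$. Applying Hadamard's inequality to the matrix with rows scaled by $e^{-c\sfx_i}$ then gives
\begin{equation*}
	\bigl|\det[K(\sfx_i,\sfx_j)]\bigr|\le n^{n/2}C^n\prod_i e^{-c\sfx_i}.
\end{equation*}
This bound is uniform in $L$ large and in $|\rz|\le1-\delta$. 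By dominated convergence, each term converges.

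\textbf{Step 2 (summation and uniformity).} The same Hadamard bound gives the summable majorant $n^{n/2}(C/c)^n/n!$, again uniform in $L$ and in $\rz$ in the compact set. Dominated convergence for the series therefore yields \eqref{eq:enclimtt} pointwise in $\rz$. For uniformity on compacts, note that $\rz\mapsto\det(I-\Ke_{Y_L,\rz})$ is analytic, since the kernel is a power series in $\rz$ with the $S_\rz$ expansion converging for $|\rz|<1$. The uniform bound above shows these functions are locally uniformly bounded on $0<|\rz|<1$. By Vitali/Montel, pointwise convergence upgrades to locally uniform convergence. Alternatively, the domination is already uniform in $\rz$ and the pointwise kernel limits can be made locally uniform in $\rz$ via Lemma~\ref{lm:S_bound}, since the $\rz$-dependence enters only through the absolutely convergent series in $k$.

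\textbf{Main obstacle.} The delicate point is that Corollary~\ref{lm:energy_kernelconvergence} is stated for fixed $\rz$ and $\sfx,\sfy>0$. This includes $\sfx$ near $0$, where Lemma~\ref{lm:T_convergence} requires $\sfx\ne0$; that condition holds automatically since $\sfx>0$. The $\sfx'$-integral in the $S\!\ast\!T$ term was handled there by dominated convergence, valid for a.e.\ $\sfx'\ne0$. The only remaining care is the shift $\sfx+1/(\rhosr L^{1/2})$, which is harmless because the convergence in Lemma~\ref{lm:T_convergence} is proved along sequences $\sfx_L\to\sfx$. With these checks in place, the argument reduces to the routine Hadamard/Vitali scheme.
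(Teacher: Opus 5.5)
Your proposal is correct and takes essentially the same route as the paper: a Hadamard bound on the Fredholm series terms, uniform in $L$ and in $\rz$, coming from Lemma~\ref{lm:Khatesunif}, combined with the pointwise kernel convergence of Corollary~\ref{lm:energy_kernelconvergence} and dominated convergence. The one difference is that you make the uniformity in $\rz$ explicit through a Vitali--Montel argument (analyticity in $\rz$ plus local uniform boundedness), whereas the paper simply appeals to compact-uniform convergence in $\rz$.
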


\begin{proof}
Fix a compact subset of $\{\rz\in\bbC: |\rz|<1\}$. 
By Hadamard's inequality and Lemma~\ref{lm:Khatesunif}, there is a constant $C>0$, uniform for $\rz$ in this compact set, such that
\begin{equation*}
	\int_{\bbR^n}
	\left|
	\det\left[
	\widehat{\Ke}_{L,\rz}(\sfx_i,\sfx_j)
	\right]_{i,j=1}^n
	\right|
	\rd\sfx_1\cdots\rd\sfx_n
	\le
	n^{n/2}C^n
\end{equation*}
for all $n\ge1$ and all sufficiently large $L$. 
Thus,  Corollary~\ref{lm:energy_kernelconvergence}, together with the compact-uniform convergence in $\rz$, implies the result. 
\end{proof}

\subsection{Uniform bounds and convergence of the PTASEP characteristic function}
\label{sec:convergence_pcf}

By Theorem~\ref{thm:characteristic}, the normalized PTASEP characteristic function can be analyzed using the probabilistic function $\mpch_{Y_L}(v,u)$. We establish a uniform bound in Section~\ref{sec:bound_char} and prove convergence to $\pchlim_\h(\eta,\xi)$ in Section~\ref{sec:converge_char}.

\subsubsection{Bounds for the PTASEP characteristic function}
\label{sec:bound_char}

Recall that we assumed $y_1^{(L)}=-1$ for all $L$.

\begin{lm}\label{prop:uniform_bound_char}
Let $\Mx\in[1,\infty)$ be such that $\max_{\XX\in[0,1]}\h_L(\XX)\le \Mx-1$ for all $L$. 
For every $\delta\in(0,1/2)$, there exist constants $C,c>0$, independent of $L$, such that
\begin{equation} \label{eq:uniform_bd_char}
    L^{-1/2}\left|\mpch_{Y_L}(v_L,u_L)\right|
    \le
    C e^{c\Mx(|\eta_L|+|\xi_L|)}
    \left(\frac{1}{\Re(-\xi_L)}+e^{c|\xi_L|^2}\right)
\end{equation}
for all complex sequences satisfying $\Re(u_L)<-\rho_L<\Re(v_L)$ and
\begin{equation}\label{eq:uvassumption_2}
	\delta\le
	\frac{|u_L|^{N_L}|1+u_L|^{L-N_L}}{\rho_L^{N_L}(1-\rho_L)^{L-N_L}}
	\le1-\delta,
	\qquad 
	\delta\le
	\frac{|v_L|^{N_L}|1+v_L|^{L-N_L}}{\rho_L^{N_L}(1-\rho_L)^{L-N_L}}
	\le1-\delta,
\end{equation}
where
\begin{equation}\label{eq:vunearo}
    \xi_L:=
    \frac{L^{1/2}(u_L+\rho_L)}{\sqrt{\rho_L(1-\rho_L)}},
    \qquad
    \eta_L:=
    \frac{L^{1/2}(v_L+\rho_L)}{\sqrt{\rho_L(1-\rho_L)}}.
\end{equation}
\end{lm}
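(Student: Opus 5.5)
We will prove \eqref{eq:uniform_bd_char} by a discrete version of the proof of Lemma~\ref{prop:plim_bound}, working directly from the probabilistic definition $\mpch_{Y_L}=\rch_{Y_L}-\rch^*_{Y_L}$ with walk parameter $\rho=\rho_L$. Write $p:=\frac{u+1}{1-\rho_L}$ and let $\Phi(n,m):=\frac{(1-\rho_L)^{n}}{(v+1)^{n+1}}\bigl(\frac{-v(1-\rho_L)}{(v+1)\rho_L}\bigr)^m$, so that
\begin{equation*}
\mpch_{Y_L}(v,u)=\sum_{x}p^x\,\bbE_{G_0=x}\bigl[\Phi(G_\tau,\tau)\mathbf{1}_{\tau<N_L}-\Phi(G_\ntau,\ntau)\mathbf{1}_{\tau<N_L,\,\ntau<\infty}\bigr].
\end{equation*}
The first task is to rewrite $|p|^x$ and $|\Phi|$ in the scaled variables. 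With $v=-\rho_L+\sqrt{\rho_L(1-\rho_L)}L^{-1/2}\eta_L$ and $G_m=-mL/N_L-\rhosr L^{1/2}\,\B_L(m/N_L)$, a Taylor expansion of $\log(1+v)$ and $\log(-v)$ around $-\rho_L$ gives
\begin{equation*}
\Phi(G_m,m)\approx C L^{0}\exp\bigl\{\eta_L\B_L(m/N_L)-\tfrac12\eta_L^2 (m/N_L)+O(L^{-1/2}|\eta_L|^3 m/N_L)\bigr\},
\end{equation*}
which is the discrete analogue of the martingale $e^{\eta\B-\frac12\eta^2 t}$; similarly $p^x\approx e^{-s\xi_L}$ with $x=-\rhosr L^{1/2}s$, and the Jacobian $dx\leftrightarrow L^{1/2}ds$ produces the factor $L^{1/2}$ on the left of \eqref{eq:uniform_bd_char}. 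Rather than exact asymptotics, we need only the upper bounds $|\Phi(G,m)|\le C e^{\Re(\eta_L)\B_L}\gamma^{m/N_L}$-type estimates: the absolute value of the ratio $\frac{|v|(1-\rho_L)}{\rho_L|v+1|}$ per step combined with $(1-\rho_L)/|v+1|$ per unit of height is exactly controlled by the quantity $|v|^{N_L}|1+v|^{L-N_L}/(\rho_L^{N_L}(1-\rho_L)^{L-N_L})\le1-\delta$ over a full period, as already seen via $\gamma_v$ in the proof of Lemma~\ref{prop:mpchwelldefined}.

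Key steps. (i) On $\{\tau<N_L\}$ and on $\{\ntau<\infty\}$, the hitting condition $G>y_{m+1}$ translates to $\B_L\le \h_L(-\cdot)\le \Mx-1$, so $|e^{\eta_L\B_L}|\le e^{\Mx|\eta_L|}$ (the rounding absorbed into $\Mx$). (ii) For the $\ntau$ term, split by the period $k$ with $\ntau\in[kN_L,(k+1)N_L)$; periodicity $y_{i+kN}=y_i-kL$ makes each period contribute a factor bounded by $\gamma_v\le1-\delta$ (up to an $e^{c\Mx|\eta_L|}$ correction from the position within the period), giving a geometric series $\sum_k(1-\delta)^k\le\delta^{-1}$, the analogue of $(1-e^{-\frac12\Re\eta^2})^{-1}$. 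Within one period, bound $|\Phi|$ uniformly using the convexity of $m\mapsto \log$ of the per-step factor; the lower bound $\delta\le\gamma_v$ keeps everything from degenerating. (iii) What remains is $\sum_x |p|^x\prob_{G_0=x}(\tau<N_L)$. Split $x$: for $\B_L(0)=s\le \Mx$, sum $|p|^x$ directly, which gives $\lesssim L^{1/2}e^{\Mx|\xi_L|}/\Re(-\xi_L)$ since $|p|^x\approx e^{-s\Re\xi_L}$ with $\Re(\xi_L)<0$ (here the condition on $|u|^{N}|1+u|^{L-N}$ ensures $|u+1|<1-\rho_L$ with a quantified gap $\asymp L^{-1/2}\Re(-\xi_L)$, so the geometric sum is $\asymp L^{1/2}/\Re(-\xi_L)$). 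For $s>\Mx$, use Lemma~\ref{lm:flatstopping_tail}, $\prob(\tau<N_L)\le e^{-(s-\Mx)^2/8}$, and sum against $e^{s|\Re\xi_L|}$, giving $L^{1/2}e^{c\Mx|\xi_L|+c|\xi_L|^2}$. Adding yields \eqref{eq:uniform_bd_char}.

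The main obstacle is step (ii)/(i): turning $|\Phi(G_m,m)|$ into a clean bound $e^{\Mx|\eta_L|}$ times a per-period decay, uniformly in $L$ and for $\eta_L$ possibly of size $L^{1/2}$ (the assumption \eqref{eq:uvassumption_2} does not force $\eta_L$ bounded). I would first try to avoid Taylor expansion entirely: write $|\Phi(G_m,m)|=|v+1|^{-1}\alpha_v^{G_m}\beta_v^m$ and, using $G_m\ge y_{m+1}+1$ together with $G_m=y$-profile plus the bounded overshoot controlled by $\h_L\le\Mx-1$, reduce to $\alpha_v^{y_{m+1}}\beta_v^m$, whose product over a period is $\gamma_v$, and whose partial products within a period are bounded via $\h_L\le \Mx-1$ by $e^{c\Mx|\eta_L|}$ because $\log\alpha_v\approx -L^{-1/2}\eta_L/\sqrt{\cdot}$ at linear order.
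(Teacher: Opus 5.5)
Your proposal is correct and follows essentially the paper's proof. The paper also bounds the integrands of $\rch_{Y_L}$ and $\rch^*_{Y_L}$ pathwise at the hitting times. It uses the exact identity $\bigl|\frac{-v(1-\rho)}{(v+1)\rho}\bigr|^{m}=\gamma_v^{m/N}\bigl(\frac{1-\rho}{|1+v|}\bigr)^{m/\rho}$ together with $G_m+m/\rho=-\rhosr L^{1/2}\B_L(m/N)$ and $\B_L\le\h_L(-\cdot)\le\Mx$ to extract $e^{\Mx|\eta_L|}$ times $(1-\delta)^{\tau/N}$; this is what your last paragraph reconstructs, and it makes the Taylor heuristic unnecessary even when $|\eta_L|\sim L^{1/2}$. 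The paper then sums $(1-\delta)^k$ over periods and estimates $\sum_x|p|^x\prob_{G_0=x}(\tau_L<N)$ by a geometric sum plus the flat-hitting comparison and Lemma~\ref{lm:flatstopping_tail}. The one detail you leave implicit is that turning $|p|^{x}$ into $e^{c|\xi_L|\mathsf{x}}$ requires the lower bound $|1+u_L|\ge c_{\delta,\rho}>0$. This follows from the left inequality $\delta\le\cdots$ in \eqref{eq:uvassumption_2} for $u_L$; the lower bound on $\gamma_v$ that you invoke for $v$ is not actually needed.
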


\begin{proof}
To lighten notation, throughout this proof we write
\begin{equation*}
	u=u_L,\qquad v=v_L,\qquad \xi=\xi_L,\qquad \eta=\eta_L,
	\qquad \rho=\rho_L,\qquad \rho_0=\rhosr, \qquad N=N_L.
\end{equation*}
Recall that $\rho_L\in[\rho_-,\rho_+]$. 

We first claim that
\begin{equation}\label{eq:roots_bound}
    0<c_{\delta,\rho}\le |1+u|<1-\rho<|1+v|\le1+\rho,
    \qquad
    c_{\delta,\rho}:=
    \delta(1-\rho)\left(\frac{\rho}{2-\rho}\right)^{\frac{\rho}{1-\rho}}.
\end{equation}
Indeed, since $\Re(v)>-\rho$, $|1+v|\ge\Re(1+v)>1-\rho$. 
Thus, from  \eqref{eq:uvassumption_2}, 
\begin{equation*}
	|v|^N
	\le
	(1-\delta)\rho^N
	\frac{(1-\rho)^{L-N}}{|1+v|^{L-N}}
	<
	(1-\delta)\rho^N
	<
	\rho^N.
\end{equation*}
Hence, $|v|<\rho$, and so $|1+v|\le1+\rho$. 
For $u$, we have $|u|\ge|\Re(u)|>\rho$. Hence
\begin{equation*}
	|1+u|^{L-N}
	\le
	(1-\delta)(1-\rho)^{L-N}\frac{\rho^N}{|u|^N}
	<
	(1-\rho)^{L-N},
\end{equation*}
which implies $|1+u|<1-\rho$. 
Then $|u|\le1+|1+u|\le2-\rho$. 
Using the lower bound in \eqref{eq:uvassumption_2}, we get
\begin{equation*}
    |1+u|^{L-N}
    \ge
    \delta(1-\rho)^{L-N}\frac{\rho^N}{|u|^N}
    \ge
    \delta(1-\rho)^{L-N}
    \left(\frac{\rho}{2-\rho}\right)^N.
\end{equation*}
Since $\delta^{1/(L-N)}\ge\delta$, this gives the lower bound in \eqref{eq:roots_bound}.

Let $\tau_L$ and $\tau_L^*$ be the stopping times associated with $Y_L$ and the geometric random walk $G^{(L)}$ with parameter $\rho_L$.
By definition,
\begin{equation*}
	\mpch_{Y_L}(v,u)=\rch_{Y_L}(v,u)-\rch_{Y_L}^*(v,u),
\end{equation*}
where
\begin{equation*}
        \rch_{Y_L}(v,u)
        =
        \sum_{x\in\bbZ}
        \left(\frac{u+1}{1-\rho}\right)^x
        \bbE_{G_0^{(L)}=x}\left[
        \frac{(1-\rho)^{G_{\tau_L}^{(L)}}}{(v+1)^{G_{\tau_L}^{(L)}+1}}
        \left(\frac{-v(1-\rho)}{(v+1)\rho}\right)^{\tau_L}
        \mathbf{1}_{\tau_L<N}
        \right]
\end{equation*}
and
\begin{equation*}
\begin{split}
        \rch_{Y_L}^*(v,u)
        &=
        \sum_{x\in\bbZ}
        \left(\frac{u+1}{1-\rho}\right)^x
        \sum_{k=1}^\infty
        \bbE_{G_0^{(L)}=x}\bigg[
        \frac{(1-\rho)^{G_{\tau_L^*}^{(L)}}}{(v+1)^{G_{\tau_L^*}^{(L)}+1}}
        \left(\frac{-v(1-\rho)}{(v+1)\rho}\right)^{\tau_L^*}
        \mathbf{1}_{\tau_L<N,\ \tau_L^*\in[kN,(k+1)N)}
        \bigg].
\end{split}
\end{equation*}
By the assumption \eqref{eq:uvassumption_2} on $v$,
\begin{equation*}
\begin{aligned}
    \left|
    \frac{(1-\rho)^{G_{\tau_L}^{(L)}}}{(1+v)^{G_{\tau_L}^{(L)}+1}}
    \left(\frac{-v(1-\rho)}{(v+1)\rho}\right)^{\tau_L}
    \right|
    & \le
    \frac{(1-\rho)^{G_{\tau_L}^{(L)}+\tau_L/\rho}}
    {|1+v|^{G_{\tau_L}^{(L)}+\tau_L/\rho+1}}
    (1-\delta)^{\tau_L/N}.
\end{aligned}
\end{equation*}
Let $\B_L$ be the rescaled random walk from \eqref{eq:rescaled_rw} and 
$\btau_L$ be defined in \eqref{eq:rescaled_hittingtime}. 
Then, $\btau_L=\tau_L/N$ and
$G_{\tau_L}^{(L)}+\frac{\tau_L}{\rho}
= -\rho_0 L^{1/2}\B_L(\btau_L)$. 
Since $|1+v|\ge1-\rho$ and
$\B_L(\btau_L)\le\h_L(-\btau_L)\le\Mx-1\le\Mx$ by the periodicity of $\h_L$, 
we find that there exists $C>0$ such that, for all sufficiently large $L$,
\begin{equation*}
     \left|
     \frac{(1-\rho)^{G_{\tau_L}^{(L)}+\tau_L/\rho}}
     {(1+v)^{G_{\tau_L}^{(L)}+\tau_L/\rho+1}}
     \right|
     \le \frac1{|1+v|} \left|\frac{1+v}{1-\rho}\right|^{\rho_0 L^{1/2}\Mx}
     \le \frac1{1-\rho_+} \left|\frac{1+v}{1-\rho}\right|^{\rho_0 L^{1/2}\Mx}.
\end{equation*}
Since $|w|\le1+|w-1|\le e^{|w-1|}$ for all $w\in \bbC$, from the definition \eqref{eq:vunearo} of $\eta$,
\begin{equation*}
     \left|\frac{1+v}{1-\rho}\right|^{\rho_0  L^{1/2}\Mx}
     \le e^{\Mx |\eta |}.
\end{equation*}
Hence, using $(1-\delta)^{\tau_L/N}\le 1$, 
\begin{equation*}
	|\rch_{Y_L}(v,u)|
	\le
	\frac{e^{\Mx|\eta|}}{1-\rho_+}
	\sum_{x\in\bbZ}
	\left|\frac{1+u}{1-\rho}\right|^x
	\prob_{G_0^{(L)}=x}(\tau_L<N).
\end{equation*}
The same calculation, using $(1-\delta)^{\tau_L^*/N}\le(1-\delta)^k$ on
$\{\tau_L^*\in[kN,(k+1)N)\}$, gives
\begin{equation*}
\begin{aligned}
	|\rch_{Y_L}^*(v,u)|
	&\le
	\frac{e^{\Mx|\eta|}}{1-\rho_+}
	\sum_{k=1}^\infty(1-\delta)^k
	\sum_{x\in\bbZ}
	\left|\frac{1+u}{1-\rho}\right|^x
	\prob_{G_0^{(L)}=x}(\tau_L<N).
\end{aligned}
\end{equation*}
Thus
\begin{equation} \label{eq:char_bound3}
        |\mpch_{Y_L}(v,u)|
        \le
        C_\delta e^{\Mx|\eta|}
        \sum_{x\in\bbZ}
        \left|\frac{1+u}{1-\rho}\right|^x
        \prob_{G_0^{(L)}=x}(\tau_L<N).
\end{equation}

Recall that $y_1^{(L)}=-1$. 
If $x\ge y_1^{(L)}+1=0$, then $\tau_L=0$, and hence $\prob_{G_0^{(L)}=x}(\tau_L<N)=1$. 
Since $|1+u|< 1-\rho$, the definition \eqref{eq:vunearo} and the upper bound in \eqref{eq:uvassumption_2} imply that there is $C_1>0$ such that
\begin{equation}\label{eq:char_bound1}
	\sum_{x\ge0}
	\left|\frac{1+u}{1-\rho}\right|^x
	\prob_{G_0^{(L)}=x}(\tau_L<N)
	=
	\frac{1}{1-\frac{|1+u|}{1-\rho}}
	\le
	\frac{C_1 L^{1/2}}{\Re(-\xi)}.
\end{equation}

Consider now $x\le-1$. 
Set
\begin{equation*}
	a_L:=\rho_0  L^{1/2},\qquad
	x_L(\sfx):=\lfloor-a_L\sfx\rfloor,\qquad
	\sfx_L:= -\frac{x_L(\sfx)}{a_L}.
\end{equation*}
Then $\sfx\le\sfx_L<\sfx+a_L^{-1}$, and the initial condition $G_0^{(L)}=x_L(\sfx)$ corresponds exactly to $\B_L(0)=\sfx_L$. 
Let 
\begin{equation*}
	\bsigma_L:=\inf \big\{\XX\in\frac1{N}\bbZ_+:\B_L(\XX)\le\Mx \big\}.
\end{equation*}
Since $\B_L(\btau_L)\le \h_L(-\btau_L)\le \Mx$ on $\{\btau_L<1\}$,
we have
\begin{equation*}
	\prob_{G_0=x_L(\sfx)}(\tau_L<N)
	=
	\prob_{\B_L(0)=\sfx_L}(\btau_L<1)
	\le
	\prob_{\B_L(0)=\sfx_L}(\bsigma_L<1).
\end{equation*}
Thus, there is $C_2>0$ such that 
\begin{equation*}
\begin{aligned}
	\sum_{x\le-1}
	\left|\frac{1+u}{1-\rho}\right|^x
	\prob_{G_0^{(L)}=x}(\tau_L<N)
	&\le
	C_2 a_L
	\int_0^\infty
	\left|\frac{1+u}{1-\rho}\right|^{x_L(\sfx)}
	\prob_{\B_L(0)=\sfx_L}(\bsigma_L<1)
	\,\rd\sfx .
\end{aligned}
\end{equation*}
Since $\log\frac{1-\rho}{|1+u|}  \le \frac{1-\rho}{|1+u|} -1 \le \frac{|u+\rho|}{c_{\delta,\rho}}$ by \eqref{eq:roots_bound}, there is $C_3>0$ such that 
\begin{equation*}
	\left|\frac{1+u}{1-\rho}\right|^{x_L(\sfx)}
	\le
	C_3 e^{ \frac{|u+\rho|}{c_{\delta,\rho}} \rho_0 L^{1/2} \sfx }
	= C_3 e^{ \frac{1-\rho}{c_{\delta,\rho}}  |\xi| \sfx }.
\end{equation*}
Thus, there exist $C_4, c_1>0$ such that 
\begin{equation}\label{eq:estimate_tau}
\begin{aligned}
	\sum_{x\le-1}
	\left|\frac{1+u}{1-\rho}\right|^x
	\prob_{G_0^{(L)}=x}(\tau_L<N)  
	& \le
	C_4 L^{1/2} 
	\int_0^\infty
	e^{c_1 |\xi| \sfx}
	\prob_{\B_L(0)=\sfx_L}(\bsigma_L<1)
	\,\rd\sfx .
\end{aligned}
\end{equation}
By Lemma~\ref{lm:flatstopping_tail}, since $\sfx_L\ge\sfx$, there is $c_2>0$ such that 
\begin{equation*}
	\prob_{\B_L(0)=\sfx_L}(\bsigma_L<1)
	\le
	\mathbf{1}_{\sfx\le\Mx+1}
	+
	e^{-c_2(\sfx-\Mx)^2}\mathbf{1}_{\sfx>\Mx+1}.
\end{equation*}
Thus, we obtain from \eqref{eq:estimate_tau}
\begin{equation}\label{eq:estimate_upper}
    \sum_{x\le-1}
    \left|\frac{1+u}{1-\rho}\right|^x
    \prob_{G_0^{(L)}=x}(\tau_L<N) 
    \le
    C L^{1/2}e^{c\Mx|\xi|}
    \left(\frac{1}{|\xi|}+e^{c|\xi|^2}\right)
    \le
    C L^{1/2}e^{c\Mx|\xi|}
    \left(\frac{1}{\Re(-\xi)}+e^{c|\xi|^2}\right).
\end{equation}
Combining \eqref{eq:char_bound3}, \eqref{eq:char_bound1}, and \eqref{eq:estimate_upper}, we obtain \eqref{eq:uniform_bd_char}.
\end{proof}

\subsubsection{Pointwise convergence of the PTASEP characteristic function}
\label{sec:converge_char}

We prove the following pointwise convergence result.

\begin{lm}\label{prop:pch_convergence}
Let $\xi,\eta\in\bbC$ satisfy $\Re(\xi)<0$ and $\arg(\eta)\in(-\frac{\pi}{4},\frac{\pi}{4})$. 
Then
\begin{equation} \label{eq:pchar_convergence}
   \lim_{L\to\infty}
   L^{-1/2}\sqrt{\rho_L(1-\rho_L)}\,\mpch_{Y_L}(v_L,u_L)
   =
   \pchlim_\h(\eta,\xi)
\end{equation}
for all sequences of complex numbers $\{u_L\}$ and $\{v_L\}$ satisfying
\begin{equation}\label{eq:vunearo2}
    \lim_{L\to\infty}
    \frac{L^{1/2}(u_L+\rho_L)}{\sqrt{\rho_L(1-\rho_L)}}=\xi,
    \qquad
    \lim_{L\to\infty}
    \frac{L^{1/2}(v_L+\rho_L)}{\sqrt{\rho_L(1-\rho_L)}}=\eta.
\end{equation}
\end{lm}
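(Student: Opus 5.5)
We write $\mpch_{Y_L}=\rch_{Y_L}-\rch^*_{Y_L}$ and rewrite each summand in the rescaled variables of Lemma~\ref{lm:rw_hitting_limit}. With $a_L=\rhosr L^{1/2}$ and $x=\lfloor -a_L s\rfloor$, the walk started from $G_0^{(L)}=x$ is $\B_L$ started near $s$. As in the proof of Lemma~\ref{prop:uniform_bound_char}, we have $G^{(L)}_{\tau_L}+\tau_L/\rho_L=-a_L\B_L(\btau_L)$ and $\btau_L=\tau_L/N_L$. So the weight inside the expectation equals
\[
\frac{1}{v_L+1}\Bigl(\frac{1-\rho_L}{v_L+1}\Bigr)^{-a_L\B_L(\btau_L)}\Bigl(\frac{-v_L}{\rho_L}\Bigl(\frac{1-\rho_L}{1+v_L}\Bigr)^{(1-\rho_L)/\rho_L}\Bigr)^{N_L\btau_L}.
\]
A Taylor expansion around $v=-\rho_L$, with $v_L=-\rho_L+\eta\sqrt{\rho_L(1-\rho_L)}L^{-1/2}+o(L^{-1/2})$, should give $((1-\rho_L)/(1+v_L))^{-a_L b}\to e^{\eta b}$. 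It should also give the $N_L$-th power of the bracket tending to $e^{-\eta^2/2}$, locally uniformly in $(b,\btau)$ on compacts, with $1/(v_L+1)\to 1/(1-\rho)$. The cubic terms vanish at this scale. The prefactor $((u_L+1)/(1-\rho_L))^{x}\to e^{-s\xi}$ in the same way, and the sum over $x$ becomes $a_L\int\rd s$. After multiplying by $L^{-1/2}\sqrt{\rho_L(1-\rho_L)}$, the constants $a_L$ and $1/(1-\rho)$ combine to $1$, which accounts for the normalization in \eqref{eq:pchar_convergence}.

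The key probabilistic input is joint weak convergence of $(\btau_L,\B_L(\btau_L),\btau^*_L,\B_L(\btau^*_L))$ to the Brownian quadruple, where $\btau^*_L=\tau^*_L/N_L$. For the first pair this is Lemma~\ref{lm:rw_hitting_limit}, valid for starting points $s\ne0=\h(0)$, so for Lebesgue-a.e.\ $s$. For the second pair we apply the strong Markov property at time $1$ together with the $\uc$ hitting-time invariance principle of \cite{Matetski-Quastel-Remenik21} for the shifted boundary $\h_L(-1-\cdot)$, started from $\B_L(1)$. This is legitimate because $\B(1)\ne\h(-1)$ almost surely. The indicators $\mathbf 1_{\btau<1}$ and $\mathbf 1_{\bntau<\infty}$ are handled by noting that $\prob(\btau=1)=0$ for Brownian motion started off the boundary. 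We then obtain pointwise-in-$s$ convergence of each expectation. To do so we truncate $\btau^*_L\le K$ and use the factor $(1-\delta)^{\btau^*_L}$, i.e.\ $|e^{-\eta^2 t/2}|$ decay with $\Re\eta^2>0$, to control the tail uniformly. We also truncate $\B_L(\btau_L)$ from below: the bound $\B_L(\btau_L)\le\Mx$ holds from above, and from below the weight $e^{\Re(\eta) b}$ decays since $\Re\eta>0$.

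Finally we pass the limit through the $s$-integral by dominated convergence. The domination is exactly what the proof of Lemma~\ref{prop:uniform_bound_char} produces. For $s<0$ (the region $x\ge0$, where $\tau_L=0$) the integrand is bounded by $e^{-s\Re(\xi_L)}$ times a constant, which is integrable since $\Re\xi<0$. For $s>0$ it is bounded by $e^{c|\xi|s}\prob_{\B_L(0)=s}(\bsigma_L<1)\le e^{c|\xi|s}e^{-(s-\Mx)^2/8}$ via Lemma~\ref{lm:flatstopping_tail}. The $\rch^*$ part is dominated by the same bounds times $\sum_k(1-\delta)^k$.

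The main obstacle is the convergence of the second stopping pair and of the expectation of the unbounded functional $e^{\eta\B(\bntau)}$. The boundary is only upper semicontinuous, and $\bntau$ may take arbitrarily large values. I would first establish tightness/uniform integrability using the upper bound $\B_L\le\Mx$ at hitting times together with the geometric decay in $\bntau$. After that, the identification of the limit reduces to the invariance principle on each window $[k,k+1]$.
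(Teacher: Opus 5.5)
\textbf{There is a genuine gap: your treatment of the indicator $\mathbf 1_{\btau<1}$.} You assert that $\prob(\btau=1)=0$ for Brownian motion started off the boundary, and then take the limits of $\rch_{Y_L}$ and $\rch^*_{Y_L}$ separately. The assertion is false in general. For $\h=\h_{\mathrm{pnw}}$ and $s>0$, $\btau$ is the first integer $k\ge1$ with $\B(k)\le0$, so $\prob_{\B(0)=s}(\btau=1)=\prob_{\B(0)=s}(\B(1)\le0)>0$.

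Moreover, the prelimit indicator $\mathbf 1_{\tau_L<N_L}$ really does not converge to $\mathbf 1_{\btau<1}$. Take $\epsilon_L\to0$ with $\epsilon_LN_L\to\infty$, and set
\begin{itemize}
\item $y^{(L)}_{m+1}=-1-m$ for $0\le m<(1-\epsilon_L)N_L$,
\item $y^{(L)}_{m+1}=-1-\lceil mL/N_L\rceil$ for $(1-\epsilon_L)N_L\le m\le N_L$.
\end{itemize}
Then $\h_L(-\beta)=O(L^{-1/2})$ on $[1-\epsilon_L,1]$ and $\h_L\to\h_{\mathrm{pnw}}$ in $\uc_1$. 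Yet a walk started at $s>0$ has $\tau_L<N_L$ with probability tending to $\prob_{\B(0)=s}(\B(1)<0)$, and on that event $\btau_L\to1$ and $\B_L(\btau_L)\to\B(1)$. Consequently $L^{-1/2}\sqrt{\rho_L(1-\rho_L)}\,\rch_{Y_L}(v_L,u_L)$ converges to the first term of \eqref{eq:pch_lim} \emph{plus} $\int_0^\infty e^{-s\xi}\,\bbE_{\B(0)=s}\bigl[e^{\eta\B(1)-\frac12\eta^2}\mathbf 1_{\B(1)<0}\bigr]\,\rd s$. The term $\rch^*_{Y_L}$ picks up the same extra contribution, since there $\tau^*_L=N_L$. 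The lemma survives only because these extras cancel in the difference: on $\{\btau=1\}$ one has $\bntau=1$, so the two integrands coincide. Your argument never uses this cancellation, so the termwise limits you claim are wrong.

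\textbf{How to repair it.} You must keep the two terms together across $\btau=1$.

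\emph{Within your framework:} apply continuous mapping to the single functional $\mathbf 1_{t<1}\bigl[F(t,b)-F(t^*,b^*)\bigr]$. This functional is continuous at limit points with $t=t^*=1$ and $b=b^*$. It still requires joint convergence of your quadruple, including on $\{\btau=1\}$, which you only sketch.

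\emph{The paper's route, which avoids this entirely:}
\begin{itemize}
\item Since $\phi_v(n,G_n)$ is a martingale, optional stopping replaces $(\tau,G_\tau)$ by $(N,G_N)$ on $\{\tau<N\}$.
\item On $\{G_N>y_{N+1}\}$ one has $\tau^*=N$, so those contributions cancel identically.
\item What remains involves only endpoints $\B_L(1)>\h_L(-1)=0$, i.e.\ the kernel $\widehat T_L(\sfx,\sfy)$ with $\sfy>0$. Its convergence (Lemma~\ref{lm:T_convergence}) already disposes of hits near time $1$.
\item The $\tau^*$ part becomes a fresh hitting problem started at $\sfy>0=\h(0)$, handled by Lemma~\ref{lm:rw_hitting_limit}.
\item Domination comes from Corollary~\ref{cor:T_arbitrary_exponential}.
\item The same optional-stopping and strong-Markov manipulation in the continuum identifies the limit with \eqref{eq:pch_lim}.
\end{itemize}
This needs no four-variable joint invariance principle.

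\textbf{Minor error in your weight.} The last factor should be $\bigl(\frac{1+v_L}{1-\rho_L}\bigr)^{(1-\rho_L)/\rho_L}$, not its reciprocal. With the reciprocal, the terms linear in $v_L+\rho_L$ add instead of cancelling, and the $N_L\btau_L$-th power does not tend to $e^{-\eta^2\btau/2}$.
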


\begin{proof}
We first rewrite $\mpch_Y$. 
Let
\begin{equation}\label{eq:phi_v}
    \phi_v(k,y):=
    \left(\frac{1-\rho}{v+1}\right)^{y+1}
    \left(\frac{-v(1-\rho)}{(v+1)\rho}\right)^k.
\end{equation}
By the martingale property and the optional stopping theorem,
\begin{equation*}
\begin{aligned}
	\bbE_{G_0=x}
	\left[\phi_v(\tau,G_\tau)\mathbf{1}_{\tau<N}\right]
	&=
	\bbE_{G_0=x}
	\left[\phi_v(N,G_N)\mathbf{1}_{\tau<N}\right] \\
	&=
	\bbE_{G_0=x}
	\left[\phi_v(N,G_N)\mathbf{1}_{\tau<N,\ G_N\le y_{N+1}}\right] 
	+
	\bbE_{G_0=x}
	\left[\phi_v(N,G_N)\mathbf{1}_{\tau<N,\ G_N>y_{N+1}}\right].
\end{aligned}
\end{equation*}
On the other hand,
\begin{equation*}
\begin{aligned}
	&\bbE_{G_0=x}
	\left[\phi_v(\ntau,G_\ntau)\mathbf{1}_{\tau<N,\ \ntau<\infty}\right] \\
	&\quad =
	\bbE_{G_0=x}
	\left[\phi_v(\ntau,G_\ntau)\mathbf{1}_{\tau<N,\ \ntau<\infty,\ G_N\le y_{N+1}}\right] 
	+
	\bbE_{G_0=x}
	\left[\phi_v(\ntau,G_\ntau)\mathbf{1}_{\tau<N,\ \ntau<\infty,\ G_N>y_{N+1}}\right].
\end{aligned}
\end{equation*}
On the event $\{G_N>y_{N+1}\}$, we have $\ntau=N$. 
Thus the last terms of the above two displayed equations are the same. 
Taking the difference and summing over $x$ with the prefactor in Definition~\ref{def:chardef2}, we obtain
\begin{equation*}
\begin{aligned}
	\mpch_Y(v,u)
	&=
	\frac1{1-\rho}
	\sum_{x\in\bbZ}
	\left(\frac{u+1}{1-\rho}\right)^x
	\bbE_{G_0=x}
	\left[\phi_v(N,G_N)\mathbf{1}_{\tau<N,\ G_N\le y_{N+1}}\right] \\
	&\quad -
	\frac1{1-\rho}
	\sum_{x\in\bbZ}
	\left(\frac{u+1}{1-\rho}\right)^x
	\bbE_{G_0=x}
	\left[\phi_v(\ntau,G_\ntau)
	\mathbf{1}_{\tau<N,\ \ntau<\infty,\ G_N\le y_{N+1}}\right].
\end{aligned}
\end{equation*}
Conditioning on the possible values of $G_N$, and using $y_{N+1}=y_1-L$, we find 
\begin{equation*}
	\bbE_{G_0=x}
	\left[\phi_v(N,G_N)\mathbf{1}_{\tau<N,\ G_N\le y_{N+1}}\right] 
	=
	\sum_{y\le y_1+1}
	\phi_v(N,-L+y-1)
	\prob_{G_0=x}
	\left(G_N=-L+y-1,\ \tau<N\right).
\end{equation*}
Similarly, by the Markov property and periodicity,
\begin{equation*}
\begin{aligned}
	&\bbE_{G_0=x}
	\left[\phi_v(\ntau,G_\ntau)
	\mathbf{1}_{\tau<N,\ \ntau<\infty,\ G_N\le y_{N+1}}\right] \\
	&\quad =
	\sum_{y\le y_1+1}
	\prob_{G_0=x}
	\left(G_N=-L+y-1,\ \tau<N\right)
	\bbE_{\widehat G_0=y-1}
	\left[
	\phi_v(N+\widehat\tau,-L+\widehat G_{\widehat\tau})
	\mathbf{1}_{\widehat\tau<\infty}
	\right],
\end{aligned}
\end{equation*}
where $\widehat G$ is an independent geometric random walk started from $y-1$, and
\begin{equation*}
	\widehat\tau:=\min\{k\ge0:\widehat G_k>y_{k+1}\}.
\end{equation*}
Thus, inserting the formula \eqref{eq:phi_v}, we obtain
\begin{equation} \label{eq:pchysuma22}
    (1-\rho)\mpch_Y(v,u)
    =
    \frac{v^N(1+v)^{L-N}}{(-\rho)^N(1-\rho)^{L-N}}
    \left[\mpch_Y^{<}(v,u)-\mpch_Y^{>}(v,u)\right],
\end{equation}
where
\begin{equation*}
\begin{aligned}
    \mpch_Y^{<}(v,u)
    &:=
    \sum_{x\in\bbZ}\sum_{y\le y_1+1}
    \left(\frac{u+1}{1-\rho}\right)^x
    \left(\frac{1-\rho}{v+1}\right)^y
    \prob_{G_0=x}
    \left(G_N=-L+y-1,\ \tau<N\right)
\end{aligned}
\end{equation*}
and
\begin{equation*}
\begin{aligned}
    \mpch_Y^{>}(v,u)
    &:=
    \sum_{x\in\bbZ}\sum_{y\le y_1+1}
    \left(\frac{u+1}{1-\rho}\right)^x
    \prob_{G_0=x}
    \left(G_N=-L+y-1,\ \tau<N\right) 
    \bbE_{\widehat G_0=y-1}
    \left[
    \phi_v(\widehat\tau,\widehat G_{\widehat\tau})
    \mathbf{1}_{\widehat\tau<\infty}
    \right].
\end{aligned}
\end{equation*}

We now evaluate the above when $Y=Y_L$, $u=u_L$, $v=v_L$, $\rho=\rho_L$, and $N=N_L$, and take the limit as $L\to\infty$. 
From the assumption \eqref{eq:vunearo2},
\begin{equation*}
    \lim_{L\to\infty}
    \frac{v_L^{N_L}(1+v_L)^{L-N_L}}
    {(-\rho_L)^{N_L}(1-\rho_L)^{L-N_L}}
    =
    \re^{-\frac12\eta^2}.
\end{equation*}

Set
\begin{equation*}
	a_L:=\rhosr L^{1/2},\qquad
	x_L(\sfx):=\lfloor-a_L\sfx\rfloor,\qquad
	y_L(\sfy):=\lfloor-a_L\sfy\rfloor+1,
\end{equation*}
and
\begin{equation*}
	\sfx_L:=-\frac{x_L(\sfx)}{a_L} = -\frac{\lfloor-a_L\sfx\rfloor}{a_L},
	\qquad
	\sfy_L:=-\frac{y_L(\sfy)-1}{a_L} = -\frac{\lfloor-a_L\sfy\rfloor}{a_L}.
\end{equation*}
Let $\B_L$ be the rescaled random walk from \eqref{eq:rescaled_rw}, let $\btau_L$ be as in \eqref{eq:rescaled_hittingtime}, and let 
$\widehat{\B}_L$ be an independent copy of $\B_L$ started from $\sfy_L$, with $\widehat{\btau}_L$ denoting its hitting time.

Writing the series as Riemann sums, we obtain
\begin{equation} \label{eq:pchgrthn0}
\begin{aligned}
    \mpch_{Y_L}^{<}(v_L,u_L)
    &=
    a_L^2
    \int_\bbR \rd\sfx
    \int_{\sfy>\h_L(-1)}\rd\sfy\,
    \left(\frac{u_L+1}{1-\rho_L}\right)^{x_L(\sfx)}
    \left(\frac{1-\rho_L}{v_L+1}\right)^{y_L(\sfy)} \\
    &\qquad\qquad \times
    \prob_{\B_L(0)=\sfx_L}
    \left(\B_L(1)=\sfy_L,\ \btau_L<1\right),
\end{aligned}
\end{equation}
and
\begin{equation} \label{eq:pchgrthn}
\begin{aligned}
    \mpch_{Y_L}^{>}(v_L,u_L)
    &=
    a_L^2
    \int_\bbR \rd\sfx
    \int_{\sfy>\h_L(-1)}\rd\sfy\,
    \left(\frac{u_L+1}{1-\rho_L}\right)^{x_L(\sfx)}
    \prob_{\B_L(0)=\sfx_L}
    \left(\B_L(1)=\sfy_L,\ \btau_L<1\right) \\
    &\qquad\qquad \times
    \bbE_{\widehat{\B}_L(0)=\sfy_L}
    \left[
    \psi_L(\widehat{\btau}_L,\widehat{\B}_L(\widehat{\btau}_L))
    \mathbf{1}_{\widehat{\btau}_L<\infty}
    \right].
\end{aligned}
\end{equation}
Here, since $y_1^{(L)}=-1$, the condition $y\le y_1^{(L)}+1=0$ corresponds to $\sfy>\h_L(-1)=0$ in the Riemann-sum notation above, up to endpoints of mesh size $a_L^{-1}$. 
The function inside the expectation in \eqref{eq:pchgrthn} is 
\begin{equation*}
    \psi_L(s,\sfz)
    :=
    \left(\frac{1-\rho_L}{1+v_L}\right)^{
    \left\lfloor -\rhosr L^{1/2}\sfz-\frac{L}{N_L}\lfloor N_Ls\rfloor\right\rfloor+1}
    \left(\frac{-v_L(1-\rho_L)}{(v_L+1)\rho_L}\right)^{\lfloor N_Ls\rfloor}.
\end{equation*}
Equivalently,
\begin{equation*}
    \psi_L(s,\sfz)
    =
    \left(\frac{1-\rho_L}{1+v_L}\right)^{-\rhosr L^{1/2}\sfz+O(1)}
    \left(
    \frac{v_L^{N_L}(1+v_L)^{L-N_L}}
    {(-\rho_L)^{N_L}(1-\rho_L)^{L-N_L}}
    \right)^{s+O(L^{-1})},
\end{equation*}
where the $O(1)$ error in the first exponent and the $O(L^{-1})$ error in the second exponent are uniform in $s,\sfz$. 
Consequently, $\psi_L(s,\sfz)\to \re^{\eta\sfz-\frac12\eta^2s}$ uniformly on compact subsets of $\bbR_+\times\bbR$.

For every $\sfx,\sfy\in\bbR$, \eqref{eq:vunearo2} implies
\begin{equation*}
    \lim_{L\to\infty}
    \left(\frac{u_L+1}{1-\rho_L}\right)^{x_L(\sfx)}
    =
    \re^{-\xi\sfx},
    \qquad
    \lim_{L\to\infty}
    \left(\frac{1-\rho_L}{v_L+1}\right)^{y_L(\sfy)}
    =
    \re^{\eta\sfy}.
\end{equation*}
By Lemma~\ref{lm:T_convergence},
\begin{equation*}
    \lim_{L\to\infty}
    a_L\,
    \prob_{\B_L(0)=\sfx_L}
    \left(\B_L(1)=\sfy_L,\ \btau_L<1\right)
    =
    \frac{\prob_{\B(0)=\sfx}(\B(1)\in \rd\sfy,\ \btau<1)}{\rd\sfy}
\end{equation*}
for all $\sfx\neq0$ and $\sfy>\h(-1)$. 
The exceptional set $\{\sfx=0\}$ has Lebesgue measure zero and is irrelevant for the dominated convergence argument below.

Moreover,
\begin{equation*}
  \lim_{L\to\infty}
  \bbE_{\widehat{\B}_L(0)=\sfy_L}
  \left[
  \psi_L(\widehat{\btau}_L,\widehat{\B}_L(\widehat{\btau}_L))
  \mathbf{1}_{\widehat{\btau}_L<\infty}
  \right]
  =
  \bbE_{\widehat{\B}(0)=\sfy}
  \left[
  \re^{\eta\widehat{\B}(\widehat{\btau})-\frac12\eta^2\widehat{\btau}}
  \mathbf{1}_{\widehat{\btau}<\infty}
  \right]
\end{equation*}
by Lemma~\ref{lm:rw_hitting_limit}, the compact-uniform convergence of $\psi_L$, and uniform integrability. The latter follows from the estimates in the proof of Lemma~\ref{prop:uniform_bound_char}: on the hitting event, the spatial factor is uniformly bounded by $Ce^{c\Mx|\eta|}$, while the time factor decays geometrically because $\Re(\eta^2)>0$.

Equation \eqref{eq:TY} and Corollary~\ref{cor:T_arbitrary_exponential} imply 
that the integrands of \eqref{eq:pchgrthn0} and \eqref{eq:pchgrthn} are bounded, for all sufficiently large $L$, by an integrable function on $\bbR\times\bbR_+$. 
Thus by the dominated convergence theorem, 
\begin{equation*}
    \lim_{L\to\infty}
    \frac{1-\rho_L}{a_L}\mpch_{Y_L}(v_L,u_L)
    =
    \pchlim_\h^{<}(\eta,\xi)-\pchlim_\h^{>}(\eta,\xi),
\end{equation*}
where
\begin{equation*}
	\pchlim_\h^{<}(\eta,\xi)
	:=
	\re^{-\frac12\eta^2}
	\int_\bbR \rd\sfx
	\int_{\sfy>\h(-1)}\rd\sfy\,
	\re^{\eta\sfy-\xi\sfx}
	\frac{\prob_{\B(0)=\sfx}(\B(1)\in\rd\sfy,\ \btau<1)}{\rd\sfy}
\end{equation*}
and
\begin{equation} \label{eq:pclimlimi}
\begin{aligned}
    \pchlim_\h^{>}(\eta,\xi)
    &:=
    \re^{-\frac12\eta^2}
    \int_\bbR \re^{-\sfx\xi}\,\rd\sfx
    \int_{\sfy>\h(-1)}\rd\sfy\,
    \frac{\prob_{\B(0)=\sfx}(\B(1)\in\rd\sfy,\ \btau<1)}{\rd\sfy} \\
    &\qquad\qquad \times
    \bbE_{\widehat{\B}(0)=\sfy}
    \left[
    \re^{\eta\widehat{\B}(\widehat{\btau})-\frac12\eta^2\widehat{\btau}}
    \mathbf{1}_{\widehat{\btau}<\infty}
    \right].
\end{aligned}
\end{equation}

It remains to identify this difference with $\pchlim_\h(\eta,\xi)$. 
In the definition \eqref{eq:pch_lim} of $\pchlim_\h(\eta,\xi)$, the contribution from the event 
$\{\btau<1,\ \B(1)\le \h(-1)\}$ cancels between the two terms, since on this event $\bntau=1$. 
Using the optional stopping theorem for the exponential martingale on the remaining part, we get
\begin{equation*}
\begin{aligned}
    \pchlim_\h(\eta,\xi)
    &=
    \int_\bbR \rd s\, e^{-s\xi}
    \bbE_{\B(0)=s}
    \left[
    e^{\eta\B(1)-\frac12\eta^2}
    \mathbf{1}_{\btau<1,\ \B(1)>\h(-1)}
    \right] \\
    &\quad -
    \int_\bbR \rd s\, e^{-s\xi}
    \bbE_{\B(0)=s}
    \left[
    e^{\eta\B(\bntau)-\frac12\eta^2\bntau}
    \mathbf{1}_{\btau<1,\ \bntau<\infty,\ \B(1)>\h(-1)}
    \right].
\end{aligned}
\end{equation*}
The first integral is $\pchlim_\h^{<}(\eta,\xi)$. 
For the second integral, use the strong Markov property at time $1$ on the event
$\{\btau<1,\ \B(1)>\h(-1)\}$. 
Since $\h$ is $1$-periodic, the shifted post-time-$1$ hitting problem has the same boundary $\h(-\cdot)$, and the second integral equals
\begin{equation*}
\begin{aligned}
	\re^{-\frac12\eta^2}
	\int_\bbR e^{-\sfx\xi}\,\rd\sfx
	\int_{\sfy>\h(-1)}
	\frac{\prob_{\B(0)=\sfx}(\B(1)\in\rd\sfy,\ \btau<1)}{\rd\sfy}
	\bbE_{\widehat{\B}(0)=\sfy}
	\left[
	e^{\eta\widehat{\B}(\widehat{\btau})-\frac12\eta^2\widehat{\btau}}
	\mathbf{1}_{\widehat{\btau}<\infty}
	\right]\rd\sfy
	=
	\pchlim_\h^{>}(\eta,\xi).
\end{aligned}
\end{equation*}
Thus $\pchlim_\h(\eta,\xi)=\pchlim_\h^{<}(\eta,\xi)-\pchlim_\h^{>}(\eta,\xi)$. 
Finally, since $\frac{1-\rho_L}{a_L} = L^{-1/2}\sqrt{\rho_L(1-\rho_L)}$, 
the convergence \eqref{eq:pchar_convergence} follows.
\end{proof}

\subsection{Proof of Corollary~\ref{cor:thm13geny1}}
\label{sec:completionthm12}

We now combine the preceding convergence results with the initial-condition-independent asymptotics from~\cite{Baik-Liu16,Baik-Liu19,Baik-Liu21} to prove Propositions~\ref{prop:CYconvergence} and~\ref{prop:DYconvergence00}.

\subsubsection{Proof of Proposition~\ref{prop:CYconvergence}} 
\label{sec:CYconvergence}

\begin{proof}[Proof of Proposition~\ref{prop:CYconvergence}] 
In the formula \eqref{eq:CPTASEP} for $\mathscr{C}_Y(\bmz)$, the only term that depends on the initial condition is the energy function $\cE_Y(z_1)$. 
Thus, using \eqref{eq:nef}, for any other initial condition $Y'$ with the same $N$ and $L$,
\begin{equation*}
\begin{aligned}
	\mathscr{C}_Y(\bz)
	=
	\frac{\cE_Y(z_1)}{\cE_{Y'}(z_1)}\mathscr{C}_{Y'}(\bz)
	=
	\frac{\ncE_Y(z_1)}{\ncE_{Y'}(z_1)}\mathscr{C}_{Y'}(\bz).
\end{aligned}
\end{equation*}
Let $Y'$ be the periodic step initial condition, which we denote by $\mathrm{step}_L$. 
For this initial condition, it was proved in~\cite[(6.20)]{Baik-Liu19} that
\begin{equation*}
	\lim_{L\to\infty}\mathscr{C}_{\mathrm{step}_L}(\bz)
	=
	e^{2B(\rz_1)}\rC(\bz),
\end{equation*}
where $\rC(\bz)$ is defined in \eqref{eq:Climnoh}. 
Thus, by Proposition~\ref{prop:energy_convergence},
\begin{equation*}
\begin{aligned}
	\lim_{L\to\infty}\mathscr{C}_{Y_L}(\bz)
	&=
	\frac{
	\det(\rI+\limKe_\h(\rz_1))_{L^2(\bbR)}
	}{
	\det(\rI+\limKe_{\mathrm{pnw}}(\rz_1))_{L^2(\bbR)}
	}
	e^{2B(\rz_1)}\rC(\bz),
\end{aligned}
\end{equation*}
where $\mathrm{pnw}$ denotes the periodic narrow-wedge initial condition. 
The Fredholm determinant in the denominator can be evaluated explicitly. 
In Proposition~\ref{prop:pnw22} below, we show that
\begin{equation*}
	\det(\rI+\limKe_{\mathrm{pnw}}(\rz))_{L^2(\bbR)}
	=
	e^{2B(\rz)}.
\end{equation*}
Therefore,
\begin{equation*}
\begin{aligned}
	\lim_{L\to\infty}\mathscr{C}_{Y_L}(\bz)
	&=
	\det(\rI+\limKe_\h(\rz_1))_{L^2(\bbR)}\rC(\bz)
	=
	\rC_\h(\bz).
\end{aligned}
\end{equation*}
Alternatively, the analysis of~\cite[(6.20)]{Baik-Liu19} shows directly that $\mathscr{C}_{\mathrm{step}_L}(\bz)/\ncE_{\mathrm{step}_L}(z_1)\to\rC(\bz)$, which yields the same conclusion.  
This proves Proposition~\ref{prop:CYconvergence}. 
\end{proof}

\subsubsection{Proof of Proposition~\ref{prop:DYconvergence00}}
\label{sec:convergence_scrD_Y}

From Definition~\ref{def:Dz},
\begin{equation} \label{eq:Dzhere}
	\mathscr{D}_{Y}(\bm{z})
	=
	\sum_{\mathbf{n}\in(\bbZ_{\geq0})^m}
	\frac{(-1)^{n_1+\cdots+n_m}}{(n_1!\cdots n_m!)^2}
	\mathscr{D}^{(\mathbf{n})}_{Y}(\bm{z}), 
	\qquad
	\mathscr{D}^{(\mathbf{n})}_{Y}(\bm{z})
	:=
	\sum_{\substack{
    U^{(\ell)}\in(\cL_{z_\ell})^{n_\ell},\ V^{(\ell)}\in(\cR_{z_\ell})^{n_\ell}\\
    1\le\ell\le m}}
	d_Y^{(\mathbf{n})}(\bm{z};U,V),
\end{equation}
where
\begin{equation}\label{eq:dYform}
	d_Y^{(\mathbf{n})}(\bm{z};U,V)
	=
	\det\left[
	\mdpch_{Y}(v^{(1)}_i,u^{(1)}_j;z_1)
	\right]_{1\le i,j\le n_1}
	d^{(\mathbf{n})}(\bm{z};U,V).
\end{equation}
The formula involves sums over the Bethe roots $\cL_{z_\ell}$ and $\cR_{z_\ell}$, the roots of
\begin{equation} \label{eq:brt}
	w^N(w+1)^{L-N}=z_\ell^L.
\end{equation}
When
\begin{equation*}
	z_\ell^L=(-\rho_L)^{N_L}(1-\rho_L)^{L-N_L}\rz_\ell
\end{equation*}
as in \eqref{eq:zL_to_rz}, and
\begin{equation} \label{eq:chov}
	w=-\rho_L+\sqrt{\rho_L(1-\rho_L)}\,\frac{\zeta}{L^{1/2}},
\end{equation}
with $\zeta=O(1)$, the Bethe-root equation \eqref{eq:brt} converges to
\begin{equation*}
	e^{-\frac12\zeta^2}=\rz_\ell.
\end{equation*}
The limiting roots split into $\rL_{\rz_\ell}$ and $\rR_{\rz_\ell}$. 
More precisely, for every $0<\epsilon<1/8$, the Bethe roots in the $O(L^{-1/2+\epsilon})$-neighborhood of $-\rho_L$ converge, under the scaling \eqref{eq:chov}, to the roots in $\rL_{\rz_\ell}$ and $\rR_{\rz_\ell}$; see~\cite[Lemma~8.1]{Baik-Liu18}. 
Thus, to prove Proposition~\ref{prop:DYconvergence00}, it remains to establish the pointwise limit and a tail bound under the scaling \eqref{eq:chov} for the summation variables.

For the periodic step initial condition, the pointwise limit and the tail estimates were obtained in~\cite{Baik-Liu19}. 
For a general initial condition, the only change is that the determinant
\begin{equation*}
	\det\left[
	\mdpch_{Y}(v^{(1)}_i,u^{(1)}_j;z_1)
	\right]_{1\le i,j\le n_1}
\end{equation*}
replaces the corresponding determinant for the periodic step initial condition. 
By Lemma~\ref{prop:pch_convergence}, the proof of the pointwise convergence is unchanged from the periodic step case.

The tail estimate requires one additional modification. 
For the periodic step initial condition, the modified PTASEP characteristic function is uniformly bounded. 
For general initial conditions, we instead have the bound from Lemma~\ref{prop:uniform_bound_char}, which grows super-exponentially as the argument tends to infinity. 
This growth is compensated by the decay of the factor $f_\ell$ with $\ell=1$ in $d^{(\mathbf{n})}(\bm{z};U,V)$. 
Lemma~\ref{lm:main_decay} below shows that this factor decays fast enough so that its product with the modified PTASEP characteristic function still satisfies the required tail estimate.

In~\cite{Baik-Liu19}, only a uniform bound on $f_\ell$ was stated, since this was sufficient for the periodic step initial condition. 
Lemma~\ref{lm:main_decay} is the strengthened estimate needed here. 
All remaining factors in $d^{(\mathbf{n})}(\bm{z};U,V)$ are independent of the initial condition and were already estimated in~\cite{Baik-Liu19}, as well as in~\cite{Baik-Liu18,Baik-Liu21}. 
After proving Lemma~\ref{lm:main_decay}, we complete the proof of Proposition~\ref{prop:DYconvergence00}.

We first state the improved estimate for $f_\ell$. 
We only need the result for $\ell=1$, but we state it for general $\ell$, since the analysis is the same. 
For $\ell=1$, we have $\TT_1>\TT_0=0$, and hence the estimate \eqref{eq:bound_generic} applies. 
We also recall that, in the notation of the next result, it was shown in~\cite[Lemma~8.8]{Baik-Liu18} that, with $\mathrm{f}_\ell$ defined in \eqref{eq:f_lim},
\begin{equation} \label{eq:f_asymptotics}
	\frac{f_\ell(u_L)}{Q_{\ell,L}}
	=
	\mathrm{f}_\ell(\xi_L)
	\left(1+O(L^{-1/2+\epsilon})\right),
	\qquad
	Q_{\ell,L} f_\ell(v_L)
	=
	\mathrm{f}_\ell(\eta_L)
	\left(1+O(L^{-1/2+\epsilon})\right),
\end{equation}
provided that $\xi_L,\eta_L=O(L^\epsilon)$ for some $\epsilon>0$. 
The estimates \eqref{eq:bound_generic} and \eqref{eq:bound_degenerate} are consistent with the tail behavior of the limiting formula for $\mathrm{f}_\ell$.

\begin{lm}\label{lm:main_decay} 
Let $\delta\in(0,1/2)$. 
Let $\{u_L\}$ and $\{v_L\}$ be sequences of complex numbers satisfying the assumptions of Lemma~\ref{prop:uniform_bound_char}; that is,
\begin{equation}\label{eq:uvassumption_4}
     \Re(u_L)<-\rho_L<\Re(v_L),
     \qquad
     \delta\le
     \frac{|u_L|^{N_L}|1+u_L|^{L-N_L}}
     {\rho_L^{N_L}(1-\rho_L)^{L-N_L}}
     \le 1-\delta,
     \qquad
     \delta\le
     \frac{|v_L|^{N_L}|1+v_L|^{L-N_L}}
     {\rho_L^{N_L}(1-\rho_L)^{L-N_L}}
     \le 1-\delta
\end{equation}
for all sufficiently large $L$. 
Define
\begin{equation*}
    \xi_L:=
    \frac{L^{1/2}(u_L+\rho_L)}{\sqrt{\rho_L(1-\rho_L)}},
    \qquad
    \eta_L:=
    \frac{L^{1/2}(v_L+\rho_L)}{\sqrt{\rho_L(1-\rho_L)}}.
\end{equation*}
For each $1\le \ell\le m$, set
\begin{equation*}
    Q_{\ell,L}
    :=
    \frac{
    (-\rho_L)^{k_\ell}(1-\rho_L)^{a_{\ell-1}+k_{\ell-1}}e^{-t_\ell\rho_L}
    }{
    (-\rho_L)^{k_{\ell-1}}(1-\rho_L)^{a_\ell+k_\ell}e^{-t_{\ell-1}\rho_L}
    }.
\end{equation*}
Let $f_\ell$ be the function in \eqref{eq:f}. 
For each $\ell$, the following hold.
\begin{enumerate}[(i)]
    \item If $\TT_{\ell-1}<\TT_\ell$, then there exist constants $C,c>0$ such that
    \begin{equation}\label{eq:bound_generic}
        \left|\frac{f_\ell(u_L)}{Q_{\ell,L}}\right|
        \le
        C e^{-c |\xi_L|^3},
        \qquad
        \left|Q_{\ell,L}f_\ell(v_L)\right|
        \le
        C e^{-c |\eta_L|^3}
    \end{equation}
    for all sufficiently large $L$, uniformly over all such sequences $\{u_L\}$ and $\{v_L\}$.

    \item If $\TT_{\ell-1}=\TT_\ell$ and $\HH_{\ell-1}<\HH_\ell$, then there exist constants $C,c>0$ such that
    \begin{equation} \label{eq:bound_degenerate}
        \left|\frac{f_\ell(u_L)}{Q_{\ell,L}}\right|
        \le
        C e^{-c |\xi_L|},
        \qquad
        \left|Q_{\ell,L}f_\ell(v_L)\right|
        \le
        C e^{-c |\eta_L|}
    \end{equation}
    for all sufficiently large $L$, uniformly over all such sequences $\{u_L\}$ and $\{v_L\}$.
\end{enumerate}
\end{lm}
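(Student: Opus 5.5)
Since $f_\ell(u)/Q_{\ell,L}$ is an explicit product of powers and an exponential, I would write it as $\exp\{L\,\Phi_L(u)\}$ and estimate the real part of the exponent directly. For $\Re(w)<-\rho_L$,
\begin{equation*}
	\log\frac{f_\ell(w)}{Q_{\ell,L}}
	=
	\Delta k\,\log\frac{w}{-\rho_L}
	-(\Delta a+\Delta k)\log\frac{w+1}{1-\rho_L}
	+\Delta t\,(w+\rho_L),
\end{equation*}
where $\Delta k=k_\ell-k_{\ell-1}$, $\Delta a=a_\ell-a_{\ell-1}$ and $\Delta t=t_\ell-t_{\ell-1}$. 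By \eqref{eq:tkii}, these increments are $\Delta t\sim(\TT_\ell-\TT_{\ell-1})L^{3/2}$, $\Delta k=\rho_L^2\Delta t-(\XX_\ell-\XX_{\ell-1})N_L+O(1)$, and $\Delta a+\Delta k=\rho_L(1-\rho_L)\Delta t+O(L)+O(L^{1/2})$. The function $w\mapsto\rho^2\log\frac{w}{-\rho}-\rho(1-\rho)\log\frac{w+1}{1-\rho}+(w+\rho)$ has a critical point of order two at $w=-\rho$, and its Taylor expansion there gives the cubic $\mp\frac13\zeta^3$ term of \eqref{eq:f_lim}. The $O(L)$ part of the exponent is, up to $O(1)$, a multiple of $\log\bigl(w^{N}(w+1)^{L-N}/(\rho^N(1-\rho)^{L-N})\bigr)$ plus a bounded correction. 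Its real part is bounded by \eqref{eq:uvassumption_4}, so it contributes only a constant. The $O(L^{1/2})$ height part gives $\Re\bigl(\HH\text{-term}\bigr)\lesssim|\xi|$.

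On the scale $\xi_L=O(L^{\epsilon})$, \eqref{eq:f_asymptotics} reduces the claim to the limit: for $\zeta\in\rL_\rz$ with $|\rz|\in[\delta,1-\delta]$, $\zeta$ lies near the asymptotes $\arg\zeta=\pm3\pi/4$, so $\Re(\zeta^3)\ge c|\zeta|^3$ and $|\mathrm f_\ell(\zeta)|\le Ce^{-c|\zeta|^3}$. Away from that scale, I would parametrize the left level curves $\{|w|^{\rho}|w+1|^{1-\rho}=\text{const}\in[\delta^{1/L},1]\cdot\bfr\}$ and show that $\Re\Phi(w):=\Re\bigl(\rho^2\log\frac{w}{-\rho}-\rho(1-\rho)\log\frac{w+1}{1-\rho}+w+\rho\bigr)$ is strictly decreasing along each curve as $w$ moves away from $-\rho$. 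The identity $\frac{d}{dw}\Phi=\frac{(w+\rho)^2}{w(w+1)}$ makes this a steepest-descent monotonicity check, as in \cite{Baik-Liu16,Baik-Liu18}. On these curves $\Re\Phi(w)\le -c\min(|w+\rho|^3,1)$. Multiplying by $\Delta t\sim L^{3/2}$ gives $-c\min(|\xi|^3,L^{3/2})$. Because the left curve is compact and $|\xi_L|\le CL^{1/2}$ on it, $L^{3/2}\ge c|\xi|^3$, which yields the cubic decay globally. The linear $\HH$ term is absorbed, since $|\xi|\le\epsilon|\xi|^3+C_\epsilon$. The bound for $Q_{\ell,L}f_\ell(v_L)$ on the right curve follows by the same argument with the signs reversed.

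For case (ii), $\Delta t=0$, and only the $\XX$- and $\HH$-increments remain, with $\Delta a\approx-(\HH_\ell-\HH_{\ell-1})\sqrt{(1-\rho)/\rho}\,L^{1/2}$ from \eqref{eq:tkii}. The $\XX$-part is bounded by \eqref{eq:uvassumption_4}, so the exponent reduces to $\Delta\HH\,\rho_0L^{1/2}\log\frac{|w+1|}{1-\rho}$ up to $O(1)$. On the left curve $|w+1|<1-\rho$, and $\log\frac{1-\rho}{|w+1|}\ge c\min(|w+\rho|,1)$ by \eqref{eq:roots_bound} and a convexity argument. This gives $e^{-c\min(|\xi|,L^{1/2})}\le Ce^{-c'|\xi|}$. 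The right curve uses $|w+1|>1-\rho$ in the same way.

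I expect the main obstacle to be the global monotonicity of $\Re\Phi$ along the left and right level curves for the whole range $|\rz|\in[\delta,1-\delta]$. This must hold uniformly in $\rho_L\in[\rho_-,\rho_+]$, and the floor functions introduce $O(1)$ errors that need tracking.
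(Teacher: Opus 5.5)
Your plan has the same architecture as the paper's proof:
\begin{itemize}
\item write $f_\ell(u)/Q_{\ell,L}$ as an exponential;
\item note that the real part of the $\Delta\XX$-term is exactly $-\Delta\XX\log\bigl(|u|^N|1+u|^{L-N}/(\rho^N(1-\rho)^{L-N})\bigr)$, so it is bounded by \eqref{eq:uvassumption_4};
\item get cubic decay from the degenerate critical point at $-\rho$, and handle the region away from $-\rho$ by the sign of $\Re\Phi$ on the level curve;
\item in case (ii), use only the term $\rho_0L^{1/2}\Delta\HH\log\frac{|1+u|}{1-\rho}$, where $\rho_0=\sqrt{(1-\rho)/\rho}$.
\end{itemize}
However, two of your intermediate claims are wrong as written.

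\textbf{The coefficient of $\Delta t$.} In fact $\Delta a+\Delta k=(1-\rho)^2\Delta t+(L-N)\Delta\XX-\rho_0L^{1/2}\Delta\HH+O(1)$, not $\rho(1-\rho)\Delta t+\cdots$. With your coefficient the derivative at $w=-\rho$ equals $1-2\rho$, so there is no critical point there. Your formula $\Phi'=(w+\rho)^2/(w(w+1))$ holds only for $\Phi=\rho^2\log\frac{w}{-\rho}-(1-\rho)^2\log\frac{w+1}{1-\rho}+w+\rho$, which is the paper's $\sqrt{\rho(1-\rho)}\,g_3$.

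\textbf{The bound $\Re\Phi(w)\le -c\min(|w+\rho|^3,1)$ is false near $-\rho$.} The admissible set contains real points $w=-\rho-x$ with $x\asymp L^{-1/2}$ (real $\xi_L<0$ with $\xi_L^2$ of order one). At such points $\Phi(w)=\int_w^{-\rho}\frac{(y+\rho)^2}{|y(y+1)|}\,\rd y>0$. For the same reason, $\Re(\zeta^3)\ge c|\zeta|^3$ fails for bounded $\zeta$; for example, the real root $-\sqrt{2\log(1/\rz)}\in\rL_\rz$ when $\rz\in(0,1)$.
\begin{itemize}
\item You also do not know that $\xi_L\in\rL_\rz$. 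You only know that $\Re\xi_L<0$ and that $\Re(\xi_L^2)$ is bounded up to $O(L^{-1/2}|\xi_L|^3)$.
\item From this the paper derives the correct statement $\Re(\xi^3)\ge\frac18|\xi|^3-C'_\delta$, i.e.\ $\Re g_3\le -c_3L^{-3/2}|\xi|^3+c_4L^{-3/2}$. The additive error only enlarges $C$, so the lemma survives, but you need this form.
\item In case (ii), the quantitative lower bound on $\log\frac{1-\rho}{|1+u|}$ near $-\rho$ comes from the same $\Re(\xi^2)$ bound, via $\Re\xi\le C-c|\xi|$. \eqref{eq:roots_bound} only gives a strict inequality.
\end{itemize}

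\textbf{The step you flag as the main obstacle.} The paper avoids monotonicity along a family of curves.
\begin{itemize}
\item Instead of \eqref{eq:f_asymptotics}, which needs $|\xi|=O(L^{\epsilon})$, it Taylor-expands $g_3$ for all $|\xi|\le\varepsilon L^{1/2}$. The remainder $L^{3/2}\cdot O(L^{-2}|\xi|^4)\le C\varepsilon|\xi|^3$ is absorbed into the cubic term.
\item For $|\xi|\ge\varepsilon L^{1/2}$, every admissible $u$ lies within $O(L^{-1})$ of the single critical curve $\Sigma_\rho$. On $\Sigma_\rho$ one has $|1+u|=(1-\rho)e^{-\rho s/(1-\rho)}$ with $s=\log(|u|/\rho)\ge0$, and $\Re u=\frac12(|1+u|^2-|u|^2-1)$.
\item Hence $\Phi=F(s)$ is an explicit function of one variable, with $F(0)=0$ and $F'(s)<0$ for $s>0$. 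Compactness in $(\rho,u)$ gives $\Phi\le -c$, and $|\xi|\le AL^{1/2}$ converts $e^{-c\,\Delta\TT L^{3/2}}$ into $e^{-c'|\xi|^3}$.
\end{itemize}
If you prefer your route, monotonicity follows from $\Phi'=(w+\rho)G'$ with $G:=\rho\log w+(1-\rho)\log(w+1)$. Along a level curve of $\Re G$ this gives $\rd\,\Re\Phi=-\mathrm{Im}(w)\,\rd\,\mathrm{Im}\,G$, and $\mathrm{Im}\,G$ is strictly monotone on each arc in the open upper or lower half-plane. You would still need compactness to make the resulting bound quantitative.
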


\begin{proof}
To lighten the notation, we suppress the subscript $L$ in $u_L,v_L,\xi_L,\eta_L,\rho_L$, and $\rhosr$ throughout the proof. 
We also write $\rho_0:=\rhosr=\sqrt{\frac{1-\rho}{\rho}}$. 
Fix $1\le \ell\le m$ and denote
\begin{equation*}
	\Delta \TT:=\TT_\ell-\TT_{\ell-1},
	\qquad
	\Delta\XX:=\XX_\ell-\XX_{\ell-1},
	\qquad
	\Delta \HH:=\HH_\ell-\HH_{\ell-1}.
\end{equation*}

Consider (i), so that $\Delta \TT>0$. 
We prove only the first inequality in \eqref{eq:bound_generic}; the second is similar. 
Recalling \eqref{eq:tkii}, we have
\begin{equation}\label{eq:exponent}
	\hat f_\ell(u)
	:=
	\frac{f_\ell(u)}{Q_{\ell,L}}
	=
	e^{  L^{3/2} g_3(\xi) \Delta \TT 
	+ L g_2(\xi) \Delta \XX 
	+ L^{1/2} g_1(\xi) \Delta \HH 
	+ O(1) }, 
	\qquad
	\xi:=\frac{L^{1/2}(u+\rho)}{\sqrt{\rho(1-\rho)}} ,
\end{equation}
where the $O(1)$ term comes from rounding errors and is uniform in $L$ and $u$. 
Here
\begin{equation*}
\begin{aligned}
	g_1(\xi)
	&:=
	\rho_0\log\big(1+\frac{\xi}{\rho_0 L^{1/2}}\big), \\
	g_2(\xi)
	&:=
	-\rho\log\big(1-\frac{\rho_0\xi}{L^{1/2}}\big)
	-(1-\rho)\log\big(1+\frac{\xi}{\rho_0 L^{1/2}}\big), \\
	g_3(\xi)
	&:=
	L^{-1/2}\xi
	+\frac{\rho}{\rho_0}
	\log\big(1-\frac{\rho_0\xi}{L^{1/2}}\big)
	-(1-\rho)\rho_0
	\log\big(1+\frac{\xi}{\rho_0 L^{1/2}}\big).
\end{aligned}
\end{equation*}
We shall use the identity
\begin{equation}\label{eq:g2_real_bound}
	L\Re\, g_2(\xi)
	=
	-\log\left(
	\frac{|u|^N|1+u|^{L-N}}
	{\rho^N(1-\rho)^{L-N}}
	\right).
\end{equation}
Thus, by \eqref{eq:uvassumption_4}, $|e^{L g_2(\xi) \Delta \XX }|$ is uniformly bounded.

We consider two cases, according to whether $|\xi|\le \varepsilon L^{1/2}$ or $|\xi|>\varepsilon L^{1/2}$, where $\varepsilon>0$ will be chosen small enough.

Assume first that $|\xi|\le\varepsilon L^{1/2}$. 
Taylor expansion gives constants $c_1,c_2>0$ such that
\begin{equation*}
	|\Re\,g_1(\xi)|
	\le
	c_1L^{-1/2}|\xi|,
	\qquad
	|\Re\,g_2(\xi)|
	\le
	c_2L^{-1}|\xi|^2.
\end{equation*}
It also gives
\begin{equation}\label{eq:g3_taylor}
	g_3(\xi)
	=
	-\frac13 L^{-3/2}\xi^3
	+
	O(L^{-2}|\xi|^4).
\end{equation}
We claim that
\begin{equation}\label{eq:g3_decay_claim}
	\Re\,g_3(\xi)
	\le
	-c_3L^{-3/2}|\xi|^3+c_4L^{-3/2}
\end{equation}
for some constants $c_3,c_4>0$. 
To prove this claim, we first show that $\Re(\xi^3)$ is comparable with $|\xi|^3$ for $\xi$ arising from a point $u$ satisfying \eqref{eq:uvassumption_4}. 
By Taylor expansion,
\begin{equation*}
	\log\left(
	\frac{|u|^N|1+u|^{L-N}}
	{\rho^N(1-\rho)^{L-N}}
	\right)
	=
	-\frac12\Re(\xi^2)
	+
	O(L^{-1/2}|\xi|^3).
\end{equation*}
Hence, using \eqref{eq:uvassumption_4}, there is a constant
$C_\delta:=\log\frac1\delta$ 
such that, after decreasing $\varepsilon$ if necessary,
\begin{equation*}
	|\Re(\xi^2)|
	\le
	2C_\delta+O(L^{-1/2}|\xi|^3)
	\le
	2C_\delta+\frac14|\xi|^2
\end{equation*}
whenever $|\xi|\le\varepsilon L^{1/2}$. 
It follows that
\begin{equation*}
	(\Re\,\xi)^2
	=
	\frac{|\xi|^2+\Re(\xi^2)}{2}
	\ge
	\frac38|\xi|^2-C_\delta.
\end{equation*}
Since $\Re(\xi)<0$ by \eqref{eq:uvassumption_4}, this implies
$\Re(\xi)\le -\frac12|\xi|$ if $|\xi|\ge\sqrt{8C_\delta}$. 
Using $\Re(\xi^3) = \Re(\xi)\left(2\Re(\xi^2)-|\xi|^2\right)$, 
we obtain, for $|\xi|\ge\sqrt{16C_\delta}$,
\begin{equation*}
	\Re(\xi^3)
	\ge
	\frac12|\xi|\left(\frac12|\xi|^2-4C_\delta\right)
	\ge
	\frac18|\xi|^3.
\end{equation*}
For bounded $|\xi|$, the trivial estimate
$\Re(\xi^3)\ge-|\xi|^3$ shows that, after increasing the constant if necessary,
\begin{equation*}
	\Re(\xi^3)
	\ge
	\frac18|\xi|^3-C_\delta'
\end{equation*}
for a constant $C_\delta'>0$. 
Combining this with \eqref{eq:g3_taylor}, we find that there exists $\varepsilon>0$ such that \eqref{eq:g3_decay_claim} holds for $|\xi|\le \varepsilon L^{1/2}$. 
Substituting the bounds above into \eqref{eq:exponent}, and using the uniform boundedness of $L\Re\,g_2(\xi)$ from \eqref{eq:g2_real_bound}, we find 
\begin{equation*}
	|\hat f_\ell(u)|
	\le
	Ce^{-c \Delta \TT|\xi|^3}
\end{equation*}
for all $|\xi|\le\varepsilon L^{1/2}$.

Now assume that $|\xi|\ge\varepsilon L^{1/2}$. 
Since $|u|$ and $|1+u|$ are uniformly bounded above and below under \eqref{eq:uvassumption_4}, there is a constant $A>0$ such that
$|\xi|\le A L^{1/2}$. 
Moreover, $g_1(\xi)$ is uniformly bounded, and $L\Re\,g_2(\xi)$ is uniformly bounded by \eqref{eq:g2_real_bound}. 
We claim that there exists $c>0$ such that
\begin{equation}\label{eq:g3_large_claim}
	\Re\,g_3(\xi)\le -c
\end{equation}
for all $\varepsilon L^{1/2}\le|\xi|\le A L^{1/2}$ and all large enough $L$.
To prove the claim, consider
\begin{equation*}
	\Phi(u)
	:=
	\sqrt{\rho(1-\rho)}\,\Re\,g_3(\xi)
	=
	\Re(u+\rho)
	+
	\rho^2\log\left|\frac{u}{\rho}\right|
	-
	(1-\rho)^2\log\left|\frac{1+u}{1-\rho}\right|.
\end{equation*}
Using $\Re(u) = \frac12\left(|1+u|^2-|u|^2-1\right)$, 
we evaluate $\Phi$ on the level curve
\begin{equation*}
	\Sigma_\rho
	:=
	\big\{
	u\in\bbC:
	\frac{|u|^N|1+u|^{L-N}}
	{\rho^N(1-\rho)^{L-N}}
	=
	1,\
	\Re(u)\le-\rho
	\big\}.
\end{equation*}
On this curve, setting $s:=\log(|u|/\rho)$, we have
$|1+u| = (1-\rho) e^{-\frac{\rho}{1-\rho}s}$. 
Therefore
\begin{equation*}
	\Phi(u)
	=
	\frac12
	\left(
	(1-\rho)^2e^{-\frac{2\rho}{1-\rho}s}
	-\rho^2e^{2s}
	-1
	\right)
	+\rho+\rho s
	=:F(s).
\end{equation*}
A direct calculation gives $F(0)=0$ and $F'(s)<0$ for all $s>0$. 
Hence $F(s)<0$ for all $s>0$. 
Moreover, uniformly for $\rho$ in a compact subset of $(0,1)$, $F(s)$ is bounded above by a negative constant whenever $s$ is bounded away from $0$. 
The points satisfying \eqref{eq:uvassumption_4} are not exactly on $\Sigma_\rho$, but if they are bounded away from $-\rho$, then they are at distance $O(L^{-1})$ from $\Sigma_\rho$. 
Since $|\xi|\ge\varepsilon L^{1/2}$ implies $|u+\rho| \ge \varepsilon\sqrt{\rho(1-\rho)}$, 
compactness and continuity imply that there exists $c>0$ such that 
$\Phi(u)\le -c$ 
for all $u$ and large enough $L$. 
This proves \eqref{eq:g3_large_claim}. 
Consequently,
\begin{equation*}
	|\hat f_\ell(u)|
	\le
	Ce^{-c \Delta \TT L^{3/2}}
	\le
	Ce^{-\frac{c}{A^3} \Delta \TT|\xi|^3},
\end{equation*}
because $|\xi|\le A L^{1/2}$. 
This completes the proof of the first inequality in \eqref{eq:bound_generic}. 
The proof of the second inequality is the same, using the corresponding expression for $Q_{\ell,L}f_\ell(v)$ and the fact that $\Re(\eta)>0$.

We now prove (ii). 
Assume that $\TT_{\ell-1}=\TT_\ell$ and $\HH_{\ell-1}<\HH_\ell$, so that $\Delta \TT=0$ and $\Delta \HH>0$. 
Again we prove only the estimate involving $u$; the estimate involving $v$ is analogous. 
By \eqref{eq:exponent},
\begin{equation*}
	\hat f_\ell(u)
	=
	e^{ L g_2(\xi) \Delta \XX
	+ L^{1/2} g_1(\xi) \Delta \HH
	+ O(1)}.
\end{equation*}
As above, $L\Re g_2(\xi)$ is uniformly bounded by \eqref{eq:g2_real_bound}. 
Considering $|\xi|\le\varepsilon L^{1/2}$ and $|\xi|\ge\varepsilon L^{1/2}$ separately as in part~(i), we obtain
\begin{equation*}
	L^{1/2}\Re\,g_1(\xi)
	\le
	C-c|\xi|
\end{equation*}
for some constants $C,c>0$. 
Indeed, in the first region the Taylor expansion of $g_1$ and the estimate $\Re(\xi)\le C-c|\xi|$ give the claim. In the second region, compactness away from $u=-\rho$ gives $\Re\,g_1(\xi)\le-c$, and $|\xi|=O(L^{1/2})$. 
Therefore
\begin{equation*}
	| \hat f_\ell(u) |
	\le
	Ce^{-c\Delta \HH|\xi|}.
\end{equation*}
This proves the first inequality in \eqref{eq:bound_degenerate}. 
The second inequality is proved in the same way for $Q_{\ell,L}f_\ell(v)$. 
The lemma follows.
\end{proof}

We now prove Proposition~\ref{prop:DYconvergence00}.

\begin{proof}[Proof of Proposition~\ref{prop:DYconvergence00}]
The convergence follows from pointwise convergence of the summands in $\mathscr{D}^{(\mathbf{n})}_{Y_L}(\bm{z})$ and a uniform tail bound.
Since the convergence of \eqref{eq:Dzhere} was proved for the periodic step initial condition in~\cite{Baik-Liu19}, we only explain the new part of the proof, namely the effect of the general initial condition.

Factoring out 
$\prod_{i=1}^{n_1} f_1(u_i^{(1)})f_1(v_i^{(1)})$ 
from $d^{(\mathbf{n})}(\bm{z};U,V)$, we consider the combination
\begin{equation*}
\begin{aligned}
	&\det\left[
	\mdpch_{Y_L}(v_i^{(1)},u_j^{(1)};z_1)
	\right]_{1\le i,j\le n_1}
	\prod_{i=1}^{n_1} f_1(u_i^{(1)})f_1(v_i^{(1)}) \\
	&\quad =
	\det\left[
	\mpch_{Y_L}(v_i^{(1)},u_j^{(1)})
	f_1(v_i^{(1)})f_1(u_j^{(1)})
	\right]_{1\le i,j\le n_1}
	\prod_{i=1}^{n_1}
	\frac{1}{H_{z_1}(u_i^{(1)})H_{z_1}(v_i^{(1)})}.
\end{aligned}
\end{equation*}
The functions $1/H_z(w)$ were shown to be bounded in~\cite[Lemma~8.2]{Baik-Liu21} for all $w$ that appear in the above formula. 
Lemma~\ref{prop:uniform_bound_char} shows that $\mpch_Y$ can grow like $e^{c|\xi|^2}$ in the scaled variable. 
However, using Lemma~\ref{lm:main_decay}(i) with $\ell=1$ and recalling that $\TT_1>\TT_0=0$, we find that there are $C, c>0$ such that 
\begin{equation} \label{eq:here22}
\begin{aligned}
	&L^{-1/2}
	\left|
	\mpch_{Y_L}(v_i^{(1)},u_j^{(1)})
	f_1(v_i^{(1)})f_1(u_j^{(1)})
	\right| \\
	&\qquad \le
	C
	e^{c\Mx(|\eta_i^{(1)}|+|\xi_j^{(1)}|)
		-c(|\eta_i^{(1)}|^3+|\xi_j^{(1)}|^3)}
	\left(
	\frac{1}{\Re(-\xi_j^{(1)})}
	+
	e^{c|\xi_j^{(1)}|^2}
	\right),
\end{aligned}
\end{equation}
where $\Mx\in[1,\infty)$ is chosen so that 
$\max_{\XX\in[0,1]}\h_L(\XX)\le \Mx-1$ for all $L$. 
The right-hand side gives a decaying bound as the scaled variables tend to infinity. 
The remaining factors in $d^{(\mathbf{n})}(\bm{z};U,V)$ were estimated in~\cite[Lemmas~8.2 and~8.3]{Baik-Liu18}. 
The factor $L^{-1/2}$ in \eqref{eq:here22} is cancelled by the remaining factors.\footnote{For the periodic step initial condition, the same factor $L^{-1/2}$ appears because $\mpch_{\mathrm{step}_L}(v,u)=\frac1{v-u}=O(L^{1/2})$ under the scaling near $-\rho_L$, and it is cancelled by the same remaining factors.}
Consequently, the estimates of~\cite[Lemmas~8.2 and~8.3]{Baik-Liu18}, together with Hadamard's inequality, give a constant $C>0$ such that
\begin{equation}\label{eq:Dn_uniform_bound}
    \left|\mathscr{D}^{(\mathbf{n})}_{Y_L}(\bm{z})\right|
    \le
    \prod_{\ell=1}^m n_\ell^{n_\ell}C^{n_\ell}
\end{equation}
for all $\mathbf{n}$ and all sufficiently large $L$, with the convention $0^0=1$. This bound is summable after multiplication by the factorial factors in \eqref{eq:Dzhere}.

We now prove pointwise convergence. 
Observe from \eqref{eq:dYform},  \eqref{eq:npcf},  and \eqref{eq:char_rewriting} that 
\begin{equation*}
	d_{Y_L}^{(\mathbf{n})}(\bm{z};U,V)
	=
	\frac{
	\det\left[
	\mpch_{Y_L}(v_i^{(1)},u_j^{(1)})
	\right]_{1\le i,j\le n_1}
	}{
	\det\left[
	\mpch_{\mathrm{step}_L}(v_i^{(1)},u_j^{(1)})
	\right]_{1\le i,j\le n_1}
	}
	d_{\mathrm{step}_L}^{(\mathbf{n})}(\bm{z};U,V).
\end{equation*}
Lemma~\ref{prop:pch_convergence} implies that, for all sequences of complex numbers $\{u_L\}$ and $\{v_L\}$ satisfying \eqref{eq:vunearo2},
\begin{equation*}
	\lim_{L\to\infty}
	\frac{
	\det\left[
	\mpch_{Y_L}(v_i^{(1)},u_j^{(1)})
	\right]_{1\le i,j\le n_1}
	}{
	\det\left[
	\mpch_{\mathrm{step}_L}(v_i^{(1)},u_j^{(1)})
	\right]_{1\le i,j\le n_1}
	}
	=
	\frac{
	\det\left[
	\pchlim_\h(\eta_i^{(1)},\xi_j^{(1)})
	\right]_{1\le i,j\le n_1}
	}{
	\det\left[
	\pchlim_{\mathrm{pnw}}(\eta_i^{(1)},\xi_j^{(1)})
	\right]_{1\le i,j\le n_1}
	}.
\end{equation*}
Here the common scaling factor in Lemma~\ref{prop:pch_convergence} cancels between the numerator and denominator. 
By Proposition~\ref{prop:pnw},
\begin{equation*}
	\pchlim_{\mathrm{pnw}}(\eta,\xi)
	=
	\frac{\re^{\hftn(\xi,\rz)+\hftn(\eta,\rz)}}{\eta-\xi},
\end{equation*}
and hence the denominator of the limiting ratio does not vanish. 
On the other hand, the pointwise convergence of 
$d_{\mathrm{step}_L}^{(\mathbf{n})}(\bm{z};U,V)$ was computed in~\cite[Section~6.3]{Baik-Liu19}. 
In our notation,
\begin{equation*}
	d_{\mathrm{step}_L}^{(\mathbf{n})}(\bm{z};U,V)
	\to
	\det\left[
	\pchlim_{\mathrm{pnw}}(\eta_i^{(1)},\xi_j^{(1)})
	\right]_{i,j=1}^{n_1}
	\mathrm{D}^{(\mathbf{n})}(\bz;\xib,\etab).
\end{equation*}
Therefore,
\begin{equation*}
	d_{Y_L}^{(\mathbf{n})}(\bm{z};U,V)
	\to
	\det\left[
	\pchlim_\h(\eta_i^{(1)},\xi_j^{(1)})
	\right]_{i,j=1}^{n_1}
	\mathrm{D}^{(\mathbf{n})}(\bz;\xib,\etab).
\end{equation*}

The uniform bound \eqref{eq:Dn_uniform_bound}, together with the factorial factors in \eqref{eq:Dzhere}, justifies termwise convergence in the series defining $\mathscr{D}_{Y_L}(\bm{z})$. 
Thus the desired convergence of $\mathscr{D}_{Y_L}(\bm{z})$ follows, completing the proof of Proposition~\ref{prop:DYconvergence00}.
\end{proof}

This proves the two propositions announced at the beginning of this section, and hence completes the proof of Corollary~\ref{cor:thm13geny1}.

\section{Completion of the proof of Theorem~\ref{thm:main}: General parameters}
\label{sec:asymptotics_general}

In Corollary~\ref{cor:thm13geny1}, we proved Theorem~\ref{thm:main} for parameters satisfying part~\textnormal{(i)} of Definition~\ref{def:F}, under the assumption that $y_1^{(L)}=-1$ for all $L$. In Subsection~\ref{sec:proof_general_h}, we remove this assumption. We then prove continuity of the one-point distribution in Subsection~\ref{sec:consistency} and complete the proof for all parameters in Subsection~\ref{sec:prtheoremm1}.

\subsection{Removal of the normalization assumption}
\label{sec:proof_general_h}

We first prove the following lemma. 

\begin{lm}\label{lm:particlelabel}
Let $\h\in\uc_1$, and let $\ra$ be a point in the effective domain of $\h$. Let $Y_L=(y_i^{(L)})_{i\in\bbZ}\in\Pconfno_{N_L,L}$ be a sequence of configurations whose linearly interpolated rescaled profiles $\h_L$, defined in \eqref{eq:initial_scaling}, satisfy $\h_L\to\h$ in $\uc_1$. Then there exists a sequence $\ra_L\in N_L^{-1}\bbZ$ such that
\begin{equation}\label{eq:aLaandhaL}
    \ra_L\to\ra,
    \qquad
    \h_L(\ra_L)\to\h(\ra).
\end{equation}
Set $p_L:=-\ra_LN_L$ and define
\begin{equation*}
    \hat y_i^{(L)}:=y_{p_L+i}^{(L)}-y_{p_L+1}^{(L)}-1,
    \qquad i\in\bbZ.
\end{equation*}
Let $\hat\h_L$ be the corresponding linearly interpolated rescaled profile. Then $\hat y_1^{(L)}=-1$ and
\begin{equation}\label{eq:hathLtoh}
    \hat\h_L(\alpha)\to\hat\h(\alpha):=\h(\alpha+\ra)-\h(\ra)
    \qquad\text{in $\uc_1$.}
\end{equation}
\end{lm}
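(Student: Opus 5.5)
The plan has three parts: construct $\ra_L$ on the lattice from the liminf half of the criterion \eqref{eq:UC_criterion}, compute $\hat\h_L$ exactly in terms of $\h_L$, and read off \eqref{eq:hathLtoh} from \eqref{eq:UC_criterion}.

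\textbf{Construction of $\ra_L$.} By \eqref{eq:UC_criterion} there is a real sequence $\alpha_L\to\ra$ with $\liminf_L\h_L(\alpha_L)\ge\h(\ra)$. Since $\h_L$ is linear on each interval $[k/N_L,(k+1)/N_L]$, its value at $\alpha_L$ lies between its values at the two neighbouring lattice points. Let $\ra_L\in N_L^{-1}\bbZ$ be the neighbour with the larger value, so that $\h_L(\ra_L)\ge\h_L(\alpha_L)$ and $|\ra_L-\alpha_L|\le N_L^{-1}$. Then $\ra_L\to\ra$. The limsup half of \eqref{eq:UC_criterion} gives $\limsup_L\h_L(\ra_L)\le\h(\ra)$, while the choice of neighbour gives $\liminf_L\h_L(\ra_L)\ge\h(\ra)$. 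Together these prove \eqref{eq:aLaandhaL}.

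\textbf{Exact form of $\hat\h_L$.} The relabeling gives $\hat y_1^{(L)}=-1$ at once. Because $p_L\in\bbZ$, $\hat y$ inherits the periodicity $\hat y_{i+N_L}=\hat y_i-L$ and the strict ordering, so $\hat Y_L\in\Pconfno_{N_L,L}$. For lattice $\alpha$, I insert the definition of $\hat y$ into \eqref{eq:initial_scaling}:
\begin{equation*}
\hat\h_L(\alpha)=\frac{-y^{(L)}_{-(\alpha+\ra_L)N_L+1}+y^{(L)}_{p_L+1}+\alpha L}{\rhosr L^{1/2}}.
\end{equation*}
The term $y^{(L)}_{p_L+1}$ is expressed using $\h_L(\ra_L)$, namely $-y^{(L)}_{p_L+1}-1+\ra_LL=\rhosr L^{1/2}\h_L(\ra_L)$. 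Substituting, and rearranging the $+1$ and $\ra_L L$ terms, yields
\begin{equation*}
\hat\h_L(\alpha)=\h_L(\alpha+\ra_L)-\h_L(\ra_L).
\end{equation*}
This identity extends to all real $\alpha$, because a shift by the lattice point $\ra_L$ commutes with linear interpolation.

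\textbf{Convergence in $\uc_1$.} Since $\h_L(\ra_L)\to\h(\ra)$ by the first step, it suffices to show $\h_L(\cdot+\ra_L)\to\h(\cdot+\ra)$ in $\uc_1$. I verify this through \eqref{eq:UC_criterion}.
\begin{itemize}
\item[(i)] Limsup bound: for any $\alpha'_L\to\alpha$, the points $\alpha'_L+\ra_L$ converge to $\alpha+\ra$, so $\limsup_L\h_L(\alpha'_L+\ra_L)\le\h(\alpha+\ra)$.
\item[(ii)] Liminf bound: take $\beta_L\to\alpha+\ra$ with $\liminf_L\h_L(\beta_L)\ge\h(\alpha+\ra)$, and set $\alpha'_L:=\beta_L-\ra_L\to\alpha$.
\end{itemize}
Subtracting the constants $\h_L(\ra_L)$, which converge to the finite number $\h(\ra)$, preserves both inequalities and gives \eqref{eq:hathLtoh}.

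I expect the only delicate step to be the first one. The recovery sequence given by \eqref{eq:UC_criterion} need not lie on the lattice, and the reduction to lattice points relies on linearity of $\h_L$ between lattice points. The remaining steps are bookkeeping of the floor and $+1$ offsets in \eqref{eq:initial_scaling}.
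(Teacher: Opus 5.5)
Your proposal is correct and follows the paper's proof essentially step for step. Both pick a lattice neighbour of the liminf recovery sequence using the linearity of $\h_L$, derive the identity $\hat\h_L(\alpha)=\h_L(\alpha+\ra_L)-\h_L(\ra_L)$, and verify the two halves of \eqref{eq:UC_criterion} by shifting the recovery sequence by $\ra_L$.
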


\begin{proof}
Since $\h(\ra)>-\infty$, the convergence $\h_L\to\h$ provides a sequence $\beta_L\to\ra$ such that
$\liminf_{L\to\infty}\h_L(\beta_L)\ge\h(\ra)$. 
Let $\ra_L$ be one of the two endpoints of the interval of the mesh $N_L^{-1}\bbZ$ containing $\beta_L$, chosen so that $\h_L(\ra_L)\ge\h_L(\beta_L)$. Such a choice is possible because $\h_L$ is linear on that interval. Then $\ra_L\to\ra$ and $\liminf_{L\to\infty} \h_L(\ra_L)\ge\h(\ra)$. The upper-bound condition in \eqref{eq:UC_criterion} implies 
$\limsup_{L\to\infty}\h_L(\ra_L)\le\h(\ra)$. 
Thus \eqref{eq:aLaandhaL} holds.

Since $p_L=-\ra_LN_L\in\bbZ$, the shifted configuration $\hat Y_L=(\hat y_i^{(L)})_{i\in\bbZ}$ belongs to $\Pconfno_{N_L,L}$ and satisfies $\hat y_1^{(L)}=-1$. Directly from the definition of the rescaled profiles,
\begin{equation}\label{eq:hhasum}
    \hat\h_L(\alpha)=\h_L(\alpha+\ra_L)-\h_L(\ra_L),
    \qquad \alpha\in\bbR.
\end{equation}
We verify \eqref{eq:hathLtoh} using \eqref{eq:UC_criterion}. For every sequence $\alpha_L\to\alpha$, \eqref{eq:hhasum} and \eqref{eq:aLaandhaL} imply
\begin{equation*}
    \limsup_{L\to\infty}\hat\h_L(\alpha_L)
    \le
    \h(\alpha+\ra)-\h(\ra).
\end{equation*}
Conversely, if $\h(\alpha+\ra)>-\infty$, choose $\beta_L\to\alpha+\ra$ such that
$\liminf_{L\to\infty}\h_L(\beta_L)\ge\h(\alpha+\ra),
$
and set $\gamma_L:=\beta_L-\ra_L$. Then $\gamma_L\to\alpha$, and
\begin{equation*}
    \liminf_{L\to\infty}\hat\h_L(\gamma_L)
    \ge
    \h(\alpha+\ra)-\h(\ra).
\end{equation*}
The lower-bound condition is automatic when $\h(\alpha+\ra)=-\infty$. Hence \eqref{eq:hathLtoh} follows.
\end{proof}

We now remove the normalization assumptions from Corollary~\ref{cor:thm13geny1}.

\begin{cor}\label{cor:thm13generic}
The conclusion of Corollary~\ref{cor:thm13geny1} remains valid without the assumption that $y^{(L)}_1=-1$ for all $L$.
\end{cor}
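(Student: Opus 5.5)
We reduce to the normalized case by relabeling particles and translating space, and then use the shift invariance of $\mathbb{F}_\h$ from Corollary~\ref{cor:shiftinvlmf}. Fix $\ra$ in the effective domain of $\h$. Let $\ra_L\in N_L^{-1}\bbZ$, $p_L=-\ra_LN_L$ and $\hat Y_L$ be as in Lemma~\ref{lm:particlelabel}. Then $\hat y_1^{(L)}=-1$ and $\hat\h_L\to\hat\h=\h(\cdot+\ra)-\h(\ra)\in\uc_1^0$. Consider the PTASEP $\hat\sx^{(L)}_i(t):=\sx^{(L)}_{p_L+i}(t)-y^{(L)}_{p_L+1}-1$. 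Relabeling and common translation commute with the TASEP dynamics, so $\hat\sx^{(L)}$ is distributed as $\mathrm{PTASEP}_{\hat Y_L}(L,N_L)$. The event in \eqref{eq:lim_bbF_particle} for $\sx^{(L)}$ at $(k_i,t_i)$ then becomes an event for $\hat\sx^{(L)}$ at labels $k_i-p_L$ with shifted thresholds.

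The main step is to rewrite these shifted quantities in the scaling \eqref{eq:scaling_1} with new parameters. First, $k_i-p_L=\lfloor\rho_L^2t_i-(\XX_i-\ra_L)N_L\rfloor$, which is the label attached to $\hat\XX_{i,L}:=\XX_i-\ra_L$. Second, write $y^{(L)}_{p_L+1}=y^{(L)}_{-\ra_LN_L+1}$. From the definition of $\h_L$,
\[
-y^{(L)}_{p_L+1}-1=\sqrt{\tfrac{1-\rho_L}{\rho_L}}\,L^{1/2}\,\h_L(\ra_L)-\ra_LL .
\]
Substituting, the rescaled quantity for $\sx^{(L)}$ at $(\XX_i,\TT_i)$ is at most $\HH_i$ exactly when the rescaled quantity for $\hat\sx^{(L)}$ at $(\XX_i-\ra_L,\TT_i)$ is at most $\HH_i-\h_L(\ra_L)$. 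The $\ra_LL$ terms cancel against $\XX_iL$. This holds up to the floor-rounding $O(L^{-1/2})$ corrections, which I would track through the $\lfloor\cdot\rfloor$ in $k_i$.

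Next we apply Corollary~\ref{cor:thm13geny1} to $\hat Y_L$, using its local uniformity in $(\XX_i,\TT_i,\HH_i)$. The parameters $\XX_i-\ra_L\to\XX_i-\ra$ and $\HH_i-\h_L(\ra_L)\to\HH_i-\h(\ra)$ converge by \eqref{eq:aLaandhaL}. Also $\dom_+^m$ is open and is preserved by a common shift of the $\HH_i$. Local uniform convergence together with continuity of $\mathbb{F}_{\hat\h}$ on this domain therefore gives the limit
\[
\mathbb{F}^{(m)}_{\hat\h}\bigl((\HH_i-\h(\ra))_i;(\XX_i-\ra,\TT_i)_i\bigr).
\]
Continuity of $\mathbb{F}_{\hat\h}$ here follows from the uniform contour-integral bounds, or from the uniform convergence of continuous prelimit functions. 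Finally, Corollary~\ref{cor:shiftinvlmf} with $a=\ra$, $b=\h(\ra)$ identifies this limit with $\mathbb{F}^{(m)}_\h(\bm\HH;(\XX_i,\TT_i)_i)$, since $\h=\hat\h(\cdot-\ra)+\h(\ra)$. Local uniformity in the original parameters carries over because the parameter shift is uniform.

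I expect the main obstacle to be the careful bookkeeping of the floors and of the linear interpolation. The inequality $\sx_k\ge a$ is discrete, so a shift of $O(1)$ in lattice units becomes an $O(L^{-1/2})$ perturbation of $\HH_i$. I would handle this by sandwiching: bound the probability above and below by the normalized-case probabilities at $\HH_i\pm\varepsilon$, let $L\to\infty$, and then let $\varepsilon\to0$ using continuity of $\mathbb{F}$ in $\bm\HH$ on $\dom_+^m$.
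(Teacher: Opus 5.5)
Your proposal is correct and follows the paper's argument. You relabel and translate via Lemma~\ref{lm:particlelabel}, apply Corollary~\ref{cor:thm13geny1} to $\hat Y_L$ at the shifted parameters $\XX_i-\ra_L$ and $\HH_i-\h_L(\ra_L)$ using its local uniformity, and conclude with the shift identity \eqref{eq:Feqst}. The one difference is that the floor bookkeeping and sandwiching you anticipate are unnecessary. Since $p_L=-\ra_LN_L$ and $y^{(L)}_{p_L+1}$ are integers, the paper checks that $\hat k_i=k_i+\ra_LN_L$ and $\hat a_i=a_i-y^{(L)}_{-\ra_LN_L+1}-1$ hold exactly, so the two events coincide with no rounding error.
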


\begin{proof}
Let $Y_L=(y_i^{(L)})_{i\in\bbZ}\in\Pconfno_{N_L,L}$ be a sequence of initial configurations whose rescaled profiles satisfy $\h_L\to\h$ in $\uc_1$. Choose a point $\ra$ in the effective domain of $\h$, and let $\ra_L$, $\hat Y_L$, and $\hat\h_L$ be as in Lemma~\ref{lm:particlelabel}. Thus
\begin{equation*}
    \hat y_1^{(L)}=-1,
    \qquad
    \hat\h_L\to\hat\h:=\h(\,\cdot+\ra)-\h(\ra)
    \qquad\text{in $\uc_1$.}
\end{equation*}

For the original system, set
\begin{equation}\label{eq:parameters}
    t_i:=\TT_i\frac{L^{3/2}}{\sqrt{\rho_L(1-\rho_L)}},
    \qquad
    k_i:=\left\lfloor\rho_L^2t_i-\XX_iN_L\right\rfloor,
    \qquad
    a_i:=\left\lfloor
    \XX_iL+(1-2\rho_L)t_i-\HH_i\sqrt{\frac{1-\rho_L}{\rho_L}}\,L^{1/2}
    \right\rfloor.
\end{equation}
For the shifted system, define 
\begin{equation*}
    \hat\XX_i:=\XX_i-\ra_L,
    \qquad
    \hat\HH_i:=\HH_i-\h_L(\ra_L),
\end{equation*}
and the corresponding indices $\hat k_i$ and $\hat a_i$ as in \eqref{eq:parameters} with $\XX_i$ replaced by $\hat{\XX}_i$ and $\HH_i$ replaced by $\hat{\HH_i}$. Since $\ra_LN_L\in\bbZ$ and
\begin{equation*}
    \h_L(\ra_L)\sqrt{\frac{1-\rho_L}{\rho_L}}\,L^{1/2}
    =-y_{-\ra_LN_L+1}^{(L)}-1+\ra_LL,
\end{equation*}
we have
\begin{equation*}
    \hat k_i=k_i+\ra_LN_L,
    \qquad
    \hat a_i=a_i-y_{-\ra_LN_L+1}^{(L)}-1.
\end{equation*}
Then, 
\begin{equation*}
    \hat\sx_{\hat k}^{(L)}(t)
    =
    \sx_{-\ra_LN_L+\hat k}^{(L)}(t)-y_{-\ra_LN_L+1}^{(L)}-1, 
\end{equation*}
and hence 
\begin{equation*}
    \prob\left(\bigcap_{i=1}^m\{\sx_{k_i}^{(L)}(t_i)\ge a_i\}\right)
    =
    \prob\left(\bigcap_{i=1}^m\{\hat\sx_{\hat k_i}^{(L)}(t_i)\ge\hat a_i\}\right).
\end{equation*}

The shifted parameters satisfy the same generic time-ordering conditions as the original parameters, and
\begin{equation*}
    \hat\XX_i\to\XX_i-\ra,
    \qquad
    \hat\HH_i\to\HH_i-\h(\ra).
\end{equation*}
Applying Corollary~\ref{cor:thm13geny1} to $\hat Y_L$ and using the shift identity \eqref{eq:Feqst}, 
along with its local uniformity in the observation parameters, gives
\begin{equation*}
\begin{aligned}
    \prob\left(\bigcap_{i=1}^m\{\hat\sx_{\hat k_i}^{(L)}(t_i)\ge\hat a_i\}\right)
    \to \mathbb F_\h^{(m)}
    \bigl(
    \HH_1,\ldots,\HH_m;
    (\XX_1,\TT_1),\ldots,(\XX_m,\TT_m)
    \bigr).
\end{aligned}
\end{equation*}
This proves the result. 
\end{proof}

\subsection{Continuity of the one-point distribution}
\label{sec:consistency}

By Corollary~\ref{cor:thm13generic}, Theorem~\ref{thm:main} holds under the assumptions
\begin{equation}\label{eq:paratemp}
    0<\TT_1\le\cdots\le\TT_m,
    \qquad
    (\HH_1,\ldots,\HH_m)\in\dom_+^m(\TT_1,\ldots,\TT_m).
\end{equation}
To extend the result to the remaining parameter regimes in Definition~\ref{def:F}, we need continuity of the one-point distribution.

For $m=1$, the restrictions involving equal-time parameters are vacuous, and
\begin{equation}\label{eq:Foneexp}
    \mathbb F_\h^{(1)}(\HH;(\XX,\TT))
    =
    \oint\frac{\rd\rz}{2\pi\ri\rz}\,\rC_\h(\rz)\rD_\h(\rz).
\end{equation}

\begin{lm}\label{lem:consistency}
For every $\h\in\uc_1$, the function $\mathbb F_\h^{(1)}(\HH;(\XX,\TT))$ is a cumulative distribution function in $\HH$ and is jointly continuous in $(\HH,\XX,\TT)\in\bbR\times\bbR\times\bbR_{+}$.
\end{lm}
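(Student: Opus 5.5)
We argue from the convergence already established in Corollary~\ref{cor:thm13generic} for $m=1$, where the equal-time restrictions are vacuous. This gives
\[
\mathbb F_\h^{(1)}(\HH;(\XX,\TT))=\lim_{L\to\infty}\prob\bigl(\text{rescaled }{-\sx^{(L)}_{k}(t)}\le \HH\bigr),
\]
locally uniformly in $(\XX,\TT,\HH)$, for any sequence $Y_L$ with $\h_L\to\h$. Such a sequence exists for every $\h\in\uc_1$; for example, one discretizes the hypograph, exactly as in the line case of~\cite{Matetski-Quastel-Remenik21}.

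\textbf{Joint continuity.} Each term of the contour integral~\eqref{eq:Foneexp} is continuous in the parameters. The factor $\rC_\h(\rz)$ depends on $\HH,\TT$ through $e^{\HH A_1+\TT A_2}$ only. In $\rD_\h(\rz)$, the parameters enter only through $\mathrm f_1$, and the bound $|\mathrm f_1(\zeta)|\le Ce^{-c|\zeta|^3}$ holds uniformly for $\TT$ in compact subsets of $\bbR_+$ and bounded $\XX,\HH$. Combined with Lemma~\ref{prop:plim_bound}, this gives a summable majorant, uniform on compacts, for the series defining $\rD_\h$. Joint continuity then follows by dominated convergence, both in the series and in the $\rz$-integral over a fixed circle. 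Alternatively, locally uniform convergence of continuous functions (in $\HH$ the prelimit is a step function, so the limit must be handled directly) suffices; the analytic route above is cleaner.

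\textbf{CDF property.} Since $\mathbb F^{(1)}_\h$ is a pointwise limit of distribution functions, it is nondecreasing and takes values in $[0,1]$. It remains to show that the limits at $\pm\infty$ are $1$ and $0$, i.e.\ tightness of the rescaled height. This is the main obstacle.

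For this we use monotonicity (attractiveness) of PTASEP in the initial condition, comparing with special profiles whose limits are explicitly known distribution functions~\cite{Baik-Liu19,Baik-Liu21}. Since $\h\le M$ and $\h(\ra)>-\infty$ at some point, we sandwich $\h$ between the flat profile $\h\le M$ and a shifted periodic narrow wedge $\h\ge \h(\ra)+\h_{\mathrm{pnw}}(\cdot-\ra)$. At the level of prelimit particle systems we can choose initial data $Y_L^{\pm}$ converging to these profiles and ordered relative to $Y_L$, using the coupling of TASEP via the basic coupling. Then
\[
\mathbb F^{(1)}_{M}\le \mathbb F^{(1)}_\h\le \mathbb F^{(1)}_{\h(\ra)+\h_{\mathrm{pnw}}(\cdot-\ra)},
\]
where the bounds use the shift identity~\eqref{eq:Feqst}. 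The two outer functions are known continuous distribution functions, since their one-point formulas coincide with those in~\cite{Baik-Liu19,Baik-Liu21} by Section~\ref{sec:examples}. Letting $\HH\to\pm\infty$ gives the required limits.

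The delicate point is making the ordering of initial configurations compatible with the particle labeling $k_i$. We handle it by using the height-function formulation, where monotone coupling is standard.
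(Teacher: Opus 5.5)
Your proposal is correct and follows the paper's proof essentially step for step. The paper also obtains values in $[0,1]$ and monotonicity in $\HH$ from the $m=1$ case of Corollary~\ref{cor:thm13generic}, and joint continuity from locally uniform absolute convergence of the series and contour integral in \eqref{eq:Foneexp}. It gets the tail limits by sandwiching $\h$ between a constant and a shifted periodic narrow wedge via PTASEP monotonicity and the shift identity \eqref{eq:Feqst}, which reduces them to the flat and periodic narrow-wedge one-point distributions (cited there from~\cite{Baik-Liu18}).

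The labeling issue you flag is milder than you suggest, so the height-function detour is optional. With the same labels, the basic coupling preserves coordinatewise ordering. For example, setting $y^-_{p+1+j}:=y_{p+1}-j$ for $0\le j<N$, extended periodically, gives a step-type configuration with $y^-_i\ge y_i$ for all $i$.
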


\begin{proof}
By Corollary~\ref{cor:thm13generic} with $m=1$, $\mathbb F_\h^{(1)}(\HH;(\XX,\TT))$ is a pointwise limit of cumulative distribution functions in $\HH$. Hence it takes values in $[0,1]$ and is nondecreasing in $\HH$.

We first prove joint continuity. In \eqref{eq:Foneexp}, the dependence on $(\HH,\XX,\TT)$ occurs through the explicit exponential factors in $\rC_\h$ and through the functions $\mathrm f_1$ in the series defining $\rD_\h$. The absolute-convergence estimates of Section~\ref{sec:propertymulti} are locally uniform for $(\HH,\XX,\TT)\in\bbR\times\bbR\times\bbR_{+}$. Hence the series and the contour integral in \eqref{eq:Foneexp} converge locally uniformly, proving joint continuity.

It remains to show
\begin{equation}\label{eq:onepoint_tail}
    \lim_{\HH\to-\infty}\mathbb F_\h^{(1)}(\HH;(\XX,\TT))=0,
    \qquad
    \lim_{\HH\to\infty}\mathbb F_\h^{(1)}(\HH;(\XX,\TT))=1.
\end{equation}
We use monotonicity in the initial condition. If $\h',\h''\in\uc_1$ satisfy $\h'\le\h''$, then the monotonicity of PTASEP implies 
\begin{equation}\label{eq:monotonicity}
    \mathbb F_{\h''}^{(1)}(\HH;(\XX,\TT))
    \le
    \mathbb F_{\h'}^{(1)}(\HH;(\XX,\TT)).
\end{equation}
Since $\h$ is one-periodic and upper semicontinuous, there exists $c_1\in\bbR$ such that $\h\le c_1$. Since $\h\not\equiv-\infty$, there exist $\XX_0,c_2\in\bbR$ such that $\h(\XX_0)\ge c_2$. Therefore
\begin{equation*}
    c_2-\infty\mathbf 1_{\XX\notin\XX_0+\intZ}
    \le
    \h(\XX)
    \le
    c_1,
    \qquad \XX\in\bbR.
\end{equation*}
By \eqref{eq:monotonicity},
\begin{equation*}
    \mathbb F_{c_1}^{(1)}(\HH;(\XX,\TT))
    \le
    \mathbb F_\h^{(1)}(\HH;(\XX,\TT))
    \le
    \mathbb F_{c_2-\infty\mathbf 1_{\XX\notin\XX_0+\intZ}}^{(1)}(\HH;(\XX,\TT)).
\end{equation*}
By \eqref{eq:Feqst}, the two bounding functions reduce, up to shifts of $\HH$ and $\XX$, to the flat and periodic narrow-wedge one-point distribution functions. Both are cumulative distribution functions by~\cite[Sections~4.1 and 4.2]{Baik-Liu18}. Letting $\HH\to\pm\infty$ proves \eqref{eq:onepoint_tail}.
\end{proof}

\subsection{Completion of the proof of Theorem~\ref{thm:main}}
\label{sec:prtheoremm1}

\begin{proof}[Proof of Theorem~\ref{thm:main} for all parameters]
For the periodic step initial condition, the extension from parameters satisfying \eqref{eq:paratemp} to arbitrary parameters, using parts~\textnormal{(ii)} and~\textnormal{(iii)} of Definition~\ref{def:F}, was proved in~\cite[Appendix~A]{Baik-Liu2024}. The argument uses only convergence in the generic time-ordered regime and continuity of the one-point limiting distribution in $\HH$; it does not use any special property of the periodic step initial condition. These two inputs are provided by Corollary~\ref{cor:thm13generic} and Lemma~\ref{lem:consistency}, respectively. Therefore the proof of~\cite[Lemma~A.1]{Baik-Liu2024} applies and extends the convergence to all parameter regimes in Definition~\ref{def:F}. This completes the proof of Theorem~\ref{thm:main}.
\end{proof}

\section{Proof of Theorem~\ref{thm:mainheight}: From particle positions to height functions}
\label{sec:particletoheight}

We deduce the periodic corner growth limit theorem, Theorem~\ref{thm:mainheight}, from the particle-location limit theorem, Theorem~\ref{thm:main}. 
We assume the normalization $\HPT_L(0,0)=0$. 
The general case follows from the same relabeling-and-shift argument used in Section~\ref{sec:proof_general_h}, together with the shift invariance in Theorem~\ref{thm:main2}; we omit the details.

We first recall the relation between height functions and particle configurations. A height function $\HPT:\bbZ\to\bbZ$ with $\HPT(0)=0$ determines a particle configuration $Y=(y_i)_{i\in\bbZ}$ by
\begin{equation}\label{eq:yiitfhpt}
	y_i
	=
	\min\left\{\ell\in\bbZ:\HPT(\ell+1)\le\ell+2i-1\right\}.
\end{equation}
Since $\HPT(0)=0$ and $|\HPT(\ell)-\HPT(0)|\le|\ell|$, we have
\begin{equation*}
	\cdots<y_1\le-1<y_0<\cdots.
\end{equation*}
Conversely, given such a configuration, we recover $\HPT$ from
\begin{equation}\label{eq:height_particle}
	\HPT(\ell)
	=
	2\bigl(\sxx^{-1}(\ell-1)-1\bigr)+\ell,
	\qquad
	\sxx^{-1}(\ell):=\min\{k\in\bbZ:y_k\le\ell\}.
\end{equation}
In particular,
\begin{equation}\label{eq:height_particle2}
	\HPT(y_k+1)=y_k+2k-1,
	\qquad k\in\bbZ.
\end{equation}
These relations preserve periodicity: $\HPT\in\confH(N,L)$ with $\HPT(0)=0$ if and only if $Y\in\Pconfno_{N,L}$ with $y_1\le-1<y_0$.

Applying \eqref{eq:yiitfhpt} at each time to a periodic corner growth model produces a PTASEP. To apply Theorem~\ref{thm:main}, we first verify that convergence of the rescaled height profiles is equivalent to convergence of the rescaled particle-location profiles. The analogous equivalence for TASEP on the line is stated in~\cite[(3.2) and (3.4)]{Matetski-Quastel-Remenik21}. Since it is local, the same argument applies in the periodic setting. We include the proof for completeness.

\begin{lm}\label{prop:particle_to_height}
Let $\h\in\uc_1$, and let $N_L$ be a sequence such that $\rho_L:=N_L/L\in[\rho_-,\rho_+]$ for all sufficiently large $L$. For each $L$, let $H_L\in\confH(N_L,L)$ satisfy $H_L(0)=0$, and let $Y_L=(y_i^{(L)})_{i\in\bbZ}\in\Pconfno_{N_L,L}$ satisfy $y_1^{(L)}\le-1<y_0^{(L)}$. Suppose that $H_L$ and $Y_L$ are related by \eqref{eq:yiitfhpt}. Define
\begin{equation}\label{eq:two_profiles}
	\mathfrak H_L(\XX)
	:=
	\frac{H_L(\XX L)-(L-2N_L)\XX}
	{-2\sqrt{\rho_L(1-\rho_L)}\,L^{1/2}},
	\qquad
	\h_L(\XX)
	:=
	\frac{-y^{(L)}_{-\XX N_L+1}-1+\XX L}
	{\sqrt{\frac{1-\rho_L}{\rho_L}}\,L^{1/2}},
\end{equation}
where $H_L(a)$ and $y_a^{(L)}$ for noninteger $a$ are defined by linear interpolation. Then
\begin{equation*}
	\mathfrak H_L\to\h\ \text{in $\uc_1$}
	\quad\Longleftrightarrow\quad
	\h_L\to\h\ \text{in $\uc_1$}.
\end{equation*}
\end{lm}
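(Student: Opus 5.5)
The plan is to show that the two rescaled profiles are, up to a uniformly small error, the same graph viewed in two coordinate systems related by a map that converges to the identity. Concretely, \eqref{eq:height_particle2} gives exactly $H_L(y_k+1)=y_k+2k-1$ at the particle sites. Between consecutive particle sites $y_{k}+1$ and $y_{k-1}+1$ the height function is piecewise linear with slopes $\pm1$. Parametrize points of $\mathfrak H_L$ by $\XX=(y_k+1)/L$ and points of $\h_L$ by $\XX'=(-k+1)/N_L$, so that $k=-\XX' N_L+1$. Substituting $k$ into $H_L(y_k+1)$ and using $L-2N_L$ gives
\begin{equation*}
\mathfrak H_L\bigl((y_k+1)/L\bigr)=\h_L\bigl((1-k)/N_L\bigr)+O(L^{-1/2}).
\end{equation*}
This is just algebra: the factor $-2\sqrt{\rho_L(1-\rho_L)}$ versus $\sqrt{(1-\rho_L)/\rho_L}$ accounts for the $2k$ term and the conversion $k/N_L$. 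We also need the relation between the abscissas,
\begin{equation*}
\frac{y_k+1}{L}-\frac{1-k}{N_L}=-\frac{\sqrt{(1-\rho_L)/\rho_L}\,L^{1/2}}{L}\,\h_L\bigl((1-k)/N_L\bigr)+O(L^{-1}).
\end{equation*}
So the horizontal displacement between corresponding points is $O(L^{-1/2}|\h_L|)$.

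Next, verify \eqref{eq:UC_criterion} in both directions, splitting into two regimes.

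\textbf{Where $\h_L$ stays bounded below.} Here the horizontal displacement is $O(L^{-1/2})$, so converging sequences $\XX_L\to\XX$ in one coordinate correspond to converging sequences in the other. The liminf condition transfers directly. For the limsup condition we must also handle points of $\mathfrak H_L$ lying between particle sites. For these, I would use the $1$-Lipschitz property of $H_L$: within a gap the height is at most the maximum of its endpoint values plus half the gap length. Rescaled, this is a bound of the form $\max\{\text{endpoint values}\}+O(L^{-1/2})\cdot(\text{gap})$.

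\textbf{Where $\h_L$ is very negative.} This is the delicate case. A very negative $\h_L$ corresponds to a large gap between particles, and there the horizontal displacement is not small. I expect this to be the main obstacle. I would handle it with monotonicity: inside a large gap of length $g$ the height function is the V-shape $H(\ell)=\max$ of two slope-$\pm1$ lines from the endpoints. After rescaling, its value at distance $d$ from the endpoints is roughly $-c\,d\,L^{1/2}\to-\infty$ unless $d=O(L^{-1/2})$. Correspondingly $\h_L$ drops linearly at rate $L^{1/2}$ across the same index range. Hence any limsup bound of $-\infty$ or of a finite value transfers in both directions.

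To make this clean, I would state and prove the following criterion: for each fixed $M$, the hypographs truncated at level $-M$ are $O(L^{-1/2}M)$-close in Hausdorff distance on compacts. This is exactly local Hausdorff convergence of hypographs, and it follows from the two displays above together with the Lipschitz interpolation. Finally, periodicity is preserved by $y_{k+N}=y_k-L$ and $H(x+L)=H(x)+L-2N$, so it suffices to work on one period.
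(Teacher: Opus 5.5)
Your overall strategy is the same as the paper's: match the two graphs by a reparametrization whose horizontal displacement is of order $L^{-1/2}|\h_L|$, then check \eqref{eq:UC_criterion}. Your two displays at particle sites are correct; they are in fact exact. The gap is in passing from particle sites to the points between them.

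Between consecutive particles $y_{k+1}<y_k$ there can be $g=y_k-y_{k+1}\asymp L^{1/2}$ holes while $\h_L$ stays bounded at both labels, because $g-1/\rho_L=\rho_{L,0}L^{1/2}\bigl(\h_L(-k/N_L)-\h_L((1-k)/N_L)\bigr)$. This is exactly what happens near a downward jump of $\h$. Since $\mathfrak H_L$ has Lipschitz constant of order $L^{1/2}$ in $\XX$, your Lipschitz interpolation across such a gap (of rescaled length of order $L^{-1/2}$) only gives $\max\{\text{endpoint values}\}+O(1)$. That does not yield $\limsup_{L}\mathfrak H_L(\beta_L)\le\h(\beta)$ for $\beta_L$ inside the gap. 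In the converse direction it likewise fails to produce a particle site near $\beta_L$ with value at least $\mathfrak H_L(\beta_L)-o(1)$. Your description of large gaps is also wrong. By \eqref{eq:height_particle}, $H_L(\ell)=\ell+2k$ for $y_{k+1}+1\le\ell\le y_k$. So across a gap $H_L$ is a single slope-$+1$ ramp followed by one down-step, not a V-shape, and $\mathfrak H_L$ is monotone there up to an $O(L^{-1/2})$ correction.

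The missing idea is to use this exact shape to extend your lattice correspondence to all real labels. With $y_a$ linearly interpolated, $H_L(y_a+1)=y_a+2a-1+O(1)$ uniformly in $a\in\bbR$. Hence $\sfz_L(\XX):=(y^{(L)}_{-\XX N_L+1}+1)/L$ is an increasing homeomorphism of $\bbR$. It satisfies $\sfz_L(\XX)-\XX=-\rho_{L,0}L^{-1/2}\h_L(\XX)$ exactly and $\mathfrak H_L(\sfz_L(\XX))=\h_L(\XX)+O(L^{-1/2})$ uniformly, and its inverse satisfies the mirror relations.

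With this, the split into regimes and the truncated-hypograph lemma become unnecessary. Both profiles are uniformly bounded above. So along any sequence whose values stay bounded below, the displacement is $O(L^{-1/2})$, and the limsup and liminf conditions transfer directly. Sequences whose values tend to $-\infty$ are trivial for the limsup condition. This is the paper's argument. Your truncated-hypograph criterion is true, but it follows from this homeomorphism rather than from Lipschitz interpolation.
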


\begin{proof}
By \eqref{eq:height_particle2},
\begin{equation}\label{eq:HL_ya_identity}
	H_L(y_a^{(L)}+1)
	=
	y_a^{(L)}+2a-1+O(1),
	\qquad a\in\bbR,
\end{equation}
where the error is uniform in $a$ and $L$. Set
\begin{equation*}
	\rho_{L,0}:=\sqrt{\frac{1-\rho_L}{\rho_L}},
	\qquad
	\sfz_L(\XX):=\frac{y^{(L)}_{-\XX N_L+1}+1}{L}.
\end{equation*}
The map $\sfz_L$ is continuous and strictly increasing, with
\begin{equation*}
	\sfz_L(\XX+1)=\sfz_L(\XX)+1.
\end{equation*}
Let $\mathsf w_L:=\sfz_L^{-1}$. We have
\begin{align}
	\sfz_L(\XX)-\XX
	&=
	-\rho_{L,0}L^{-1/2}\h_L(\XX), \label{eq:zL_to_alpha_exact}\\
	\mathfrak H_L(\sfz_L(\XX))
	&=
	\h_L(\XX)+O(L^{-1/2}), \label{eq:reparam}\\
	\h_L(\mathsf w_L(\beta))
	&=
	\mathfrak H_L(\beta)+O(L^{-1/2}), \label{eq:reparam_inv}\\
	\mathsf w_L(\beta)-\beta
	&=
	\rho_{L,0}L^{-1/2}\mathfrak H_L(\beta)+O(L^{-1}). \label{eq:reparam_inv2}
\end{align}
Indeed, the first identity follows directly from \eqref{eq:two_profiles}. Using \eqref{eq:HL_ya_identity} with $a=-\XX N_L+1$, we obtain
\begin{equation*}
	H_L(\sfz_L(\XX)L)
	=
	y^{(L)}_{-\XX N_L+1}-2\XX N_L+1+O(1),
\end{equation*}
which gives \eqref{eq:reparam}. The remaining two identities follow by setting $\XX=\mathsf w_L(\beta)$ in \eqref{eq:reparam} and then using \eqref{eq:zL_to_alpha_exact}.

Suppose first that $\h_L\to\h$ in $\uc_1$. By periodicity and the limsup condition in \eqref{eq:UC_criterion}, the functions $\h_L$ are uniformly bounded above. Hence, by \eqref{eq:reparam_inv}, the same is true of $\mathfrak H_L$.
Fix $\beta\in\bbR$. 

Suppose $\beta_L\to\beta$. Set  
\begin{equation*}
	M=\limsup_{L\to \infty} \mathfrak H_L(\beta_L) .
\end{equation*}
There is nothing to prove if $M=-\infty$. 
Thus, passing to a subsequence, assume that
\begin{equation*}
	\mathfrak H_L(\beta_L)\to M \in \bbR. 
\end{equation*}
By \eqref{eq:reparam_inv2}, we find $\mathsf w_L(\beta_L)\to\beta$. 
Thus, using \eqref{eq:reparam_inv} and the limsup half of \eqref{eq:UC_criterion} for $\h_L\to\h$, we obtain
\begin{equation*}
	M
	= \lim_{L\to\infty}\mathfrak H_L(\beta_L)
	= \lim_{L\to\infty}\h_L(\mathsf w_L(\beta_L))
	\le \h(\beta).
\end{equation*}
Thus
\begin{equation*}
	\limsup_{L\to\infty}\mathfrak H_L(\beta_L)\le \h(\beta).
\end{equation*}

For the lower bound, suppose that $\h(\beta)>-\infty$. Choose $\XX_L\to\beta$ such that
\begin{equation*}
	\liminf_{L\to\infty}\h_L(\XX_L)\ge\h(\beta).
\end{equation*}
The limsup condition shows that $\h_L(\XX_L)$ is bounded, and hence \eqref{eq:zL_to_alpha_exact} implies $\beta_L:=\sfz_L(\XX_L)\to \beta$. 
Thus, by \eqref{eq:reparam},
\begin{equation*}
	\liminf_{L\to\infty}\mathfrak H_L(\beta_L)
	=
	\liminf_{L\to\infty}\h_L(\XX_L)
	\ge
	\h(\beta).
\end{equation*} 
This result also holds trivially if $\h(\beta) = -\infty$.
Thus $\mathfrak H_L\to\h$ in $\uc_1$.

The converse follows by the same argument with $(\mathfrak H_L,\sfz_L)$ and $(\h_L,\mathsf w_L)$ interchanged, using \eqref{eq:reparam}--\eqref{eq:reparam_inv2}.
\end{proof}

We now prove Theorem~\ref{thm:mainheight}.

\begin{proof}[Proof of Theorem~\ref{thm:mainheight}]
Let $\HPT_L(\ell,t)$ be the periodic corner growth model of type $(N_L,L)$ appearing in the theorem, and assume that $\HPT_L(0,0)=0$ for all $L$. 
Define the particles $(\sx_i^{(L)}(t))_{i\in\bbZ}$ by \eqref{eq:yiitfhpt}. Then they form a $\mathrm{PTASEP}_{Y_L}(L,N_L)$, where $Y_L=(y_i^{(L)})_{i\in\bbZ}$ is determined from $\HPT_L(\cdot,0)$ by the same relation.

The hypothesis \eqref{eq:initial_height_convergence} states that the rescaled initial height profiles $\mathfrak H_L$ in Lemma~\ref{prop:particle_to_height} converge to $\h$ in $\uc_1$. Hence the corresponding rescaled particle-location profiles satisfy \eqref{eq:initial_scaling}, and Theorem~\ref{thm:main} applies.

It remains to translate the height events into particle-location events. Fix the parameters $(\XX_i,\TT_i,\HH_i)_{i=1}^m$ from Theorem~\ref{thm:mainheight}, and set
\begin{equation*}
	\rho_{L,0}:=\sqrt{\frac{1-\rho_L}{\rho_L}},
	\qquad
	t_i:=\frac{\TT_iL^{3/2}}{\sqrt{\rho_L(1-\rho_L)}},
	\qquad
	x_i:=\XX_iL+(1-2\rho_L)t_i.
\end{equation*}
Define
\begin{equation*}
	\XX_{i,L}
	:=
	\XX_i+\rho_{L,0}L^{-1/2}\HH_i,
\end{equation*}
so that $\XX_{i,L}\to\XX_i$.

Fix $i$ and set $\ell_i:=\lfloor x_i\rfloor$. Since $\HPT_L(\cdot,t_i)$ is linearly interpolated with slopes $\pm1$, replacing $x_i$ by $\ell_i$ changes the scaled height variable by $O(L^{-1/2})$. By the time-$t_i$ version of \eqref{eq:height_particle},
\begin{equation*}
	\HPT_L(\ell_i,t_i)
	=
	2\bigl(\sxx_{t_i}^{-1}(\ell_i-1)-1\bigr)+\ell_i,
	\qquad
	\sxx_{t_i}^{-1}(\ell):=\min\{k\in\bbZ:\sx_k^{(L)}(t_i)\le\ell\}.
\end{equation*}
Consequently, the height inequality
\begin{equation}\label{eq:height_event_i}
	\frac{\HPT_L(x_i,t_i)-(1-2\rho_L)x_i-2\rho_L(1-\rho_L)t_i}
	{-2\sqrt{\rho_L(1-\rho_L)}\,L^{1/2}}
	\le\HH_i
\end{equation}
is equivalent, up to an $O(1)$ perturbation of the integer threshold, to
\begin{equation*}
	\sxx_{t_i}^{-1}(\ell_i-1)
	\ge
	\rho_L^2t_i-\XX_{i,L}N_L+O(1).
\end{equation*}

The inverse relation
\begin{equation*}
	\sxx_{t_i}^{-1}(\ell)\le k
	\quad\Longleftrightarrow\quad
	\sx_k^{(L)}(t_i)\le\ell
\end{equation*}
therefore sandwiches \eqref{eq:height_event_i} between particle-location events of the form
\begin{equation}\label{eq:particle_event_shifted_height}
	\left\{
	\frac{
	-\sx_{\lfloor\rho_L^2t_i-\widetilde\XX_{i,L}N_L\rfloor}^{(L)}(t_i)
	+\widetilde\XX_{i,L}L+(1-2\rho_L)t_i}
	{\rho_{L,0}L^{1/2}}
	\le
	\widetilde\HH_{i,L}
	\right\},
\end{equation}
where
\begin{equation*}
	\widetilde\XX_{i,L}
	=
	\XX_{i,L}+O(L^{-1}),
	\qquad
	\widetilde\HH_{i,L}
	=
	\HH_i+O(L^{-1/2}).
\end{equation*}
The errors are deterministic and uniform for the fixed parameters.

Intersecting these comparisons over $1\le i\le m$, the event in \eqref{eq:lim_bbF_height} is sandwiched between intersections of particle events of the form \eqref{eq:particle_event_shifted_height}. Theorem~\ref{thm:main}, together with the local uniformity of the convergence in the parameters, gives the corresponding values of $\mathbb F_\h^{(m)}$ at the perturbed parameters. Since
$\widetilde\XX_{i,L}\to \XX_i$ and $\widetilde\HH_{i,L}\to \HH_i$, 
their limits equal $\mathbb F_\h^{(m)} \bigl(\bm{\HH};(\XX_i,\TT_i)_{i=1}^m\bigr)$
by the joint continuity in Theorem~\ref{thm:main2}. This proves \eqref{eq:lim_bbF_height}.
\end{proof}

\section{Proof of Theorem~\ref{thm:main2}: Properties of the periodic KPZ fixed point distribution functions} 
\label{sec:proof_thm12}

We prove the properties of $\mathbb{F}_\h^{(m)}$ stated in Theorem~\ref{thm:main2}.

The symmetry property~\textnormal{(d)} is immediate from Definition~\ref{def:F}, and the shift invariance property~\textnormal{(g)} was proved for all parameters in Corollary~\ref{cor:shiftinvlmf}. The periodicity property~\textnormal{(a)} follows from Theorem~\ref{thm:main} and the spatial periodicity of PTASEP, while the monotonicity property~\textnormal{(h)} follows from the basic coupling of PTASEP and Theorem~\ref{thm:main}.

We next consider properties~\textnormal{(b)} and~\textnormal{(c)}. For the periodic narrow-wedge initial condition, they were proved in~\cite[Proposition~A.3]{Baik-Liu2024}. The argument uses only that the one-point function is a continuous cumulative distribution function, which was established for every $\h\in\uc_1$ in Lemma~\ref{lem:consistency}. The same argument therefore proves properties~\textnormal{(b)} and~\textnormal{(c)} for every $\h\in\uc_1$.

We now prove the joint continuity property~\textnormal{(e)}. First consider parameters satisfying part~\textnormal{(i)} of Definition~\ref{def:F}. In \eqref{eq:multi_time1}, the dependence on $(\HH_i,\XX_i,\TT_i)_{i=1}^m$ appears explicitly in the exponential factors of $\rC_\h(\bz)$ in \eqref{eq:Climnoh} and in the factors $\mathrm f_\ell$ of $\rD_\h(\bz)$ in \eqref{eq:f_lim}. These factors are jointly continuous in the parameters. Moreover, the estimates proving the absolute convergence of the series defining $\rD_\h(\bz)$ and of the contour integrals are locally uniform in the parameters. Hence the series and contour integrals converge locally uniformly, proving the joint continuity of $\mathbb{F}_\h^{(m)}$ in the generic time-ordered regime. 
For general parameters, \cite[Proposition~A.4]{Baik-Liu2024} shows that joint continuity of the one-point distribution implies joint continuity of the multipoint distributions defined by the extension rules in parts~\textnormal{(ii)} and~\textnormal{(iii)} of Definition~\ref{def:F}. The argument is independent of the initial condition. Together with Lemma~\ref{lem:consistency}, this proves property~\textnormal{(e)} for all parameters.

\medskip

We devote the remainder of this section to proving the initial condition property~\textnormal{(f)}. We begin by reducing it to the one-point case. 
For every $1\le\ell\le m$,
\begin{equation*}
	0
	\le
	\mathbb{F}^{(m)}_\h
	\bigl(\HH_1,\ldots,\HH_m;(\XX_1,\TT_1),\ldots,(\XX_m,\TT_m)\bigr)
	\le
	\mathbb{F}^{(1)}_\h(\HH_\ell;(\XX_\ell,\TT_\ell)),
\end{equation*}
and
\begin{equation*}
	0
	\le
	1-\mathbb{F}^{(m)}_\h
	\bigl(\HH_1,\ldots,\HH_m;(\XX_1,\TT_1),\ldots,(\XX_m,\TT_m)\bigr)
	\le
	\sum_{\ell=1}^m
	\bigl(
	1-\mathbb{F}^{(1)}_\h(\HH_\ell;(\XX_\ell,\TT_\ell))
	\bigr).
\end{equation*}
Thus, property~\textnormal{(f)} follows once we prove
\begin{equation}\label{eq:limit_tau0_special}
	\lim_{\TT\downarrow0}
	\mathbb{F}^{(1)}_\h(\HH;(\XX,\TT))
	=\begin{dcases}
        0,& \HH<\h(\XX), \\
        1,& \HH>\h(\XX). 
    \end{dcases}
\end{equation}
It is enough to consider $\h(\XX)>-\infty$. By the shift invariance \eqref{eq:translation_invariance}, we may further assume that $\XX=0$ and $\h(0)=0$. We split the proof into the cases $\HH<0$ and $\HH>0$.

First suppose  $\HH<0$. Let $\h_{\mathrm{pnw}}$ denote the periodic narrow-wedge initial condition $\h_{\mathrm{pnw}}(\XX')=-\infty\mathbf{1}_{\XX'\notin\intZ}$. Since $\h(0)=0$ and $\h$ is one-periodic, we have $\h_{\mathrm{pnw}}\le\h$. Monotonicity therefore gives
\begin{equation}\label{eq:onept_upper_bound_case1}
	\mathbb{F}^{(1)}_\h(\HH;(0,\TT))
	\le
	\mathbb{F}^{(1)}_{\h_{\mathrm{pnw}}}(\HH;(0,\TT)).
\end{equation}
For the periodic narrow-wedge initial condition, \cite[Theorem~1.6]{Baik-Liu-Silva22} shows that, for every fixed $x\in\bbR$,
\begin{equation*}
	\lim_{\TT\downarrow0}
	\mathbb{F}^{(1)}_{\h_{\mathrm{pnw}}}
	\bigl(\TT^{1/3}x;(0,\TT)\bigr)
	=
	\mathbf F_{\mathrm{GUE}}(x),
\end{equation*}
where $\mathbf F_{\mathrm{GUE}}$ is the GUE Tracy--Widom distribution function. Since $\HH<0$ is fixed, for every fixed $x\in\bbR$ we have $\HH<\TT^{1/3}x$ for all sufficiently small $\TT$. Hence, by monotonicity in $\HH$,
\begin{equation*}
	\limsup_{\TT\downarrow0}
	\mathbb{F}^{(1)}_{\h_{\mathrm{pnw}}}(\HH;(0,\TT))
	\le
	\mathbf F_{\mathrm{GUE}}(x)
\end{equation*}
for all $x\in \bbR$. 
Letting $x\to-\infty$ gives
\begin{equation}\label{eq:limit_tau0_special_a}
	\lim_{\TT\downarrow0}
	\mathbb{F}^{(1)}_\h(\HH;(0,\TT))
	=
	0, 
	\qquad
	\HH<0.
\end{equation}

It remains to show that
\begin{equation}\label{eq:limit_tau0_special_b}
	\lim_{\TT\downarrow0}
	\mathbb{F}^{(1)}_\h(\HH;(0,\TT))
	=
	1,
	\qquad
	\HH>0.
\end{equation}
Fix $\HH>0$. Since $\h$ is upper semicontinuous and one-periodic, it is bounded above. Choose $M>\HH$ such that
\begin{equation*}
	\sup_{\XX\in\bbR}\h(\XX)<M/2.
\end{equation*}
Moreover, since $\h(0)=0$, there exists $\delta\in(0,1/4)$ such that
\begin{equation*}
	\sup_{\XX\in[-\delta,\delta]}\h(\XX)<\HH_0:=\HH/2.
\end{equation*}
Define
\begin{equation*}
	\hat\h(\XX)
	:=
	\begin{cases}
		\HH_0, \quad &\operatorname{dist}(\XX,\intZ)<\delta,\\
		M,\quad &\operatorname{dist}(\XX,\intZ)\ge\delta.
	\end{cases}
\end{equation*}
The function $\hat\h$ is upper semicontinuous and one-periodic, and satisfies $\h\le\hat\h$. Hence, by monotonicity,
\begin{equation*}
	\mathbb{F}^{(1)}_\h(\HH;(0,\TT))
	\ge
	\mathbb{F}^{(1)}_{\hat\h}(\HH;(0,\TT)).
\end{equation*}
It is therefore enough to prove
\begin{equation}\label{eq:limit_tau0_special_c}
	\lim_{\TT\downarrow0}
	\mathbb{F}^{(1)}_{\hat\h}(\HH;(0,\TT))
	=
	1.
\end{equation}
We prove this by comparing the prelimit periodic corner-growth model with the standard corner-growth model.

Throughout the proof, $L$ tends to infinity through even positive integers. Set
\begin{equation*}
	r_L:=\lfloor\delta L\rfloor,
\end{equation*}
and choose an integer $k_L$ satisfying
\begin{equation*}
	k_L
	=
	r_L- \bigl(\lfloor ML^{1/2}\rfloor-\lfloor\HH_0L^{1/2}\rfloor \bigr)+O(1),
\end{equation*}
where the bounded adjustment is chosen so that the transition segments below are admissible. 
For each even $L$, define $h_L^{\mathrm{per}}\in\confH(L/2,L)$ by 
\begin{equation*}
	h_L^{\mathrm{per}}(k)
	=
	\begin{dcases}
		-\lfloor ML^{1/2}\rfloor+i_L(k),
		\quad &\operatorname{dist}(k,L\intZ) \ge r_L,\\
		-\lfloor\HH_0L^{1/2}\rfloor+i_L(k),
		\quad &\operatorname{dist}(k,L\intZ) \le k_L,
	\end{dcases}
\end{equation*}
where $i_L(k)\in\{0,1\}$ is the parity correction required for an admissible height function, and on each intervening interval satisfying
$k_L< \operatorname{dist}(k,L\intZ)<r_L$, 
define $h_L^{\mathrm{per}}(k)$ to be the admissible linear interpolation with slope $1$ or $-1$ between the two plateaus. Then
\begin{equation*}
	-\frac{h_L^{\mathrm{per}}(L\XX)}{L^{1/2}}
	\longrightarrow
	\hat\h(\XX)
	\qquad\text{in }\uc_1.
\end{equation*}
Let $H_L^{\mathrm{per}}(x,t)$ denote the height function\footnote{In the notation of Theorem~\ref{thm:mainheight}, $H_L^{\mathrm{per}}=\HPT_L$. We write $H_L^{\mathrm{per}}$ to emphasize the periodic setting.} of the periodic corner-growth model of type $(L/2,L)$ with initial condition
$H_L^{\mathrm{per}}(x,0)=h_L^{\mathrm{per}}(x)$.

We use the corner-growth/last-passage-percolation correspondence in coordinates obtained by rotating the usual LPP lattice counterclockwise by $45$ degrees and scaling it by $\sqrt{2}$. Couple the periodic and standard models using the same family
\begin{equation*}
	\{w(i,j):i+j\text{ is odd}\}
\end{equation*}
of independent mean-one exponential random variables. Define the periodic environment by
\begin{equation*}
	w^{\mathrm{per}}(i,j)
	:=
	w\left(i-L\left\lfloor \frac{i}{L} + \frac12\right\rfloor,j\right),
\end{equation*}
so that $w^{\mathrm{per}}(i+L,j)=w^{\mathrm{per}}(i,j)$. For odd lattice points $(i_1,j_1)$ and $(i_2,j_2)$ with $j_1\le j_2$, define
\begin{equation*}
	\mathbb L^{\mathrm{per}}(i_1,j_1;i_2,j_2)
	:=
	\max_\pi\sum_{(i,j)\in\pi}w^{\mathrm{per}}(i,j),
	\qquad
	\mathbb L(i_1,j_1;i_2,j_2)
	:=
	\max_\pi\sum_{(i,j)\in\pi}w(i,j),
\end{equation*}
where the maxima are taken over directed paths from $(i_1,j_1)$ to $(i_2,j_2)$ with steps $(\pm1,1)$. For an initial height function $g$, set
\begin{equation*}
	\mathbb L^{\mathrm{per}}(g;i,j)
	:=
	\max_{(i_0,j_0)}\mathbb L^{\mathrm{per}}(i_0,j_0;i,j),
	\qquad
	\mathbb L(g;i,j)
	:=
	\max_{(i_0,j_0)}\mathbb L(i_0,j_0;i,j),
\end{equation*}
where the maxima are taken over admissible odd lattice points $(i_0,j_0)$ above the initial curve $(k,g(k))$. We denote the corresponding periodic maximizing paths by $\gamma_L^{\mathrm{per}}(p;i,j)$ and $\gamma_L^{\mathrm{per}}(g;i,j)$ in the point-to-point and curve-to-point cases, respectively.

The standard corner-growth/LPP correspondence gives
\begin{equation}\label{eq:cornerLPPc}
	\prob\left(H_L^{\mathrm{per}}(i,t)\ge j\right)
	=
	\prob\left(\mathbb L^{\mathrm{per}}(h_L^{\mathrm{per}};i,j)\le t\right).
\end{equation}
Set
\begin{equation}\label{eq:tnn32}
	t_L:=2\tau L^{3/2},
\end{equation}
and choose an admissible integer $j_L$ of the correct parity such that
\begin{equation}\label{eq:jntn32}
	j_L
	=
	\frac12t_L-\HH L^{1/2}+O(1).
\end{equation}
By \eqref{eq:cornerLPPc} and Theorem~\ref{thm:mainheight},
$\prob\left(\mathbb L^{\mathrm{per}}(h_L^{\mathrm{per}};0,j_L)\le t_L\right)
\to \mathbb F_{\hat\h}^{(1)}(\HH;(0,\tau))$. 
Therefore, \eqref{eq:limit_tau0_special_c} follows once we prove
\begin{equation}\label{eq:to_be_compared}
	\limsup_{\tau\downarrow0}\limsup_{L\to\infty}
	\prob\left(
	\mathbb L^{\mathrm{per}}(h_L^{\mathrm{per}};0,j_L)>t_L
	\right)
	=0.
\end{equation}

We separate the contributions from the central part and the two tails of the periodic initial condition. Define the nonperiodic function
\begin{equation*}
	h_L(k)
	:=
	\begin{cases}
		-k+k_L+h_L^{\mathrm{per}}(-k_L),&k<-k_L,\\
		h_L^{\mathrm{per}}(k),&|k|\le k_L,\\
		k-k_L+h_L^{\mathrm{per}}(k_L),&k>k_L.
	\end{cases}
\end{equation*}
Define also
\begin{equation*}
	h_L^{\mathrm{ltail}}(k)
	:=
	\begin{cases}
		-\lfloor ML^{1/2}\rfloor+i_L(k),&k\le-r_L,\\
		k+r_L-\lfloor ML^{1/2}\rfloor+i_L(-r_L),&k>-r_L,
	\end{cases}
\end{equation*}
and
$h_L^{\mathrm{rtail}}(k):=h_L^{\mathrm{ltail}}(-k)$. 
The functions $h_L^{\mathrm{ltail}}$ and $h_L^{\mathrm{rtail}}$ are step-flat initial profiles. By construction,
\begin{equation}\label{eq:ic_decomposition}
	h_L^{\mathrm{per}}(k)
	\ge
	\min\left\{
	h_L(k),
	h_L^{\mathrm{ltail}}(k),
	h_L^{\mathrm{rtail}}(k)
	\right\}.
\end{equation}
Therefore, by monotonicity and reflection symmetry,
\begin{equation}\label{eq:Lper_split}
	\prob\left(
	\mathbb L^{\mathrm{per}}(h_L^{\mathrm{per}};0,j_L)>t_L
	\right)
	\le
	\prob\left(
	\mathbb L^{\mathrm{per}}(h_L;0,j_L)>t_L
	\right)
	+
	2\prob\left(
	\mathbb L^{\mathrm{per}}(h_L^{\mathrm{ltail}};0,j_L)>t_L
	\right).
\end{equation}
Thus, establishing the following two lemmas proves \eqref{eq:to_be_compared}, which completes the proof of Theorem~\ref{thm:main2} (f).  

\begin{lm}
\label{lm:lpp_center}
We have
\begin{equation*}
	\limsup_{\tau\downarrow0}\limsup_{L\to\infty}
	\prob\left(
	\mathbb L^{\mathrm{per}}(h_L;0,j_L)>t_L
	\right)
	=0.
\end{equation*}
\end{lm}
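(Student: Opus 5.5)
The plan is to dominate $h_L$ from below by a periodic flat profile and then use the known small-$\TT$ behavior of the periodic KPZ fixed point with flat initial condition. Heights in the central part are already of the form $-\lfloor\HH_0L^{1/2}\rfloor+i_L(k)$, and outside $|k|\le k_L$ the profile $h_L$ increases with slope one. So $h_L(k)\ge h_L^{\mathrm{flat}}(k)$ for all $k\in\bbZ$, where
\begin{equation*}
	h_L^{\mathrm{flat}}(k):=-\lfloor\HH_0L^{1/2}\rfloor-1+i'_L(k)
\end{equation*}
and $i'_L(k)\in\{0,1\}$ is the parity correction. For $N=L/2$ this zigzag profile is admissible and lies in $\confH(L/2,L)$, since $L-2N=0$.

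First I will record the monotonicity of curve-to-point last passage in the initial curve: lowering the initial curve only enlarges the set of admissible starting points $(i_0,j_0)$. Hence, under the coupling already fixed,
\begin{equation*}
	\mathbb L^{\mathrm{per}}(h_L;0,j_L)\le \mathbb L^{\mathrm{per}}(h_L^{\mathrm{flat}};0,j_L)
\end{equation*}
almost surely. This is the same monotonicity already used for \eqref{eq:Lper_split}.

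Second, I apply \eqref{eq:cornerLPPc} to the periodic corner growth model started from $h_L^{\mathrm{flat}}$, which is a genuine periodic initial condition. This gives
\begin{equation*}
	\prob\bigl(\mathbb L^{\mathrm{per}}(h_L^{\mathrm{flat}};0,j_L)>t_L\bigr)=1-\prob\bigl(H_L^{\mathrm{flat}}(0,t_L)\ge j_L\bigr).
\end{equation*}
The rescaled initial profile $-h_L^{\mathrm{flat}}(L\XX)/L^{1/2}$ converges in $\uc_1$ to the constant function $\HH_0$. With $\rho_L=1/2$, the scalings \eqref{eq:tnn32} and \eqref{eq:jntn32} are exactly those of Theorem~\ref{thm:mainheight}. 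Theorem~\ref{thm:mainheight}, combined with the shift invariance of Theorem~\ref{thm:main2}(g), then yields
\begin{equation*}
	\limsup_{L\to\infty}\prob\bigl(\mathbb L^{\mathrm{per}}(h_L;0,j_L)>t_L\bigr)\le 1-\mathbb F^{(1)}_{\h_{\mathrm{flat}}}\bigl(\HH-\HH_0;(0,\TT)\bigr),
\end{equation*}
where $\h_{\mathrm{flat}}\equiv0$. The continuity in Lemma~\ref{lem:consistency} handles the $O(1)$ lattice and parity adjustments.

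Third, I send $\TT\downarrow0$. The threshold gap $\HH-\HH_0=\HH/2$ is a fixed positive number. By \cite[Theorem~1.6]{Baik-Liu-Silva22}, for flat initial data $\mathbb F^{(1)}_{\h_{\mathrm{flat}}}(\TT^{1/3}x;(0,\TT))$ converges to a GOE-type Tracy--Widom distribution function $F(x)$. Fix any $x$. For small $\TT$ we have $\HH/2>\TT^{1/3}x$, so monotonicity in the height variable gives $\liminf_{\TT\downarrow0}\mathbb F^{(1)}_{\h_{\mathrm{flat}}}(\HH/2;(0,\TT))\ge F(x)$. Letting $x\to\infty$ makes the right-hand side tend to $1$, which proves the lemma.

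The step I expect to need the most care is the bookkeeping in the second step. One must check that $h_L^{\mathrm{flat}}$ sits below $h_L$ at every lattice site, including the parity corrections. One must also check that the discrete thresholds $j_L$ and $t_L$ correspond, up to $O(L^{-1/2})$ errors in the scaled variables, to the parameters $(\HH-\HH_0,0,\TT)$ in Theorem~\ref{thm:mainheight}, with the correct sign convention for the height normalization by $-2\sqrt{\rho_L(1-\rho_L)}\,L^{1/2}$. If a sign issue appears there, the fallback is to redo the comparison at the level of PTASEP particle positions and invoke Theorem~\ref{thm:main} directly.
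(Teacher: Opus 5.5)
Your proof is correct, but it takes a different and shorter route than the paper's. The paper first removes the periodicity. By planar ordering of geodesics, off the event $A_L$ that a point-to-point geodesic from $\mathrm p_L^{\pm}$ to $(0,j_L)$ crosses the line $i=\mp L/2$, the periodic geodesic stays in one fundamental strip, so $\mathbb L^{\mathrm{per}}(h_L;0,j_L)\le\mathbb L(h_L;0,j_L)$; this gives \eqref{eq:Lpermain_vs_L}. It then bounds $\prob(A_L)$ by the transversal-fluctuation estimate of \cite[Proposition~3.12]{Schmid-Sly26}. Only after that does it lower $h_L$ to a flat profile and use the non-periodic line-to-point GOE limit \cite{Baik-Rains01,Ferrari04}.

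You instead lower $h_L$ to a periodic zigzag profile at level $-\lfloor\HH_0L^{1/2}\rfloor-1$ and stay in the periodic environment throughout. This is legitimate for two reasons. First, curve-to-point monotonicity is pathwise and does not require the curve to be periodic. Second, $h_L\ge-\lfloor\HH_0L^{1/2}\rfloor-1$ everywhere, and the plateau level $\HH_0=\HH/2$ is strictly below $\HH$. Then \eqref{eq:cornerLPPc}, Theorem~\ref{thm:mainheight} and shift invariance give $\limsup_{L}\prob(\mathbb L^{\mathrm{per}}(h_L;0,j_L)>t_L)\le 1-\mathbb F^{(1)}_{\h_{\mathrm{flat}}}(\HH/2;(0,\TT))$. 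The small-$\TT$ flat asymptotics then finish the proof.

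This is exactly the device the paper itself uses for $Q_{1,L}$ in Lemma~\ref{lm:lpp_left} (compare \eqref{eq:bound_P1}). So your argument needs no input the section does not already use, and it makes the geodesic-ordering step and the transversal estimate unnecessary for the central term. The paper's route needs only the classical non-periodic GOE limit here. But since the relaxation-scale flat result is needed anyway for $Q_{1,L}$, yours is the more economical proof.

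Your sign bookkeeping is right, so the fallback to particle positions is not needed. With $\rho_L=1/2$, Theorem~\ref{thm:mainheight} identifies $\{H_L(0,t_L)\ge j_L\}$ with the scaled height being at most $\HH$. Lowering the profile by $\HH_0L^{1/2}$ moves the threshold to $\HH-\HH_0$. The $O(1)$ lattice and parity adjustments can be absorbed by monotonicity in $\HH$ alone, by replacing $\HH/2$ with $\HH/2-\epsilon$ for some $0<\epsilon<\HH/2$.

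One citation slip: the flat (GOE) small-$\TT$ limit is \cite[Theorem~8.2]{Baik-Liu-Silva22}. Theorem~1.6 there is the periodic narrow-wedge (GUE) statement.
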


\begin{lm}
\label{lm:lpp_left}
We have
\begin{equation*}
	\limsup_{\tau\downarrow0}\limsup_{L\to\infty}
	\prob\left(
	\mathbb L^{\mathrm{per}}(h_L^{\mathrm{ltail}};0,j_L)>t_L
	\right)
	=0.
\end{equation*}
\end{lm}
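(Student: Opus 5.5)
We must prove Lemma~\ref{lm:lpp_left}: the contribution of the left-tail step-flat profile $h_L^{\mathrm{ltail}}$ is negligible. The plan is to show that with high probability the periodic last-passage value from the tail region is strictly below $t_L$. The point is that $h_L^{\mathrm{ltail}}$ sits at height about $-ML^{1/2}$ on $\{k\le -r_L\}$. It is then linear with slope $+1$ to the right, so near the origin it is at height $\ge -ML^{1/2}+r_L\gg 0$. The target height $j_L=\tau L^{3/2}-\HH L^{1/2}$ is only $\HH L^{1/2}$ below the flat characteristic $\tfrac12 t_L$, whereas $M>\HH$.

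First, I would remove periodicity for short times. A directed path with steps $(\pm1,1)$ ending at $(0,j_L)$ and starting at height $j_0$ has horizontal displacement at most $j_L-j_0$. Any path from the tail region contributes only if it starts above the curve, hence at height at least $-ML^{1/2}$. It is not clear that it must stay within horizontal distance $L/2$ of the origin; its length is up to about $\tau L^{3/2}$, which is far larger than $L$. So instead I would argue with transversal fluctuations. A maximizing path for time $t_L\sim \tau L^{3/2}$ has transversal fluctuation of order $t_L^{2/3}=(2\tau)^{2/3}L$. This is smaller than $L/4$ with probability tending to $1$ as $\tau\downarrow0$, uniformly in large $L$, by standard transversal-fluctuation bounds for exponential LPP. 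On that event, the periodic and nonperiodic environments agree along all relevant paths, because $w^{\mathrm{per}}=w$ on the strip $|i|<L/2$. Thus $\mathbb L^{\mathrm{per}}(h_L^{\mathrm{ltail}};0,j_L)=\mathbb L(h_L^{\mathrm{ltail}};0,j_L)$ up to an event of small probability. Making this uniform (curve-to-point, with a starting set that extends to $-\infty$) is the main obstacle. I would handle it by bounding $\mathbb L$ from starting points with $|i_0|\ge L/4$ via a union bound over dyadic blocks, using the upper-tail moderate-deviation estimate for point-to-point exponential LPP, $\prob(\mathbb L(p;q)>\mathbb E+sT^{1/3})\le Ce^{-cs}$. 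The expected point-to-point value from horizontal distance $d$ loses $\sim d^2/T$, which yields exponential decay in $d$.

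Next, I would evaluate the nonperiodic problem. This is step-flat initial data in standard TASEP/corner growth, and the KPZ fixed point scaling applies: with $T=t_L$, the scaled fluctuation $T^{1/3}\sim \tau^{1/3}L^{1/2}$. In these units the flat part lies $ML^{1/2}/(\tau^{1/3}L^{1/2})=M\tau^{-1/3}$ below, the threshold corresponds to $\HH\tau^{-1/3}$, and the wedge is at horizontal distance $r_L\sim\delta L$, i.e.\ $\delta\tau^{-2/3}$ in scaled units. By the convergence of \cite{Matetski-Quastel-Remenik21} (or directly by monotonicity), the limiting probability is
\[
\prob\bigl(\cH^{\mathrm{KPZ}}(0,1;\h_\tau)>-\HH\tau^{-1/3}\bigr),
\]
where $\h_\tau$ equals $-M\tau^{-1/3}$ to the left of $-\delta\tau^{-2/3}$ and $-\infty$ elsewhere (up to sign conventions). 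I would bound this probability by the union over flat starting points, i.e.\ by $\prob\bigl(\sup_y\{\mathcal A_1\text{-type}\}>(M-\HH)\tau^{-1/3}+c\,\delta^2\tau^{-4/3}\bigr)$, which tends to $0$ as $\tau\downarrow0$ by Airy-process tail bounds. To avoid depending on the KPZ fixed point limit, I would instead use the prelimit one-point upper-tail bound for flat LPP directly. Combining this with the first step gives the lemma.
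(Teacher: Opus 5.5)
The gap is in your first step, and that step carries essentially all of the difficulty of the lemma.

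On the event that the \emph{standard} geodesic stays in the strip $|i|<L/2$, the coupling $w^{\mathrm{per}}=w$ there only gives $\mathbb L^{\mathrm{per}}(h_L^{\mathrm{ltail}};0,j_L)\ge\mathbb L(h_L^{\mathrm{ltail}};0,j_L)$. You need the reverse inequality: no path in the \emph{periodic} environment that leaves the fundamental strip may exceed $t_L$. Such paths see the strip's weights repeated, not fresh i.i.d.\ weights. Transversal-fluctuation and moderate-deviation bounds for i.i.d.\ exponential LPP therefore say nothing about them; periodic versions are genuinely different estimates, which the paper imports from Schmid--Sly.

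This issue cannot be avoided here. The flat part of $h_L^{\mathrm{ltail}}$ extends to $-\infty$, so there are admissible sources with $i_0<-L/2$ at horizontal distances up to order $\tau L^{3/2}$. Paths from sources with $i_0\le -r_L-L$ wind around the cylinder.

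Your proposed remedy has two problems.
\begin{itemize}
\item It bounds the nonperiodic $\mathbb L$ from sources with $|i_0|\ge L/4$, which is not the relevant quantity once $i_0<-L/2$.
\item It fails as a union bound even in the i.i.d.\ setting. A block of sources at distance $\asymp L$ contains $\asymp L$ points, each with tail $Ce^{-cs}$ where $s\asymp\tau^{-4/3}$ does not grow with $L$. The sum is of order $Le^{-c\tau^{-4/3}}$, which does not vanish as $L\to\infty$.
\end{itemize}
Your second step is fine in outline; the $(M-\beta)\tau^{-1/3}$ term should enter with a minus sign, which is harmless as $\tau\downarrow0$. However, the fallback of a flat one-point bound would not work. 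The full flat profile at height $-ML^{1/2}$ with $M>\beta$ has typical passage time $t_L+2(M-\beta)L^{1/2}>t_L$ at $(0,j_L)$.

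The paper never de-periodizes the left tail. The slope part of $h_L^{\mathrm{ltail}}$ is reachable from the corner $\mathrm p_L=(-r_L,h_L^{\mathrm{ltail}}(-r_L))$, so the maximizing source can be taken on the flat part. Its geodesic to $(0,j_L)$ must then meet the geodesic from $\mathrm p_L$ to $(-r_L,j_L)$, and exchanging the pieces after the meeting point gives
\[
\mathbb L^{\mathrm{per}}(h_L^{\mathrm{ltail}};0,j_L)+\mathbb L^{\mathrm{per}}(\mathrm p_L;-r_L,j_L)\le\mathbb L^{\mathrm{per}}(h_L^{\mathrm{ltail}};-r_L,j_L)+\mathbb L^{\mathrm{per}}(\mathrm p_L;0,j_L).
\]
\begin{itemize}
\item The first term on the right has its endpoint directly above the corner. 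The probability that it exceeds $t_L+3ML^{1/2}$ is therefore controlled by the periodic flat one-point law evaluated at the \emph{positive} argument $\beta+M/2$, which tends to $1$ as $\tau\downarrow0$. This absorbs the unbounded, winding source set without any union bound.
\item The other two terms are periodic point-to-point passage times, controlled by the Schmid--Sly comparison with the expected standard passage time. The parabolic loss $\frac{\delta^2}{2\tau}L^{1/2}$ you identified pushes $\mathbb L^{\mathrm{per}}(\mathrm p_L;0,j_L)$ far below $t_L$. Meanwhile $\mathbb L^{\mathrm{per}}(\mathrm p_L;-r_L,j_L)\ge t_L-2\beta L^{1/2}$ with high probability.
\end{itemize}
This crossing inequality, which trades the curve-to-point problem for a known periodic flat law plus two periodic point-to-point estimates, is the idea your proposal is missing.
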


\begin{proof}[Proof of Lemma~\ref{lm:lpp_center}]
Define
\begin{equation}\label{eq:def_pend1}
	\mathrm p_L^-
	:=
	\bigl(-k_L,h_L(-k_L)\bigr),
	\qquad
	\mathrm p_L^+
	:=
	\bigl(k_L,h_L(k_L)\bigr).
\end{equation}
We compare the periodic and standard passage times with the same initial condition $h_L$. Define 
\begin{equation*}
	A_L
	:=
	\left\{
	\gamma_L^{\mathrm{per}}
	\bigl(\mathrm p_L^-;0,j_L\bigr)
	\text{ intersects the line }i=-L/2
	\right\}.
\end{equation*}
The corresponding event for $\mathrm p_L^+$ and the line $i=L/2$ has the same probability. Because $h_L$ has maximal slopes outside $[-k_L,k_L]$, a maximizing source for $\mathbb L^{\mathrm{per}}(h_L;0,j_L)$ may be chosen on the portion of the initial curve between $\mathrm p_L^-$ and $\mathrm p_L^+$. Planar ordering then places the corresponding curve-to-point geodesic between the point-to-point geodesics from these two endpoints to $(0,j_L)$. Hence, if the curve-to-point geodesic exits the fundamental strip, one of the two endpoint geodesics must intersect the corresponding boundary line.

On the complement of the union of these two events, the periodic maximizing path remains in a single fundamental strip. Under the above coupling, it is then an admissible path in the standard environment with the same weight. Therefore,
\begin{equation}\label{eq:Lpermain_vs_L}
	\prob\left(
	\mathbb L^{\mathrm{per}}(h_L;0,j_L)>t_L
	\right)
	\le
	\prob\left(
	\mathbb L(h_L;0,j_L)>t_L
	\right)
	+
	2\prob(A_L).
\end{equation}

The straight segment joining $\mathrm p_L^-$ to $(0,j_L)$ remains at distance of order $L$ from the line $i=-L/2$. Moreover, by \eqref{eq:tnn32},
\begin{equation}\label{eq:nastau2332}
	L
	=
	(2\tau)^{-2/3}t_L^{2/3}.
\end{equation}
Thus, the constant-fraction transversal estimate proved in \cite[Proposition~3.12]{Schmid-Sly26}\footnote{This estimate is further refined in \cite[Lemma~3.9]{Aggarwal-Corwin-Schmid26}.} gives a constant $c>0$ such that
\begin{equation}\label{eq:transversal_bound}
	\prob(A_L)
	\le
	\re^{-\frac{c}{\tau^{2/3}}}
\end{equation}
for all sufficiently small $\tau$ and sufficiently large even $L$.

On the other hand, since
$h_L(k)\ge-\lfloor\HH_0L^{1/2}\rfloor-1$ 
for all $k$, monotonicity implies
\begin{equation*}
	\prob\left(
	\mathbb L(h_L;0,j_L)>t_L
	\right)
	\le
	\prob\left(
	\mathbb L(h_{\mathrm{flat}};0,j_L+\lfloor\HH_0L^{1/2}\rfloor+1)>t_L
	\right),
\end{equation*}
where $h_{\mathrm{flat}}(k)=\mathbf{1}_{k\text{ even}}$ denotes the flat initial condition. The limit theorem for line-to-point exponential LPP~\cite{Baik-Rains01,Ferrari04} gives
\begin{equation*}
	\prob\left(
	\mathbb L(h_{\mathrm{flat}};0,j_L+\lfloor\HH_0L^{1/2}\rfloor+1)>t_L
	\right)
	\longrightarrow
	1-F_{\mathrm{GOE}}\bigl((2\tau)^{-1/3}\HH\bigr)
\end{equation*}
as $L\to\infty$. Since $\HH>0$, the right-hand side tends to $0$ as $\tau\downarrow0$. Combining this with \eqref{eq:Lpermain_vs_L} and \eqref{eq:transversal_bound} proves the lemma.
\end{proof}

\begin{proof}[Proof of Lemma~\ref{lm:lpp_left}]
Define the point 
\begin{equation}\label{eq:def_pend}
	\mathrm p_L
	:=
	\bigl(-r_L,h_L^{\mathrm{ltail}}(-r_L)\bigr).
\end{equation}
By planarity, the maximizing paths for 
$\mathbb L^{\mathrm{per}}(h_L^{\mathrm{ltail}};0,j_L)$ and
$\mathbb L^{\mathrm{per}}(\mathrm p_L;-r_L,j_L)$ 
intersect. Switching their remaining portions after the first intersection gives
\begin{equation*}
	\mathbb L^{\mathrm{per}}(h_L^{\mathrm{ltail}};0,j_L)
	+
	\mathbb L^{\mathrm{per}}(\mathrm p_L;-r_L,j_L)
	\le
	\mathbb L^{\mathrm{per}}(h_L^{\mathrm{ltail}};-r_L,j_L)
	+
	\mathbb L^{\mathrm{per}}(\mathrm p_L;0,j_L).
\end{equation*}
Thus,
\begin{equation}\label{eq:bound_Lper_P123}
	\prob\left(
	\mathbb L^{\mathrm{per}}(h_L^{\mathrm{ltail}};0,j_L)>t_L
	\right)
	\le
	Q_{1,L}+Q_{2,L}+Q_{3,L},
\end{equation}
where
\begin{equation*}
\begin{aligned}
	Q_{1,L}
	&:=
	\prob\left(
	\mathbb L^{\mathrm{per}}(h_L^{\mathrm{ltail}};-r_L,j_L)
	>
	t_L+3ML^{1/2}
	\right),\\
	Q_{2,L}
	&:=
	\prob\left(
	\mathbb L^{\mathrm{per}}(\mathrm p_L;0,j_L)
	>
	t_L-(3M+2\HH)L^{1/2}
	\right),\\
	Q_{3,L}
	&:=
	\prob\left(
	\mathbb L^{\mathrm{per}}(\mathrm p_L;-r_L,j_L)
	<
	t_L-2\HH L^{1/2}
	\right), 
\end{aligned}
\end{equation*}
since, if none of these three events occurs, then
\begin{equation*}
	\mathbb L^{\mathrm{per}}(h_L^{\mathrm{ltail}};0,j_L)
	\le
	\bigl(t_L+3ML^{1/2}\bigr)
	+
	\bigl(t_L-(3M+2\HH)L^{1/2}\bigr)
	-
	\bigl(t_L-2\HH L^{1/2}\bigr)
	=
	t_L.
\end{equation*}

We first estimate $Q_{1,L}$. Translating the endpoint and comparing with the flat periodic initial condition gives
\begin{equation*}
	Q_{1,L}
	\le
	\prob \bigl(
	\mathbb L^{\mathrm{per}}
	\bigl(h_L^{\mathrm{flat}};0,j_L+ML^{1/2}+O(1)\bigr)
	>
	t_L+3ML^{1/2}
	\bigr).
\end{equation*}
The one-point relaxation-scale limit for periodic LPP with flat initial condition~\cite{Baik-Liu18} gives
\begin{equation}\label{eq:bound_P1}
	\limsup_{L\to\infty}Q_{1,L}
	\le
	1-
	\mathbb F_{\h_{\mathrm{flat}}}^{(1)}
	\bigl(\HH+M/2;(0,\tau)\bigr).
\end{equation}
It was shown in  \cite[Theorem~8.2]{Baik-Liu-Silva22} that, for every fixed $x\in\bbR$, 
$\mathbb F_{\h_{\mathrm{flat}}}^{(1)}
\bigl(\tau^{1/3}x;(0,\tau)\bigr)
\to F_{\mathrm{GOE}}(2^{2/3}x)$
as $\tau\downarrow0$. Since $\HH+M/2>0$, this implies
\begin{equation*}
	\mathbb F_{\h_{\mathrm{flat}}}^{(1)}
	\bigl(\HH+M/2;(0,\tau)\bigr)
	\longrightarrow
	1
\end{equation*}
as $\tau\downarrow0$. Hence
\begin{equation}\label{eq:P1goestoz}
	\lim_{\tau\downarrow0}
	\limsup_{L\to\infty}Q_{1,L}
	=
	0.
\end{equation}

We next estimate $Q_{2,L}$ and $Q_{3,L}$. 
The estimate in \cite[Corollary~3.14]{Schmid-Sly26} compares the periodic last-passage time with the expected value of the corresponding standard last-passage time.
After rotating to the usual up-right coordinates, the shape theorem for standard point-to-point exponential LPP~\cite{Rost80,Johansson00} gives
\begin{equation*}
	\bbE\left[\mathbb L(a,b;0,c)\right]
	=
	(c-b)+\sqrt{(c-b)^2-a^2}
	+
	O\bigl((c-b)^{1/3}\bigr)
\end{equation*}
as $c-b\to\infty$. Therefore,
\begin{equation*}
	\bbE\left[\mathbb L(\mathrm p_L;0,j_L)\right]
	=
	t_L+2(M-\HH)L^{1/2}
	-
	\frac{\delta^2}{2\tau}L^{1/2}
	+
	O\bigl(\tau^{1/3}L^{1/2}\bigr)
	+
	o\bigl(L^{1/2}\bigr),
\end{equation*}
and
\begin{equation*}
	\bbE\left[\mathbb L(\mathrm p_L;-r_L,j_L)\right]
	=
	t_L+2(M-\HH)L^{1/2}
	+
	O\bigl(\tau^{1/3}L^{1/2}\bigr)
	+
	o\bigl(L^{1/2}\bigr).
\end{equation*}
Consequently,
\begin{equation*}
\begin{aligned}
	Q_{2,L}
	&=
	\prob \left(
	\mathbb L^{\mathrm{per}}(\mathrm p_L;0,j_L)
	-
	\bbE\left[\mathbb L(\mathrm p_L;0,j_L)\right]
	>
	\bigl(
	\frac{\delta^2}{2\tau}-5M+O(\tau^{1/3})
	\bigr)L^{1/2}
	+
	o\bigl(L^{1/2}\bigr)
	\right),
\end{aligned}
\end{equation*}
and
\begin{equation*}
\begin{aligned}
	Q_{3,L}
	&=
	\prob \left(
	\mathbb L^{\mathrm{per}}(\mathrm p_L;-r_L,j_L)
	-
	\bbE\left[\mathbb L(\mathrm p_L;-r_L,j_L)\right]
	<
	-
	\bigl(2M+O(\tau^{1/3})\bigr)L^{1/2}
	+
	o\bigl(L^{1/2}\bigr)
	 \right).
\end{aligned}
\end{equation*}
Using \eqref{eq:nastau2332}, \cite[Corollary~3.14]{Schmid-Sly26} gives constants $C,c>0$ such that
\begin{equation}\label{eq:P2P3es}
	Q_{2,L}
	\le
	\re^{-C\left(\frac{\delta^2}{2\tau^{3/2}}-\frac{5M}{\tau^{1/2}}\right)},
	\qquad
	Q_{3,L}
	\le
	\re^{-\frac{cM}{\tau^{1/2}}}
\end{equation}
for all sufficiently small $\tau$ and sufficiently large even $L$.

Lemma~\ref{lm:lpp_left} follows from \eqref{eq:bound_Lper_P123}, \eqref{eq:P1goestoz}, and \eqref{eq:P2P3es}.
\end{proof}

Combining Lemmas~\ref{lm:lpp_center} and~\ref{lm:lpp_left} with \eqref{eq:Lper_split} proves \eqref{eq:to_be_compared}, and hence \eqref{eq:limit_tau0_special_c}.

\section{Proof of Proposition~\ref{prop:onepoint_b}: The one-point small-period limit}
\label{sec:proofofsmalltconj}

\begin{proof}
Set
\begin{equation*}
	c_0:=\frac{\sqrt{2}}{\pi^{1/4}},
	\qquad
	Z_\rp(\h):=
	c_0\rp^{1/4}
	\left(\cH^{\mathrm{PKPZ}}_\rp(\XX,\TT;\h)+\rp^{-1}\tau\right).
\end{equation*}
Fix $\epsilon>0$. 
By assumption, for all sufficiently small $\rp$,
\begin{equation*}
	\h_\rp(\XX)\le \epsilon\rp^{-1/4}
	\qquad \text{for all }\XX.
\end{equation*}
By the monotonicity and shift invariance of Theorem~\ref{thm:main2}, 
\begin{equation*}
	\prob(Z_\rp(\h_\rp)>z)
	\le \prob(Z_\rp(\epsilon\rp^{-1/4})>z)
	= \prob(Z_\rp(\h_{\mathrm{flat}})>z-c_0\epsilon), 
\end{equation*}
where $\h_{\mathrm{flat}}\equiv 0$. 
For the flat initial condition, it was shown in~\cite[Theorem~8.1]{Baik-Liu-Silva22} that $\prob(Z_\rp(\h_{\mathrm{flat}})>z-c_0\epsilon)$ converges to $\prob(\B(\TT)>z-c_0\epsilon)$ as $\rp\to 0$. 
Hence, 
\begin{equation} \label{eq:prpup}
	\limsup_{\rp\to0}\prob(Z_\rp(\h_\rp)>z)
	\le
	\prob(\B(\TT)>z-c_0\epsilon).
\end{equation}

For the lower bound, the assumption also implies that, for all sufficiently small $\rp$, there exists $\XX_\rp$ such that
\begin{equation*}
	\h_\rp(\XX_\rp)\ge -\epsilon\rp^{-1/4}.
\end{equation*}
Thus, by the periodicity, 
\begin{equation*}
	\h_\rp(\alpha) \ge  \hat{\h}_{\rp}(\alpha):= -\epsilon\rp^{-1/4}+\h_{\mathrm{pnw}}(\alpha-\XX_\rp), 
\end{equation*}
where $\h_{\mathrm{pnw}}(\alpha)= -\infty \mathbf{1}_{\alpha\notin \rp\bbZ}$. 
By the monotonicity and shift invariance, 
\begin{equation*}
\begin{aligned}
	\prob(Z_\rp(\h_\rp)\le z)
	&\le 
	\prob(Z_\rp(\hat{\h}_{\rp}) \le z) 
	=
	\prob\left(
	c_0\rp^{1/4}
	\left(
	\cH^{\mathrm{PKPZ}}_\rp(\XX-\XX_\rp,\TT;\h_{\mathrm{pnw}})
	+\rp^{-1}\tau
	\right)
	\le z+c_0\epsilon
	\right). 
\end{aligned}
\end{equation*}
It was shown in~\cite[Theorem~1.5]{Baik-Liu-Silva22} that for $\XX=0$ the right-hand side converges to 
$\prob\left(\B(\TT)\le z+c_0\epsilon\right)$. 
Although the theorem is stated there at $\XX=0$, the proof is spatially uniform and does not depend on the value of the spatial parameter; hence the same limit holds with $\XX-\XX_\rp$ in place of $0$. 
Thus
\begin{equation} \label{eq:prplo}
	\liminf_{\rp\to0}\prob(Z_\rp(\h_\rp)>z)
	\ge
	\prob(\B(\TT)>z+c_0\epsilon).
\end{equation}

Since $\epsilon>0$ is arbitrary, \eqref{eq:prpup} and \eqref{eq:prplo} imply the proposition.
\end{proof}

\section{Periodic narrow-wedge and flat initial conditions}
\label{sec:examples}

In this section, we evaluate the two initial-condition-dependent quantities in the limiting formula,
\begin{equation*}
	\pchlim_\h(\eta,\xi)
	\qquad\text{and}\qquad
	\det(\rI+\limKe_\h(\rz))_{L^2(\bbR)},
\end{equation*}
for the periodic narrow-wedge initial condition $\h_{\mathrm{pnw}}(\XX)=-\infty\mathbf{1}_{\XX\notin\bbZ}$ 
and the flat initial condition $\h_{\mathrm{flat}}\equiv0$. The resulting formulas agree with those obtained in~\cite{Baik-Liu19,Baik-Liu21}.

The determinant factorization in Proposition~\ref{result:KasQ} reduces the flat case to the periodic narrow-wedge determinant and a relative determinant on $L^2(\bbR_+)$. These determinants, as well as the periodic narrow-wedge characteristic function, are evaluated using Wiener--Hopf methods. We collect the required preliminaries in Subsection~\ref{sec:WH_setup} and treat the two initial conditions in Subsections~\ref{sec:examples_pnw} and~\ref{sec:examples_flat}.

\subsection{Wiener--Hopf preliminaries}
\label{sec:WH_setup}

Let
\begin{equation}\label{eq:FTrr}
	\mathcal F(f)(\lambda)
	:=
	\int_\bbR \re^{-\ri\sfx\lambda}f(\sfx)\,\rd\sfx
\end{equation}
denote the Fourier transform. Let $E:L^2(\bbR_+)\to L^2(\bbR)$ be extension by zero and let $R:L^2(\bbR)\to L^2(\bbR_+)$ be restriction, so that $P_+:=ER:L^2(\bbR)\to L^2(\bbR)$ is the orthogonal projection onto $L^2(\bbR_+)$. 
For $b\in L^\infty(\bbR)$, define the Wiener--Hopf operator
\begin{equation*}
	W(b):=R\mathcal F^{-1}M_b\mathcal F E
\end{equation*}
on $L^2(\bbR_+)$, where $M_b$ denotes multiplication by $b$.

Suppose that $b=b_+b_-$ is a Wiener--Hopf factorization, where $b_+$ and $1/b_+$ extend to bounded analytic functions on $\bbC_+$, and $b_-$ and $1/b_-$ extend to bounded analytic functions on $\bbC_-$. Krein's theorem~\cite{Bottcher-Silbermann06} gives\footnote{The book~\cite{Bottcher-Silbermann06} uses the convention $\mathcal F(f)(\lambda)=\int_\bbR \re^{\ri\sfx\lambda}f(\sfx)\,\rd\sfx$, under which \eqref{eq:WH_inverse} takes the form $W(b)^{-1}=W(1/b_+)W(1/b_-)$. That convention maps $L^2(\bbR_+)$ to $H^2(\bbC_+)$, whereas \eqref{eq:FTrr} maps it to $H^2(\bbC_-)$, accounting for the reversed order of the factors.}
\begin{equation}\label{eq:WH_inverse}
	W(b)^{-1}=W(1/b_-)\,W(1/b_+).
\end{equation}

Recall the heat-kernel operator $\mathsf G$ from \eqref{eq:limkernel2TT}. Its Fourier multiplier is $\re^{-\lambda^2/2}$, and hence $\mathcal F(\rI-\rz\mathsf G)\mathcal F^{-1}=M_a$ with the symbol 
\begin{equation} \label{eq:asymb}
	a(\lambda):=1-\rz\re^{-\lambda^2/2}.
\end{equation}
Consequently, for $0<|\rz|<1$,
\begin{equation}\label{eq:WH_IzG}
	R(\rI-\rz\mathsf G)E=W(a),
	\qquad
	R(\rI-\rz\mathsf G)^{-1}E=W(1/a).
\end{equation}
Define
\begin{equation*}
	a_\pm(\lambda)
	:=
	\exp\left[
	\pm\frac{1}{2\pi\ri}
	\int_\bbR
	\frac{\log(1-\rz\re^{-t^2/2})}{t-\lambda}\,\rd t
	\right],
	\qquad \lambda\in\bbC_\pm.
\end{equation*}
By the Sokhotski--Plemelj formula, the boundary values satisfy $a=a_+a_-$, and
$\lim_{|\lambda|\to\infty}a_\pm(\lambda)=1$ in $\bbC_\pm$. 
Since~\cite[(2.25)]{Baik-Liu19}
\begin{equation*}
	\int_{-\ri\infty}^{\ri\infty}
	\frac{\log(1-\rz\re^{w^2/2})}{w-\zeta}\,
	\frac{\rd w}{2\pi\ri}
	=
	-\frac{1}{\sqrt{2\pi}}
	\int_{-\infty}^{\zeta}
	\polylog_{1/2}\!\left(\rz\re^{(\zeta^2-y^2)/2}\right)
	\,\rd y
\end{equation*}
for $0<|\rz|<1$ and $\Re(\zeta)<0$, we find that 
\begin{equation}\label{eq:apm_h}
	a_+(\ri\eta)=\re^{\hftn(\eta,\rz)}
	\quad\text{for $\Re(\eta)>0$},
	\qquad
	a_-(\ri\xi)=\re^{\hftn(\xi,\rz)}
	\quad\text{for $\Re(\xi)<0$},
\end{equation}
where $\hftn$ is defined in \eqref{eq:def_h_R}.

We need the following formula for the Widom constant associated with the symbol \eqref{eq:asymb}.

\begin{lm}\label{lem:Widom}
For $0<|\rz|<1$, let $a(\lambda):=1-\rz\re^{-\lambda^2/2}$. 
Then, the operator $W(1/a)W(a)-\I_+$ is trace class on $L^2(\bbR_+)$, and
\begin{equation}\label{eq:Widom_constant}
	\det\!\bigl(W(1/a)W(a)\bigr)_{L^2(\bbR_+)}
	=
	\re^{2B(\rz)}
\end{equation}
with $B(\rz)$  defined in \eqref{eq:def_Bz}.
\end{lm}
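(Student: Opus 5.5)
The plan is to use the classical Widom/Kac–Akhiezer formula for truncated Wiener–Hopf operators. For a symbol $b$ with $\log b$ sufficiently regular, the operator $W(b^{-1})W(b)-\I_+$ is trace class. Moreover,
\begin{equation*}
\det\bigl(W(b^{-1})W(b)\bigr)=\exp\Bigl(\int_0^\infty x\,\widehat{\log b}(x)\,\widehat{\log b}(-x)\,\rd x\Bigr)
\end{equation*}
with a suitable normalization of the Fourier transform. In our symmetric situation we could equally use the operator form $\det(W(b)W(b^{-1}))=\exp\operatorname{Tr}(H(\log b)H(\widetilde{\log b}))$ in terms of Hankel operators.

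\textbf{Trace class.} We write $1/a = 1+\sum_{k\ge1}\rz^k \re^{-k\lambda^2/2}$ and $a=1-\rz \re^{-\lambda^2/2}$. Then $W(1/a)W(a)-\I_+ = W(1/a)W(a)-W(1/a\cdot a)$, and by Widom's identity this equals $-H(1/a)H(\tilde a)$, where $H(\cdot)$ denotes Hankel operators on $L^2(\bbR_+)$ and $\tilde a(\lambda)=a(-\lambda)$. Since $a$ is even, $\tilde a=a$. The Hankel operator $H(a)=-\rz H(\re^{-\lambda^2/2})$ has kernel $-\rz\,\mathsf G(\sfx,-\sfy)=-\rz(2\pi)^{-1/2}\re^{-(\sfx+\sfy)^2/2}$ on $\bbR_+^2$, which is Hilbert--Schmidt. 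The same holds for $H(1/a)$, whose kernel is $\sfS_\rz(\sfx,-\sfy)$, using the bound $\sum_k |\rz|^k k^{-1/2}\re^{-(\sfx+\sfy)^2/(2k)}$ and the computation already done in the proof of Proposition~\ref{result:KasQ}(b), where $\|R\sfS_\rz P_-\|_{\mathrm{HS}}^2<\infty$. Hence the product is trace class.

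\textbf{Determinant.} The determinant will be computed via $\log a=-\sum_{n\ge1}\frac{\rz^n}{n}\re^{-n\lambda^2/2}$. We deform $\rz\mapsto s\rz$ for $s\in[0,1]$ and differentiate $\log\det$. Alternatively, and more simply, we apply the Kac–Akhiezer formula directly: with $\log a=\mathcal F(k)$, where
\begin{equation*}
k(\sfx)=-\sum_{n\ge1}\frac{\rz^n}{n}\frac{1}{\sqrt{2\pi n}}\re^{-\sfx^2/(2n)},
\end{equation*}
the limit formula gives
\begin{equation*}
\log\det\bigl(W(1/a)W(a)\bigr)=\int_0^\infty x\,k(x)k(-x)\,\rd x .
\end{equation*}
To justify this exactly (not just asymptotically), we use that $\det(W(a^{-1})W(a))=\det(W(a_-^{-1})W(a_+^{-1})W(a_+)W(a_-))$. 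By \eqref{eq:WH_inverse}-type factorizations, this reduces to $\det\bigl(\I_+ + \ldots\bigr)=\exp\operatorname{Tr}\bigl(H(\log a)H(\widetilde{\log a})\bigr)$, which is the standard exact identity for $\det W(e^{f})W(e^{-f})$ when $f$ is even (or $f_\pm$ analytic). The trace equals $\int_0^\infty x\,k(x)k(-x)\,\rd x$ up to the convention.

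Inserting the series for $k$, we get
\begin{equation*}
\int_0^\infty x\,\re^{-x^2/(2n)-x^2/(2n')}\,\rd x=\frac{nn'}{n+n'},
\end{equation*}
so the exponent is
\begin{equation*}
\frac{1}{2\pi}\sum_{n,n'\ge1}\frac{\rz^{n+n'}}{nn'\sqrt{nn'}}\cdot\frac{nn'}{n+n'}=\frac{1}{2\pi}\sum_{n,n'}\frac{\rz^{n+n'}}{(n+n')\sqrt{nn'}}=2B(\rz),
\end{equation*}
which matches \eqref{eq:def_Bz}. Exchanging sums and integrals is justified by absolute convergence for $|\rz|<1$.

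\textbf{Main obstacle.} The main obstacle is making the exact identity rigorous in the operator setting. Specifically, one must show that $\log\det(W(e^{-f})W(e^{f}))=\operatorname{Tr}(H(f)H(\tilde f))$ with $f=\log a$; this holds exactly because the factors $a_\pm$ are bounded and invertible in $H^\infty(\bbC_\pm)$, and it requires that the Hankel operators of $\log a$ be Hilbert--Schmidt. The latter follows from the Gaussian-mixture form of $k$ and the bound $\sum_n |\rz|^n n^{-1}\cdot n<\infty$ for the weighted $L^2$ norm $\int_0^\infty x|k(x)|^2\,\rd x$. I would first verify the identity by the $s$-derivative method: both sides vanish at $s=0$, and their $s$-derivatives agree by the Wiener--Hopf inverse \eqref{eq:WH_inverse}.
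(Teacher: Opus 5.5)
Your proposal follows the paper's proof: the paper likewise invokes the exact Akhiezer--Kac (Widom) identity $\log\det(W(1/a)W(a))=\int_0^\infty \sfx\,s(\sfx)s(-\sfx)\,\rd\sfx$ with $s=\mathcal F^{-1}(\log a)$ from B\"ottcher--Silbermann (Section~10.84), which also supplies the trace-class property, and then carries out your Gaussian computation $\int_0^\infty \sfx\,\re^{-\sfx^2/(2n)-\sfx^2/(2n')}\,\rd\sfx=nn'/(n+n')$ to obtain $2B(\rz)$. One correction to your remark on exactness: by \eqref{eq:WH_inverse}, the product $W(a_-^{-1})W(a_+^{-1})W(a_+)W(a_-)$ equals $W(a)^{-1}W(a)=\I_+$, so your factors are in the wrong order (in the paper's convention $W(1/a)W(a)=W(1/a_+)W(1/a_-)W(a_+)W(a_-)$), and it is cleaner simply to cite the exact identity, as the paper does.
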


\begin{proof}
By the Akhiezer--Kac formula~\cite[Section~10.84]{Bottcher-Silbermann06}, the operator $W(1/a)W(a)-\I_+$ is trace class and
\begin{equation}\label{eq:Akhiezer_Kac}
	\log\det\!\bigl(W(1/a)W(a)\bigr)_{L^2(\bbR_+)}
	=
	\int_0^\infty \sfx\,s(\sfx)s(-\sfx)\,\rd\sfx,
\end{equation}
where $s:=\mathcal F^{-1}(\log a)$. 
Since $|\rz|<1$,
\begin{equation}\label{eq:sxformula}
	\log a(\lambda)
	=
	-\sum_{k\ge1}\frac{\rz^k}{k}\re^{-k\lambda^2/2}, 
	\qquad 
	s(\sfx)
	=
	-\frac{1}{\sqrt{2\pi}}
	\sum_{k\ge1}\frac{\rz^k}{k^{3/2}}
	\re^{-\sfx^2/(2k)}.
\end{equation}
Substituting \eqref{eq:sxformula} into \eqref{eq:Akhiezer_Kac} implies 
\begin{equation} \label{eq:intofss}
	\int_0^\infty \sfx\,s(\sfx)s(-\sfx)\,\rd\sfx
	=
	\frac{1}{2\pi}
	\sum_{k,k'\ge1}
	\frac{\rz^{k+k'}}{(k+k')\sqrt{kk'}}. 
\end{equation}
From the definition \eqref{eq:def_Bz}, the result follows. 
\end{proof}

\subsection{The periodic narrow-wedge initial condition}
\label{sec:examples_pnw}

Throughout this subsection, let $\h_{\mathrm{pnw}}$ denote the periodic narrow-wedge initial condition, defined by $\h_{\mathrm{pnw}}(\XX)=-\infty\mathbf{1}_{\XX\notin\bbZ}$. 

\begin{prop}\label{prop:pnw}
For $\h=\h_{\mathrm{pnw}}$,
\begin{equation}\label{eq:pnw_pchlim}
	\pchlim_{\mathrm{pnw}}(\eta,\xi)
	=
	\frac{\re^{\hftn(\xi,\rz)+\hftn(\eta,\rz)}}{\eta-\xi},
	\qquad
	\rz=\re^{-\eta^2/2},
\end{equation}
for $\arg(\eta)\in(-\pi/4,\pi/4)$ and $\Re(\xi)<0$.
\end{prop}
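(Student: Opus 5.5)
We compute $\pchlim_{\mathrm{pnw}}(\eta,\xi)$ directly from Definition~\ref{def:pch_lim}, using the Wiener--Hopf setup of Subsection~\ref{sec:WH_setup}. For $\h=\h_{\mathrm{pnw}}$ the hypograph of $\sfx\mapsto\h(-\sfx)$ consists of the half-lines $\{\sfx=-k\}\times(-\infty,0]$, $k\in\bbZ$. The stopping time $\btau$ is then $0$ if $\B(0)=s\le0$, and $\infty$ otherwise (a.s.). Thus $\mathbf{1}_{\btau<1}=\mathbf{1}_{s\le0}$, and on this event $\B(\btau)=s$. Similarly, $\bntau$ is the first integer $k\ge1$ with $\B(k)\le0$. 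This gives
\begin{equation*}
	\pchlim_{\mathrm{pnw}}(\eta,\xi)
	=\int_{-\infty}^0 e^{-s\xi}\Bigl(e^{\eta s}-\sum_{k\ge1}e^{-k\eta^2/2}\,\bbE_{s}\bigl[e^{\eta\B(k)}\mathbf{1}_{\B(1)>0,\ldots,\B(k-1)>0,\ \B(k)\le0}\bigr]\Bigr)\rd s .
\end{equation*}
The first term is $\int_{-\infty}^0e^{s(\eta-\xi)}\rd s=1/(\eta-\xi)$, which is convergent since $\Re(\eta-\xi)>0$. The factor $e^{-k\eta^2/2}=\rz^k$ is the $\rz^k$ in the statement.

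The sum is the generating function of a random walk $X_n=\B(n)$ with Gaussian $N(0,1)$ steps, started at $s\le 0$. Its terms record the first return to $(-\infty,0]$ after at least one step (first a jump to the positive side, then the walk stays positive until it comes back down). This is a classical ladder-variable problem, so we plan to use the Wiener--Hopf (Spitzer--Baxter) factorization. Concretely, write the quantity as
\begin{equation*}
	\int_{-\infty}^0 e^{-s\xi}\,\bigl(f(s)-\bbE_s[\rz^{\kappa}e^{\eta X_\kappa}\mathbf{1}_{\kappa<\infty}]\bigr)\,\rd s,
\end{equation*}
with $\kappa=\bntau$ and $f(s)=e^{\eta s}$. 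Then decompose the walk at the step leaving $(-\infty,0]$. In operator language: let $\mathsf G_-=P_-\mathsf G P_-$ and $\mathsf G_+=R\mathsf G E$. The first excursion to $\bbR_+$ and its return are expressed through $(\I_+-\rz\mathsf G_+)^{-1}=W(1/a)$, using \eqref{eq:WH_IzG}. The double Laplace transform in $(s,\text{endpoint})$ then becomes a pairing of the exponentials $e^{\eta\cdot}$ and $e^{-\xi\cdot}$ against $W(1/a)$. Since $\Re(\eta^2)>0$ only gives decay in the time variable, we will first work with $|\rz|<1$ real-analytically and only at the end impose $\rz=e^{-\eta^2/2}$.

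Applying Krein's formula \eqref{eq:WH_inverse}, $W(1/a)=W(1/a_-)W(1/a_+)$. Exponentials $e^{\eta\sfx}$ restricted to half-lines are (up to the Fourier convention \eqref{eq:FTrr}) reproducing kernels of $H^2$. Hence $W(1/a_\pm)$ acts on them by multiplication by $1/a_\pm$ evaluated at $\ri\eta$ or $\ri\xi$. This should collapse the whole expression to
\begin{equation*}
	\frac{1}{\eta-\xi}\cdot\frac{\text{(terms in }a_\pm)}{\ldots},
\end{equation*}
and the relation $\rz=e^{-\eta^2/2}$, equivalently $a(\ri\eta)=0$, is what kills the remaining cross term. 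The end result is $a_+(\ri\eta)a_-(\ri\xi)/(\eta-\xi)$. By \eqref{eq:apm_h} this equals $e^{\hftn(\eta,\rz)+\hftn(\xi,\rz)}/(\eta-\xi)$.

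The main obstacle is this final identification. We need to keep track of which factor ($a_+$ or $a_-$, and $a$ versus $1/a$) attaches to $\eta$ and which to $\xi$, with the correct signs in \eqref{eq:FTrr}. We also need to justify that the zero $a(\ri\eta)=0$ makes the apparently singular contribution vanish rather than blow up. We would first verify the formula at the level of formal power series in $\rz$ (the $\rz^1$ term is an explicit Gaussian integral) as a consistency check, and then extend to $\rz=e^{-\eta^2/2}$ by analytic continuation in $\eta$, using the bound of Lemma~\ref{prop:plim_bound}.
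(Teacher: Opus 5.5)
There is a genuine gap at the central step. The operator identities you rely on are wrong, and the idea that makes the computation collapse is missing.

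By \eqref{eq:WH_IzG}, $W(1/a)=R(\rI-\rz\mathsf G)^{-1}E$ is the compression of the resolvent. By contrast, $(\I_+-\rz\mathsf G_+)^{-1}=W(a)^{-1}$ is the inverse of the compression, and these are different operators. Krein's formula \eqref{eq:WH_inverse} gives $W(a)^{-1}=W(1/a_-)W(1/a_+)$ and $W(1/a)^{-1}=W(a_-)W(a_+)$. It does not give $W(1/a)=W(1/a_-)W(1/a_+)$.

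Suppose you pair the exponentials against $W(1/a_-)W(1/a_+)$ as your sketch suggests, placing them on $\bbR_+$ where they are in $L^2$. Then $W(1/a_+)$ acts on $u_\eta$ by the scalar $1/a_+(\ri\eta)$, and you obtain $1/\bigl(a_+(\ri\eta)a_-(\ri\xi)(\eta-\xi)\bigr)$. That is the reciprocal of the correct numerator. In your unreflected setup the situation is worse: $(\I_+-\rz\mathsf G_+)^{-1}$ sits between the Hankel-type blocks $G_{-+}$ and $G_{+-}$ and never acts on an exponential, so the reproducing-kernel collapse is not available. The final formula $a_+(\ri\eta)a_-(\ri\xi)/(\eta-\xi)$ is therefore asserted rather than derived.

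The missing step is a Schur complement. After reflecting $s\mapsto-s$ and setting $u_\eta(\sfx)=\re^{-\eta\sfx}$ on $\bbR_+$, the first term is $1/(\eta-\xi)=\langle u_\eta,u_{-\xi}\rangle_{L^2(\bbR_+)}$. It combines with the excursion series into $\langle Su_\eta,u_{-\xi}\rangle$, where $S=\I_+-\rz G_{++}-\rz^2G_{+-}(\I_--\rz G_{--})^{-1}G_{-+}$. This $S$ is the Schur complement of the $(-,-)$ block of $\rI-\rz\mathsf G$. Hence $S^{-1}=R(\rI-\rz\mathsf G)^{-1}E=W(1/a)$, and so $S=W(a_-)W(a_+)$. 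Now $W(a_+)u_\eta=a_+(\ri\eta)u_\eta$, multiplication by $a_-$ preserves $H^2(\bbC_-)$, and evaluating at $\lambda=\ri\xi$ gives the result. This is the paper's argument.

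There is also no cross term to kill. The relation $\rz=\re^{-\eta^2/2}$ enters only through $\re^{-k\eta^2/2}=\rz^k$, so the power-series check and the analytic continuation are unnecessary. Using $a(\ri\eta)=0$, i.e.\ $\rz\mathsf G\re^{\eta\cdot}=\re^{\eta\cdot}$ on $\bbR$, to cancel terms amounts to formal optional stopping of $\re^{\eta\B(t)-\eta^2t/2}$ at $\bntau$. That would give $\pchlim_{\mathrm{pnw}}\equiv0$. It is invalid because $\re^{\eta\sfx}$ is not in $L^2$ on the excursion half-line, so the martingale is not uniformly integrable.

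A minor slip: for $s>0$, $\btau$ is the first integer $k\ge1$ with $\B(k)\le0$, which is a.s.\ finite, not $\infty$. Only $\mathbf{1}_{\btau<1}=\mathbf{1}_{s\le0}$ is used, so this is harmless.
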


\begin{proof}
Recall the definition \eqref{eq:pch_lim} of $\pchlim_\h$. For $\h=\h_{\mathrm{pnw}}$, the event $\{\btau<1\}$ is $\{\B(0)\le0\}$, and on this event $\btau=0$. Hence the first term in $\pchlim_{\mathrm{pnw}}(\eta,\xi)$ is
\begin{equation}\label{eq:pnw_term1_compute}
	\int_{-\infty}^0\re^{(\eta-\xi)s}\,\rd s
	=
	\frac{1}{\eta-\xi}.
\end{equation}

The stopping time $\bntau$ takes values in $\bbZ_{>0}$, with $\bntau=k$ if and only if
\begin{equation*}
	\B(1),\ldots,\B(k-1)>0,
	\qquad
	\B(k)\le0.
\end{equation*}
Therefore, the second term in $\pchlim_{\mathrm{pnw}}(\eta,\xi)$ equals
\begin{equation*}
\begin{aligned}
	\sum_{k=1}^\infty
	\int_{-\infty}^0\re^{-s_0\xi}\,\rd s_0
	\int_0^\infty\rd s_1\cdots
	\int_0^\infty\rd s_{k-1}
	\int_{-\infty}^0\re^{\eta s_k-\frac12\eta^2k}
	\prod_{j=0}^{k-1}\mathsf G(s_j,s_{j+1})\,\rd s_k.
\end{aligned}
\end{equation*}
Changing variables $s_j\mapsto-s_j$ gives
\begin{equation}\label{eq:pnw_term2_compute}
	\sum_{k=1}^\infty \rz^k 
	\int_0^\infty\re^{\xi s_0}\,\rd s_0
	\int_{-\infty}^0\rd s_1\cdots
	\int_{-\infty}^0\rd s_{k-1}
	\int_0^\infty
	\prod_{j=0}^{k-1}\mathsf G(s_j,s_{j+1})
	\re^{-\eta s_k}\,\rd s_k,
\end{equation}
where we set 
\begin{equation}\label{eq:zetesq}
	\rz:=\re^{-\eta^2/2}.
\end{equation}
Since $\arg(\eta)\in(-\pi/4,\pi/4)$, we have $|\rz|<1$. 

For $a,b\in\{+,-\}$, let $G_{ab}:L^2(\bbR_b)\to L^2(\bbR_a)$ denote the corresponding block of $\mathsf G$, and set
\begin{equation*}
	u_\eta(\sfx):=\re^{-\eta\sfx}.
\end{equation*}
Using the bilinear pairing $\langle f,g\rangle_{L^2(\bbR_+)}:= \int_0^\infty f(\sfx)g(\sfx)\,\rd\sfx
$, 
the expression \eqref{eq:pnw_term2_compute} becomes
\begin{equation*}
	\rz\langle G_{++}u_\eta,u_{-\xi}\rangle
	+
	\sum_{k\ge2}\rz^k
	\langle G_{+-}G_{--}^{k-2}G_{-+}u_\eta,u_{-\xi}\rangle.
\end{equation*}
Summing the series and subtracting it from \eqref{eq:pnw_term1_compute}, we obtain
\begin{equation*}
	\pchlim_{\mathrm{pnw}}(\eta,\xi)
	=
	\langle Su_\eta,u_{-\xi}\rangle_{L^2(\bbR_+)},
	\qquad
	S
	:=
	\rI_+-\rz G_{++}
	-\rz^2G_{+-}(\rI_--\rz G_{--})^{-1}G_{-+}.
\end{equation*}

With respect to $L^2(\bbR)=L^2(\bbR_+)\oplus L^2(\bbR_-)$,
\begin{equation*}
	\rI-\rz\mathsf G
	=
	\begin{bmatrix}
		\rI_+-\rz G_{++}&-\rz G_{+-}\\
		-\rz G_{-+}&\rI_--\rz G_{--}
	\end{bmatrix}.
\end{equation*}
The Schur complement formula and \eqref{eq:WH_IzG} give
\begin{equation*}
	S^{-1}
	=
	R(\rI-\rz\mathsf G)^{-1}E
	=
	W(1/a).
\end{equation*}
Applying \eqref{eq:WH_inverse} to the factorization $1/a=(1/a_+)(1/a_-)$ yields
\begin{equation*}
	\pchlim_{\mathrm{pnw}}(\eta,\xi)
	=
	\langle W(a_-)W(a_+)u_\eta,u_{-\xi}\rangle_{L^2(\bbR_+)}.
\end{equation*}

It remains to evaluate this pairing. Since $(\mathcal FEu_\eta)(\lambda) =\frac{1}{\eta+\ri\lambda}$, 
we have 
\begin{equation*}
	(M_{a_+} \mathcal{F}Eu_\eta)(\lambda)
	= \frac{a_+(\lambda)}{\eta+\ri\lambda}
	=
	\frac{a_+(\ri\eta)}{\eta+\ri\lambda}
	+
	\frac{a_+(\lambda)-a_+(\ri\eta)}{\eta+\ri\lambda}.
\end{equation*}
The second term is analytic in $\bbC_+$ and is annihilated by the projection onto $H^2(\bbC_-)$. Hence
\begin{equation*}
	(\mathcal FEW(a_+)u_\eta)(\lambda)
	=
	\frac{a_+(\ri\eta)}{\eta+\ri\lambda}.
\end{equation*}
Since multiplication by $a_-$ preserves $H^2(\bbC_-)$,
\begin{equation*}
	(\mathcal FEW(a_-)W(a_+)u_\eta)(\lambda)
	=
	\frac{a_-(\lambda)a_+(\ri\eta)}{\eta+\ri\lambda}.
\end{equation*}
For $\Re(\xi)<0$, evaluation at $\lambda=\ri\xi$ gives
\begin{equation*}
	\langle W(a_-)W(a_+)u_\eta,u_{-\xi}\rangle_{L^2(\bbR_+)}
	=
	(\mathcal FEW(a_-)W(a_+)u_\eta)(\ri\xi)
	=
	\frac{a_-(\ri\xi)a_+(\ri\eta)}{\eta-\xi}.
\end{equation*}
The identities \eqref{eq:apm_h} and \eqref{eq:zetesq} now give \eqref{eq:pnw_pchlim}.
\end{proof}

\begin{prop}\label{prop:pnw22}
For $\h=\h_{\mathrm{pnw}}$ and $|\rz|<1$,
\begin{equation}\label{eq:pnw_energy}
	\det(\rI+\limKe_{\mathrm{pnw}}(\rz))_{L^2(\bbR)}
	=
	\re^{2B(\rz)}.
\end{equation}
\end{prop}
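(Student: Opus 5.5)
We reduce to the Widom constant of Lemma~\ref{lem:Widom}. For $\h=\h_{\mathrm{pnw}}$, the no-hit condition in \eqref{eq:Qs_definition} is automatic for paths starting and ending in $\bbR_+$. Hence $\mathsf Q_{\h_{\mathrm{pnw}}}=\mathsf G_+$, and the operator $B$ in the proof of Proposition~\ref{result:KasQ} is $\I_+$. From \eqref{eq:Kh_AB_det} we obtain
\begin{equation*}
	\det(\rI+\limKe_{\mathrm{pnw}}(\rz))_{L^2(\bbR)}
	=
	\det(A)_{L^2(\bbR_+)},
	\qquad
	A=R(\rI-\rz\mathsf G)^{-1}E\,(\I_+-\rz\mathsf G_+).
\end{equation*}
Here $A-\I_+$ was already shown to be trace class in Proposition~\ref{result:KasQ}(b).

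Next we rewrite $A$ in Wiener--Hopf form. By \eqref{eq:WH_IzG}, $R(\rI-\rz\mathsf G)^{-1}E=W(1/a)$ and $\I_+-\rz\mathsf G_+=R(\rI-\rz\mathsf G)E=W(a)$, with $a(\lambda)=1-\rz\re^{-\lambda^2/2}$. Thus $A=W(1/a)W(a)$, and Lemma~\ref{lem:Widom} gives $\det(A)=\re^{2B(\rz)}$ for $0<|\rz|<1$. The case $\rz=0$ is trivial, since then $\limKe_{\mathrm{pnw}}(0)=0$ and $B(0)=0$. Alternatively, the case $\rz=0$ follows by continuity, because both sides are analytic in $|\rz|<1$.

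The only point needing care is that the operator identities hold with exactly the conventions of \eqref{eq:WH_IzG}, so that the Fourier multiplier of $\mathsf G$ is $\re^{-\lambda^2/2}$. Since $\mathsf G$ is a symmetric convolution, the sign convention in $\mathcal F$ is harmless. The trace-class property needed to take determinants is supplied by Lemma~\ref{lem:Widom} itself, and it is consistent with Proposition~\ref{result:KasQ}(b). We do not expect any genuine obstacle, because the substance is contained in the Akhiezer--Kac computation already carried out in Lemma~\ref{lem:Widom}.
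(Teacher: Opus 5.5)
Your proposal is correct and follows essentially the paper's route: both arguments identify the restriction of $\rI+\limKe_{\mathrm{pnw}}(\rz)$ to $L^2(\bbR_+)$ with $W(1/a)W(a)$ and then invoke Lemma~\ref{lem:Widom}. The only difference is how that identity is obtained. You read it off from the factorization \eqref{eq:RlimKeAB} with $B=\I_+$, while the paper derives it directly from $\sfT_{\mathrm{pnw}}=(\rI-P_+)\mathsf G$ and the resolvent identity $\rz(\rI-\rz\mathsf G)^{-1}\mathsf G=-\rI+(\rI-\rz\mathsf G)^{-1}$.
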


\begin{proof}
For $\h=\h_{\mathrm{pnw}}$, the event $\{\btau<1\}$ is $\{\B(0)\le0\}$, and hence $	\sfT_{\mathrm{pnw}}(\sfx,\sfy) =	\mathbf{1}_{\sfx\leq 0}\mathsf G(\sfx,\sfy).
$
Thus $\sfT_{\mathrm{pnw}}=(\rI-P_+)\mathsf G$. Substituting this identity into \eqref{eq:limkernel} gives
\begin{equation*}
	\limKe_{\mathrm{pnw}}(\rz)
	=
	\rz P_+(\rI-\rz\mathsf G)^{-1}\mathsf GP_+
	-
	\rz P_+(\rI-\rz\mathsf G)^{-1}P_+\mathsf GP_+.
\end{equation*}
Using $\rz(\rI-\rz\mathsf G)^{-1}\mathsf G = -\rI+(\rI-\rz\mathsf G)^{-1}$, 
we obtain
\begin{equation}\label{eq:pnw_K_WH}
	\limKe_{\mathrm{pnw}}(\rz)
	=
	-P_+
	+
	P_+(\rI-\rz\mathsf G)^{-1}P_+
	(\rI-\rz\mathsf G)P_+\
    =
	-P_++EW(1/a)W(a)R,
\end{equation}
where we used \eqref{eq:WH_IzG} and $ER=P_+$.
The operator in \eqref{eq:pnw_K_WH} vanishes on $L^2(\bbR_-)$, while its restriction to $L^2(\bbR_+)$ is $-\rI_++W(1/a)W(a)$. 
Therefore, Lemma~\ref{lem:Widom} gives
\begin{equation*}
	\det(\rI+\limKe_{\mathrm{pnw}}(\rz))_{L^2(\bbR)}
	=
	\det\!\bigl(W(1/a)W(a)\bigr)_{L^2(\bbR_+)}
	=
	\re^{2B(\rz)}.
\end{equation*}
\end{proof}

\subsection{The flat initial condition}
\label{sec:examples_flat}

Throughout this subsection, let $\h_{\mathrm{flat}}\equiv0$ denote the flat initial condition.

\begin{prop}\label{prop:flat}
For $\h=\h_{\mathrm{flat}}$,
\begin{equation}\label{eq:flat_pchlim}
	\pchlim_{\mathrm{flat}}(\eta,\xi)
	=
	\frac{2\eta\bigl(\re^{(\xi^2-\eta^2)/2}-1\bigr)}
	{\xi^2-\eta^2}
\end{equation}
for $\arg(\eta)\in(-\pi/4,\pi/4)$ and $\Re(\xi)<0$, where the right-hand side is understood by continuity when $\xi=-\eta$.
\end{prop}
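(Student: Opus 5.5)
We compute $\pchlim_{\mathrm{flat}}(\eta,\xi)$ directly from \eqref{eq:pch_lim}, exploiting that for $\h\equiv0$ the hitting times are explicit Brownian first-passage times to level $0$. Write $\B(0)=s$. If $s\le0$ then $\btau=0$. If $s>0$, $\btau$ is the first passage time to $0$, and $\B(\btau)=0$. For $\bntau$ we use the strong Markov property at time $1$. On $\{\B(1)\le 0\}$ we have $\bntau=1$. On $\{\B(1)>0\}$, $\bntau=1+\sigma$, where $\sigma$ is the first passage time to $0$ of a Brownian motion started at $\B(1)$, and $\B(\bntau)=0$.

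The key tool is the Laplace transform of first passage: for $y>0$ and $\Re(\eta^2)>0$ (so that $\Re\sqrt{\eta^2}=\Re\eta>0$ on our sector),
\begin{equation*}
	\bbE_{\B(0)=y}\bigl[e^{-\frac12\eta^2\sigma}\mathbf 1_{\sigma<\infty}\bigr]=e^{-\eta y}.
\end{equation*}
Combining this with the value $\eta\B(1)-\frac12\eta^2$ on $\{\B(1)\le 0\}$, we get
\begin{equation*}
	\bbE\bigl[e^{\eta\B(\bntau)-\frac12\eta^2\bntau}\mid\mathcal F_1\bigr]
	=e^{-\frac12\eta^2}\bigl(e^{\eta\B(1)}\mathbf 1_{\B(1)\le0}+e^{-\eta\B(1)}\mathbf 1_{\B(1)>0}\bigr)
	=e^{-\frac12\eta^2}e^{-\eta|\B(1)|}.
\end{equation*}
By optional stopping (or the same identity for $\btau$ with $\B(\btau)=0$), the first term of \eqref{eq:pch_lim} equals $e^{\eta s}$ for $s\le0$ and $\bbE_s[e^{-\frac12\eta^2\btau}\mathbf 1_{\btau<1}]$ for $s>0$. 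Since $\{\btau<1\}\in\mathcal F_1$, the second term becomes $e^{-\eta^2/2}\bbE_s[e^{-\eta|\B(1)|}\mathbf 1_{\btau<1}]$.

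The main simplification, and the step where care is needed, is to avoid computing the defective first-passage transform on $[0,1)$. Apply the strong Markov property at $\btau$ to the bounded martingale-type quantity $\bbE[e^{-\eta|\B(1)|}e^{-\frac12\eta^2}\mid\mathcal F_{\btau}]$ on $\{\btau<1\}$. Alternatively, and more simply, split $e^{-\frac12\eta^2\btau}\mathbf 1_{\btau<1}=e^{-\frac12\eta^2\btau}\mathbf 1_{\btau<\infty}-e^{-\frac12\eta^2\btau}\mathbf 1_{1\le\btau<\infty}$. The full transform gives $e^{-\eta s}$. On $\{\btau\ge1\}$ we have $\B(1)>0$, so by Markov at time $1$ the subtracted piece is $e^{-\eta^2/2}\bbE_s[e^{-\eta\B(1)}\mathbf 1_{\btau\ge1}]$. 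On $\{\btau\ge1\}$, $e^{-\eta|\B(1)|}=e^{-\eta\B(1)}$, so adding the second term over $\{\btau<1\}$ and $\{\btau\ge1\}$ combines into a Gaussian expectation without constraints. For $s>0$ the integrand therefore reduces to
\begin{equation*}
	E(s,\eta)=e^{-\eta s}-e^{-\frac12\eta^2}\bbE_s\bigl[e^{-\eta|\B(1)|}\bigr].
\end{equation*}
For $s\le0$, $\btau=0$, so $E(s,\eta)=e^{\eta s}-e^{-\frac12\eta^2}\bbE_s[e^{-\eta|\B(1)|}]$. Thus uniformly $E(s,\eta)=e^{-\eta|s|}-e^{-\eta^2/2}\int\mathsf G(s,y)e^{-\eta|y|}\,\rd y$.

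It remains to integrate against $e^{-s\xi}$. Here we use that $\int e^{-s\xi}\mathsf G(s,y)\,\rd s=e^{\xi^2/2}e^{-y\xi}$, valid by absolute convergence since $\Re\xi<0$ is compensated by the Gaussian and by $e^{-\eta|y|}$ with $\Re\eta>|\Re\xi|$ not needed. Convergence on $y\to+\infty$ requires only $\Re(\eta+\xi)>0$ formally, so we justify it by first assuming $\Re\eta>-\Re\xi$ and then extending by analytic continuation; Lemma~\ref{prop:plim_bound} guarantees analyticity in the full region. This gives
\begin{equation*}
	\pchlim_{\mathrm{flat}}=\bigl(1-e^{(\xi^2-\eta^2)/2}\bigr)\int_\bbR e^{-s\xi-\eta|s|}\,\rd s
	=\bigl(1-e^{(\xi^2-\eta^2)/2}\bigr)\Bigl(\frac1{\eta-\xi}+\frac1{\eta+\xi}\Bigr)
	=\frac{2\eta\bigl(e^{(\xi^2-\eta^2)/2}-1\bigr)}{\xi^2-\eta^2},
\end{equation*}
which is \eqref{eq:flat_pchlim}. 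The removable singularity at $\xi=-\eta$ is then clear.
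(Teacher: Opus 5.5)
Your argument is correct, and it takes a different, shorter route than the paper. The paper evaluates the two terms of \eqref{eq:pch_lim} separately in closed form. It handles the first via the first-passage density of $\btau$, and the second by conditioning on $\B(1)$ and using the reflection-principle expression for $\prob_{\B(0)=s}(\B(1)\in\rd y,\ \btau<1)$. Subtracting gives explicit $\Phi$-expressions $\Psi_\pm(s,\eta)$, which it integrates against $\re^{-s\xi}$ term by term. Each of the four Gaussian integrals there converges absolutely on the whole domain, so no continuation is needed.

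You avoid both the reflection principle and the defective transform on $[0,1)$. The identity $\bbE[\re^{\eta\B(\bntau)-\frac12\eta^2\bntau}\mid\mathcal F_1]=\re^{-\eta^2/2}\re^{-\eta|\B(1)|}$, together with the split $\mathbf 1_{\btau<1}=\mathbf 1_{\btau<\infty}-\mathbf 1_{\btau\ge1}$, makes the constraint $\{\btau<1\}$ cancel. This leaves $E(s,\eta)=\re^{-\eta|s|}-\re^{-\eta^2/2}\int_\bbR\mathsf G(s,y)\re^{-\eta|y|}\,\rd y$, which agrees with the paper's $\Psi_\pm$ after completing the square. What your route buys is a structural identity, $\pchlim_{\mathrm{flat}}(\eta,\xi)=\bigl(1-\re^{(\xi^2-\eta^2)/2}\bigr)\int_\bbR\re^{-s\xi-\eta|s|}\,\rd s$. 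Its prefactor comes from $s\mapsto\re^{-s\xi}$ being an eigenfunction of $\mathsf G$ with eigenvalue $\re^{\xi^2/2}$. The cost is that your split integrals converge only for $\Re(\eta+\xi)>0$, so you need analytic continuation.

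Two small points in your write-up need fixing.
\begin{itemize}
\item Your remark that absolute convergence holds ``with $\Re\eta>|\Re\xi|$ not needed'' is wrong as stated. The condition $\Re(\eta+\xi)>0$ is exactly what the $y\to+\infty$ tail requires, as does the $s\to+\infty$ tail of $\re^{-s\xi-\eta|s|}$. That is precisely why continuation is unavoidable on your route.
\item Lemma~\ref{prop:plim_bound} is only a bound, not an analyticity statement. For holomorphy of the left side on the full domain, combine the locally uniform domination $|\re^{-s\xi}E(s,\eta)|\le C\,\re^{-s\Re(\xi)}\prob_{\B(0)=s}(\btau<1)$ from its proof with analyticity of $E(s,\eta)$ in $\eta$, which is evident from your explicit formula. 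The right side of \eqref{eq:flat_pchlim} is entire and the domain is connected, so the identity theorem then completes the argument.
\end{itemize}
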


\begin{proof}
For $\h\equiv0$,
\begin{equation*}
	\btau=\inf\{\sfx\ge0:\B(\sfx)\le0\},
	\qquad
	\bntau=\inf\{\sfx\ge1:\B(\sfx)\le0\}.
\end{equation*}
If $\B(0)=s\le0$, then $\btau=0$. If $s>0$, then $\btau$ has density $\frac{s}{\sqrt{2\pi t^3}}\re^{-s^2/(2t)}\,\rd t$ for $t>0$, 
and $\B(\btau)=0$ on $\{\btau<\infty\}$. It follows that
\begin{equation}\label{eq:flat_E1}
	\bbE_{\B(0)=s}
	\left[
	\re^{\eta\B(\btau)-\frac12\eta^2\btau}
	\mathbf{1}_{\btau<1}
	\right]
	=
	\mathbf{1}_{s\le0}\re^{s\eta}
	+
	\mathbf{1}_{s>0}
	\left(
	\re^{-s\eta}\Phi(\eta-s)
	+
	\re^{s\eta}\Phi(-\eta-s)
	\right),
\end{equation}
where
\begin{equation*}
	\Phi(\zeta)
	:=
	\frac{1}{\sqrt{2\pi}}
	\int_{-\infty}^{\zeta}\re^{-u^2/2}\,\rd u.
\end{equation*}

For the second term, conditioning on $\B(1)$ and using the strong Markov property at time $1$ gives 
\begin{equation}\label{eq:Bntaaa}
\begin{aligned}
	&\bbE_{\B(0)=s}
	\left[
	\re^{\eta\B(\bntau)-\frac12\eta^2\bntau}
	\mathbf{1}_{\btau<1,\,\bntau<\infty}
	\right]\\
	&\qquad=
	\re^{-\frac12\eta^2}
	\int_\bbR
	\bbE_{\B(0)=y}
	\left[
	\re^{\eta\B(\btau)-\frac12\eta^2\btau}
	\right]
	\prob_{\B(0)=s}
	\bigl(\B(1)\in\rd y,\ \btau<1\bigr).
\end{aligned}
\end{equation}
Optional stopping gives
\begin{equation*}
	\bbE_{\B(0)=y}
	\left[
	\re^{\eta\B(\btau)-\frac12\eta^2\btau}
	\right]
	=
	\re^{\eta y}\mathbf{1}_{y\le0}
	+
	\re^{-\eta y}\mathbf{1}_{y>0},
\end{equation*}
while the reflection principle gives
\begin{equation*}
	\frac{
	\prob_{\B(0)=s}
	\bigl(\B(1)\in\rd y,\ \btau<1\bigr)}
	{\rd y}
	=
	\begin{cases}
		\mathsf G(s,y),&s\le0,\\[2pt]
		\mathsf G(s,y)\mathbf{1}_{y\le0}
		+
		\displaystyle\frac{1}{\sqrt{2\pi}}
		\re^{-(s+y)^2/2}\mathbf{1}_{y>0},&s>0.
	\end{cases}
\end{equation*}
Evaluating the Gaussian integrals in \eqref{eq:Bntaaa}, we obtain
\begin{equation}\label{eq:flat_E2}
	\mathbf{1}_{s\le0}
	\left(
	\re^{s\eta}\Phi(-(s+\eta))
	+
	\re^{-s\eta}\Phi(s-\eta)
	\right)
	+
	2\mathbf{1}_{s>0}\re^{s\eta}\Phi(-(s+\eta)).
\end{equation}
Subtracting \eqref{eq:flat_E2} from \eqref{eq:flat_E1} and using
$\Phi(\zeta)+\Phi(-\zeta)=1$,  
we find
\begin{equation*}
	\bbE_{\B(0)=s}
	\left[
	\left(
	\re^{\eta\B(\btau)-\frac12\eta^2\btau}
	-
	\re^{\eta\B(\bntau)-\frac12\eta^2\bntau}
	\mathbf{1}_{\bntau<\infty}
	\right)
	\mathbf{1}_{\btau<1}
	\right]
	=
	\mathbf{1}_{s\le0}\Psi_-(s,\eta)
	+
	\mathbf{1}_{s>0}\Psi_+(s,\eta),
\end{equation*}
with
\begin{equation*}
	\Psi_-(s,\eta)
	:=
	\re^{s\eta}\Phi(s+\eta)
	-
	\re^{-s\eta}\Phi(s-\eta),
	\qquad
	\Psi_+(s,\eta)
	:=
	\re^{-s\eta}\Phi(\eta-s)
	-
	\re^{s\eta}\Phi(-(s+\eta)).
\end{equation*}
Therefore,
\begin{equation}\label{eq:flat_pchlim_split}
	\pchlim_{\mathrm{flat}}(\eta,\xi)
	=
	\int_{-\infty}^0
	\re^{-s\xi}\Psi_-(s,\eta)\,\rd s
	+
	\int_0^\infty
	\re^{-s\xi}\Psi_+(s,\eta)\,\rd s.
\end{equation}

Direct Gaussian integration gives
\begin{align*}
	\int_{-\infty}^0
	\re^{-s(\xi\mp\eta)}\Phi(s\pm\eta)\,\rd s
	&=
	\frac{
	\re^{(\xi^2-\eta^2)/2}\Phi(\xi)-\Phi(\pm\eta)}
	{\xi\mp\eta},\\
	\int_0^\infty
	\re^{-s(\xi\pm\eta)}\Phi(-s\pm\eta)\,\rd s
	&=
	\frac{
	\Phi(\pm\eta)-\re^{(\xi^2-\eta^2)/2}\Phi(-\xi)}
	{\xi\pm\eta}.
\end{align*}
Substituting these identities into \eqref{eq:flat_pchlim_split} and using $\Phi(\zeta)+\Phi(-\zeta)=1$, we obtain 
\begin{equation*}
	\pchlim_{\mathrm{flat}}(\eta,\xi)
	=
	\frac{2\eta}{\xi^2-\eta^2}
	\left[
	\re^{(\xi^2-\eta^2)/2}
	\bigl(\Phi(\xi)+\Phi(-\xi)\bigr)
	-
	\bigl(\Phi(\eta)+\Phi(-\eta)\bigr)
	\right]
	=
	\frac{2\eta\bigl(\re^{(\xi^2-\eta^2)/2}-1\bigr)}
	{\xi^2-\eta^2}.
\end{equation*}
\end{proof}

\begin{cor}\label{cor:flat_pchlim_roots}
For $\h=\h_{\mathrm{flat}}$ and $0<|\rz|<1$,
\begin{equation}\label{eq:flat_pchlim_roots}
	\pchlim_{\mathrm{flat}}(\eta,\xi)
	=
	\eta\,\mathbf{1}_{\xi+\eta=0},
	\qquad
	\eta\in\rR_\rz,\quad \xi\in\rL_\rz.
\end{equation}
\end{cor}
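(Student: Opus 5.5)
This is a direct specialization of Proposition~\ref{prop:flat} to the root sets, so I would substitute the root relation into \eqref{eq:flat_pchlim} and treat two cases. For $\eta\in\rR_\rz$ and $\xi\in\rL_\rz$ we have $\re^{-\eta^2/2}=\rz=\re^{-\xi^2/2}$. Hence $\re^{(\xi^2-\eta^2)/2}=1$, and the numerator $2\eta(\re^{(\xi^2-\eta^2)/2}-1)$ vanishes. Before substituting, I would check that the hypotheses of Proposition~\ref{prop:flat} hold. Certainly $\Re(\xi)<0$. For $\eta$, the relation $|\re^{-\eta^2/2}|=|\rz|<1$ gives $\Re(\eta^2)>0$, and together with $\Re(\eta)>0$ this yields $\arg(\eta)\in(-\pi/4,\pi/4)$.

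\emph{Case $\xi\ne-\eta$.} Since $\Re(\xi)<0<\Re(\eta)$, we also have $\xi\ne\eta$, so $\xi^2-\eta^2\ne0$. The quotient in \eqref{eq:flat_pchlim} is therefore a zero numerator over a nonzero denominator, and it equals $0$.

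\emph{Case $\xi=-\eta$.} This choice is admissible because $\rL_\rz=-\rR_\rz$. Here \eqref{eq:flat_pchlim} is defined by continuity, and this limit is the only step needing any care. I would set $w:=(\xi^2-\eta^2)/2$ and note that $w\to0$ as $\xi\to-\eta$. Using $(\re^{w}-1)/w\to1$, the expression $2\eta(\re^w-1)/(2w)$ tends to $\eta$. Equivalently, $\pchlim_{\mathrm{flat}}(\eta,\cdot)$ is an entire function of $\xi$ in its second argument, with value $\eta$ at $\xi=-\eta$.

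Putting the two cases together gives $\pchlim_{\mathrm{flat}}(\eta,\xi)=\eta\,\mathbf{1}_{\xi+\eta=0}$, which is \eqref{eq:flat_pchlim_roots}. There is no real obstacle; the only delicate point is evaluating the removable singularity at $\xi=-\eta$ correctly.
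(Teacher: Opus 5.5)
Your proposal is correct and follows the paper's own argument. Like the paper, you specialize Proposition~\ref{prop:flat} using $\re^{-\eta^2/2}=\rz=\re^{-\xi^2/2}$, so the quotient vanishes except at $\xi=-\eta$, which is the unique point of $\rL_\rz=-\rR_\rz$ paired with $\eta$. You also verify the hypothesis $\arg(\eta)\in(-\pi/4,\pi/4)$ and evaluate the removable singularity explicitly, both of which the paper leaves implicit.
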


\begin{proof}
Since $\rL_\rz=-\rR_\rz$, for each $\eta\in\rR_\rz$, the condition $\xi+\eta=0$ singles out the unique point $\xi=-\eta$ in $\rL_\rz$. 
Since $\re^{-\eta^2/2} = \rz = \re^{-\xi^2/2}$, 
the result follows from Proposition~\ref{prop:flat}. \end{proof}

\begin{prop}\label{prop:flat22}
For $\h=\h_{\mathrm{flat}}$ and $|\rz|<1$,
\begin{equation}\label{eq:flat_energy}
	\det(\I+\limKe_{\mathrm{flat}}(\rz))_{L^2(\bbR)}
	=
	\re^{B(\rz)-\frac14\log(1-\rz)}.
\end{equation}
\end{prop}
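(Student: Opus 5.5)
The plan is to compute the right-hand factor of the factorization in Proposition~\ref{result:KasQ}(c) by a trace expansion, in the same spirit as the proof of Corollary~\ref{lm:translation_rC}. By Proposition~\ref{result:KasQ}(c) and Proposition~\ref{prop:pnw22},
\begin{equation*}
	\det(\I+\limKe_{\mathrm{flat}}(\rz))_{L^2(\bbR)}
	=
	\re^{2B(\rz)}\,\Delta_{\mathrm{flat}}(\rz),
	\qquad
	\Delta_{\mathrm{flat}}(\rz)
	:=
	\det\bigl((\I_+-\rz\mathsf G_+)^{-1}(\I_+-\rz\mathsf Q_{\mathrm{flat}})\bigr)_{L^2(\bbR_+)}.
\end{equation*}
So it suffices to show $\log\Delta_{\mathrm{flat}}(\rz)=-B(\rz)-\frac14\log(1-\rz)$. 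As in the proof of Corollary~\ref{lm:translation_rC}, $\mathsf G_+$ and $\mathsf Q_{\mathrm{flat}}$ are contractions whose difference is trace class by Proposition~\ref{result:KasQ}(a). Hence
\begin{equation*}
	\log\Delta_{\mathrm{flat}}(\rz)
	=
	\sum_{n\ge1}\frac{\rz^n}{n}\operatorname{Tr}\bigl(\mathsf G_+^n-\mathsf Q_{\mathrm{flat}}^n\bigr),
\end{equation*}
and the proof reduces to evaluating these traces for each $n$.

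I split each trace as $\operatorname{Tr}(\mathsf G_+^n-\mathsf G_n|_{+})+\operatorname{Tr}(\mathsf G_n|_{+}-\mathsf Q_{\mathrm{flat}}^n)$, where $\mathsf G_n|_+$ is the time-$n$ heat kernel restricted to $\bbR_+$. Each piece is a difference of kernels that is trace class with continuous diagonal, justified as in Corollary~\ref{lm:translation_rC} by truncating to $(0,R)$.

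For the second piece, the Markov property shows that $\mathsf Q_{\mathrm{flat}}^n$ is the Brownian kernel killed upon hitting $0$ on $[0,n]$. The reflection principle gives its kernel as $\sfG_n(\sfx,\sfy)-\sfG_n(\sfx,-\sfy)$. Therefore
\begin{equation*}
	\operatorname{Tr}(\mathsf G_n|_+-\mathsf Q^n_{\mathrm{flat}})
	=
	\int_0^\infty\sfG_n(\sfx,-\sfx)\,\rd\sfx
	=
	\frac14 ,
\end{equation*}
and summing $\frac14\sum_{n\ge1}\rz^n/n$ produces the term $-\frac14\log(1-\rz)$.

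For the first piece, $\mathsf G_+^n$ is Brownian motion killed only if it is negative at one of the integer times $1,\ldots,n$. Conditioning on the endpoint as in \eqref{eq:Q_diag_M}, the diagonal is $\sfG_n(\sfx,\sfx)\,\prob(\sfx+\hat\B(k)>0,\ 1\le k\le n)$, where $\hat\B$ is a Brownian bridge from $0$ to $0$ on $[0,n]$ (for $k=n$ the condition is just $\sfx>0$). Integrating over $\sfx>0$ gives
\begin{equation*}
	\operatorname{Tr}(\mathsf G_+^n-\mathsf G_n|_+)
	=
	-\frac{1}{\sqrt{2\pi n}}\,\bbE[M_n],
	\qquad
	M_n:=\max_{1\le k\le n-1}\bigl(-\hat\B(k)\bigr)^+ ,
\end{equation*}
which is the maximum of a Gaussian random-walk bridge of length $n$. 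It remains to show $\sum_{n\ge1}\frac{\rz^n}{n}\frac{\bbE[M_n]}{\sqrt{2\pi n}}=B(\rz)$.

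This identity is the main step. I will use Kac's formula (Spitzer's combinatorial lemma) for exchangeable increments, applied to the bridge:
\begin{equation*}
	\bbE[M_n]=\sum_{k=1}^{n-1}\frac1k\,\bbE\bigl[\hat\B(k)^+\bigr],
	\qquad
	\hat\B(k)\sim N\bigl(0,k(n-k)/n\bigr).
\end{equation*}
With $k'=n-k$ this gives $\bbE[\hat\B(k)^+]=\sqrt{kk'/(2\pi n)}$, so the $(n,k)$ summand equals $\frac{\rz^n k'}{2\pi n^2\sqrt{kk'}}$. Symmetrizing in $k\leftrightarrow k'$ turns this into $\frac{\rz^{k+k'}}{4\pi(k+k')\sqrt{kk'}}$, which is exactly the summand of $B(\rz)$ in \eqref{eq:def_Bz}.

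The point that needs care is the validity of the cyclic-exchangeability (Spitzer) argument for the bridge maximum, including the treatment of the endpoint $k=n$ and of ties, which have probability zero. Once that is settled, combining the two pieces gives $\log\Delta_{\mathrm{flat}}=-B(\rz)-\frac14\log(1-\rz)$. Adding $2B(\rz)$ yields \eqref{eq:flat_energy}, first for $0<|\rz|<1$ and then at $\rz=0$ trivially.
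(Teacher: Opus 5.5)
Your argument is correct, and it evaluates the relative determinant by a different route than the paper. Both proofs start from Proposition~\ref{result:KasQ}(c) and Proposition~\ref{prop:pnw22}.

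The paper then observes that $\sfTp_{\mathrm{flat}}$ is the Hankel operator $H(g)$ with $g(\lambda)=\re^{-\lambda^2/2}$. It rewrites the remaining factor as the Wiener--Hopf-minus-Hankel determinant $\det(\I_+-W(a)^{-1}H(a))$ and quotes the Basor / Basor--Ehrhardt formula, which returns $-\frac12\int_0^\infty \sfx\, s(\sfx)^2\,\rd\sfx-\frac14\log a(0)=-B(\rz)-\frac14\log(1-\rz)$.

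You instead use the trace-log expansion from the proof of Corollary~\ref{lm:translation_rC}. The reflection term contributes exactly $\frac14$ for each $n$, which is the $-\frac14\log a(0)$ term. The integer-time killing term contributes $-\bbE[M_n]/\sqrt{2\pi n}$, and Kac's formula turns this into $B(\rz)$. The coefficients check out; for example, $n=2$ gives $\frac{1}{8\pi}$ on both sides.

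The point you flagged is harmless. The cyclic lemma $\sum_{r=0}^{n-1}\bigl(s_n-\max_{0\le j\le n-1}s_j(\sigma^r x)\bigr)^+=s_n^+$ is a deterministic identity for every real vector: for $s_n>0$ the window maxima telescope, and for $s_n\le 0$ every term vanishes. So it only needs each initial segment of the increments to be cyclically exchangeable, which the Gaussian bridge satisfies, and ties play no role.

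You also need not show that the two pieces are separately trace class. It suffices to integrate the continuous diagonal of $\mathsf G_+^n-\mathsf Q_{\mathrm{flat}}^n$ over $(0,R)$ and let $R\to\infty$, since both parts of that diagonal are integrable.

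As for what each route buys: the paper's proof is a one-line application of a deep external theorem and stays inside the Wiener--Hopf framework used for the narrow wedge. Yours is elementary and self-contained, and it reuses the paper's own trace machinery. It also gives each term a probabilistic meaning, the $\frac14$ from reflection at level $0$ and $B(\rz)$ from fluctuation theory of the discrete bridge. In effect it is a Kac--Spitzer counterpart of the Akhiezer--Kac and Basor--Ehrhardt formulas.
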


\begin{proof}
By \eqref{eq:KasQ_factorization} and Proposition~\ref{prop:pnw22},
\begin{equation*}
\begin{aligned}
	\det(\I+\limKe_{\mathrm{flat}}(\rz))_{L^2(\bbR)}
	&=
	\re^{2B(\rz)}
	\det\left(
	\I_+
	+
	\rz(\I_+-\rz\mathsf G_+)^{-1}
	\sfTp_{\mathrm{flat}}
	\right)_{L^2(\bbR_+)}.
\end{aligned}
\end{equation*}
Since $\h_{\mathrm{flat}}\equiv 0$, the reflection principle implies 
\begin{equation*}
	\sfTp_{\mathrm{flat}}(\sfx,\sfy)
	=
	\frac{1}{\sqrt{2\pi}}
	\re^{-(\sfx+\sfy)^2/2},
	\qquad
	\sfx,\sfy>0.
\end{equation*}

For $b\in L^\infty(\bbR)$, define the Hankel operator
\begin{equation*}
	H(b)
	:=
	R\mathcal F^{-1}M_b\mathcal FJE
\end{equation*}
on $L^2(\bbR_+)$, where $(Jf)(\sfx):=f(-\sfx)$. Set $g(\lambda):=\re^{-\lambda^2/2}$ so that 
$a(\lambda)=1-\rz g(\lambda)$. 
A direct computation gives $H(g)=\sfTp_{\mathrm{flat}}$. 
Hence, since $H(1)=RJE=0$,
\begin{equation*}
	H(a)=H(1-\rz g)= H(1)- \rz H(g)= -\rz\sfTp_{\mathrm{flat}}.
\end{equation*}
Moreover, by \eqref{eq:WH_IzG},
$W(a)=\I_+-\rz\mathsf G_+$. 
Thus,
\begin{equation}\label{eq:flat_widom_split}
	\det(\I+\limKe_{\mathrm{flat}}(\rz))_{L^2(\bbR)}
	=
	\re^{2B(\rz)}
	\det\left(
	\I_+-W(a)^{-1}H(a)
	\right)_{L^2(\bbR_+)}.
\end{equation}

The Wiener--Hopf-minus-Hankel determinant formula from~\cite[Theorem~5]{Basor97} and~\cite[Theorem~1.1]{BasorEhrhardt03} gives
\begin{equation}\label{eq:Basor}
	\log
	\det\left(
	\I_+-W(a)^{-1}H(a)
	\right)_{L^2(\bbR_+)}
	=
	-\frac12
	\int_0^\infty \sfx\,s(\sfx)^2\,\rd\sfx
	-\frac14\log a(0),
\end{equation}
where $s=\mathcal F^{-1}(\log a)$ is given by \eqref{eq:sxformula}. 
Since $s$ is even, the equation \eqref{eq:intofss} implies that the first term is $-B(\rz)$. 
Since $a(0)=1-\rz$, \eqref{eq:Basor} becomes
\begin{equation*}
	\log
	\det\left(
	\I_+-W(a)^{-1}H(a)
	\right)_{L^2(\bbR_+)}
	=
	-B(\rz)-\frac14\log(1-\rz).
\end{equation*}
Substituting this into \eqref{eq:flat_widom_split} proves \eqref{eq:flat_energy}.
\end{proof}

\section{Additional formulas and properties of the PKPZ multipoint distributions}
\label{sec:additional}

We collect several additional formulas and structural properties of the quantities appearing in the multipoint distributions of Section~\ref{sec:PKPZ_formula}. 
None of the results below is needed in the rest of the paper. 
Section~\ref{sec:Fredholm_D} gives an operator representation of $\rD_\h(\bz)$, Section~\ref{sec:STalt} gives Fourier--Laplace representations of $\sfS_\rz$ and $\sfT_\h$, and Section~\ref{sec:reversal} establishes space-reversal identities.

\subsection{Fredholm determinant formula for $\rD_\h(\bz)$}
\label{sec:Fredholm_D}

By the general Fredholm determinant identities in~\cite[Lemmas~4.8 and~4.9]{Baik-Liu19}, $\rD_\h(\bz)$ admits the following Fredholm determinant representation. 
We state it without proof, since it is a direct application of those lemmas.

\begin{prop}[Fredholm determinant formula for $\rD_\h(\bz)$]
\label{prop:Fredholm}
Under the assumptions and notation of Definition~\ref{def:D_series}, define the discrete sets
\begin{equation*}
	\Sp_1
	:=
	\bigcup_{\substack{1\le i\le m\\ i\text{ odd}}}\rL_{\rz_i}
	\cup
	\bigcup_{\substack{1\le i\le m\\ i\text{ even}}}\rR_{\rz_i},
	\qquad
	\Sp_2
	:=
	\bigcup_{\substack{1\le i\le m\\ i\text{ odd}}}\rR_{\rz_i}
	\cup
	\bigcup_{\substack{1\le i\le m\\ i\text{ even}}}\rL_{\rz_i}.
\end{equation*}
Set $\rz_0=\rz_{m+1}:=0$, and define
\begin{equation*}
	\rK_1:\ell^2(\Sp_2)\to\ell^2(\Sp_1),
	\qquad
	\rK_{2;\h}:\ell^2(\Sp_1)\to\ell^2(\Sp_2)
\end{equation*}
as follows. For $\zeta\in(\rL_{\rz_i}\cup\rR_{\rz_i})\cap\Sp_1$ and $\zeta'\in(\rL_{\rz_j}\cup\rR_{\rz_j})\cap\Sp_2$, 
set
\begin{equation*}
	\rK_1(\zeta,\zeta')
	:=
	\bigl(\delta_{i,j}+\delta_{i,\,j+(-1)^i}\bigr)
	\left(1-\frac{\rz_{j-(-1)^j}}{\rz_j}\right)
	\frac{
		\mathrm f_i(\zeta)
		\re^{2\hftn(\zeta,\rz_i)-\hftn(\zeta,\rz_{i-(-1)^i})-\hftn(\zeta',\rz_{j-(-1)^j})}
	}{
		\zeta(\zeta-\zeta')
	}.
\end{equation*}
For $\zeta\in(\rL_{\rz_i}\cup\rR_{\rz_i})\cap\Sp_2$ and $\zeta'\in(\rL_{\rz_j}\cup\rR_{\rz_j})\cap\Sp_1$, 
set
\begin{equation*}
\begin{aligned}
	\rK_{2;\h}(\zeta,\zeta')
	&:=
	\bigl(\delta_{i,j}+\delta_{i,\,j-(-1)^i}\bigr)
	\left(1-\frac{\rz_{j+(-1)^j}}{\rz_j}\right)
	\frac{
		\mathrm f_i(\zeta)
		\re^{2\hftn(\zeta,\rz_i)-\hftn(\zeta,\rz_{i+(-1)^i})-\hftn(\zeta',\rz_{j+(-1)^j})}
	}{\zeta}\\
	&\quad\times
	\begin{cases}
		\re^{-\hftn(\zeta,\rz_1)-\hftn(\zeta',\rz_1)}
		\pchlim_\h(\zeta,\zeta'),
		&j=1,\\[2pt]
		\dfrac{1}{\zeta-\zeta'},
		&j\ge2.
	\end{cases}
\end{aligned}
\end{equation*}
Then $\rK_1\rK_{2;\h}$ is trace class on $\ell^2(\Sp_1)$, and
\begin{equation}\label{eq:fredholm}
	\rD_\h(\bz)
	=
	\det(\rI-\rK_1\rK_{2;\h})_{\ell^2(\Sp_1)}.
\end{equation}
\end{prop}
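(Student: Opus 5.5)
The plan is to expand the right-hand side of \eqref{eq:fredholm} as a Fredholm series and match it term by term with the series \eqref{eq:series}. This mirrors the periodic narrow-wedge case in~\cite{Baik-Liu19}; the only change is that the single block of $\rK_{2;\h}$ with $j=1$ carries the kernel $\pchlim_\h(\zeta,\zeta')$ in place of the Cauchy kernel $1/(\zeta-\zeta')$.

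\emph{Step 1: trace class.} I would first show that $\rK_1\rK_{2;\h}$ is trace class by writing it as a product of two Hilbert--Schmidt operators. Conjugate both operators by diagonal weights: multiply by $|\mathrm f_i(\zeta)|^{1/2}$ on one side and divide by it on the other. On every root set $\rL_{\rz_i}\cup\rR_{\rz_i}$, the factors $\mathrm f_i$ decay like $\re^{-c|\zeta|^3}$, or like $\re^{-c|\zeta|}$ in the equal-time case. The $\hftn$-exponentials are bounded, and the Cauchy factors $1/(\zeta-\zeta')$ are bounded because the root sets are separated.

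The new ingredient is the $j=1$ block. Lemma~\ref{prop:plim_bound} bounds $\pchlim_\h(\eta,\xi)$ by $\re^{C(|\eta|+|\xi|)+\frac12|\xi|^2}$. This quadratic growth is absorbed by the cubic decay of $\mathrm f_1$, since $\TT_1>0$ and the $\mathrm f_1$ factors attach to both variables of that block. The Hilbert--Schmidt sums over the discrete root sets are then finite.

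\emph{Step 2: series expansion.} I would expand $\det(\rI-\rK_1\rK_{2;\h})$ as
\[
\sum_n \frac{(-1)^n}{n!}\sum \det[\rK_1\rK_{2;\h}(\zeta_a,\zeta_b)].
\]
The Cauchy--Binet identity $\det(\rK_1\rK_2)$ restricted to $n$ points equals $\frac1{n!}\sum\det\rK_1\det\rK_2$, summed over intermediate points in $\Sp_2$. Then I would sort the points of $\Sp_1\cup\Sp_2$ by the level $\ell$ and by left/right roots, producing the tuples $\xib^{(\ell)}\in\rL_{\rz_\ell}^{n_\ell}$ and $\etab^{(\ell)}\in\rR_{\rz_\ell}^{n_\ell}$. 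The block-tridiagonal structure, through the factors $\delta_{i,j}+\delta_{i,j\pm(-1)^i}$, forces the counts to match so that each level carries equal numbers of $\xi$'s and $\eta$'s.

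\emph{Step 3: identifying the terms.} Here I would invoke~\cite[Lemmas~4.8 and~4.9]{Baik-Liu19} in the generality stated there. Those lemmas convert a Fredholm determinant with this block structure into a series whose $\mathbf n$-th term is a product of block determinants. The determinant of $\rK_1$ factorizes into the product of the $\Cau(\xib^{(\ell)}\sqcup\etab^{(\ell+1)};\etab^{(\ell)}\sqcup\xib^{(\ell+1)})$ for adjacent levels. The $j=1$ part of $\det\rK_{2;\h}$ gives $\det[\pchlim_\h(\eta_i^{(1)},\xi_j^{(1)})]$ together with the factors $\re^{-\hftn(\cdot,\rz_1)}$, which produce the product $\prod_i\re^{-\hftn(\xi^{(1)}_i,\rz_1)-\hftn(\eta^{(1)}_i,\rz_1)}$ in \eqref{eq:Dn_limnoh}. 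Those lemmas only use that the level-one kernel is an arbitrary function, never its Cauchy form, so they apply here.

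Finally, I would check that the prefactors $(1-\rz_{j\mp(-1)^j}/\rz_j)$ and the $\hftn$ exponentials recombine into $\mathrm g_\ell$ and into the powers $(1-\rz_{\ell+1}/\rz_\ell)^{n_\ell}(1-\rz_\ell/\rz_{\ell+1})^{n_{\ell+1}}$, and that the combinatorial factor comes out as $(n_1!\cdots n_m!)^{-2}$. I expect this bookkeeping of signs, of the $\rz$-ratio factors, and of the alternating odd/even assignment of left/right roots to be the main obstacle. My strategy is to verify it first for $m=1$ and $m=2$ against Definition~\ref{def:D_series}, and then argue inductively on adjacent levels. Absolute convergence of both series, from Step 1 and the corollary following Lemma~\ref{prop:plim_bound}, justifies the regrouping.
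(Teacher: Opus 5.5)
Your Steps~2--3 follow the paper's route, and the matching is correct. The paper states the proposition without proof, as a direct application of \cite[Lemmas~4.8 and~4.9]{Baik-Liu19}: Cauchy--Binet on the Fredholm series, then factorization into block determinants, with $\pchlim_\h$ replacing the Cauchy kernel in the level-one block of $\rK_{2;\h}$.

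The bookkeeping you flag as the main obstacle is short and needs no induction on $m$:
\begin{itemize}
\item $\rK_1$ is block diagonal over the level pairs $\{2\ell-1,2\ell\}$, while $\rK_{2;\h}$ is block diagonal over $\{1\},\{2\ell,2\ell+1\}$. The singleton block $\{1\}$ together with this interlacing forces $\#\xib^{(\ell)}=\#\etab^{(\ell)}$ for all $\ell$.
\item Order the $\Sp_1$-points as $\xib^{(1)},\etab^{(2)},\xib^{(3)},\dots$ and the $\Sp_2$-points as $\etab^{(1)},\xib^{(2)},\etab^{(3)},\dots$. This block-diagonalizes both matrices at once, so no signs arise.
\item Each block determinant is then one of the factors $\Cau(\xib^{(\ell)}\sqcup\etab^{(\ell+1)};\etab^{(\ell)}\sqcup\xib^{(\ell+1)})$, or $\det[\pchlim_\h(\eta^{(1)}_i,\xi^{(1)}_j)]$ for the block $\{1\}$.
\item Every point is a row of one operator and a column of the other. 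The product of its two prefactors is $\mathrm f_\ell\mathrm g_\ell/\zeta$, times $(1-\rz_{\ell-1}/\rz_\ell)$ for a $\xi$ or $(1-\rz_{\ell+1}/\rz_\ell)$ for an $\eta$, times $\re^{-\hftn(\cdot,\rz_1)}$ at level $1$.
\end{itemize}

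The gap is in Step~1. Conjugating by the diagonal weight $W$ (multiplication by $|\mathrm f_i(\zeta)|^{1/2}$ on level $i$) shows only that $W^{-1}\rK_1\rK_{2;\h}W$ is trace class. Since $W$ is unbounded, this similarity does not preserve trace class. So your argument does not show that $\rK_1\rK_{2;\h}$ itself is trace class on $\ell^2(\Sp_1)$, and in general it is not.

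The kernel depends on its column variable $\zeta''$ only through $\rK_{2;\h}(\zeta',\zeta'')$, which carries no factor $\mathrm f(\zeta'')$. For example, take $m=1$ and $\h=\h_{\mathrm{pnw}}$. Partial fractions give
\[
(\rK_1\rK_{2;\h})(\xi,\xi'')=\frac{\mathrm f_1(\xi)\re^{2\hftn(\xi,\rz)}}{\xi}\cdot\frac{G(\xi'')-G(\xi)}{\xi-\xi''},\qquad G(w):=\sum_{\eta\in\rR_\rz}\frac{\mathrm f_1(\eta)\re^{2\hftn(\eta,\rz)}}{\eta(\eta-w)}.
\]
Since $G(\xi'')\to0$, the row has size about $|G(\xi)|/|\xi''|$ whenever $G(\xi)\ne0$, which is the generic case. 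The roots satisfy $\xi''^2=-2\log\rz-4\pi\ri k$, so $|\xi''|^2\sim4\pi|k|$ and $\sum_{\xi''\in\rL_\rz}|\xi''|^{-2}=\infty$. Hence the rows are not square-summable, and the operator is not even bounded on $\ell^2(\Sp_1)$.

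The correct reading of \eqref{eq:fredholm}, and what the cited lemmas deliver, is $\det(\rI-W^{-1}\rK_1\rK_{2;\h}W)$. Equivalently, it is the absolutely convergent Fredholm series, which is unchanged because finite minors are invariant under diagonal similarity. The paper's trace-class assertion should be understood the same way. With that reading, your Steps~2--3 go through verbatim.

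A smaller inaccuracy: $1/(\zeta-\zeta')$ is not bounded on the same-side adjacent-level blocks, namely $\rL_{\rz_{2\ell-1}}$ against $\rL_{\rz_{2\ell}}$ and $\rR_{\rz_{2\ell}}$ against $\rR_{\rz_{2\ell+1}}$. These sets lie on the hyperbolas $\Re(\zeta^2)=-2\log|\rz|$, which share asymptotes. One only has $|\zeta-\zeta'|\ge 2\log(|\rz_\ell|/|\rz_{\ell+1}|)/(|\zeta|+|\zeta'|)$. This polynomial growth is harmless after weighting, but "separated root sets" does not justify boundedness.
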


For each $\ell\ge1$, omitting any nonexistent sets, $\rK_1$ maps 
$\rR_{\rz_{2\ell-1}}\cup\rL_{\rz_{2\ell}}$ to $\rL_{\rz_{2\ell-1}}\cup\rR_{\rz_{2\ell}}$, 
and $\rK_{2;\h}$ maps in the reverse direction. Hence $\rK_1\rK_{2;\h}$ is block diagonal in $\ell$, and the initial condition enters only through the first block of $\rK_{2;\h}$.

\subsection{Alternative expressions for $\sfS_\rz$ and $\sfT_\h$}
\label{sec:STalt}

The following identities give a contour representation of the resolvent kernel $\sfS_\rz$ and a Fourier--Laplace representation of the hitting kernel $\sfT_\h$.

\begin{lm}\label{lm:alt_S_T}
\begin{enumerate}[(a)]
\item For $0<|\rz|<1$ and $\sfx,\sfx'\in\bbR$,
\begin{equation}\label{eq:S_contour}
	\sfS_\rz(\sfx,\sfx')
	=
	\int_\gamma
	\frac{\rz\re^{\xi(\sfx-\sfx')}}{\re^{-\xi^2/2}-\rz}\,
	\frac{\rd\xi}{2\pi\ri},
\end{equation}
where $\gamma$ is a vertical line satisfying $-\sqrt{-\log|\rz|}<\Re(\xi)<0$. 

\item For $\h\in\uc_1^0$, $\arg(\eta)\in(-\frac{\pi}{4},\frac{\pi}{4})$, and $s\in\bbR$,
\begin{equation}\label{eq:T_OST}
	\re^{-\eta^2/2}
	\int_\bbR \re^{\eta\sfy}\sfT_\h(s,\sfy)\,\rd\sfy
	=
	\bbE_{\B(0)=s}
	\left[
	\re^{\eta\B(\btau)-\frac12\eta^2\btau}
	\mathbf{1}_{\btau<1}
	\right].
\end{equation}
\end{enumerate}
\end{lm}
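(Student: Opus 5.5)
For part (a), I will work termwise in $\rz$. Since $|\rz|<1$ and $|\re^{-\xi^2/2}|=\re^{-\Re(\xi^2)/2}$, on the vertical line $\Re(\xi)=c$ with $-\sqrt{-\log|\rz|}<c<0$ we have $\Re(\xi^2)=c^2-(\Im\xi)^2\le c^2<-\log|\rz|$. Hence $|\rz\re^{\xi^2/2}|\le|\rz|\re^{c^2/2}<1$ uniformly on $\gamma$, and we may expand
\begin{equation*}
	\frac{\rz}{\re^{-\xi^2/2}-\rz}
	=
	\sum_{k\ge1}\rz^k\re^{k\xi^2/2}.
\end{equation*}
The series converges absolutely along $\gamma$, since each term is dominated by $|\rz|^k\re^{kc^2/2}\re^{-k(\Im\xi)^2/2}$, which is summable and integrable. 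Each term is then a Gaussian integral:
\begin{equation*}
	\int_\gamma\re^{k\xi^2/2+\xi(\sfx-\sfx')}\frac{\rd\xi}{2\pi\ri}
	=
	\frac{1}{\sqrt{2\pi k}}\re^{-(\sfx-\sfx')^2/(2k)}.
\end{equation*}
To see this, shift the contour to the imaginary axis, which is allowed because the integrand is entire with Gaussian decay in $\Im\xi$, and write $\xi=\ri t$. Summing over $k$ recovers \eqref{eq:limkernel2SS}. The only care needed is the dominated-convergence justification for swapping the sum and the integral, and this is routine.

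For part (b), I apply the strong Markov property at $\btau$ on $\{\btau<1\}$. Given $\btau=s'$ and $\B(\btau)=b$, the density of $\B(1)$ is $\sfG_{1-s'}(b,\cdot)$, so
\begin{equation*}
	\int_\bbR\re^{\eta\sfy}\sfT_\h(s,\sfy)\,\rd\sfy
	=
	\bbE_{\B(0)=s}\left[\mathbf{1}_{\btau<1}\int_\bbR\re^{\eta\sfy}\sfG_{1-\btau}(\B(\btau),\sfy)\,\rd\sfy\right].
\end{equation*}
The inner integral is the moment generating function of a Gaussian, equal to $\re^{\eta\B(\btau)+\frac12\eta^2(1-\btau)}$; this holds for complex $\eta$ as well. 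Multiplying by $\re^{-\eta^2/2}$ gives \eqref{eq:T_OST}.

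The main point is integrability, which justifies Fubini and makes the right-hand side finite. This follows because $|\re^{\eta\sfy}|=\re^{\Re(\eta)\sfy}$ against a Gaussian density is integrable. Moreover, on $\{\btau<1\}$ we have $\B(\btau)\le\h(-\btau)\le\Mx$, where $\Mx$ bounds $\h$ from above. The remaining factor $\re^{-\frac12\eta^2\btau}$ has modulus at most $1$, since $\Re(\eta^2)>0$. The one subtle step is making sure the Markov decomposition at a stopping time is applied to the joint law of $(\btau,\B(\btau))$, exactly as in the proof of Lemma~\ref{lm:T_convergence}; I do not expect a real obstacle there.
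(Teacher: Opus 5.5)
Your proposal is correct and follows the paper's proof. In part (a) you use the same termwise geometric expansion followed by Gaussian integration. In part (b), your strong-Markov decomposition at $\btau$ combined with the explicit Gaussian moment generating function is an unpacked form of the paper's optional-stopping argument for the exponential martingale $\re^{\eta\B(t)-\frac12\eta^2t}$ at the bounded time $\btau\wedge1$.
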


\begin{proof}
For part~(a), expand
\begin{equation*}
    \frac{\rz}{e^{-\xi^2/2}-\rz}
    =
    \sum_{k\ge1}\rz^k e^{k\xi^2/2},
\end{equation*} 
which converges on the contour since $|e^{-\xi^2/2}|>|\rz|$. 
Integrating term by term gives the series definition \eqref{eq:limkernel2SS}.

For part~(b), the process 
$( e^{\eta\B(t)-\frac{1}{2}\eta^2t} )_{t\ge0}$ 
is a martingale. 
Since $\btau\wedge1$ is bounded, optional stopping gives
\begin{equation*}
	\bbE_{\B(0)=s} \left[ e^{\eta \B(\btau)-\frac{1}{2}\eta^2\btau} \mathbf{1}_{\btau<1} \right]
	=
	\bbE_{\B(0)=s} \left[ e^{\eta \B(1)-\frac{1}{2}\eta^2} \mathbf{1}_{\btau<1} \right] 
	=
	e^{-\eta^2/2} \int_{\bbR} e^{\eta \sfy}\sfT_\h(s,\sfy)\,\rd \sfy,
\end{equation*}
where the last equality follows from the definition \eqref{eq:limkernel2TT} of $\sfT_\h$.
\end{proof}

Define
\begin{equation}\label{eq:Xchh-def}
	\pchlimline_\h(\eta,\xi)
	:=
	\int_\bbR \re^{-s\xi}
	\bbE_{\B(0)=s}
	\left[
	\re^{\eta\B(\btau)-\frac12\eta^2\btau}
	\mathbf{1}_{\btau<1}
	\right]\rd s,
\end{equation}
which is the first term in \eqref{eq:pch_lim}. By \eqref{eq:T_OST},
\begin{equation}\label{eq:Xchh-Fourier}
	\pchlimline_\h(\eta,\xi)
	=
	\re^{-\eta^2/2}
	\int_\bbR\int_\bbR
	\re^{-s\xi+\eta\sfy}\sfT_\h(s,\sfy)
	\,\rd s\,\rd\sfy.
\end{equation}
Thus $\re^{\eta^2/2}\pchlimline_\h(\eta,\xi)$ is the two-sided Fourier--Laplace transform of $\sfT_\h$.

\subsection{Space-reversal formulas}
\label{sec:reversal}

By the space-reversal invariance of Brownian motion, both the periodic characteristic function and the energy function admit alternative expressions; see Propositions~\ref{prop:spacereversal} and~\ref{prop:spacereversal2} below. These identities are reminiscent of particle--hole duality for (P)TASEP.

For $\h\in\uc_1$, define
\begin{equation*}
	\btau_+:=\inf\{\sfx\ge0:\B(\sfx)\le\h(\sfx)\},
	\qquad
	\bntau_+:=\inf\{\sfx\ge1:\B(\sfx)\le\h(\sfx)\}.
\end{equation*}
Define the unit-time hitting kernels
\begin{equation*}
	\widehat{\sfT}_{\h}(\sfx,\sfy)
	:=
	\frac{\prob_{\B(0)=\sfx}
	\bigl(\B(1)\in\rd\sfy,\ \btau\le1\bigr)}{\rd\sfy},
	\qquad
	\widehat{\sfT}_{\h,+}(\sfx,\sfy)
	:=
	\frac{\prob_{\B(0)=\sfx}
	\bigl(\B(1)\in\rd\sfy,\ \btau_+\le1\bigr)}{\rd\sfy},
\end{equation*}
and the corresponding no-hit operators 
\begin{equation*}
	\bar{\sfT}_{\h}:=\mathsf G-\widehat{\sfT}_{\h},
	\qquad
	\bar{\sfT}_{\h,+}:=\mathsf G-\widehat{\sfT}_{\h,+}.
\end{equation*}

\begin{lm}\label{lm:spacereversal}
For every $\h\in\uc_1$,
\begin{equation}\label{eq:space_reversal} 
	\widehat{\sfT}_{\h,+}^{\top}
	=
	\widehat{\sfT}_{\h},
	\qquad
	\bar{\sfT}_{\h,+}^{\top}
	=
	\bar{\sfT}_{\h}.
\end{equation}
\end{lm}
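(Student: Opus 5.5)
The plan is to prove the identity for the no-hit kernels first, and then obtain the hit kernels by subtraction from the symmetric heat kernel. Since $\mathsf G^\top=\mathsf G$ and $\widehat{\sfT}_{\h}=\mathsf G-\bar{\sfT}_{\h}$, $\widehat{\sfT}_{\h,+}=\mathsf G-\bar{\sfT}_{\h,+}$, the two identities in \eqref{eq:space_reversal} are equivalent. So it suffices to show
\begin{equation*}
	\bar{\sfT}_{\h,+}(\sfy,\sfx)=\bar{\sfT}_{\h}(\sfx,\sfy),
	\qquad \sfx,\sfy\in\bbR .
\end{equation*}

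First I would write both sides as Brownian-bridge probabilities. By the definitions of $\btau$ and $\btau_+$,
\begin{equation*}
	\bar{\sfT}_{\h}(\sfx,\sfy)
	=\mathsf G(\sfx,\sfy)\,
	\prob\bigl(\B(t)>\h(-t)\ \text{for all } t\in[0,1]\ \big|\ \B(0)=\sfx,\ \B(1)=\sfy\bigr),
\end{equation*}
and
\begin{equation*}
	\bar{\sfT}_{\h,+}(\sfy,\sfx)
	=\mathsf G(\sfy,\sfx)\,
	\prob\bigl(\B(t)>\h(t)\ \text{for all } t\in[0,1]\ \big|\ \B(0)=\sfy,\ \B(1)=\sfx\bigr).
\end{equation*}
Next I would apply time reversal of the bridge: if $\B$ is a bridge from $\sfx$ to $\sfy$ on $[0,1]$, then $\tilde\B(t):=\B(1-t)$ is a bridge from $\sfy$ to $\sfx$. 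Under this map the event $\{\B(t)>\h(-t)\ \forall t\in[0,1]\}$ becomes $\{\tilde\B(s)>\h(s-1)\ \forall s\in[0,1]\}$. By one-periodicity, $\h(s-1)=\h(s)$, so this is exactly the no-hit event for $\btau_+$. Together with the symmetry $\mathsf G(\sfx,\sfy)=\mathsf G(\sfy,\sfx)$, this gives the identity.

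The only delicate point is the boundary convention. Hitting is defined by the non-strict inequality $\B\le\h$, and the hit kernels use $\btau\le1$, so the closed interval $[0,1]$ is the right one on both sides and the endpoints $t=0,1$ correspond correctly under reversal. I would check that the events really are "no $t\in[0,1]$ with $\B(t)\le\h(\pm t)$" on both sides; this uses that upper semicontinuity makes the hitting set closed, so infima are attained. No null-set issues arise, because the events match pathwise under the bijection $\B\mapsto\tilde\B$.
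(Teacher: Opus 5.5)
Your proposal is correct and is essentially the paper's argument: time reversal $t\mapsto 1-t$ of the Brownian bridge, one-periodicity of $\h$ to turn $\h(s-1)$ into $\h(s)$, and symmetry of $\mathsf G$. The only difference is that you prove the no-hit identity first and obtain the hit identity by subtraction, whereas the paper does the reverse; you also make explicit that upper semicontinuity makes the hitting set closed, a point the paper leaves implicit.
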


\begin{proof}
Set $\widehat{\B}(t):=\B(1-t)$. By the one-periodicity of $\h$,
\begin{equation*}
	\{\B(t)\le\h(-t)\text{ for some }t\in[0,1]\}
	=
	\{\widehat{\B}(t)\le\h(t)\text{ for some }t\in[0,1]\}.
\end{equation*}
The time-reversal invariance of a Brownian bridge therefore gives the first identity in \eqref{eq:space_reversal}. Since $\mathsf G^\top=\mathsf G$, the second identity immediately follows.
\end{proof}

Recall the periodic characteristic function $\pchlim_\h(\eta,\xi)$ defined in \eqref{eq:pch_lim}.

\begin{prop}\label{prop:spacereversal}
For $\h\in\uc_1$, define
\begin{equation*}
	\pchlim_\h^+(\eta,\xi)
	:=
	\int_\bbR \re^{s\eta}
	\bbE_{\B(0)=s}
	\bigg[
	\re^{-\xi\B(\btau_+)-\frac12\xi^2\btau_+}
	\mathbf{1}_{\btau_+<1}
	-
	\re^{-\xi\B(\bntau_+)-\frac12\xi^2\bntau_+}
	\mathbf{1}_{\btau_+<1,\,\bntau_+<\infty}
	\bigg]\rd s
\end{equation*}
for $\arg(\xi)\in\left(\frac{3\pi}{4},\frac{5\pi}{4}\right)$ and $\Re(\eta)>0$. 
Then, for $0<|\rz|<1$,
\begin{equation*}
	\pchlim_\h(\eta,\xi)=\pchlim_\h^+(\eta,\xi),
	\qquad
	\eta\in\rR_\rz,\quad \xi\in\rL_\rz.
\end{equation*}
\end{prop}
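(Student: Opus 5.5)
We reduce both sides to the same Fourier--Laplace expression built from the unit-time kernels. The idea is to rewrite each characteristic function as a ``one period'' object, mirroring the proof of Lemma~\ref{prop:pch_convergence}. That proof showed
\begin{equation*}
\pchlim_\h=\pchlim_\h^{<}-\pchlim_\h^{>},
\end{equation*}
that is, a Fourier--Laplace transform of $\widehat{\sfT}_\h$ minus a term involving the post-time-$1$ hitting expectation. Iterating the strong Markov property at integer times, and using periodicity, we expand the post-time-$1$ expectation $\bbE_{\B(0)=\sfy}[\re^{\eta\B(\btau)-\frac12\eta^2\btau}\mathbf{1}_{\btau<\infty}]$ as a Neumann series. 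Each additional period either hits, contributing a factor $\re^{-\eta^2/2}\widehat{\sfT}_\h$, or survives, contributing $\bar{\sfT}_\h$. This should give, for $\rz=\re^{-\eta^2/2}$,
\begin{equation*}
\pchlim_\h(\eta,\xi)=\bigl\langle u_{\xi},\,\rz\,\widehat{\sfT}_\h\bigl(\rI-\rz(\rI-\rz\bar{\sfT}_\h)^{-1}\widehat{\sfT}_\h\bigr)u_{-\eta}\bigr\rangle ,
\end{equation*}
possibly rearranged as $\rz\langle u_\xi,\widehat{\sfT}_\h(\rI-\rz\bar{\sfT}_\h)^{-1}(\rI-\rz\mathsf G)u_{-\eta}\rangle$. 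Here $u_\xi(s)=\re^{-s\xi}$, $u_{-\eta}(y)=\re^{\eta y}$, and we use optional stopping at time $1$ as in Lemma~\ref{lm:alt_S_T}(b). The same manipulation applied to $\pchlim_\h^+$, with $\btau_+$, the roles of $\eta$ and $-\xi$ swapped, and $\rz=\re^{-\xi^2/2}$, gives the analogous formula with $\widehat{\sfT}_{\h,+}$, $\bar{\sfT}_{\h,+}$, and test functions $u_{-\eta}$ and $u_\xi$ exchanged.

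Next we use that $\eta\in\rR_\rz$ and $\xi\in\rL_\rz$, so $\re^{-\eta^2/2}=\re^{-\xi^2/2}=\rz$ and both expansions carry the same scalar $\rz$. Moreover, $u_{-\eta}$ and $u_\xi$ are formal eigenfunctions of $\mathsf G$ with eigenvalues $\re^{\eta^2/2}$ and $\re^{\xi^2/2}$. Hence $(\rI-\rz\mathsf G)u_{-\eta}=0$ and $u_\xi^\top(\rI-\rz\mathsf G)=0$ formally. This is exactly what makes the difference $\pchlim_\h^{<}-\pchlim_\h^{>}$ collapse into a symmetric bilinear form. Concretely, we aim to write
\begin{equation*}
\pchlim_\h(\eta,\xi)=\bigl\langle u_\xi,\,\Phi(\widehat{\sfT}_\h,\bar{\sfT}_\h)\,u_{-\eta}\bigr\rangle
\end{equation*}
for an expression $\Phi$ that is invariant under transposing its arguments and reversing the order of products. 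Lemma~\ref{lm:spacereversal} then lets us transpose, giving $\langle u_{-\eta},\Phi(\widehat{\sfT}_{\h,+},\bar{\sfT}_{\h,+})u_\xi\rangle$, which is precisely the expansion of $\pchlim_\h^+$. The time-reversal of a path crossing several periods is handled period by period, since each factor is a unit-time kernel and periodicity makes all periods identical.

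The main obstacle is analytic. The functions $u_{-\eta}$ and $u_\xi$ are not in $L^2$, and the eigenfunction cancellations are only formal. To handle this, we keep all pairings in the order in which they were derived from absolutely convergent integrals. Convergence follows from the Gaussian decay of $\widehat{\sfT}_\h(s,\cdot)$ for $s$ large (Lemma~\ref{prop:plim_bound}-type estimates and the reflection-principle bounds). Exponential growth of $u$ is killed on the side where hitting is impossible, and $\bar{\sfT}$ paths stay above the bounded function $\h$. The sector conditions on $\eta$ and $\xi$ guarantee $|\rz|<1$, so the Neumann series converge in a weighted space $\re^{-c|x|}L^2$. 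The eigenrelation is used only in the form $\int \mathsf G(s,y)\re^{\eta y}\,\rd y=\re^{\eta^2/2}\re^{\eta s}$, which holds pointwise. If the direct bilinear rearrangement proves too delicate, the fallback is to prove the identity for $\h$ bounded below and continuous, where all kernels decay, and to pass to general $\h\in\uc_1$ by monotone approximation from above, using continuity of the hitting times as in Lemma~\ref{lm:rw_hitting_limit}.
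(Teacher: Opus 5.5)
Your main line is the paper's proof. You expand over unit periods with the closed-interval kernels to get
\[
\pchlim_\h(\eta,\xi)=\rz\langle u_\xi,\widehat{\sfT}_\h u_{-\eta}\rangle-\sum_{k\ge0}\rz^{k+2}\langle u_\xi,\widehat{\sfT}_\h\bar{\sfT}_\h^{k}\widehat{\sfT}_\h u_{-\eta}\rangle,
\]
where, in your notation, $u_\xi(s)=\re^{-s\xi}$ and $u_{-\eta}(y)=\re^{\eta y}$. You then use $\re^{-\eta^2/2}=\re^{-\xi^2/2}$ so that the scalars in this expansion and in the analogous one for $\pchlim_\h^+$ coincide, and you transpose each palindromic term via Lemma~\ref{lm:spacereversal}. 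The paper makes one bookkeeping step explicit that you should include: replace $\mathbf{1}_{\btau<1}$ by $\mathbf{1}_{\btau\le1}$ at the outset. This is allowed because on $\{\btau=1\}$ one has $\bntau=1$, so the two terms cancel. It matters because it is $\widehat{\sfT}_\h$, not $\sfT_\h$, whose transpose is $\widehat{\sfT}_{\h,+}$.

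The eigenfunction discussion should be discarded. It is not what produces the symmetry, and the ``rearranged'' form you propose is false. The integral $\int_\bbR\mathsf G(s,y)\re^{\eta y}\,\rd y=\re^{\eta^2/2}\re^{\eta s}$ converges absolutely and $\rz\re^{\eta^2/2}=1$. Hence $(\rI-\rz\mathsf G)u_{-\eta}\equiv0$ pointwise, and $\rz\langle u_\xi,\widehat{\sfT}_\h(\rI-\rz\bar{\sfT}_\h)^{-1}(\rI-\rz\mathsf G)u_{-\eta}\rangle$ would vanish. But $\pchlim_{\mathrm{pnw}}\neq0$ by Proposition~\ref{prop:pnw}.

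The regrouping fails because the Neumann series for $(\rI-\rz\bar{\sfT}_\h)^{-1}$ cannot be applied to $u_{-\eta}$ itself. By optional stopping,
\[
(\rz\bar{\sfT}_\h)^{k}u_{-\eta}(s)=\bbE_{\B(0)=s}\bigl[\re^{\eta\B(k)-\frac12\eta^2k}\mathbf{1}_{\btau>k}\bigr],
\]
and this tends to $u_{-\eta}(s)-F_\h^\eta(s)$, not to $0$, where $F_\h^\eta(s):=\bbE_{\B(0)=s}[\re^{\eta\B(\btau)-\frac12\eta^2\btau}\mathbf{1}_{\btau<\infty}]$. The symmetry comes solely from two facts: the palindromic structure of $\rz\widehat{\sfT}_\h-\rz^2\widehat{\sfT}_\h(\rI-\rz\bar{\sfT}_\h)^{-1}\widehat{\sfT}_\h$, and the equality of the scalars. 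No eigen-relation is used.

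Consequently, the analytic obstacle you anticipate does not arise. Each pairing $\langle u_\xi,\widehat{\sfT}_\h\bar{\sfT}_\h^{k}\widehat{\sfT}_\h u_{-\eta}\rangle$ is absolutely convergent by the estimates of Lemma~\ref{prop:plim_bound}, so Fubini justifies the termwise transposition. The $k$-sum converges absolutely because $|\rz|<1$ and $\bar{\sfT}_\h$ is a contraction on bounded functions. Neither weighted spaces nor an approximation of $\h$ is needed.
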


\begin{proof}
Fix $\eta\in\rR_\rz$ and $\xi\in\rL_\rz$, so that $\rz=\re^{-\eta^2/2}=\re^{-\xi^2/2}$. 
On the event $\{\btau=1\}$, we have $\bntau=1$, so the two terms in the definition of $\pchlim_\h$ cancel. 
Thus, we may replace $\mathbf{1}_{\btau<1}$ by $\mathbf{1}_{\btau\le1}$, and similarly in the definition of $\pchlim_\h^+$.

Define
\begin{equation*}
	F_\h^\eta
	:=
	\sum_{k\ge0}
	\rz^{k+1}\bar{\sfT}_{\h}^{k}
	\widehat{\sfT}_{\h}u_\eta,
	\qquad
	F_{\h,+}^{-\xi}
	:=
	\sum_{k\ge0}
	\rz^{k+1}\bar{\sfT}_{\h,+}^{k}
	\widehat{\sfT}_{\h,+}u_{-\xi}, 
	\qquad 
	u_\lambda(s):=\re^{\lambda s}.
\end{equation*}
The series converge absolutely by the same estimates used in the proof of Lemma~\ref{prop:plim_bound}. Decomposing according to the unit interval containing the first hit and applying optional sampling on the final interval gives
\begin{equation*}
	F_\h^\eta(s)
	=
	\bbE_{\B(0)=s}
	\left[
	\re^{\eta\B(\btau)-\frac12\eta^2\btau}
	\mathbf{1}_{\btau<\infty}
	\right],
	\qquad 
	F_{\h,+}^{-\xi}(s)
	=
	\bbE_{\B(0)=s}
	\left[
	\re^{-\xi\B(\btau_+)-\frac12\xi^2\btau_+}
	\mathbf{1}_{\btau_+<\infty}
	\right].
\end{equation*}
The strong Markov property at time $1$ now yields
\begin{equation*} 
	\pchlim_\h(\eta,\xi)
	=
	\rz
	\langle
	u_{-\xi},
	\widehat{\sfT}_{\h}(u_\eta-F_\h^\eta)
	\rangle_{\bbR},
	\qquad 
	\pchlim_\h^+(\eta,\xi)
	=
	\rz
	\langle
	u_\eta,
	\widehat{\sfT}_{\h,+}
	(u_{-\xi}-F_{\h,+}^{-\xi})
	\rangle_{\bbR}, 
\end{equation*}
where we write the bilinear pairing $\langle f,g\rangle_{\bbR} =
\int_\bbR f(s)g(s)\,\rd s$. 
Lemma~\ref{lm:spacereversal} implies that for $\arg(\xi)\in\left(\frac{3\pi}{4},\frac{5\pi}{4}\right)$ and $\arg(\eta)\in\left(-\frac{\pi}{4},\frac{\pi}{4}\right)$, we have
\begin{equation*}
	\langle u_{-\xi},\widehat{\sfT}_{\h}u_\eta\rangle_{\bbR}
	=
	\langle u_\eta,\widehat{\sfT}_{\h,+}u_{-\xi}\rangle_{\bbR},
\end{equation*}
and, for every $k\ge0$,
\begin{equation*}
	\langle
	u_{-\xi},
	\widehat{\sfT}_{\h}\bar{\sfT}_{\h}^{k}
	\widehat{\sfT}_{\h}u_\eta
	\rangle_{\bbR}
	=
	\langle
	u_\eta,
	\widehat{\sfT}_{\h,+}\bar{\sfT}_{\h,+}^{k}
	\widehat{\sfT}_{\h,+}u_{-\xi}
	\rangle_{\bbR}.
\end{equation*}
Thus, the result follows. 
\end{proof}

A similar space-reversed representation holds for the energy function. It follows by first rewriting the operators in a form analogous to \eqref{eq:IRKEf}, as in the proof of Proposition~\ref{result:KasQ}, and then applying the space-reversal identity \eqref{eq:space_reversal}. We omit the details.

\begin{prop}\label{prop:spacereversal2}
Let $\h\in\uc_1^0$ and $|\rz|<1$. Define
\begin{equation}\label{eq:limkernel_plus} 
	\limKe_{\h,+}(\rz)
	:=
	\rz\,\mathbf{1}_{>0}
	(\rI-\rz\mathsf G)^{-1}
	\sfT_{\h,+}\mathbf{1}_{>0},
	\qquad 
	\sfT_{\h,+}(\sfx,\sfy)
	:=
	\frac{
	\prob_{\B(0)=\sfx}
	\bigl(\B(1)\in\rd\sfy,\ \btau_+<1\bigr)
	}{\rd\sfy}.
\end{equation}
Then $\limKe_{\h,+}(\rz)$ is trace class on $L^2(\bbR)$ and
\begin{equation*}
	\det(\rI+\limKe_{\h,+}(\rz))_{L^2(\bbR)}
	=
	\det(\rI+\limKe_{\h}(\rz))_{L^2(\bbR)}.
\end{equation*}
\end{prop}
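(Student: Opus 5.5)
The approach is to mimic the proof of Proposition~\ref{result:KasQ}, with $\sfT_\h$ replaced by $\sfT_{\h,+}$, and then compare the two resulting relative determinants through Lemma~\ref{lm:spacereversal}.

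First, trace class. Define $\mathsf Q_{\h,+}:=\mathsf G_+-\sfT^+_{\h,+}$ on $L^2(\bbR_+)$. Because $\h(0)=\h(1)=0$, its kernel is the no-hit density
\[
\mathsf Q_{\h,+}(\sfx,\sfy)=\frac{\prob_{\B(0)=\sfx}\bigl(\B(1)\in\rd\sfy,\ \B(t)>\h(t)\ \text{for all } t\in[0,1]\bigr)}{\rd\sfy},\qquad \sfx,\sfy>0.
\]
We show that $\sfT^+_{\h,+}$ is trace class exactly as in part (a) of Proposition~\ref{result:KasQ}. That argument split at time $1/2$ and used only that $\h$ is bounded above, so it applies verbatim with the boundary $\h(t)$ in place of $\h(-t)$. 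Repeating part (b) then gives
\[
\I_++R\limKe_{\h,+}(\rz)E=AB_+,\qquad B_+:=(\I_+-\rz\mathsf G_+)^{-1}(\I_+-\rz\mathsf Q_{\h,+}),
\]
where $A$ is the same operator as before and does not depend on $\h$. It follows that $\limKe_{\h,+}(\rz)$ is trace class and, as in part (c),
\[
\det(\I+\limKe_{\h,+}(\rz))=\det(\I+\limKe_{\h_{\mathrm{pnw}}}(\rz))\,\det B_+ .
\]

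So it suffices to show $\det B_+=\det B$, where $B=(\I_+-\rz\mathsf G_+)^{-1}(\I_+-\rz\mathsf Q_\h)$ comes from Proposition~\ref{result:KasQ}. As in Corollary~\ref{lm:translation_rC}, we expand
\[
\log\det B=\sum_{n\ge1}\frac{\rz^n}{n}\operatorname{Tr}\bigl(\mathsf G_+^n-\mathsf Q_\h^n\bigr),
\]
and write the same expansion for $B_+$. The claim reduces to $\operatorname{Tr}(\mathsf G_+^n-\mathsf Q_{\h,+}^n)=\operatorname{Tr}(\mathsf G_+^n-\mathsf Q_\h^n)$ for every $n$. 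Lemma~\ref{lm:spacereversal} gives $\bar\sfT_{\h,+}^\top=\bar\sfT_\h$. Restricted to $\bbR_+\times\bbR_+$, and since $\h(0)=\h(1)=0$ means the endpoint hit events have measure zero, this yields $\mathsf Q_{\h,+}^\top=\mathsf Q_\h$. Transposition preserves both traces and powers, so $(\mathsf Q_{\h,+}^n)^\top=(\mathsf Q_\h^n)^\top$-type identities give $\mathsf Q_{\h,+}^n=(\mathsf Q_\h^n)^\top$. Also $\mathsf G_+^\top=\mathsf G_+$. Hence the traces agree, because $\mathsf G_+^n-\mathsf Q_\h^n$ is trace class and $\operatorname{Tr}(X^\top)=\operatorname{Tr}(X)$. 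Alternatively, one can avoid the logarithm altogether: $\det B_+=\det(B^\top)$ after conjugating by the transpose, since $(\I_+-\rz\mathsf Q_\h)^\top=\I_+-\rz\mathsf Q_{\h,+}$ and Fredholm determinants of relative products are invariant under transposition.

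The main obstacle is bookkeeping. First, one must make sure the strict versus non-strict hitting conventions ($\btau<1$ versus $\btau\le 1$, and hitting at the endpoints) do not matter; this should follow from $\h(0)=\h(\pm1)=0$ together with the fact that the endpoints $\sfx,\sfy>0$ lie strictly above the boundary. Second, one must justify the relative determinant and transpose manipulation: the factors $(\I_+-\rz\mathsf G_+)^{-1}$ and $(\I_+-\rz\mathsf Q)$ are not individually of the form $\I+$trace class. For this I would use the series formula from Corollary~\ref{lm:translation_rC}, whose absolute convergence bound $\|\mathsf G_+^n-\mathsf Q^n\|_1\le n\|\mathsf G_+-\mathsf Q\|_1$ holds equally for $\mathsf Q_{\h,+}$.
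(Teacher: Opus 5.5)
Your proposal is correct and follows essentially the paper's route. The paper rewrites $\I_++R\limKe_{\h,+}(\rz)E=R(\I-\rz\mathsf G)^{-1}E(\I_+-\rz\mathsf Q_{\h,+})$ as in \eqref{eq:IRKEf}, uses Lemma~\ref{lm:spacereversal} to get $\mathsf Q_{\h,+}=\mathsf Q_\h^{\top}$, and concludes by transposition together with conjugation by a bounded invertible operator; your transpose-and-conjugate variant does the same thing, and the log-trace expansion is unnecessary. You could also skip the detour through $A$, $B_+$ and the narrow-wedge factor: the transpose of $R(\I-\rz\mathsf G)^{-1}E(\I_+-\rz\mathsf Q_\h^{\top})$ is $(\I_+-\rz\mathsf Q_\h)R(\I-\rz\mathsf G)^{-1}E$, which is conjugate via $\I_+-\rz\mathsf Q_\h$ to $\I_++R\limKe_\h(\rz)E$, and this also gives the trace-class property directly.
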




\begin{thebibliography}{99}

\bibitem{Aggarwal-Corwin-Hegde24}
A.~Aggarwal, I.~Corwin, and M.~Hegde.
\newblock Scaling limit of the colored {ASEP} and stochastic six-vertex models.
\newblock {\em arXiv:2403.01341}, 2024.

\bibitem{Aggarwal-Corwin-Schmid26}
A.~Aggarwal, I.~Corwin, and D.~Schmid.
\newblock Periodic Directed Landscape.
\newblock {\em arXiv:2607.18104}, 2026.

\bibitem{Amir-Corwin-Quastel11}
G.~Amir, I.~Corwin, and J.~Quastel.
\newblock Probability distribution of the free energy of the continuum directed random polymer in {$1+1$} dimensions.
\newblock {\em Comm. Pure Appl. Math.}, 64(4):466--537, 2011.

\bibitem{Baik-Deift-Johansson99}
J.~Baik, P.~Deift, and K.~Johansson.
\newblock On the distribution of the length of the longest increasing subsequence of random permutations.
\newblock {\em J. Amer. Math. Soc.}, 12(4):1119--1178, 1999.

\bibitem{Baik-Liu16}
J.~Baik and Z.~Liu.
\newblock {TASEP on a ring in sub-relaxation time scale}.
\newblock {\em J. Stat. Phys.}, 165(6):1051--1085, 2016.

\bibitem{Baik-Liu18}
J.~Baik and Z.~Liu.
\newblock Fluctuations of {TASEP} on a ring in relaxation time scale.
\newblock {\em Comm. Pure Appl. Math.}, 71(4):747--813, 2018.

\bibitem{Baik-Liu19}
J.~Baik and Z.~Liu.
\newblock Multipoint distribution of periodic {TASEP}.
\newblock {\em J. Amer. Math. Soc.}, 32(3):609--674, 2019.

\bibitem{Baik-Liu21}
J.~Baik and Z.~Liu.
\newblock Periodic {TASEP} with general initial conditions.
\newblock {\em Probab. Theory Related Fields}, 179(3-4):1047--1144, 2021.


\bibitem{Baik-Liu2024}
J.~{Baik} and Z.~{Liu}.
\newblock {Pinched-up periodic KPZ fixed point}.
\newblock {\em Ann. Inst. Henri Poincar\'{e} Probab. Stat.}, to appear.

\bibitem{Baik-Liu-Silva22}
J.~Baik, Z.~Liu, and G.~L.~F.~Silva.
\newblock Limiting one-point distribution of periodic {TASEP}.
\newblock {\em Ann. Inst. Henri Poincar\'{e} Probab. Stat.}, 58(1):248--302, 2022.

\bibitem{Baik-Rains01}
J.~Baik and E.~M.~Rains.
\newblock Symmetrized random permutations.
\newblock In {\em Random matrix models and their applications}, volume 40 of {\em Math. Sci. Res. Inst. Publ.}, pages 1--19. Cambridge Univ. Press, Cambridge, 2001.


\bibitem{Basor97}
E.~L.~Basor.
\newblock Distribution functions for random variables for ensembles of positive
  Hermitian matrices.
\newblock {\em Comm. Math. Phys.}, 188(2):327--350, 1997.

\bibitem{BasorEhrhardt03}
E.~L.~Basor and T.~Ehrhardt.
\newblock Asymptotics of determinants of Bessel operators.
\newblock {\em Comm. Math. Phys.}, 234:491--516, 2003.

\bibitem{BLSZ23}
E.~Bisi, Y.~Liao, A.~Saenz, and N.~Zygouras.
\newblock Non-intersecting path constructions for TASEP with inhomogeneous rates and the KPZ fixed point.
\newblock {\em Commun. Math. Phys.}, 402:285--333, 2023.

\bibitem{Borodin17}
A.~Borodin.
\newblock On a family of symmetric rational functions.
\newblock {\em Adv. Math.}, 306:973 -- 1018, 2017.

\bibitem{Bottcher-Silbermann06}
A.~B\"ottcher and B.~Silbermann.
\newblock {\em Analysis of Toeplitz operators}, second edition. 
\newblock Springer Monographs in Mathematics, Springer, Berlin, 2006.

\bibitem{Brankov-Papoyan-Poghosyan-Priezzhev06}
J.~G.~Brankov, V.~V.~Papoyan, V.~S.~Poghosyan, and V.~B.~Priezzhev.
\newblock The totally asymmetric exclusion process on a ring: Exact relaxation dynamics and associated model of clustering transition.
\newblock {\em Phys. A}, 368(2):471--480, 2006.

\bibitem{Corwin-Gu-Sorensen26}
I.~Corwin, Y.~Gu, and E.~Sorensen.
\newblock Periodic {P}itman {T}ransforms and {J}ointly {I}nvariant {M}easures.
\newblock {\em Comm. Math. Phys.}, 407(3):Paper No. 49, 2026.

\bibitem{Corwin-OConnel-Seppalainen-Zygouras14}
I.~Corwin, N.~O'Connell, T.~Sepp\"al\"ainen, and N.~Zygouras.
\newblock Tropical combinatorics and {W}hittaker functions.
\newblock {\em Duke Math. J.}, 163(3):513--563, 2014.

\bibitem{Dauvergne-Ortmann-Virag22}
D.~Dauvergne, J.~Ortmann, and B.~Vir{\'a}g.
\newblock {The directed landscape}.
\newblock {\em Acta Mathematica}, 229(2):201 -- 285, 2022.

\bibitem{Dauvergne-Zhang24}
D.~Dauvergne and L.~Zhang.
\newblock Characterization of the directed landscape from the {KPZ} fixed point.
\newblock {\em arXiv:2412.13032}, 2024.

\bibitem{Derrida-Lebowitz98}
B.~Derrida and J.~L.~Lebowitz.
\newblock Exact large deviation function in the asymmetric exclusion process.
\newblock {\em Phys. Rev. Lett.}, 80(2):209--213, 1998.

\bibitem{Dunlap-Gu-Komorowski23}
A.~Dunlap, Y.~Gu, and T.~Komorowski.
\newblock Fluctuation exponents of the {KPZ} equation on a large torus.
\newblock {\em Comm. Pure Appl. Math.}, 76(11):3104--3149, 2023.

\bibitem{Ferrari04}
P.~L.~Ferrari.
\newblock Polynuclear growth on a flat substrate and edge scaling of {GOE} eigenvalues.
\newblock {\em Comm. Math. Phys.}, 252(1-3):77--109, 2004.

\bibitem{Golinelli-Mallick04}
O.~Golinelli and K.~Mallick.
\newblock Bethe ansatz calculation of the spectral gap of the asymmetric exclusion process.
\newblock {\em J. Phys. A}, 37(10):3321--3331, 2004.

\bibitem{Golinelli-Mallick05}
O.~Golinelli and K.~Mallick.
\newblock Spectral gap of the totally asymmetric exclusion process at arbitrary filling.
\newblock {\em J. Phys. A}, 38(7):1419--1425, 2005.

\bibitem{Gwa-Spohn92}
L.-H.~Gwa and H.~Spohn.
\newblock Six-vertex model, roughened surfaces, and an asymmetric spin {H}amiltonian.
\newblock {\em Phys. Rev. Lett.}, 68(6):725--728, 1992.

\bibitem{Johansson00}
K.~Johansson.
\newblock Shape fluctuations and random matrices.
\newblock {\em Comm. Math. Phys.}, 209(2):437--476, 2000.

\bibitem{Kardar-Parisi-Zhang86}
M.~Kardar, G.~Parisi, and Y.-C.~Zhang.
\newblock Dynamic scaling of growing interfaces.
\newblock {\em Phys. Rev. Lett.}, 56:889--892, Mar 1986.

\bibitem{Li-Saenz25}
J.-H.~Li and A.~Saenz.
\newblock Contour integral formulas for {P}ush{ASEP} on the ring.
\newblock {\em Ann. Probab.}, 53(4):1434--1490, 2025.

\bibitem{Liao22}
Y.~Liao.
\newblock Multi-point distribution of discrete time periodic {TASEP}.
\newblock {\em Probab. Theory Related Fields}, 182(3-4):1053--1131, 2022.

\bibitem{Liao-Liu25}
Y.~Liao and Z.~Liu.
\newblock Multipoint distributions of the {KPZ} fixed point with compactly supported initial conditions.
\newblock {\em arXiv:2509.03246}, 2025.

\bibitem{Liu16}
Z.~Liu.
\newblock Height fluctuations of stationary {TASEP} on a ring in relaxation time scale.
\newblock {\em Ann. Inst. Henri Poincar\'{e} Probab. Stat.}, 54(2):1031--1057, 2018.

\bibitem{Liu22}
Z.~Liu.
\newblock Multipoint distribution of {TASEP}.
\newblock {\em Ann. Probab.}, 50(4):1255--1321, 2022.

\bibitem{Liu-Saenz-Wang20}
Z.~Liu, A.~Saenz, and D.~Wang.
\newblock Integral formulas of {ASEP} and {$q$}-{TAZRP} on a ring.
\newblock {\em Comm. Math. Phys.}, 379(1):261--325, 2020.

\bibitem{Liu-Tejaswi26}
Z.~Liu and T.~Tripathi.
\newblock A determinant identity for the sum of contour integral matrices.
\newblock {\em arXiv:2604.24747}, 2026.

\bibitem{Matetski-Quastel-Remenik21}
K.~Matetski, J.~Quastel, and D.~Remenik.
\newblock The {KPZ} fixed point.
\newblock {\em Acta Math.}, 227(1):115--203, 2021.

\bibitem{Matetski-Quastel-Remenik25}
K.~Matetski, J.~Quastel, and D.~Remenik.
\newblock Polynuclear growth and the {T}oda lattice.
\newblock {\em J. Eur. Math. Soc. (JEMS)}, 2025.

\bibitem{Matetski-Remenik23a}
K.~Matetski and D.~Remenik.
\newblock TASEP and generalizations: method for exact solution.
\newblock {\em Probab. Theory Related Fields}, 185:615--698, 2023.

\bibitem{Matetski-Remenik23}
K.~Matetski and D.~Remenik.
\newblock Exact solution of {TASEP} and variants with inhomogeneous speeds and memory lengths.
\newblock {\em Electron. J. Probab.}, 30:Paper No. 167, 1--54, 2025.

\bibitem{Motegi-Sakai13}
K.~Motegi and K.~Sakai.
\newblock Vertex models, {TASEP} and {G}rothendieck polynomials.
\newblock {\em J. Phys. A}, 46(35):355201, 2013.

\bibitem{Nica-Quastel-Remenik20}
M.~Nica, J.~Quastel, and D.~Remenik.
\newblock One-sided reflected {B}rownian motions and the {KPZ} fixed point.
\newblock {\em Forum Math. Sigma}, 8:Paper No. e63, 16, 2020.

\bibitem{Petrov75}
V.~V.~Petrov.
\newblock {\em Sums of independent random variables}.
\newblock Springer-Verlag, New York, 1975.
\newblock Translated from the Russian by A. A. Brown, Ergebnisse der Mathematik und ihrer Grenzgebiete, Band 82.

\bibitem{Petrov26}
L.~Petrov.
\newblock A {B}orodin-{O}kounkov-{G}eronimo-{C}ase identity for tilted {T}oeplitz minors.
\newblock {\em arXiv:2605.24976}, 2026.

\bibitem{Priezzhev2003}
V.~B.~Priezzhev.
\newblock Exact nonstationary probabilities in the asymmetric exclusion process on a ring.
\newblock {\em Phys. Rev. Lett.}, 91(5):050601, 2003.

\bibitem{Prolhac16}
S.~Prolhac.
\newblock Finite-time fluctuations for the totally asymmetric exclusion process.
\newblock {\em Phys. Rev. Lett.}, 116:090601, 2016.

\bibitem{Prolhac20}
S.~Prolhac.
\newblock {Riemann surfaces for KPZ with periodic boundaries}.
\newblock {\em SciPost Phys.}, 8:008, 2020.

\bibitem{Rost80}
H.~Rost.
\newblock Nonequilibrium behaviour of a many particle process: density profile and local equilibria.
\newblock {\em Z. Wahrsch. Verw. Gebiete}, 58:41--53, 1980.

\bibitem{Schmid-Sly26}
D.~Schmid and A.~Sly.
\newblock Mixing times for the {TASEP} on the circle.
\newblock {\em Probab. Theory Related Fields}, 194:1161--1233, 2026.


\bibitem{Seppalainen12}
T.~Sepp\"al\"ainen.
\newblock Scaling for a one-dimensional directed polymer with boundary conditions.
\newblock {\em Ann. Probab.}, 40(1):19--73, 2012.

\bibitem{Virag20}
B.~Vir\'ag.
\newblock The heat and the landscape {I}.
\newblock {\em arXiv:2008.07241}, 2020.

\bibitem{Wu23}
X.~Wu.
\newblock The {KPZ} equation and the directed landscape.
\newblock {\em Ann. Probab.}, 54(3):1212--1257, 2026.


\end{thebibliography}
\end{document}